\documentclass[11pt]{article}

\usepackage[a4paper,margin=1in]{geometry}
\usepackage{amsmath,amssymb,amsthm,mathtools}
\usepackage{eucal}
\usepackage{microtype}
\usepackage{fancyhdr}
\usepackage{lastpage}
\usepackage[
    bookmarksnumbered=true,
    bookmarksopen=true,
    bookmarksopenlevel=2,
    pdfpagemode=UseOutlines
]{hyperref}

\hypersetup{
    colorlinks=true,
    linkcolor=blue,
    citecolor=blue,
    urlcolor=blue
}

\fancypagestyle{main}{%
    \fancyhf{}%
    \cfoot{Page \thepage\ of \pageref*{LastPage}}%
}
\fancypagestyle{plain}{%
    \fancyhf{}%
    \cfoot{Page \thepage\ of \pageref*{LastPage}}%
}

\allowdisplaybreaks

\newcommand{\R}{\mathbb R}
\newcommand{\E}{\mathbb E}
\newcommand{\Var}{\operatorname{Var}}
\newcommand{\PP}{\mathbb P}
\newcommand{\norm}[1]{\left\lvert #1 \right\rvert}
\newcommand{\Norm}[1]{\left\lVert #1 \right\rVert}

\newcommand{\ip}[2]{\left\langle #1,#2\right\rangle}
\newcommand{\N}{\mathbb N}
\DeclareMathOperator{\Cov}{Cov}
\DeclareMathOperator{\diag}{diag}
\DeclareMathOperator{\tr}{Tr}
\DeclareMathOperator{\argmin}{argmin}
\DeclareMathOperator{\op}{op}

\theoremstyle{plain}
\newtheorem{theorem}{Theorem}[section]
\newtheorem{lemma}[theorem]{Lemma}
\newtheorem{corollary}[theorem]{Corollary}
\newtheorem{proposition}[theorem]{Proposition}

\theoremstyle{definition}
\newtheorem{assumption}{Assumption}
\newtheorem{remark}[theorem]{Remark}
\newtheorem{example}[theorem]{Example}

\numberwithin{equation}{section}
\title{A Central Limit Theorem for Regularized M-Estimators}
\author{Cosme Louart}
\date{}

\begin{document}

\maketitle

\begin{abstract}
We prove a quantitative central limit theorem for linear functionals of
regularized empirical-risk minimizers in the proportional-dimensional regime
\(p=O(n)\). The data columns are independent, not necessarily identically
distributed, and satisfy a uniform columnwise Poincar\'e inequality. Under
uniform curvature and smoothness assumptions, and for a quadratic regularizer,
we show that every nondegenerate statistic
\(\sqrt n\,u^\top\hat\theta\), centered by its expectation and normalized by
its standard deviation, converges to a standard normal random variable in
Wasserstein distance, with rate \(O((\log n)^7n^{-1/4})\). The proof is based
on moment and stability bounds for the minimizer, a second-order leave-one-out
expansion, and a perturbative normal-approximation argument for functions of
independent variables. We also prove the variance upper bound
\(\Var(u^\top\hat\theta)\le C\norm{u}_2^2/n\), identifying the \(\sqrt n\)
fluctuation scale.
\end{abstract}
\section{Introduction}

We consider a regularized empirical-risk minimizer
\[
    \hat\theta
    =
    \argmin_{\theta\in\R^p}
    \left\{
        \frac1n\sum_{i=1}^n L(x_i^\top\theta)+\rho(\theta)
    \right\}.
\]
Here \(X=(x_1,\ldots,x_n)\in\R^{p\times n}\) has independent columns, the loss
\(L:\R\to\R\) is smooth and uniformly convex, and the regularizer
\(\rho:\R^p\to\R\) is strongly convex. Problems of this form are central in
classical \(M\)-estimation, robust statistics, and convex learning theory. In
fixed dimension, consistency and asymptotic normality for smooth
\(M\)-estimators are classical; see Huber~\cite{Huber1967} and
van der Vaart~\cite{vanDerVaart1998}.

When the dimension grows with the sample size, the fixed-dimensional theory no
longer applies directly. Classical increasing-dimension normal approximation
results, such as those of Portnoy~\cite{Portnoy1985},
Mammen~\cite{Mammen1989}, and He--Shao~\cite{HeShao2000}, require the
dimension to grow sufficiently slowly compared with the sample size. In the
proportional regime, where \(p/n\) is of constant order, central limit theorems
and exact-asymptotic descriptions have been obtained in more structured
settings. Donoho--Montanari~\cite{DonohoMontanari2016},
Sur--Cand\`es~\cite{SurCandes2019},
Zhao--Sur--Cand\`es~\cite{ZhaoSurCandes2022}, and
Bellec--Shen--Zhang~\cite{BellecShenZhang2022} work under Gaussian-design
assumptions, possibly with general covariance. Lei--Bickel--El
Karoui~\cite{LeiBickelElKaroui2018} prove coordinate-wise normality under
fixed-design regularity conditions. El Karoui and collaborators
\cite{ElKarouiBeanBickelLimYu2013,ElKaroui2013} allow non-Gaussian designs, but
the assumptions still impose iid, isotropic, or independent-entry structure
together with strong concentration properties.

The present paper proves a quantitative central limit theorem under weaker
distributional assumptions on the design. We work in the regime \(p=O(n)\), and
the columns \(x_i\) are independent but need not be identically distributed,
Gaussian, centered, isotropic, or have iid entries. The main concentration
assumption is instead a uniform columnwise Poincar\'e inequality. Under the
smoothness and curvature assumptions stated below, and for a quadratic
regularizer, we prove a Wasserstein CLT for arbitrary deterministic linear
projections of the minimizer. More precisely, for deterministic directions
\(u=u_n\in\R^p\) with \(\norm{u}_2\le1\), set
\[
    \sigma_n^2
    :=
    \Var\!\big(\sqrt n\,u^\top\hat\theta\big).
\]
If \(\inf_n\sigma_n^2>0\), then
\[
    \frac{
        \sqrt n\,u^\top\hat\theta
        -
        \E[\sqrt n\,u^\top\hat\theta]
    }{\sigma_n}
    \Rightarrow
    \mathcal N(0,1)
\]
quantitatively in Wasserstein distance, with rate
\(O((\log n)^7n^{-1/4})\). We also prove the variance upper bound
\[
    \Var(u^\top\hat\theta)
    \lesssim
    \frac{\norm{u}_2^2}{n},
\]
so that the only variance assumption in the CLT is the matching lower bound.

The proof combines leave-one-out expansions with the perturbative
normal-approximation method of Chatterjee~\cite{Chatterjee2008}; see also the
quantitative Berry--Esseen theory for nonlinear statistics and \(M\)-estimators
of Shao--Zhang~\cite{ShaoZhang2022}. The key step is a second-order
leave-one-out approximation for the increments of
\(\sqrt n\,u^\top\hat\theta\). This expansion uses an averaged-leverage score
to remove the leading dependence of the active score on the active column. Once
this approximation is established, moment estimates and conditional-covariance
bounds verify a perturbative Wasserstein criterion for asymptotic normality.

\tableofcontents

\section{Setting, assumptions, and main result}
\label{sec:assumptions}

\subsection{Notation}

We use the following notation. Given $i\in \mathbb N$, denote:
\[
    [i]:=\{1,\ldots,i\}
    \qquad\text{and}\qquad
    [0]:=\emptyset .
\]
For a measurable function
\(f:\mathbb R^p\to\mathbb R\), a finite-dimensional vector \(a=(a_i)\), and
\(A\in\mathcal M_{p,n}\), set
\begin{itemize}
    \item \(\norm{f}_{\infty}:=\sup_{x\in\mathbb R^p}|f(x)|\);
    \item \(\norm{a}_2:=\big(\sum_i a_i^2\big)^{1/2}\) and
    \(\norm{a}_\infty:=\sup_i |a_i|\);
    \item \(\norm{A}_{\op}:=\sup_{\norm{x}_2\le1}\norm{Ax}_2\) and
    \(\norm{A}_F:=\sqrt{\tr(AA^\top)}\).
\end{itemize}

Given \(f\in\mathcal C^2(\mathbb R^p)\) and \(z\in\mathbb R^p\),
\[
    \nabla f(z)
    =
    \begin{pmatrix}
        \frac{\partial f(z)}{\partial z_1},\ldots,
        \frac{\partial f(z)}{\partial z_p}
    \end{pmatrix}^{\!\top},
    \qquad
    \nabla^2 f(z)
    =
    \left(
        \frac{\partial^2 f(z)}{\partial z_a\partial z_b}
    \right)_{a,b=1}^p .
\]

For \(\ell\) indexing a column of
\(Z=(z_1,\ldots,z_n)\in\mathcal M_{p,n}\), we write \(\mathbb D_\ell\)
for the Fr\'echet derivative with respect to \(z_\ell\). If \(T=T(Z)\) is
real-, vector-, or matrix-valued, then \(\mathbb D_\ell[T(Z)][h]\) denotes
the derivative in the direction \(h\in\mathbb R^p\). For instance, when
\(T\) is vector-valued,
\[
    \norm{
        T(Z+h e_\ell^\top)-T(Z)-\mathbb D_\ell[T(Z)][h]
    }_2
    =
    o(\norm{h}_2).
\]

When \(T\) is matrix-valued, we write
\[
    \norm{\mathbb D_\ell[T]}_{\op}^{*}
    :=
    \sup_{\norm{h}_2\le1}
    \norm{\mathbb D_\ell[T][h]}_{\op},
\]
with analogous notation for the other norms and for real- or vector-valued
maps. For example, given \(u\in\mathbb R^p\),
\[
    \norm{u^\top\mathbb D_\ell[T]}_2^{*}
    :=
    \sup_{\norm{h}_2\le1}
    \norm{u^\top\mathbb D_\ell[T][h]}_2 .
\]

We will keep the $\|\cdot\|$ notation for the integrated norms defined for any $k>0$ and any random variable $Z\in \mathbb R$ as:
\begin{align*}
    \|Z\|_{L^k}:= \mathbb E[|Z|^k]^{\frac{1}{k}}.
\end{align*}
Given a statistic $g((z_1,\ldots, z_n))$ depending on $n$ independent variables $z_1,\ldots, z_n$, we denote:
\begin{align*}
    \|g((z_1,\ldots, z_n))\|_{L_i^k}:=\mathbb E[|g((z_1,\ldots, z_n))|^k \mid (z_j)_{j\in[n]\setminus\{i\}}]^{\frac{1}{k}}.
\end{align*}
Similarly, \(\E_i\), \(\Var_i\), and \(\Cov_i\) denote conditional expectation,
variance, and covariance with respect to \(z_i\) only, conditionally on all
other variables.

For two nonnegative sequences $(a_n)$ and $(b_n)$, we write $a_n=O(b_n)$ if there exists $C>0$ such that $a_n\le Cb_n$ for all large enough $n$, and $a_n=o(b_n)$ if $a_n/b_n\to0$. All implicit constants are deterministic and independent of $n$ and $p$, unless explicitly stated otherwise.

\subsection{Setting}
Let $X=(x_1,\dots,x_n)\in\R^{p\times n}$ have independent columns
$x_1,\dots,x_n\in\R^p$. For $\theta\in\R^p$, set
\[
    \Psi_X(\theta)
    :=
    \frac1n\sum_{i=1}^n L(x_i^\top\theta)+\rho(\theta),
    \qquad
    \hat\theta
    :=
    \argmin_{\theta\in\R^p}\Psi_X(\theta).
\]
The assumptions below imply that $\Psi_X$ is strongly convex, so this minimizer is unique. We view $(p,X,L,\rho)=(p_n,X_n,L_n,\rho_n)$ as a sequence indexed by $n$, and we suppress the dependence on $n$ in the notation.

\begin{assumption}
\label{ass:1dim}
$p=O(n)$.
\end{assumption}

\begin{assumption}
\label{ass:2origin}
$|L'(0)|=O(1)$ and $\norm{\nabla\rho(0)}_2=O(1)$.
\end{assumption}

\begin{assumption}
\label{ass:3bounded-means}
$\sup_{i\in[n]}\norm{\E[x_i]}_2=O(1)$.
\end{assumption}

\begin{assumption}[Poincar\'e inequality for the columns]
\label{ass:poincare-data}
There exists $C_P=O(1)$ such that, for every $i\in[n]$ and every $C^1$ function
$f:\R^p\to\R$ for which the right-hand side is finite,
\[
    \Var(f(x_i))
    \le
    C_P\,\E\norm{\nabla f(x_i)}_2^2.
\]
\end{assumption}

\begin{assumption}
\label{ass:5regularity}
The loss $L:\R\to\R$ is $C^2$, $\norm{L''}_\infty=O(1)$, and there exists a
constant \(\kappa_L>0\), with \(\kappa_L^{-1}=O(1)\), such that, for all
\(t\in\R\),
\[
    L''(t)\ge \kappa_L .
\]
The regularizer \(\rho:\R^p\to\R\) is \(C^2\) and \(\kappa\)-strongly convex
for some constant \(\kappa>0\), with \(\kappa^{-1}=O(1)\), namely
\[
    \nabla^2\rho(\theta)\succeq \kappa I_p,
    \qquad
    \theta\in\R^p.
\]
\end{assumption}

These assumptions ensure well-posedness and provide the basic moment and concentration bounds for $\hat\theta$. To derive a first-order expansion of the leave-one-out perturbation $\hat\theta-\hat\theta_{-i}$, we also require the following smoothness condition.

\begin{assumption}
\label{ass:6smooth-curvature}
The second derivative $L''$ is globally Lipschitz with constant $C_L=O(1)$:
\[
    |L''(t)-L''(t')|
    \le
    C_L |t-t'|,
    \qquad t,t'\in\R.
\]
Moreover, $\nabla^2\rho$ is globally Lipschitz in operator norm with constant
$C_\rho=O(1)$:
\[
    \norm{\nabla^2\rho(\theta)-\nabla^2\rho(\theta')}_{\op}
    \le
    C_\rho\norm{\theta-\theta'}_{2},
    \qquad
    \theta,\theta'\in\R^p.
\]
\end{assumption}

Finally, the CLT proof uses a second-order expansion of the leave-one-out perturbation. For this step we impose the following additional curvature condition.

\begin{assumption}[Third-order curvature]
\label{ass:7third-order-curvature}
The loss $L$ is $C^4$ with\footnote{The bound $\norm{L''''}_{\infty}=O(1)$ will actually only be used in Section~\ref{sec:sensitivity-covariances} but to stay simple we did not isolate it in an independent assumption.}:
\[
    \norm{L'''}_{\infty}= O(1),
    \qquad
    \text{and}
    \qquad
    \norm{L''''}_{\infty}= O(1).
\]
The regularizer $\rho$ is $C^3$. We denote its third-order differential tensor by $\nabla^3\rho(\theta)$, so that, for any $h\in\R^p$, $\nabla^3\rho(\theta)[h]$ is a symmetric matrix. There exists a constant $\gamma_\rho=O(1)$ such that
\[
    \sup_{\theta\in \mathbb R^p}\sup_{\norm{h}_2\leq 1}\norm{\nabla^3\rho(\theta)[h]}_{\op}
    \le
    \gamma_\rho,
\]
and for all $\theta,\theta',h\in\R^p$:
\[
    \norm{\nabla^3\rho(\theta)[h]-\nabla^3\rho(\theta')[h]}_{\op}
    \le
    \gamma_\rho\norm{\theta-\theta'}_2\,\norm{h}_2.
\]
\end{assumption}

Fix a deterministic sequence of directions $u=u_n\in\R^p$ such that
\[
    \norm{u}_2\le1.
\]
Define
\begin{align}\label{eq:def_f_n}
    f_n(X)
    :=
    \sqrt n\,u^\top\hat\theta(X),
    \qquad
    \sigma_n^2
    :=
    \Var\!\big(f_n(X)\big).
\end{align}
Whenever \(\sigma_n>0\), set
\[
    W_n
    :=
    \frac{f_n(X)-\E f_n(X)}{\sigma_n}.
\]
\subsection{Main result}

\begin{theorem}[CLT for linear functionals of the minimizer]
\label{thm:main-clt}
Under Assumptions~\ref{ass:1dim}--\ref{ass:7third-order-curvature}, assume in addition that \(\rho\) is quadratic, in the sense that
\[
    \nabla^3\rho\equiv0,
    \qquad\text{and}\qquad
    \sigma_n^{-1}=O(1).
\]
Then
\[
    d_W(W_n,\mathcal N)
    =
    O\left( \frac{(\log n)^{13}}{\sqrt n}\right),
\]
where $\mathcal N$ denotes a standard normal random variable and
\[
    d_W(Y,Z)
    :=
    \sup_{\norm{h}_{\mathrm{Lip}}\le1}
    \left|
        \E h(Y)-\E h(Z)
    \right|
\]
is the Wasserstein distance.
\end{theorem}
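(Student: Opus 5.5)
We will prove Theorem~\ref{thm:main-clt} with Chatterjee's perturbative normal-approximation bound for functions of independent variables. Write \(f=f_n(X)\), and for each \(i\in[n]\) let \(X^{(i)}\) be \(X\) with column \(x_i\) replaced by an independent copy \(x_i'\). Set \(\Delta_i f:=f(X)-f(X^{(i)})\), and for \(A\subseteq[n]\) let \(\Delta_i f^{A}\) be the same increment computed after the columns indexed by \(A\) have also been resampled. Chatterjee's theorem bounds \(d_W(W_n,\mathcal N)\) by
\[
\frac{C}{\sigma_n^2}\sqrt{\Var\big(\E[T\mid X]\big)}+\frac{C}{\sigma_n^3}\sum_{i=1}^n\E\norm{\Delta_i f}^3,
\]
where \(T=\frac12\sum_{i}\sum_{A\not\ni i}\kappa_{|A|}\,\Delta_i f\,\Delta_i f^{A}\) with the usual combinatorial weights \(\kappa_{|A|}\). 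Because \(\sigma_n^{-1}=O(1)\), it is enough to show two things: the third-moment term is \(O(\mathrm{polylog}(n)/\sqrt n)\), and the variance of \(T\) is \(O(\mathrm{polylog}(n)/n)\).

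\textbf{Step 1: moments and stability.} From strong convexity (Assumption~\ref{ass:5regularity}) and the Poincar\'e inequality (Assumption~\ref{ass:poincare-data}), we will prove moment bounds \(\norm{\hat\theta}_2=O(1)\) in every \(L^k\). We will also prove that the scores \(x_j^\top\hat\theta\) are \(O(\log n)\) uniformly in \(j\) with high probability. This is where the logarithms enter, and we will handle the exceptional events using the polynomial moment bounds.

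Next comes the first-order leave-one-out identity. Let \(H_{-i}\) be the Hessian of the leave-one-out objective at \(\hat\theta_{-i}\). Then
\[
\hat\theta-\hat\theta_{-i}=-\tfrac1n H_{-i}^{-1}x_i\,L'(x_i^\top\hat\theta)+\text{remainder}.
\]
Combined with Poincar\'e concentration of the bilinear forms \(u^\top H_{-i}^{-1}x_i\), this gives \(\Delta_i f=O_{L^k}(\mathrm{polylog}/\sqrt n)\). The third-moment sum is therefore \(n\cdot n^{-3/2}\,\mathrm{polylog}\), which is the desired rate. The same argument, with the Efron--Stein inequality, gives the variance upper bound.

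\textbf{Step 2: second-order expansion, the main obstacle.} For \(\Var(\E[T\mid X])\) we need the product \(\Delta_i f\,\Delta_i f^{A}\) to depend only weakly on the other columns. We will expand \(\Delta_i f\) to second order:
\[
\Delta_i f\approx-\tfrac{1}{\sqrt n}\Big(u^\top H_{-i}^{-1}x_i\,L'(\mathrm{prox}_i)-u^\top H_{-i}^{-1}x_i'\,L'(\mathrm{prox}_i')\Big).
\]
Here \(\mathrm{prox}_i\) solves the scalar fixed point \(t=x_i^\top\hat\theta_{-i}-\tfrac{\tau_i}{n}L'(t)\), and \(\tau_i\) is the averaged leverage \(\tr(H_{-i}^{-1})\) in place of \(x_i^\top H_{-i}^{-1}x_i\). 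Assumption~\ref{ass:7third-order-curvature}, together with \(\nabla^3\rho\equiv0\) (which keeps the Hessian perturbation rank-one-driven), should control the remainder at order \(n^{-1}\). We will use Poincar\'e to show \(x_i^\top H_{-i}^{-1}x_i-\tau_i=O(\sqrt n\log n)\).

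\textbf{Step 3: conditional covariances.} With this approximation, \(\E[T\mid X]\) becomes an average over \(i\) of smooth functionals of \((H_{-i}^{-1},\hat\theta_{-i},x_i)\). We will bound its variance by Efron--Stein. Differentiating with respect to a column \(x_\ell\), using \(\mathbb D_\ell\) and the operator-norm derivative bounds for \(H^{-1}\) and \(\hat\theta\) (this is where \(L''''\) is used), should show that each summand moves by \(O(\mathrm{polylog}/n)\) in an averaged sense. That gives \(\Var(\E[T\mid X])=O(\mathrm{polylog}/n)\) and hence \(d_W=O((\log n)^{13}/\sqrt n)\).

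The hardest part is that a single column \(x_\ell\) affects all \(n\) terms of \(T\) simultaneously. We therefore need the correlated parts, namely quadratic forms \(u^\top H^{-1}x_i x_i^\top H^{-1}u\) averaged over \(i\), to concentrate. We will first approximate these averages by deterministic-equivalent traces and then bound their fluctuations by Poincar\'e.
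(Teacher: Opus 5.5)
The gap is in Step~2. Your displayed approximation keeps only the first-order term with the averaged-leverage score. You treat everything else as a remainder of order $n^{-1}$, arguing that $\nabla^3\rho\equiv0$ keeps the Hessian perturbation rank-one-driven. That is not so. $\nabla^3\rho\equiv0$ only removes $R_i^\rho$. The loss-curvature drift $R_i^L=\frac1nX_{-i}\Gamma_iX_{-i}^\top$, with $(\Gamma_i)_j\approx L'''(x_j^\top\hat\theta_{-i})\,x_j^\top\delta_i\hat\theta$, reweights the whole background Gram matrix. Its linearization is the paper's correction $\delta_i^{(1)}\hat\theta$:
\[
\sqrt n\,u^\top\delta_i^{(1)}\hat\theta=-\frac{(s_i^\zeta)^2}{2n^{5/2}}\,x_i^\top Y_ix_i,
\qquad
Y_i:=G_{-i}X_{-i}D_iX_{-i}^\top G_{-i},
\qquad
D_i:=\diag_{j\ne i}\bigl(L'''(x_j^\top\hat\theta_{-i})\,x_j^\top G_{-i}u\bigr).
\]
Conditionally on the background, $x_i^\top Y_ix_i=\tr(\Sigma_iY_i)+O(\norm{Y_i}_F)$ with $\norm{Y_i}_F=O(n^{3/2})$. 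But $\tr(\Sigma_iY_i)=\sum_{j\ne i}(D_i)_j\,x_j^\top G_{-i}\Sigma_iG_{-i}x_j$ is a sum of $n$ terms of size $\asymp p\asymp n$. It is therefore of order $n^2$ as soon as $(D_i)_j$ does not average out, e.g.\ $L'''>0$ and $u$ aligned with a common column mean. So this term is of order $n^{-1/2}$, the same as your main term $-\frac{s_i^\zeta}{\sqrt n}u^\top G_{-i}x_i$.

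It also does not cancel in $\Delta_if=\delta_if(X)-\delta_if(X^{(i)})$: it leaves roughly $\frac{\tr(\Sigma_iY_i)}{2n^{5/2}}\bigl((s_i^\zeta(x_i'))^2-(s_i^\zeta(x_i))^2\bigr)$. Without it, your remainder is only $O(\log n/\sqrt n)$, which is all Lemma~\ref{lem:first-order-averaged-score-bounds} gives. An error of that size in each increment can shift $T$ by order one, so Step~3 would analyse the wrong object. This is why the paper keeps $\delta_i^{(1)}\hat\theta$ explicitly (Theorem~\ref{thm:second-order-loo-xi}). It uses the Lipschitz bound on $L'''$ only for the remainder $\Gamma_i-\tilde\Gamma_i$. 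It then must control the quadratic contractions $\mathcal C^{01},\mathcal C^{10},\mathcal C^{11}$ in $Y,\check Y$, which is where most of the $(\log n)^{13}$ comes from.

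There is a smaller error in the same step. The averaged leverage must be $\frac1n\tr(\Sigma_iH_{-i}^{-1})$ with $\Sigma_i=\E[x_ix_i^\top]$ (the paper uses its expectation $\gamma_i$), not $\frac1n\tr(H_{-i}^{-1})$. The columns are not isotropic, so $x_i^\top H_{-i}^{-1}x_i-\tr(H_{-i}^{-1})$ is of order $n$ in general, not $O(\sqrt n\log n)$.

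Even with the correct expansion, Step~3 is where the real difficulty lies, and you leave it as a plan. The paper does not use Chatterjee's full-subset $T$. It proves the ordered variant, Theorem~\ref{the:conditional-loo-perturbative}, whose variance proxy $c_i=\Cov_i\bigl(g_n(Z),g_n(Z^{[i-1]})\bigr)$ has already integrated out the active column. Each $c_i$ is then a function of the background alone. The crude bound of Remark~\ref{rem:simplification_wassertein_conv}, $\sqrt{\Var(\sum_ic_i)}\le\sum_i\sqrt{\Var(c_i)}$, reduces everything to $\sqrt{\Var(c_i)}=O((\log n)^{13}n^{-3/2})$ for a single $i$. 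That bound follows from tensorized Poincar\'e in the background columns. Because $x_i$ has been integrated out, those gradients involve only fixed-direction leverages $x_\ell^\top Gu=O((\log n)^6)$ (Lemma~\ref{lem:fixed-direction-full-resolvent-leverage}), never cross-leverages $x_\ell^\top Gx_i$ of size $\sqrt n$. So the ``one column affects all $n$ summands'' problem you flag never has to be confronted. In your $\E[T\mid X]$ each summand still depends on its own column, so the triangle inequality over $i$ gives only $O(1)$. You would have to exhibit the cancellation across $i$ explicitly, and ``approximate by deterministic-equivalent traces'' does not say how.
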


The proof is based on Chatterjee's normal-approximation method \cite{Chatterjee2008} that was further simplified by Shao and Zhang in \cite{ShaoZhang2025Functionals}. We provide below a very similar version adapted to our needs.

Let $Z=(z_1,\ldots,z_n)$ have independent coordinates, and let
$Z'=(z_1',\ldots,z_n')$ be an independent copy. For $A\subset[n]$, let
$Z^A$ be obtained from $Z$ by replacing $z_j$ by $z_j'$ for every
$j\in A$. With this notation:
\begin{align*}
    Z^{[i]} = (z_1',\ldots, z_i', z_{i+1},\ldots, z_n).
\end{align*}
For a random variable $Y=Y(Z,Z')$, let us denote the expectation obtained by integrating on $z_i$ by
\[
    \E_i[Y]:=\E\left[Y \mid \{z_j,z_j'\}_{j\in [n]} \setminus \{z_i\}\right].
\]
The notation $\Cov_i$ has the analogous meaning.

\begin{theorem}
\label{the:conditional-loo-perturbative}
Let $Z=(Z_1,\ldots,Z_n)$ have independent entries in $\mathcal X$, and let
$Z'=(Z_1',\ldots,Z_n')$ be an independent copy of $Z$.
Let $g_n:\mathcal X^n\to\mathbb R$ be measurable and assume that
\[
    \E[g_n(Z)]=0,
    \qquad
    \Var(g_n(Z))=1.
\]
Then, denoting the intrinsic local covariances and increments by
\[
    c_i
    :=
    \Cov_i\!\left(g_n(Z),g_n(Z^{[i-1]})\right)
    \qquad \text{and}\qquad
    \Delta_i:= g_n(Z)-\E_i[g_n(Z)],
\]
we have $\mathbb E \left[ \sum_{i=1}^n c_i \right] = 1$ and
\[
\begin{aligned}
    d_W(g_n(Z),\mathcal N(0,1))
    &\le \sqrt{\frac{2}{\pi}}
        \mathbb E\left|
        1-
        \sum_{i=1}^n
            c_i
        \right|
    +4\sum_{i=1}^n
    \E\left[
        |\Delta_i|^3
    \right].
\end{aligned}
\]
\end{theorem}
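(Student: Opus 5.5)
We prove Theorem~\ref{the:conditional-loo-perturbative} with Stein's method in the form of Chatterjee's interpolation. Write $W:=g_n(Z)$ and $W^{(i)}:=g_n(Z^{[i]})$. For a $1$-Lipschitz test function $h$, let $f=f_h$ be the solution of the Stein equation $f'(w)-wf(w)=h(w)-\E h(\mathcal N)$. The classical bounds are $\norm{f}_\infty\le 2$, $\norm{f'}_\infty\le\sqrt{2/\pi}$ and $\norm{f''}_\infty\le 2$. It then suffices to control $\E[f'(W)-Wf(W)]$.

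\textbf{Step 1: telescoping.} Since $Z^{[n]}=Z'$ is independent of $Z$ and $\E W=0$, we have $\E[W'f(W)]=0$ with $W':=g_n(Z')$. Hence
\[
    \E[Wf(W)]=\sum_{i=1}^n\E\big[(W^{(i-1)}-W^{(i)})f(W)\big].
\]
Now $W^{(i-1)}$ and $W^{(i)}$ differ only through $z_i\leftrightarrow z_i'$. Conditionally on all other variables, $(z_i,z_i')$ is exchangeable. We also replace $f(W)$ by its value with the $i$-th coordinate integrated out: set $\bar W_i:=\E_i[W]$, so that $W=\bar W_i+\Delta_i$, and $\bar W_i$ does not depend on $z_i$.

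\textbf{Step 2: expansion.} Write $f(W)=f(\bar W_i)+f'(\bar W_i)\Delta_i+R_i$ with $\norm{R_i}\le \Delta_i^2$.

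For the zeroth-order term: $f(\bar W_i)$ does not depend on $z_i$. Moreover $\E_i[W^{(i-1)}]$ and $\E_i[W^{(i)}]$ coincide, because $W^{(i)}$ does not involve $z_i$ and integrating over $z_i$ in $W^{(i-1)}$ yields the same quantity by equality in distribution of $z_i$ and $z_i'$. More precisely, we condition on everything but $z_i$. In $W^{(i)}$ the $i$-th slot holds $z_i'$, which is fixed under this conditioning, so $\E_i[W^{(i-1)}-W^{(i)}]=\E_i[W^{(i-1)}]-W^{(i)}$. Its product with $f(\bar W_i)$ does not vanish pointwise. It does vanish after also integrating over $z_i'$, since $\bar W_i$ does not depend on $z_i'$ either: its $i$-th slot is integrated out and all other slots are unchanged. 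So this term has zero full expectation.

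For the first-order term, $\E[(W^{(i-1)}-W^{(i)})\Delta_i f'(\bar W_i)]$, we take $\E_i$. The factor $W^{(i)}$ is constant in $z_i$, so it pairs with $\Delta_i$ to give zero. The remaining part is $\E_i[W^{(i-1)}\Delta_i]=\Cov_i(g_n(Z^{[i-1]}),g_n(Z))=c_i$. This term therefore equals $\E[c_i f'(\bar W_i)]$.

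\textbf{Step 3: collecting terms.} Replacing $f'(\bar W_i)$ by $f'(W)$ costs at most $\norm{f''}_\infty\E[\norm{c_i}\norm{\Delta_i}]$. We get
\[
    \E[f'(W)-Wf(W)]=\E\Big[f'(W)\Big(1-\sum_i c_i\Big)\Big]+\text{errors}.
\]
The first piece is bounded by $\sqrt{2/\pi}\,\E\norm{1-\sum_i c_i}$. The errors are $\E[\norm{W^{(i-1)}-W^{(i)}}\Delta_i^2]$ and $2\E[\norm{c_i}\norm{\Delta_i}]$. By Hölder's inequality and exchangeability, with $\norm{W^{(i-1)}-W^{(i)}}\le \norm{W^{(i-1)}-\E_i W^{(i-1)}}+\norm{\Delta_i'}$ and $\norm{c_i}\le \E_i[\Delta_i^2]$ (Cauchy--Schwarz, the two conditional variances being equal in law), each error reduces to a constant times $\E\norm{\Delta_i}^3$. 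The constant $4$ follows from careful bookkeeping.

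The identity $\E\sum_i c_i=1$ comes from taking $f(w)=w$ in the same telescoping, where the computation is exact: $\E W^2=\sum_i\E[(W^{(i-1)}-W^{(i)})W]=\sum_i \E c_i$. The main obstacle is matching the exact constant $4$ and the exact index convention ($Z^{[i-1]}$ versus $Z^{[i]}$). If the crude Hölder bookkeeping gives a larger constant, I would instead expand around $W^{(i-1)}$-type symmetric averages, as in Shao--Zhang.
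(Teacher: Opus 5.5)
Your proof is correct and is essentially the paper's argument: the telescoping $Z^{[i-1]}\to Z^{[i]}$ is the paper's ordered covariance decomposition (Lemma~\ref{lem:cov-decomp}), and in the expansion of $f(W)$ around $\E_i[W]$ the zeroth-order term vanishes in expectation, the first-order term gives $\E[f'(\E_i[W])\,c_i]$, and the remainder and the replacement of $f'(\E_i[W])$ by $f'(W)$ are each at most $2\E|\Delta_i|^3$, so the constant is exactly $4=2+2$ and no symmetric-average refinement is needed. One small fix: $|c_i|\le \E_i[\Delta_i^2]$ is false pointwise; use instead $|c_i|\le (\E_i[\Delta_i^2])^{1/2}(\E_i[(\Delta_i^{[i-1]})^2])^{1/2}$ with $\Delta_i^{[i-1]}:=W^{(i-1)}-\E_i[W^{(i-1)}]$, then H\"older and Jensen give $\|c_i\|_{L^{3/2}}\le\|\Delta_i\|_{L^3}^2$ and hence $\E[|c_i||\Delta_i|]\le\E|\Delta_i|^3$, as in the paper.
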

The result could not be directly deduced from~\cite{ShaoZhang2025Functionals}, we thus provide the full proof in Appendix~\ref{subsec:proof_of_clt-perturbative} for completeness.

\begin{remark}\label{rem:simplification_wassertein_conv}
    In the setting of Theorem~\ref{the:conditional-loo-perturbative} we can rely on the weaker formulation:
    \begin{align*}
        \begin{aligned}
    d_W(g_n(Z),\mathcal N(0,1))
    &\le
    \sqrt{\frac{2}{\pi}}
    \sqrt{\Var\!\left(
        \sum_{i=1}^n c_i
    \right)}
    +4\sum_{i=1}^n
    \E\left[
        |\Delta_i|^3
    \right] \\
    &\le
    \sqrt{\frac{2}{\pi}}
    \sum_{i=1}^n
        \sqrt{\Var(c_i)}
    +4\sum_{i=1}^n
    \E\left[
        |\Delta_i|^3
    \right],
\end{aligned}
    \end{align*}
    thanks to Cauchy--Schwarz and the fact that $\mathbb E[\sum_{i=1}^nc_i] = 1$.
\end{remark}

\begin{example}[Why the cubic local term is needed]
\label{ex:single-coordinate-cubic-term}
Let $z_1,\ldots,z_n$ be independent Rademacher variables and set $g_n(Z):=z_1$.
Trivially, $\E g_n(Z)=0$ and $\Var(g_n(Z))=1$, but $g_n(Z)$ is Rademacher for
all $n$, and hence does not converge to a standard Gaussian. The covariance part of the bound provided by Theorem~\ref{the:conditional-loo-perturbative} alone would
see no obstruction since:
\begin{align*}
    \sum_{i=1}^n c_i = c_1 + \sum_{i=2}^n c_i = \Cov_1(z_1,z_1) =1,
\end{align*}
deterministically. The obstruction is exactly the large local increment,
\[
    \sum_{i=1}^n \E |\Delta_i|^3 = \E |z_1|^3 + \sum_{i=2}^n \E |\Delta_i|^3 =1 \not \to 0.
\]
\end{example}

\begin{example}[Why the intrinsic local covariances terms and the ordered resampling is needed]
\label{ex:folded-rademacher-ordered-resampling}
Let $n$ be even and let $z_1,\ldots,z_n$ be independent Rademacher
variables. Set
\[
    S_n := \frac1{\sqrt n}\sum_{j=1}^n z_j,
    \qquad
    a_n := \E |S_n|,
    \qquad
    b_n^2 := \Var(|S_n|),
\]
and define the centered and normalized statistic
\[
    g_n(Z) := \frac{|S_n|-a_n}{b_n}.
\]
Then $\E g_n(Z)=0$ and $\Var(g_n(Z))=1$. Moreover, by the classical
central limit theorem,
\[
    g_n(Z)
    \Longrightarrow
    \frac{|G|-\sqrt{2/\pi}}{\sqrt{1-2/\pi}},
    \qquad G\sim\mathcal N(0,1),
\]
which is not Gaussian. 

We now compare the ordered covariance appearing in
Theorem~\ref{the:conditional-loo-perturbative} with the diagonal analogue
where $g_n(Z^{[i-1]})$ is replaced by $g_n(Z)$. Put
\[
    R_i := \sum_{j\ne i} z_j,
    \qquad
    R_i^{[i-1]} := \sum_{j<i}z_j' + \sum_{j>i}z_j .
\]
Assuming, without loss of generality that $n$ is even, we can deduce that $R_i$ and $R_i^{[i-1]}$ are almost surely non-zero as sums of an odd number of Rademacher variables. Then $ \E_i |S_n| = \mathbb E[|z_i|] \frac{|R_i|}{\sqrt n} = \frac{|R_i|}{\sqrt n}$ and therefore
\[
    \Delta_i
    = g_n(Z)-\E_i g_n(Z)
    = \frac{1}{b_n\sqrt n} \left( |z_i + R_i| - |R_i| \right)  = \frac{z_i\,\operatorname{sgn}(R_i)}{b_n\sqrt n}.
\]
In particular
\[
    \sum_{i=1}^n \E |\Delta_i|^3
    =\sum_{i=1}^n \E \left[ \left( \frac{|z_i|}{b_n^3\sqrt n} \right)^3 \right]
    = \frac1{b_n^3\sqrt n}
    \longrightarrow 0,
\]
because $b_n^2\to 1-2/\pi>0$.

The ordered covariance of
Theorem~\ref{the:conditional-loo-perturbative} is
\[
    c_i
    := \Cov_i\bigl(g_n(Z),g_n(Z^{[i-1]})\bigr)
    =\Cov_i\bigl(\Delta_i,\Delta_i^{[i-1]}\bigr)
    = \frac{\operatorname{sgn}(R_i)\operatorname{sgn}(R_i^{[i-1]})}
           {b_n^2 n}.
\]
One can show that, uniformly for $i/n\in[1/3,2/3]$, $\Var\bigl(
        \operatorname{sgn}(R_i)\operatorname{sgn}(R_i^{[i-1]})
    \bigr)
    \ge \kappa$, for some numerical constant $\kappa>0$ and all large $n$. Therefore
\[
    \sum_{i=1}^n \sqrt{\Var(c_i)}
    \ge
    \sum_{n/3\le i\le 2n/3}
        \frac{\sqrt\kappa}{b_n^2 n}
    \ge c>0
\]
for all large $n$. Thus the ordered covariance term does detect the
failure of asymptotic normality. The role of $Z^{[i-1]}$ is precisely to
compare the local derivative in two environments which share $z_i$ but
have different past coordinates; this reveals the random orientation
$\operatorname{sgn}(R_i)$ that is invisible to the diagonal variance
$\operatorname{sgn}(R_i)^2$.

The random sign in $c_i$ is the obstruction that the diagonal proxy
\[
    \tilde c_i
    := \Cov_i\bigl(g_n(Z),g_n(Z)\bigr)
    = \Var_i(g_n(Z)).
\]
squares away. We get indeed:
\[
    \tilde c_i
    = \Var_i(\Delta_i)
    = \frac1{b_n^2 n},
\]
which is deterministic. Consequently $\sum_{i=1}^n \sqrt{\Var(\tilde c_i)} = 0$.
The diagonal fluctuation criterion sees no obstruction, although
$g_n(Z)$ does not converge to a Gaussian. Notice also that the diagonal
quantity is not the correct Stein variance proxy: in this example
\[
    \E \left[ \sum_{i=1}^n \tilde c_i \right] = \frac1{b_n^2}\ne 1.
\]

\end{example}

With the notation of \eqref{eq:def_f_n}, we apply
Theorem~\ref{the:conditional-loo-perturbative} with \(Z=X\) and \(g_n=\frac{1}{\sigma_n}(f_n-\E f_n)\) with the nottions of~\eqref{eq:def_f_n}.
Our approach consists in introducing a leave-one-out statistic
\[
    f_n^{-i}(X_{-i})
    :=
    \sqrt n\,u^\top\hat\theta_{-i},
\]
where
\begin{align}\label{eq:def_loo}
    \hat\theta_{-i}
    :=
    \argmin_{\theta\in\R^p}\Psi_{X_{-i}}(\theta),
    \qquad
    \Psi_{X_{-i}}(\theta)
    :=
    \frac1n\sum_{j\ne i}L(x_j^\top\theta)+\rho(\theta).
\end{align}
The normalization is kept equal to \(1/n\), so that \(\Psi_{X_{-i}}\) is the original objective with the \(i\)th loss term removed. We then replace the appearance of $f_n(X)-\mathbb E_i[f_n(X)]$ with
\begin{align*}
    \delta_if_n(X):= f_n(X) - f_n^{-i}(X_{-i}) = \sqrt n u^\top(\hat\theta-\hat\theta_{-i}).
\end{align*}
That is made possible thanks to the identities
\begin{align*}
    \Cov_i\!\left( f_n(X), f_n(X^{[i-1]})  \right)
    =
    \Cov_i\!\left( \delta_if_n(X), \delta_if_n(X^{[i-1]}) \right)
\end{align*}
and
\begin{align*}
    \left\Vert f_n(X)-\mathbb E_i[f_n(X)] \right\Vert_{L^3}
    &=\left\Vert \delta_if_n(X)-\mathbb E_i[\delta_if_n(X)] \right\Vert_{L^3}\\
    &\leq \left\Vert \delta_if_n(X)\right\Vert_{L^3}+\left\Vert\mathbb E_i[\delta_if_n(X)] \right\Vert_{L^3}
    \leq 2 \left\Vert \delta_if_n(X)\right\Vert_{L^3}.
\end{align*}

Then one can deduce from Theorem~\ref{the:conditional-loo-perturbative} and more specifically from Remark~\ref{rem:simplification_wassertein_conv} that, introducing the numerical constant $C= \max (\sqrt{2/\pi}, 32)$, one can bound
\begin{align}\label{eq:loo_bound_wasserstein_bound_overview}
    d_W \left( \frac{f_n(X)- \mathbb E[f_n(X)]}{\sigma_n}, \mathcal N \right)
    \leq C \left( \frac{n}{\sigma_n^2} \omega_c + \frac{n}{\sigma_n^3}\omega_\Delta^3 \right),
\end{align}
where we denoted
\begin{align*}
    \omega_c := \sup_{i\in[n]}\sqrt{\Var\!\left(\Cov_i\!\left(\delta_if_n(X), \delta_if_n(X^{[i-1]})\right)\right)}
    &&\text{and}&&
    \omega_{\Delta}:= \sup_{i\in[n]}\|\delta_if_n(X)\|_{L^3}.
\end{align*}

To prove Theorem~\ref{thm:main-clt}, we establish the following estimates:
\begin{enumerate}
    \item $\sigma_n=O(1)$ in Section~\ref{sec:preliminary_concentration_properties}, Remark~\ref{rem:sigma-upper};
    \item $\omega_\Delta
    =
    O\!\left(\frac{\log n}{\sqrt n}\right)$ in Subsection~\ref{sse:first_order},~\eqref{eq:borne_omega_delta};
    \item $\omega_c
    =
    O\!\left(
        \frac{(\log n)^{13}}{n^{3/2}}
    \right)$ in Subsection~\ref{subsec:contracted-active-background}, Corollary~\ref{cor:bound-cn-bar-x}.
\end{enumerate}
Together with the nondegeneracy assumption $\sigma_n^{-1}=O(1)$, these bounds imply the corresponding Wasserstein rate.

The argument proceeds in three main stages. First, we prove moment, stability,
and variance bounds for the minimizer and its leave-one-out analogues. Second,
we derive first- and second-order leave-one-out expansions for
\(u^\top(\hat\theta-\hat\theta_{-i})\). Third, we insert these expansions into
the perturbative normal-approximation bound and verify the required sensitivity
and covariance estimates.

\section{Preliminary concentration properties}
\label{sec:preliminary_concentration_properties}

\subsection{Basic concentration results on the data}
We first collect the concentration estimates for the data and the minimizer that
will be used throughout the proof. The general Poincar\'e facts are recalled in
Appendix~\ref{sec:preliminaries_data}; here we only record their consequences
for the independent columns \(x_1,\ldots,x_n\).

\begin{lemma}[Tensorized Poincar\'e inequality for the data]
\label{lem:concentration_X}
Under Assumption~\ref{ass:poincare-data}, for every sufficiently smooth
\(f:\mathcal M_{p,n}\to\R\),
\[
    \Var(f(X))
    \le
    C_P\sum_{i=1}^n \|\norm{\mathbb D_i [f(X)]}_2^{*}\|_{L^ 2}^2 .
\]
\end{lemma}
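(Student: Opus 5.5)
The plan is the standard tensorization of variance via the Efron--Stein inequality, combined with the columnwise Poincar\'e inequality of Assumption~\ref{ass:poincare-data} applied conditionally.

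\textbf{Step 1 (Efron--Stein).} Since the columns \(x_1,\ldots,x_n\) are independent, the Efron--Stein inequality (obtainable from the Doob martingale decomposition \(f(X)-\E f(X)=\sum_{i}(\E[f\mid x_1,\ldots,x_i]-\E[f\mid x_1,\ldots,x_{i-1}])\) and conditional Jensen) gives
\[
    \Var(f(X))\le \sum_{i=1}^n \E\bigl[\Var_i(f(X))\bigr],
\]
where \(\Var_i\) is the variance with respect to \(x_i\) alone, all other columns being frozen.

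\textbf{Step 2 (conditional Poincar\'e).} Fix \((x_j)_{j\neq i}\) and consider the map \(\phi_i:\R^p\to\R\), \(\phi_i(z):=f(x_1,\ldots,x_{i-1},z,x_{i+1},\ldots,x_n)\). By independence, the conditional law of \(x_i\) given the other columns is the law of \(x_i\). Hence Assumption~\ref{ass:poincare-data} applies to \(\phi_i\) and yields
\[
    \Var_i(f(X))\le C_P\,\E_i\norm{\nabla\phi_i(x_i)}_2^2 .
\]
The gradient \(\nabla\phi_i(x_i)\) is the Riesz representative of the Fr\'echet derivative \(h\mapsto\mathbb D_i[f(X)][h]\). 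Therefore \(\norm{\nabla\phi_i(x_i)}_2=\sup_{\norm{h}_2\le1}|\mathbb D_i[f(X)][h]|=\norm{\mathbb D_i[f(X)]}_2^{*}\). Taking expectations and summing over \(i\) then gives the claim, with \(\|\cdot\|_{L^2}^2\) of the derivative norm on the right.

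\textbf{Main point to check.} The only delicate step is technical: Assumption~\ref{ass:poincare-data} requires \(\phi_i\) to be \(C^1\) with finite right-hand side, and Step 1 requires \(f(X)\in L^2\). The phrase ``sufficiently smooth'' covers the \(C^1\) requirement. If the right-hand side of the lemma is finite, then the conditional right-hand side is finite for almost every frozen environment, and Assumption~\ref{ass:poincare-data} applies there. On the null set where it is infinite, the inequality holds trivially. Square-integrability of \(f(X)\) can be obtained by truncation: apply the argument to \(\max(-M,\min(f,M))\), which is Lipschitz-compatible (or use a smooth truncation if \(C^1\) is needed), and let \(M\to\infty\) by monotone convergence.
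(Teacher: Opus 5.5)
Your proposal is correct and follows the paper's argument: the lemma is obtained from Proposition~\ref{prop:tensorization}, whose proof is the Efron--Stein inequality followed by the columnwise Poincar\'e inequality applied conditionally on the other columns and summed over $i$, using $\norm{\nabla\phi_i}_2=\norm{\mathbb D_i[f(X)]}_2^{*}$ as you note. Your integrability remarks go beyond the paper, which simply assumes $f$ is ``sufficiently smooth''; one small correction is that the limit $M\to\infty$ in the truncation step uses Fatou's lemma, e.g.\ via $\Var(f_M(X))=\tfrac12\E[(f_M(X)-f_M(X'))^2]$ with $X'$ an independent copy of $X$, not monotone convergence.
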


\begin{proof}
This is Proposition~\ref{prop:tensorization} applied to the independent
columns \(x_1,\ldots,x_n\).
\end{proof}

The next lemma gives the data moment bounds used below. It is a direct
consequence of Lemma~\ref{lem:linear-moments-poincare},
Lemma~\ref{lem:column-euclidean-moments}, and the boundedness of the column
means.

\begin{lemma}[Column moment bounds]
\label{lem:moments_of_x_i}
Under Assumptions~\ref{ass:3bounded-means} and~\ref{ass:poincare-data}, for any
sequence of positive integers \(k_n\geq 1\), and any deterministic sequence
\(u=u_n\in\R^p\) satisfying \(\norm{u}\le1\),
\[
    \sup_{i\in[n]}
    \Norm{u^\top x_i }_{L^{k_n}}
    =
    O(k_n).
\]
If, in addition, Assumption~\ref{ass:1dim} holds, then
\[
    \sup_{i\in[n]}
    \Norm{\norm{x_i}_2 }_{L^{k_n}}
    =
    O(k_n\sqrt n).
\]
\end{lemma}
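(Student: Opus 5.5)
The plan is to reduce both bounds to the Poincar\'e inequality of Assumption~\ref{ass:poincare-data} through the auxiliary results of the appendix, treating first the linear statistic and then the Euclidean norm.

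\textbf{Linear statistic.} Fix \(i\in[n]\) and write
\[
u^\top x_i=u^\top\E[x_i]+u^\top(x_i-\E[x_i]).
\]
The first term is bounded by \(\norm{u}_2\norm{\E x_i}_2=O(1)\) using Assumption~\ref{ass:3bounded-means}, uniformly in \(i\). For the centered part, the map \(x\mapsto u^\top x\) is \(1\)-Lipschitz, since its gradient \(u\) has norm at most \(1\). Lemma~\ref{lem:linear-moments-poincare} (the standard Poincar\'e-to-exponential-concentration fact) then gives \(\Norm{u^\top(x_i-\E x_i)}_{L^{k}}\le C\sqrt{C_P}\,k\) for every \(k\ge1\). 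If it is stated only as a sub-exponential tail bound \(\PP(\norm{Y-\E Y}>t)\le 2e^{-t/(c\sqrt{C_P})}\), I would integrate the tail to get \(\E\norm{Y-\E Y}^k\le 2\,k!\,(c\sqrt{C_P})^k\) and use \((k!)^{1/k}\le k\). Both constants depend only on \(C_P\) and the mean bound, so taking \(k=k_n\) and the supremum over \(i\) yields \(O(k_n)\).

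\textbf{Euclidean norm.} Write
\[
\norm{x_i}_2\le\norm{\E x_i}_2+\norm{x_i-\E x_i}_2 .
\]
For the centered norm, split it as its mean plus its fluctuation.
\begin{itemize}
\item[(a)] \emph{Mean.} Apply Poincar\'e coordinatewise to \(e_a^\top x_i\): this gives \(\E\norm{x_i-\E x_i}_2^2=\sum_a\Var(e_a^\top x_i)\le C_P\,p\). With Assumption~\ref{ass:1dim}, the mean is at most \(\sqrt{C_P p}=O(\sqrt n)\).
\item[(b)] \emph{Fluctuation.} The map \(x\mapsto\norm{x-\E x_i}_2\) is \(1\)-Lipschitz. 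The same sub-exponential moment bound as above then controls its centered \(L^{k}\) norm by \(O(k)\).
\end{itemize}
This is presumably the content of Lemma~\ref{lem:column-euclidean-moments}. Combining gives \(\Norm{\norm{x_i}_2}_{L^{k_n}}=O(1+\sqrt n+k_n)=O(k_n\sqrt n)\), since \(k_n\ge1\).

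\textbf{Main obstacle.} The only delicate point is that the \(L^k\) bounds must be linear in \(k\) with constants uniform in \(n\), \(p\), \(i\) and \(k\). This linear growth is exactly what the Poincar\'e exponential-moment argument provides: one applies \(\Var(e^{\lambda f/2})\le C_P\lambda^2\E e^{\lambda f}/4\) and iterates to bound \(\E e^{\lambda(f-\E f)}\) for \(\lambda<2/\sqrt{C_P}\). I would invoke the appendix lemmas directly rather than redo this step.
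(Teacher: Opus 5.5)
Your proof is correct. For the linear statistic you follow the paper exactly: the mean part is $O(1)$ by Assumption~\ref{ass:3bounded-means}, and the centered part is $O(k_n)$ by Lemma~\ref{lem:linear-moments-poincare}. For the Euclidean norm you take a different route.

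The paper never uses concentration of the norm. Lemma~\ref{lem:column-euclidean-moments} is just the coordinatewise Minkowski bound $\Norm{\norm{x_i}_2}_{L^k}\le\sqrt p\,\sup_{\norm{v}_2\le1}\Norm{v^\top x_i}_{L^k}$. Combined with the first part, this gives $\sqrt p\cdot O(k_n)=O(k_n\sqrt n)$ in one line. So your guess about that lemma's content is off, though nothing in your argument depends on it.

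Your route has two ingredients: $\E\norm{x_i-\E x_i}_2\le\sqrt{C_P p}$ from the trace of the covariance, and concentration of the $1$-Lipschitz map $x\mapsto\norm{x-\E x_i}_2$. The second ingredient is Lipschitz concentration for a nonlinear function, which Lemma~\ref{lem:linear-moments-poincare} as stated does not provide. You should instead cite Lemma~\ref{lem:bobkov_ledoux}, together with the truncation-and-Fatou step from the proof of Lemma~\ref{lem:linear-moments-poincare}. Your step (a) supplies the integrability needed for $\E[T_M(f)]\to\E f$.

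What you gain is the strictly sharper additive bound $O(\sqrt n+k_n)$ instead of the multiplicative $O(k_n\sqrt n)$: the norm fluctuates at scale $O(1)$ around a mean of size at most $O(\sqrt n)$. The paper's argument is more elementary and uses only moment bounds on linear forms, but it pays a factor $k_n$. For example, with your bound the estimate $\Norm{\sup_j\norm{x_j}_2}_{L^r}=O_r(\sqrt n\log n)$ used in Lemma~\ref{lem:uniform-background-drift} would improve to $O_r(\sqrt n)$.
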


\begin{proof}
Lemma~\ref{lem:linear-moments-poincare} gives
\[
    \sup_{i\in[n]}
    \Norm{u^\top(x_i-\E x_i) }_{L^{k_n}}
    =
    O(k_n),
    \qquad \norm{u}\le1.
\]
Since
\[
    |u^\top\E x_i|
    \le
    \norm{\E x_i}_2
    =
    O(1)
\]
uniformly in \(i\) by Assumption~\ref{ass:3bounded-means}, the first bound
follows by the triangle inequality. The second bound follows from
Lemma~\ref{lem:column-euclidean-moments} and \(p=O(n)\).
\end{proof}

Combining Lemma~\ref{lem:moments_of_x_i} with
Lemma~\ref{lem:max-polynomial-moment-growth} also gives the logarithmic control of
columnwise maxima that will be used later. For instance, if
\((u_i)_{i\in[n]}\) is deterministic and \(\norm{u_i}\le1\), then, for every
\(k\ge1\),
\[
    \Norm{\max_{i\in[n]} |u_i^\top x_i|  }_{L^k}
    =
    O\big(\max(k,\log(en))\big).
\]

\begin{lemma}[Conditional quadratic-form bound]
\label{lem:conditional-quadratic-form-poincare}
Let
\[
    \Sigma_i:=\E[x_i x_i^\top]
\]
denote the second-moment matrix of \(x_i\). Under
Assumptions~\ref{ass:3bounded-means} and~\ref{ass:poincare-data}, for every
fixed \(k\ge1\), every deterministic symmetric matrix \(A\), and every
\(i\in[n]\),
\[
    \Norm{
        x_i^\top A x_i-\tr(\Sigma_iA)
    }_{L_i^k}
    \le
    O_k(\norm{A}_F).
\]
The same bound holds conditionally whenever \(A\) is independent of \(x_i\).
\end{lemma}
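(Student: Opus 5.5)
The plan is to derive the quadratic-form bound from the Poincaré inequality, using the moment growth of Lipschitz functionals. First reduce to the centered variable. Write $x_i=\mu_i+y_i$ with $\mu_i:=\E x_i$, $\norm{\mu_i}_2=O(1)$ by Assumption~\ref{ass:3bounded-means}, and $y_i$ centered. Then
\[
    x_i^\top A x_i-\tr(\Sigma_iA)
    =\bigl(y_i^\top A y_i-\E[y_i^\top Ay_i]\bigr)+2\mu_i^\top A y_i ,
\]
since $\E[x_i^\top Ax_i]=\tr(\Sigma_iA)$ and $\mu_i^\top A\mu_i$ cancels against its own expectation. The linear term is handled by Lemma~\ref{lem:moments_of_x_i} (or Lemma~\ref{lem:linear-moments-poincare}) applied in the direction $v=A\mu_i/\norm{A\mu_i}_2$. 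This gives $\Norm{\mu_i^\top Ay_i}_{L^k}\le O(k)\norm{A\mu_i}_2\le O(k)\norm{A}_{\op}\norm{\mu_i}_2=O_k(\norm{A}_F)$.

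For the centered quadratic form $Q(y)=y^\top Ay$, the Poincaré inequality of Assumption~\ref{ass:poincare-data} implies the standard $L^k$ moment bound. This is presumably among the appendix facts underlying Lemma~\ref{lem:linear-moments-poincare}, or it can be obtained by iterating Poincaré on $|f|^{k/2}$. The bound reads
\[
    \Norm{f(y_i)-\E f(y_i)}_{L^k}\le C k\sqrt{C_P}\,\Norm{\norm{\nabla f(y_i)}_2}_{L^k}.
\]
Apply it to $f=Q$, with $\nabla Q(y)=2Ay$. Then estimate
\[
    \Norm{\norm{Ay_i}_2}_{L^k}\le \Norm{\norm{Ay_i}_2}_{L^2}+\Norm{\norm{Ay_i}_2-\E\norm{Ay_i}_2}_{L^k}.
\]
The first term equals $\sqrt{\tr(A\Cov(x_i)A)}\le\sqrt{C_P}\,\norm{A}_F$, because Poincaré applied to linear functions gives $\Cov(x_i)\preceq C_PI_p$. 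The second term is the fluctuation of the $\norm{A}_{\op}$-Lipschitz function $y\mapsto\norm{Ay}_2$, so it is $O(k\norm{A}_{\op})$ by the same moment bound. Hence $\Norm{Q(y_i)-\E Q(y_i)}_{L^k}=O_k(\norm{A}_F+\norm{A}_{\op})=O_k(\norm{A}_F)$.

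For the conditional version, suppose $A$ depends only on the other columns and is therefore independent of $x_i$. Condition on them. The law of $x_i$ is unchanged by this conditioning, and the bound above holds for each fixed $A$ with constants depending only on $k$, $C_P$ and $\sup_i\norm{\E x_i}_2$. Applying it pointwise in the conditioning variables gives the $L_i^k$ statement.

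The main obstacle is the moment inequality for non-Lipschitz $f$. If it is not already available in the appendix, I would prove it by applying Poincaré to $g=|f-\E f|^{k/2}$ and inducting on $k$ with Hölder's inequality. This only yields constants $O_k$, which is all the lemma requires.
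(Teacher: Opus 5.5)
Your proof is correct and follows the paper's argument: the same split $x_i=\mu_i+y_i$, the linear term $2\mu_i^\top Ay_i$ handled by the linear moment bound, and the $L^k$-Poincar\'e inequality applied to $y\mapsto y^\top Ay$ with gradient $2Ay$. The only difference is how you bound $\Norm{\norm{Ay_i}_2}_{L^k}$: the paper uses an SVD of $A$ with Minkowski and linear moments, whereas you combine $\E\norm{Ay_i}_2^2=\tr(A\Cov(x_i)A)\le C_P\norm{A}_F^2$ with Lipschitz concentration of $y\mapsto\norm{Ay}_2$ (applied to a $C^1$ surrogate such as $\sqrt{\norm{Ay}_2^2+\varepsilon^2}$), which is equally valid and gives $O(\norm{A}_F+k\norm{A}_{\op})$ instead of $O(k\norm{A}_F)$.
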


\begin{proof}
It is enough to prove the result for deterministic \(A\), since the conditional
version follows by freezing the variables on which \(A\) depends. Write
\(x_i=m_i+y_i\), where \(m_i=\E x_i\) and \(\E y_i=0\). Then
\[
    x_i^\top A x_i-\tr(\Sigma_iA)
    =
    \big(y_i^\top A y_i-\E[y_i^\top A y_i]\big)
    +
    2m_i^\top A y_i .
\]
The linear term satisfies
\[
    \Norm{m_i^\top A y_i}_{L^k}
    \le
    O_k(\norm{A m_i}_2)
    \le
    O_k(\norm{A}_{\op})
    \le
    O_k(\norm{A}_F),
\]
by Lemma~\ref{lem:linear-moments-poincare} and
Assumption~\ref{ass:3bounded-means}. For the centered quadratic term, we use
the standard \(L^k\)-form of the Poincar\'e inequality, obtained by applying
the Poincar\'e inequality to powers of the centered function:
\[
    \Norm{F-\E F}_{L^k}
    \le
    O_k\!\left(
        \Norm{\norm{\nabla F}_2}_{L^k}
    \right).
\]
Applying this to \(F(y)=y^\top A y\), whose gradient is \(2Ay\), gives
\[
    \Norm{
        y_i^\top A y_i-\E[y_i^\top A y_i]
    }_{L^k}
    \le
    O_k\!\left(
        \Norm{\norm{Ay_i}_2}_{L^k}
    \right).
\]
If \(A=\sum_a \sigma_a v_aw_a^\top\) is a singular-value decomposition, then
Minkowski's inequality and Lemma~\ref{lem:linear-moments-poincare} yield
\[
    \Norm{\norm{Ay_i}_2}_{L^k}
    \le
    \left(
        \sum_a \sigma_a^2
        \Norm{w_a^\top y_i}_{L^k}^2
    \right)^{1/2}
    \le
    O_k(\norm{A}_F).
\]
Combining the two estimates proves the lemma.
\end{proof}

Let us finally mention, as a direct consequence of~Lemma~\ref{lem:operator-norm-poincare-matrix} that will only be used once to set Lemma~\ref{lem:deterministic-gamma-concentration}:
\begin{lemma}\label{lem:op_norm_X}
    Under Assumptions~\ref{ass:1dim},\ref{ass:3bounded-means} and~\ref{ass:poincare-data}:
    \begin{align*}
        \Norm{|X|_{\op}}=O(\sqrt n).
    \end{align*}
\end{lemma}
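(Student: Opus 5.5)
The statement is Lemma~\ref{lem:op_norm_X}, read as an \(L^k\) bound (for fixed \(k\), say \(k=1\) or \(2\)) on \(\norm{X}_{\op}\). The plan is to split off the mean and apply Lemma~\ref{lem:operator-norm-poincare-matrix} to the centered part. Write \(X=M+Y\), where \(M=(\E x_1,\ldots,\E x_n)\) and \(Y=(y_1,\ldots,y_n)\) has independent centered columns \(y_i=x_i-\E x_i\). Each \(y_i\) inherits the Poincar\'e inequality of Assumption~\ref{ass:poincare-data} with the same constant \(C_P\), since translation does not change gradients. By the triangle inequality,
\[
    \norm{X}_{\op}\le \norm{M}_{\op}+\norm{Y}_{\op}.
\]

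\textbf{Mean part.} Using Assumption~\ref{ass:3bounded-means},
\[
    \norm{M}_{\op}\le\norm{M}_F=\Big(\sum_{i=1}^n\norm{\E x_i}_2^2\Big)^{1/2}=O(\sqrt n).
\]
This term is deterministic, so it contributes \(O(\sqrt n)\) to every \(L^k\) norm.

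\textbf{Centered part.} I would invoke Lemma~\ref{lem:operator-norm-poincare-matrix}, which should state that a \(p\times n\) matrix with independent columns satisfying a uniform Poincar\'e inequality has \(\Norm{\norm{Y}_{\op}}_{L^k}=O_k(\sqrt{p}+\sqrt n)\). With Assumption~\ref{ass:1dim}, \(p=O(n)\), this becomes \(O(\sqrt n)\).

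If that lemma only controls fluctuations, I would reconstruct the bound in two steps.
\begin{enumerate}
    \item \emph{Fluctuations.} The map \(Y\mapsto\norm{Y}_{\op}\) is \(1\)-Lipschitz for the Frobenius norm. The tensorized Poincar\'e inequality (Lemma~\ref{lem:concentration_X}, with its \(L^k\) version) then gives \(\Norm{\norm{Y}_{\op}-\E\norm{Y}_{\op}}_{L^k}=O_k(1)\).
    \item \emph{Mean.} To show \(\E\norm{Y}_{\op}=O(\sqrt n)\), I would use an \(\varepsilon\)-net argument on \(S^{p-1}\times S^{n-1}\). For fixed unit vectors \(a,b\), the bilinear form \(a^\top Y b=\sum_i b_i\,a^\top y_i\) is a \(1\)-Lipschitz function of \(Y\), hence has exponential tails at scale \(O(1)\) by Poincar\'e. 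A union bound over nets of cardinality \(e^{O(p+n)}\) then yields \(\norm{Y}_{\op}=O(\sqrt{p+n})\) in \(L^k\).
\end{enumerate}

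The main obstacle is this net step. Poincar\'e gives only sub-exponential tails, not sub-Gaussian ones, so the union bound at level \(t\sim\sqrt{n}\) must be checked. The tail is \(e^{-ct/\sqrt{C_P}}\), while the net has size \(e^{C(p+n)}\). Taking \(t\sim\sqrt{n}\) is therefore insufficient a priori, and a pure net argument would only give \(O(n)\).

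To get around this I would rely on Lemma~\ref{lem:operator-norm-poincare-matrix}, presumably proved by the standard moment method or by using that Poincar\'e measures satisfy Lipschitz concentration \(e^{-ct}\) uniformly in dimension. An alternative is to bound \(\E\norm{Y}_{\op}^2\le\E\norm{YY^\top}_{\op}\) via the trace of a high power, using Lemma~\ref{lem:conditional-quadratic-form-poincare} for the column quadratic forms.

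Combining the mean and centered parts gives \(\Norm{\norm{X}_{\op}}=O(\sqrt n)\).
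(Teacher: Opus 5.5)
Your proposal is correct and matches the paper, which obtains this lemma as a direct consequence of Lemma~\ref{lem:operator-norm-poincare-matrix}; your mean/centered split is redundant, since that lemma already allows bounded nonzero means, but it reproduces the first step of that lemma's proof. The net obstacle you flag is real, and the paper's proof of that lemma avoids it: it imports the constant-probability bound $\PP(\norm{YY^\top}_{\op}>C(p+n))\le 1/4$ (Lemma~\ref{lem:empirical-covariance-input}, from Adamczak--Litvak--Pajor--Tomczak-Jaegermann) and upgrades it to $\E\norm{Y}_{\op}=O(\sqrt{p+n})$ via the $O_k(1)$ Lipschitz concentration of $\norm{Y}_{\op}$ from your step~1.
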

\subsection{Control of the minimizer and its leave-one-out displacement}
\label{sec:minimizer_control}

We start with a deterministic stability estimate for strongly convex functions.

\begin{lemma}[Stability of minimizers]
\label{lem:stability-direct}
Let \(\phi:\R^p\to\R\) be differentiable and \(\kappa\)-strongly convex, and let
\[
    \mu_\phi:=\argmin_{x\in\R^p}\phi(x).
\]
Then, for every \(x\in\R^p\),
\[
    \norm{\mu_\phi-x}_2
    \le
    \frac1\kappa\norm{\nabla\phi(x)}_2.
\]
\end{lemma}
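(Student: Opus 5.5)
The plan is to compare \(\phi\) with its strongly convex lower bound at the minimizer, combined with first-order optimality. Since \(\phi\) is differentiable, convex and minimized at \(\mu_\phi\), the first-order condition gives \(\nabla\phi(\mu_\phi)=0\). The key ingredient is the strong monotonicity of the gradient of a \(\kappa\)-strongly convex differentiable function: for all \(x,y\in\R^p\),
\[
    \ip{\nabla\phi(x)-\nabla\phi(y)}{x-y}\ge\kappa\norm{x-y}_2^2 .
\]
I would derive it by writing the strong convexity inequality \(\phi(y)\ge\phi(x)+\ip{\nabla\phi(x)}{y-x}+\frac\kappa2\norm{y-x}_2^2\) twice with the roles of \(x\) and \(y\) exchanged, and adding the two inequalities. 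If strong convexity is taken in its definitional form (convexity of \(\phi-\frac\kappa2\norm{\cdot}_2^2\)), this first-order form follows from the standard gradient inequality for the convex differentiable function \(\phi-\frac\kappa2\norm{\cdot}_2^2\).

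Next I apply strong monotonicity with \(y=\mu_\phi\) and use \(\nabla\phi(\mu_\phi)=0\). This gives
\[
    \kappa\norm{x-\mu_\phi}_2^2\le\ip{\nabla\phi(x)}{x-\mu_\phi}\le\norm{\nabla\phi(x)}_2\norm{x-\mu_\phi}_2
\]
by Cauchy--Schwarz. If \(x=\mu_\phi\) the claim is trivial. Otherwise, dividing by \(\kappa\norm{x-\mu_\phi}_2>0\) yields \(\norm{\mu_\phi-x}_2\le\kappa^{-1}\norm{\nabla\phi(x)}_2\).

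There is no real obstacle here. The only point needing care is making sure the minimizer exists and is unique, so that \(\mu_\phi\) is well defined. This follows from strong convexity: \(\phi\) is coercive, since \(\phi(y)\ge\phi(0)+\ip{\nabla\phi(0)}{y}+\frac\kappa2\norm{y}_2^2\to\infty\), and it is continuous, so a minimizer exists. Uniqueness then follows from the displayed inequality itself, since any two critical points \(x,y\) satisfy \(\kappa\norm{x-y}_2^2\le0\).
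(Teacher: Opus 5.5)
Your proposal is correct and follows essentially the same route as the paper: strong monotonicity of $\nabla\phi$ applied with $y=\mu_\phi$ and $\nabla\phi(\mu_\phi)=0$, then Cauchy--Schwarz and cancellation of one factor $\norm{x-\mu_\phi}_2$. Your additional derivation of strong monotonicity from the first-order strong convexity inequality, and your existence and uniqueness argument for $\mu_\phi$, are sound details that the paper leaves implicit.
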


\begin{proof}
Strong convexity implies the \(\kappa\)-strong monotonicity of \(\nabla\phi\):
\[
    \ip{\nabla\phi(x)-\nabla\phi(y)}{x-y}
    \ge
    \kappa\norm{x-y}_2^2.
\]
Taking \(y=\mu_\phi\) and using \(\nabla\phi(\mu_\phi)=0\), we obtain
\[
    \kappa\norm{x-\mu_\phi}^2
    \le
    \ip{\nabla\phi(x)}{x-\mu_\phi}
    \le
    \norm{\nabla\phi(x)}_2\,\norm{x-\mu_\phi}_2.
\]
The claim follows by canceling one factor \(\norm{x-\mu_\phi}_2\), the case
\(x=\mu_\phi\) being immediate.
\end{proof}

\begin{lemma}[Empirical gradient at the origin]
\label{lem:gradient-zero}
Under Assumptions~\ref{ass:1dim}--\ref{ass:poincare-data}, for any
sequence \(k=(k_n)_{n\in\N}\),
\[
    \Norm{\norm{\nabla\Psi_X(0)}_2  }_{L^{k_n}}
    =
    O(k_n).
\]
\end{lemma}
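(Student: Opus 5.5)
We write the gradient explicitly as
\[
    \nabla\Psi_X(0)=\frac{L'(0)}{n}\sum_{i=1}^n x_i+\nabla\rho(0)
    =\frac{L'(0)}{n}X\mathbf 1+\nabla\rho(0),
\]
where \(\mathbf 1\in\R^n\) is the all-ones vector. By the triangle inequality in \(L^{k_n}\) and Assumption~\ref{ass:2origin}, the term \(\norm{\nabla\rho(0)}_2=O(1)\) is deterministic and harmless, and \(|L'(0)|=O(1)\). It therefore suffices to show
\[
    \Norm{\tfrac1n\norm{X\mathbf 1}_2}_{L^{k_n}}=O(k_n).
\]
Set \(S:=\frac1n\sum_i x_i\) and split \(S=\E S+(S-\E S)\). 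By Assumption~\ref{ass:3bounded-means},
\[
    \norm{\E S}_2\le\sup_i\norm{\E x_i}_2=O(1).
\]

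For the centered part, the key observation is that \(S-\E S=\frac1{\sqrt n}\,\bar y\), where \(\bar y:=\frac1{\sqrt n}\sum_i(x_i-\E x_i)\). This \(\bar y\) is a \(1\)-Lipschitz linear image of the independent columns, so by tensorization (Lemma~\ref{lem:concentration_X}, or rather its product-measure Poincar\'e form, Proposition~\ref{prop:tensorization}) it satisfies a Poincar\'e inequality with constant \(C_P\). Indeed, for \(F(X)=f(\bar y)\) one has \(\norm{\mathbb D_i F}^*_2=n^{-1/2}\norm{\nabla f}_2\), and summing over \(i\) gives \(\Var f(\bar y)\le C_P\E\norm{\nabla f(\bar y)}_2^2\). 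Then Lemma~\ref{lem:column-euclidean-moments}, which bounds Euclidean-norm moments of a centered Poincar\'e vector in \(\R^p\) by \(O(k_n\sqrt p)\), applies verbatim to \(\bar y\). Using \(p=O(n)\) from Assumption~\ref{ass:1dim}, we get
\[
    \Norm{\norm{\bar y}_2}_{L^{k_n}}=O(k_n\sqrt n),
    \qquad\text{hence}\qquad
    \Norm{\norm{S-\E S}_2}_{L^{k_n}}=O(k_n).
\]

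If one prefers not to rely on Lemma~\ref{lem:column-euclidean-moments} for a non-column vector, the moment bound can be obtained directly. First, \(\E\norm{\bar y}_2^2=\sum_a\Var(e_a^\top\bar y)\le C_Pp\). Second, the function \(\norm{\bar y}_2\) is \(1\)-Lipschitz, so the \(L^k\)-Poincar\'e inequality quoted in the proof of Lemma~\ref{lem:conditional-quadratic-form-poincare} gives \(\Norm{\norm{\bar y}_2-\E\norm{\bar y}_2}_{L^{k}}=O(k)\). Together these give \(O(\sqrt p+k_n)\le O(k_n\sqrt n)\), which suffices.

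The only point needing care is that the \(L^k\)-Poincar\'e bound must have constant linear in \(k\) uniformly over the sequence \(k_n\), rather than \(O_k\). This is the standard exponential-concentration consequence of Poincar\'e for Lipschitz functions (Gromov--Milman). It is precisely what Lemma~\ref{lem:linear-moments-poincare} and Lemma~\ref{lem:column-euclidean-moments} already encode, so we invoke them rather than reprove it. The remaining steps are routine.
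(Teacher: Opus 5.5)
Your argument is correct and follows the paper's proof. The paper likewise isolates the $O(1)$ deterministic terms and bounds $\sup_{\norm{u}_2\le1}\Norm{u^\top S_n}_{L^{k_n}}=O(k_n n^{-1/2})$ via tensorization and Lemma~\ref{lem:linear-moments-poincare}, which is what your Poincar\'e inequality for $\bar y$ encodes; it then passes to the Euclidean norm with Lemma~\ref{lem:column-euclidean-moments} and $p=O(n)$. One small correction: Lemma~\ref{lem:column-euclidean-moments} alone only compares Euclidean and linear moments at cost $\sqrt p$, so the $O(k_n\sqrt p)$ bound comes from combining it with Lemma~\ref{lem:linear-moments-poincare}, as you acknowledge at the end. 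Your optional Lipschitz-concentration variant is also valid and gives the sharper bound $O(1+k_n/\sqrt n)$.
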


\begin{proof}
We have
\[
    \nabla\Psi_X(0)
    =
    \frac{L'(0)}{n}\sum_{i=1}^n x_i
    +
    \nabla\rho(0).
\]
Thus
\[
    \Norm{\norm{\nabla\Psi_X(0)}_2  }_{L^{k_n}}
    \le
    \norm{\nabla\rho(0)}_2
    +
    |L'(0)|
    \left|
        \frac1n\sum_{i=1}^n \E x_i
    \right|_2
    +
    |L'(0)|
    \left\|
        \norm{\frac1n\sum_{i=1}^n(x_i-\E x_i)}_2
    \right\|_{L^{k_n}}.
\]
The first two terms are \(O(1)\) by Assumptions~\ref{ass:2origin} and
\ref{ass:3bounded-means}.

Set
\[
    S_n:=\frac1n\sum_{i=1}^n(x_i-\E x_i).
\]
For each deterministic \(u\in\R^p\) with \(\norm{u}\le1\), the map
\[
    X\mapsto u^\top S_n
\]
is a centered linear functional of \(X\) with Euclidean coefficient norm
at most \(n^{-1/2}\). By Lemma~\ref{lem:concentration_X} and
Lemma~\ref{lem:linear-moments-poincare},
\[
    \sup_{\norm{u}_2\le1}\Norm{u^\top S_n  }_{L^{k_n}}
    =
    O(k_nn^{-1/2}).
\]
Lemma~\ref{lem:column-euclidean-moments} and \(p=O(n)\) then give
\[
    \Norm{\norm{S_n}_2 }_{L^{k_n}}
    \le
    \sqrt p \sup_{\norm{u}_2\le1}\Norm{u^\top S_n  }_{L^{k_n}}
    =
    O(k_n).
\]
Since \(|L'(0)|=O(1)\), the claim follows.
\end{proof}

\begin{theorem}[Moment bound for the minimizer]
\label{thm:direct-hat-bound}
Under Assumptions~\ref{ass:1dim}--\ref{ass:5regularity}, for every sequence of
positive integers \(k_n\geq 1\)
\[
    \left\Vert |\hat\theta|_2\right\Vert_{L^{k_n}} + \sup_{i\in[n]}
    \Norm{\norm{\hat\theta_{-i}}  }_{L^{k_n}}
    =
    O(k_n).
\]
\end{theorem}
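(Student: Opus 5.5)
The plan is to apply Lemma~\ref{lem:stability-direct} with base point \(x=0\) to the strongly convex objectives \(\Psi_X\) and \(\Psi_{X_{-i}}\), and then bound the gradient at the origin using Lemma~\ref{lem:gradient-zero}.

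\textbf{Strong convexity.} By Assumption~\ref{ass:5regularity}, \(L''\ge\kappa_L>0\), so each \(\theta\mapsto L(x_j^\top\theta)\) is convex. Since \(\rho\) is \(\kappa\)-strongly convex, both \(\Psi_X\) and \(\Psi_{X_{-i}}\) are \(\kappa\)-strongly convex. We do not use the curvature of the loss; only \(\rho\) provides the constant \(\kappa\), which is uniform in \(X\) and \(i\).

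\textbf{Full minimizer.} Lemma~\ref{lem:stability-direct} with \(x=0\) gives, deterministically,
\[
    \norm{\hat\theta}_2\le \kappa^{-1}\norm{\nabla\Psi_X(0)}_2 .
\]
Taking \(L^{k_n}\) norms and using Lemma~\ref{lem:gradient-zero} with \(\kappa^{-1}=O(1)\) yields \(\Norm{\norm{\hat\theta}_2}_{L^{k_n}}=O(k_n)\).

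\textbf{Leave-one-out minimizers.} Similarly, \(\norm{\hat\theta_{-i}}_2\le\kappa^{-1}\norm{\nabla\Psi_{X_{-i}}(0)}_2\), where
\[
    \nabla\Psi_{X_{-i}}(0)=\frac{L'(0)}{n}\sum_{j\ne i}x_j+\nabla\rho(0)
    =\nabla\Psi_X(0)-\frac{L'(0)}{n}x_i .
\]
By Minkowski,
\[
    \Norm{\norm{\nabla\Psi_{X_{-i}}(0)}_2}_{L^{k_n}}
    \le \Norm{\norm{\nabla\Psi_X(0)}_2}_{L^{k_n}}+\frac{|L'(0)|}{n}\Norm{\norm{x_i}_2}_{L^{k_n}} .
\]
The first term is \(O(k_n)\) by Lemma~\ref{lem:gradient-zero}. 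The second term is \(O(k_n\sqrt n/n)=O(k_n)\) by Assumption~\ref{ass:2origin} and the second bound of Lemma~\ref{lem:moments_of_x_i}, uniformly in \(i\). Taking the supremum over \(i\) and adding the bound for the full minimizer gives the claim.

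\textbf{Uniformity.} The only point needing care is that every constant is uniform in \(i\) and \(n\). For the leave-one-out gradient one could instead rerun the proof of Lemma~\ref{lem:gradient-zero} with the sum over \(j\ne i\), since that argument does not depend on which index is removed. The perturbation argument above avoids this and is uniform because Lemma~\ref{lem:moments_of_x_i} is stated with \(\sup_{i\in[n]}\).
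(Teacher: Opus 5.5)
Your proposal is correct and follows the paper's proof: you apply Lemma~\ref{lem:stability-direct} at \(x=0\) and then bound the gradient with Lemma~\ref{lem:gradient-zero}. The only difference is in the leave-one-out objective. You control \(\nabla\Psi_{X_{-i}}(0)\) as a perturbation of \(\nabla\Psi_X(0)\), and the extra term is in fact \(O(k_n/\sqrt n)\); the paper instead notes that the same argument applies verbatim to the sum over \(j\ne i\), uniformly in \(i\). Both routes are valid.
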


\begin{proof}
By Assumption~\ref{ass:5regularity}, the objective \(\Psi_X\) is
\(\kappa\)-strongly convex. Lemma~\ref{lem:stability-direct}, applied to
\(\phi=\Psi_X\) and \(x=0\), yields
\[
    \vert\hat\theta\vert_2
    \le
    \kappa^{-1}\norm{\nabla\Psi_X(0)}_2.
\]
Taking \(L^{k_n}\)-norms and applying Lemma~\ref{lem:gradient-zero} proves the
claim.
The bound for \(\hat\theta_{-i}\) is obtained in the same way, applied to
\[
    \nabla\Psi_{X_{-i}}(0)
    =
    \frac{L'(0)}{n}\sum_{j\ne i}x_j+\nabla\rho(0),
\]
which satisfies the same bound as \(\nabla\Psi_X(0)\), uniformly in \(i\).
\end{proof}

We next control the leave-one-out displacement
\[
    \delta_i\hat\theta
    :=
    \hat\theta-\hat\theta_{-i},
    \qquad i\in[n].
\]
We require two preliminary results. For \(i\in[n]\), define the test score
\[
    s_i^-:=L'(x_i^\top\hat\theta_{-i}).
\]
\begin{lemma}[Test score control]
\label{lem:test-score-control}
Under Assumptions~\ref{ass:1dim}--\ref{ass:5regularity}, for every sequence of
positive integers \(k_n\),
\[
    \sup_{i\in[n]}
    \Norm{s_i^-}_{L^{k_n}}
    =
    O(k_n^2).
\]
\end{lemma}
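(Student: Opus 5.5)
The bound on \(s_i^-\) will come from the Lipschitz control of \(L'\) together with the independence of \(x_i\) from \(\hat\theta_{-i}\). By Assumption~\ref{ass:5regularity}, \(L'\) is \(\norm{L''}_\infty\)-Lipschitz, so
\[
    |s_i^-|
    \le
    |L'(0)|+\norm{L''}_\infty\,|x_i^\top\hat\theta_{-i}|.
\]
Assumption~\ref{ass:2origin} gives \(|L'(0)|=O(1)\), so it remains to show that \(\sup_i\Norm{x_i^\top\hat\theta_{-i}}_{L^{k_n}}=O(k_n^2)\).

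The key observation is that \(\hat\theta_{-i}\) is a measurable function of \(X_{-i}\) only, and is therefore independent of \(x_i\). On the event \(\hat\theta_{-i}\neq0\) we write \(v:=\hat\theta_{-i}/\norm{\hat\theta_{-i}}_2\), which is a unit vector independent of \(x_i\). We condition on \(X_{-i}\) and apply the first bound of Lemma~\ref{lem:moments_of_x_i} with \(u=v\) frozen. This is legitimate because the constant in that lemma depends only on \(C_P\) and on \(\sup_i\norm{\E x_i}_2\), both uniform over unit vectors (it comes from Lemma~\ref{lem:linear-moments-poincare} plus the triangle inequality). We obtain
\[
    \E\!\left[|x_i^\top\hat\theta_{-i}|^{k_n}\,\middle|\,X_{-i}\right]
    \le
    (Ck_n)^{k_n}\norm{\hat\theta_{-i}}_2^{k_n}
\]
with \(C\) independent of \(i,n\) and of \(X_{-i}\). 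The case \(\hat\theta_{-i}=0\) is trivial.

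Taking expectations and \(k_n\)-th roots then gives
\[
    \Norm{x_i^\top\hat\theta_{-i}}_{L^{k_n}}
    \le
    Ck_n\,\Norm{\norm{\hat\theta_{-i}}_2}_{L^{k_n}}.
\]
Theorem~\ref{thm:direct-hat-bound} bounds \(\sup_i\Norm{\norm{\hat\theta_{-i}}_2}_{L^{k_n}}\) by \(O(k_n)\), so the product is \(O(k_n^2)\) uniformly in \(i\). Combined with the Lipschitz bound and Minkowski's inequality, this gives the claim.

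The only delicate point is the conditioning step: the implicit constant in Lemma~\ref{lem:moments_of_x_i} must be uniform over all unit directions and valid for every integer \(k_n\), including \(k_n\) growing with \(n\). This holds because the \(L^k\) moment bound for linear functionals under the Poincaré inequality has the form \(O(k)\) with a universal multiple of \(\sqrt{C_P}\). If one prefers to avoid conditioning, an alternative is to bound \(|x_i^\top\hat\theta_{-i}|\le\norm{x_i}_2\norm{\hat\theta_{-i}}_2\), but that loses a factor \(\sqrt n\), so the conditional argument is essential.
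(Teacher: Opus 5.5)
Your proposal is correct and follows the same route as the paper: bound \(|s_i^-|\) by \(|L'(0)|+\norm{L''}_\infty|x_i^\top\hat\theta_{-i}|\), condition on \(X_{-i}\) so that Lemma~\ref{lem:moments_of_x_i} applies to the frozen direction \(\hat\theta_{-i}\), and conclude with Theorem~\ref{thm:direct-hat-bound}. Your remark that the constant is uniform over directions and over growing \(k_n\), via Lemma~\ref{lem:linear-moments-poincare}, makes explicit a point the paper's proof leaves implicit.
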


\begin{proof}
Conditionally on \(X_{-i}\), the vector \(\hat\theta_{-i}\) is deterministic
and independent of \(x_i\). Lemma~\ref{lem:moments_of_x_i} gives
\[
    \Norm{x_i^\top\hat\theta_{-i}}_{L_i^{k_n}}
    \le
    O(k_n)|\hat\theta_{-i}|_2 .
\]
Taking the \(L^{k_n}\)-norm over \(X_{-i}\) and using
Theorem~\ref{thm:direct-hat-bound}, we obtain
\[
    \Norm{x_i^\top\hat\theta_{-i}}_{L^{k_n}}
    =
    O(k_n^2).
\]
The claim then follows from Assumptions~\ref{ass:2origin} and~\ref{ass:5regularity} that provide $|L'(t)|
    \le
    |L'(0)|+\norm{L''}_\infty |t|
    \le
    O(1)(1+|t|)$.
\end{proof}
\begin{lemma}[Strong monotonicity with prediction curvature]
\label{lem:curvature_bound_t_hat_theta}
Under Assumption~\ref{ass:5regularity}, for every \(t\in\R^p\),
\[
    \frac{\kappa_L}{n}
    \norm{X^\top(\hat\theta-t)}_2^2
    +
    \kappa\norm{\hat\theta-t}_2^2
    \le
    -\nabla\Psi_X(t)^\top(\hat\theta-t).
\]
\end{lemma}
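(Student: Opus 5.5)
The plan is to split $\Psi_X=\Psi_1+\rho$, with $\Psi_1(\theta):=\frac1n\sum_{i=1}^n L(x_i^\top\theta)$, and to establish a strong-monotonicity inequality for each piece separately. After that, we evaluate at the pair $(\hat\theta,t)$ and use the first-order optimality condition $\nabla\Psi_X(\hat\theta)=0$.

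For the loss part, fix $i$ and put $a:=x_i^\top\hat\theta$ and $b:=x_i^\top t$. By the mean value theorem,
\[
    \big(L'(a)-L'(b)\big)(a-b)=L''(\xi)(a-b)^2\ge\kappa_L(a-b)^2
\]
for some $\xi$ between $a$ and $b$, using $L''\ge\kappa_L$ from Assumption~\ref{ass:5regularity}. The gradient of the loss part is $\nabla\Psi_1(\theta)=\frac1n\sum_i L'(x_i^\top\theta)x_i$, so
\[
    \ip{\nabla\Psi_1(\hat\theta)-\nabla\Psi_1(t)}{\hat\theta-t}
    =\frac1n\sum_{i=1}^n\big(L'(x_i^\top\hat\theta)-L'(x_i^\top t)\big)\,x_i^\top(\hat\theta-t).
\]
Applying the scalar inequality termwise bounds this below by $\frac{\kappa_L}{n}\sum_i\big(x_i^\top(\hat\theta-t)\big)^2=\frac{\kappa_L}{n}\norm{X^\top(\hat\theta-t)}_2^2$.

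For the regularizer, write
\[
    \nabla\rho(\hat\theta)-\nabla\rho(t)=\Big(\int_0^1\nabla^2\rho\big(t+s(\hat\theta-t)\big)\,ds\Big)(\hat\theta-t).
\]
Since $\nabla^2\rho\succeq\kappa I_p$, this gives $\ip{\nabla\rho(\hat\theta)-\nabla\rho(t)}{\hat\theta-t}\ge\kappa\norm{\hat\theta-t}_2^2$. This is the same strong monotonicity already used in the proof of Lemma~\ref{lem:stability-direct}.

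Adding the two inequalities yields
\[
    \frac{\kappa_L}{n}\norm{X^\top(\hat\theta-t)}_2^2+\kappa\norm{\hat\theta-t}_2^2
    \le\ip{\nabla\Psi_X(\hat\theta)-\nabla\Psi_X(t)}{\hat\theta-t}.
\]
Existence and uniqueness of $\hat\theta$ follow from strong convexity of $\Psi_X$. It is a minimizer of a differentiable function, so $\nabla\Psi_X(\hat\theta)=0$, and the right-hand side reduces to $-\nabla\Psi_X(t)^\top(\hat\theta-t)$, which is the claim.

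I do not expect any real obstacle. The only point needing care is that the loss curvature enters through the projections $x_i^\top(\hat\theta-t)$, not through $\norm{\hat\theta-t}_2$. Keeping it in the form $\norm{X^\top(\hat\theta-t)}_2^2$ rather than dropping it is exactly what gives the lemma its additional strength over Lemma~\ref{lem:stability-direct}.
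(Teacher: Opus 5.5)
Your proof is correct and follows the paper's argument essentially verbatim. The paper also bounds $\langle\nabla\Psi_X(\theta)-\nabla\Psi_X(\theta'),\theta-\theta'\rangle$ from below by splitting it into the loss terms (using $L''\ge\kappa_L$) and the regularizer term (using $\kappa$-strong convexity), then specializes to $\theta=\hat\theta$, $\theta'=t$ and uses $\nabla\Psi_X(\hat\theta)=0$; your mean-value and integrated-Hessian steps only make explicit the two monotonicity inequalities the paper states directly.
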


\begin{proof}
For all \(\theta,\theta'\in\R^p\),
\[
\begin{aligned}
    &\big\langle
        \nabla\Psi_X(\theta)-\nabla\Psi_X(\theta'),
        \theta-\theta'
    \big\rangle                                      \\
    &\qquad =
    \frac1n\sum_{j=1}^n
    \big(
        L'(x_j^\top\theta)-L'(x_j^\top\theta')
    \big)
    x_j^\top(\theta-\theta')
    +
    \big\langle
        \nabla\rho(\theta)-\nabla\rho(\theta'),
        \theta-\theta'
    \big\rangle .
\end{aligned}
\]
Since \(L''\ge\kappa_L\),
\[
    \big(
        L'(x_j^\top\theta)-L'(x_j^\top\theta')
    \big)
    x_j^\top(\theta-\theta')
    \ge
    \kappa_L
    \big(x_j^\top(\theta-\theta')\big)^2.
\]
Since \(\rho\) is \(\kappa\)-strongly convex,
\[
    \big\langle
        \nabla\rho(\theta)-\nabla\rho(\theta'),
        \theta-\theta'
    \big\rangle
    \ge
    \kappa\norm{\theta-\theta'}_2^2 .
\]
As a consequence, for all
\(\theta,\theta'\in\R^p\),
\[
    \big\langle
        \nabla\Psi_X(\theta)-\nabla\Psi_X(\theta'),
        \theta-\theta'
    \big\rangle
    \ge
    \frac{\kappa_L}{n}
    \norm{X^\top(\theta-\theta')}_2^2
    +
    \kappa\norm{\theta-\theta'}_2^2 .
\]
Taking \(\theta=\hat\theta\),
\(\theta'=t\), and using \(\nabla\Psi_X(\hat\theta)=0\), gives the result.
\end{proof}

\begin{lemma}[Prediction stability of the leave-one-out displacement]
\label{lem:loo-prediction-stability}
Under Assumptions~\ref{ass:1dim}--\ref{ass:5regularity}, for every sequence of
positive integers \(k_n\),
\[
    \sup_{i\in[n]}
    \Norm{\norm{\delta_i\hat\theta}_2}_{L^{k_n}}
    =
    O\!\left(\frac{k_n^2}{\sqrt n}\right),
\]
and
\[
    \sup_{i\in[n]}
    \Norm{\norm{X^\top\delta_i\hat\theta}_2}_{L^{k_n}}
    +
    \sup_{i\in[n]}
    \Norm{\norm{X_{-i}^\top\delta_i\hat\theta}_2}_{L^{k_n}}
    =
    O(k_n^2).
\]
\end{lemma}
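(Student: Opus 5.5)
The plan is to apply Lemma~\ref{lem:curvature_bound_t_hat_theta} with the test point \(t=\hat\theta_{-i}\). Since \(\nabla\Psi_{X_{-i}}(\hat\theta_{-i})=0\), the gradient of the full objective at this point reduces to the contribution of the single removed loss term:
\[
    \nabla\Psi_X(\hat\theta_{-i})
    =
    \nabla\Psi_{X_{-i}}(\hat\theta_{-i})+\frac1n L'(x_i^\top\hat\theta_{-i})\,x_i
    =
    \frac{s_i^-}{n}\,x_i .
\]
The lemma then yields the deterministic inequality
\[
    \frac{\kappa_L}{n}\norm{X^\top\delta_i\hat\theta}_2^2
    +\kappa\norm{\delta_i\hat\theta}_2^2
    \le
    -\frac{s_i^-}{n}\,x_i^\top\delta_i\hat\theta
    \le
    \frac{|s_i^-|}{n}\,\norm{x_i^\top\delta_i\hat\theta}.
\]
This is the central estimate, and the two claimed bounds will be extracted from it in two different ways.

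For the prediction bound, I bound \(|x_i^\top\delta_i\hat\theta|\le\norm{X^\top\delta_i\hat\theta}_2\), because this is one coordinate of \(X^\top\delta_i\hat\theta\). Dropping the \(\kappa\)-term and cancelling one factor of \(\norm{X^\top\delta_i\hat\theta}_2\) gives \(\norm{X^\top\delta_i\hat\theta}_2\le \kappa_L^{-1}|s_i^-|\) almost surely. Lemma~\ref{lem:test-score-control} then gives \(O(k_n^2)\) in \(L^{k_n}\), uniformly in \(i\). The quantity \(\norm{X_{-i}^\top\delta_i\hat\theta}_2\) is bounded by \(\norm{X^\top\delta_i\hat\theta}_2\), since removing a coordinate only decreases the norm, so it obeys the same bound.

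For the parameter bound, I reuse the same inequality, now dropping the prediction term and combining it with the bound just obtained:
\[
    \kappa\norm{\delta_i\hat\theta}_2^2
    \le
    \frac{|s_i^-|}{n}\norm{X^\top\delta_i\hat\theta}_2
    \le
    \frac{|s_i^-|^2}{\kappa_L n}.
\]
This gives \(\norm{\delta_i\hat\theta}_2\le (\kappa\kappa_L)^{-1/2}|s_i^-|/\sqrt n\), and Lemma~\ref{lem:test-score-control} again yields \(O(k_n^2/\sqrt n)\). Both bounds are deterministic functions of \(|s_i^-|\) alone, so the suprema over \(i\) cost nothing beyond the uniformity already present in Lemma~\ref{lem:test-score-control}.

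The one step needing care is avoiding a bound through \(\norm{x_i}_2\). Writing \(|x_i^\top\delta_i\hat\theta|\le\norm{x_i}_2\norm{\delta_i\hat\theta}_2\) would bring in \(\norm{x_i}_2\sim\sqrt n\) and give only \(\norm{\delta_i\hat\theta}_2=O(1)\). The trick is to absorb \(x_i^\top\delta_i\hat\theta\) into the prediction norm \(\norm{X^\top\delta_i\hat\theta}_2\), which the curvature term controls. Beyond that, the argument is the gradient identity at \(\hat\theta_{-i}\) together with the score moment bound.
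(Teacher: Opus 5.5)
Your proposal is correct and follows the paper's proof essentially step for step. Both apply Lemma~\ref{lem:curvature_bound_t_hat_theta} at $t=\hat\theta_{-i}$ with $\nabla\Psi_X(\hat\theta_{-i})=\frac1n s_i^-x_i$ and absorb $|x_i^\top\delta_i\hat\theta|$ into $\norm{X^\top\delta_i\hat\theta}_2$ to get $\norm{X^\top\delta_i\hat\theta}_2\le\kappa_L^{-1}|s_i^-|$; they then reuse this bound to obtain $\kappa\norm{\delta_i\hat\theta}_2^2\le|s_i^-|^2/(\kappa_L n)$, and conclude with Lemma~\ref{lem:test-score-control} and $\norm{X_{-i}^\top\delta_i\hat\theta}_2\le\norm{X^\top\delta_i\hat\theta}_2$.
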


\begin{proof}
Since
\[
    \nabla\Psi_X(\hat\theta_{-i})
    =
    \frac1n s_i^-x_i,
\]
Lemma~\ref{lem:curvature_bound_t_hat_theta}, applied with
\(t=\hat\theta_{-i}\), gives
\[
    \frac{\kappa_L}{n}
    \norm{X^\top\delta_i\hat\theta}_2^2
    +
    \kappa\norm{\delta_i\hat\theta}_2^2
    \le
    -\frac1n s_i^-\,x_i^\top\delta_i\hat\theta .
\]
In particular,
\[
    \frac{\kappa_L}{n}
    \norm{X^\top\delta_i\hat\theta}_2^2
    \le
    \frac1n |s_i^-|\,|x_i^\top\delta_i\hat\theta|
    \le
    \frac1n |s_i^-|\,
    \norm{X^\top\delta_i\hat\theta}_2,
\]
because \(x_i^\top\delta_i\hat\theta\) is one coordinate of
\(X^\top\delta_i\hat\theta\). If
\(\norm{X^\top\delta_i\hat\theta}_2=0\), the prediction bound is immediate.
Otherwise, dividing by \(\norm{X^\top\delta_i\hat\theta}_2\) yields
\[
    \norm{X^\top\delta_i\hat\theta}_2
    \le
    \kappa_L^{-1}|s_i^-|.
\]

Returning to the same inequality and dropping the nonnegative prediction term,
\[
    \kappa\norm{\delta_i\hat\theta}_2^2
    \le
    \frac1n |s_i^-|\,|x_i^\top\delta_i\hat\theta|
    \le
    \frac1n |s_i^-|\,
    \norm{X^\top\delta_i\hat\theta}_2
    \le
    \frac{|s_i^-|^2}{\kappa_L n}.
\]
Taking \(L^{k_n}\)-norms and using Lemma~\ref{lem:test-score-control} gives
the first two estimates. Finally,
\[
    \norm{X_{-i}^\top\delta_i\hat\theta}_2
    \le
    \norm{X^\top\delta_i\hat\theta}_2,
\]
which gives the last one.
\end{proof}

\subsection{Concentration of the minimizer}
\label{sec:variance}

We next derive the sensitivity estimates for \(\hat\theta\) with respect to the
columns of \(X\). These estimates are the input needed to apply the tensorized
Poincar\'e inequality to functions of \(\hat\theta\).

Fix \(i\in[n]\), freeze all columns except \(x_i\), and perturb \(x_i\) in the
direction \(h\in\R^p\):
\[
    x_i(t)=x_i+th,
    \qquad
    x_j(t)=x_j\quad(j\ne i),
    \qquad
    \theta(t)=\hat\theta(X(t)).
\]
The first-order condition is
\[
    0
    =
    \nabla\Psi_{X(t)}(\theta(t))
    =
    \frac1n\sum_{j=1}^n
    L'(x_j(t)^\top\theta(t))x_j(t)
    +
    \nabla\rho(\theta(t)).
\]
Since \(\Psi_X\) is strongly convex and \(C^2\), the implicit function theorem
allows us to differentiate this identity at \(t=0\). Define
\[
    G
    :=
    \nabla^2\Psi_X(\hat\theta)^{-1}.
\]
Writing \(\theta'(0)=\mathbb D_i[\hat\theta][h]\), we obtain
\[
    G^{-1}\mathbb D_i[\hat\theta][h]
    +
    \frac1n
    \left[
        L''(x_i^\top\hat\theta)(h^\top\hat\theta)x_i
        +
        L'(x_i^\top\hat\theta)h
    \right]
    =
    0.
\]
Equivalently,
\begin{align}\label{eq:derivative_hat_theta}
    \mathbb D_i[\hat\theta]
    =
    -\frac1n
    \left[
        L''(x_i^\top\hat\theta)Gx_i\hat\theta^\top
        +
        s_iG
    \right].
\end{align}
where we introduced the following notation for the train score:
\begin{align*}
    s_i
    :=
    L'(x_i^\top\hat\theta)
\end{align*}
\begin{lemma}[Uniform prediction moments]
\label{lem:uniform-prediction-moments}
Under Assumptions~\ref{ass:1dim}--\ref{ass:5regularity}, for every sequence of
positive integers \(k_n\),
\[
    \sup_{i\in[n]}
    \Norm{x_i^\top\hat\theta}_{L^{k_n}}
    =
    O(k_n^2),
    \qquad
    \sup_{i\in[n]}
    \Norm{s_i}_{L^{k_n}}
    =
    O(k_n^2).
\]
\end{lemma}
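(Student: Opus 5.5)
The plan is to reduce the train prediction \(x_i^\top\hat\theta\) to the test prediction \(x_i^\top\hat\theta_{-i}\), whose moments are already controlled, plus the leave-one-out displacement. We start from
\[
    x_i^\top\hat\theta
    =
    x_i^\top\hat\theta_{-i}
    +
    x_i^\top\delta_i\hat\theta .
\]
The proof of Lemma~\ref{lem:test-score-control} gives \(\Norm{x_i^\top\hat\theta_{-i}}_{L^{k_n}}=O(k_n^2)\), uniformly in \(i\). This was obtained by conditioning on \(X_{-i}\), applying Lemma~\ref{lem:moments_of_x_i} to the frozen direction \(\hat\theta_{-i}/\norm{\hat\theta_{-i}}_2\), and then using Theorem~\ref{thm:direct-hat-bound}.

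For the second term, we use that \(x_i^\top\delta_i\hat\theta\) is a coordinate of \(X^\top\delta_i\hat\theta\). Hence
\[
    |x_i^\top\delta_i\hat\theta|
    \le
    \norm{X^\top\delta_i\hat\theta}_2 .
\]
Lemma~\ref{lem:loo-prediction-stability} bounds the \(L^{k_n}\)-norm of the right-hand side by \(O(k_n^2)\), uniformly in \(i\). The triangle inequality in \(L^{k_n}\) then yields \(\sup_i\Norm{x_i^\top\hat\theta}_{L^{k_n}}=O(k_n^2)\).

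For the score, Assumptions~\ref{ass:2origin} and~\ref{ass:5regularity} give
\[
    |s_i|
    =
    |L'(x_i^\top\hat\theta)|
    \le
    |L'(0)|+\norm{L''}_\infty\,|x_i^\top\hat\theta| .
\]
So \(\Norm{s_i}_{L^{k_n}}\le O(1)+O(1)\,O(k_n^2)=O(k_n^2)\), since \(k_n\ge1\).

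There is no real obstacle here. The only point needing care is that the bound \(\norm{X^\top\delta_i\hat\theta}_2\le\kappa_L^{-1}|s_i^-|\) from Lemma~\ref{lem:loo-prediction-stability} is deterministic. Its moments are controlled through Lemma~\ref{lem:test-score-control}, so all constants stay uniform in \(i\) and in the sequence \(k_n\).
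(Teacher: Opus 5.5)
Your proof is correct and follows the paper's argument step for step. You split $x_i^\top\hat\theta$ into the test prediction $x_i^\top\hat\theta_{-i}$, which you control by conditioning on $X_{-i}$ together with Lemma~\ref{lem:moments_of_x_i} and Theorem~\ref{thm:direct-hat-bound}, plus the displacement term, which you bound by $\norm{X^\top\delta_i\hat\theta}_2$ using Lemma~\ref{lem:loo-prediction-stability}; you then transfer the bound to $s_i$ through the linear growth $|L'(t)|\le O(1)(1+|t|)$.
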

\begin{proof}
Fix \(i\in[n]\). Since \(\hat\theta_{-i}\) is independent of \(x_i\), conditioning
on \(\hat\theta_{-i}\), applying Lemma~\ref{lem:moments_of_x_i}, and then using
Theorem~\ref{thm:direct-hat-bound} give
\[
    \Norm{x_i^\top\hat\theta_{-i}  }_{L^{k_n}}
    =
    O(k_n^2).
\]
One can then deduce from Lemma~\ref{lem:loo-prediction-stability} that:
\begin{align*}
    \Norm{x_i^\top\hat\theta  }_{L^{k_n}}
    &\le
    \Norm{x_i^\top\hat\theta_{-i}  }_{L^{k_n}}
    +
    \Norm{x_i^\top(\hat\theta-\hat\theta_{-i}) }_{L^{k_n}}        \\
    &\le
    O(k_n^2)
    +
    \Norm{\norm{X^\top(\hat\theta-\hat\theta_{-i})}_2}_{L^{k_n}}
    =
    O(k_n^2),
\end{align*}
Finally, since
\[
    |L'(t)|\le O(1)(1+|t|),
\]
the same bound holds for \(s_i=L'(x_i^\top\hat\theta)\).
\end{proof}

\begin{lemma}[Differential bound for the minimizer]
\label{lem:differential-identity-minimizer}
Under Assumptions~\ref{ass:1dim}--\ref{ass:5regularity}, for every \(i\in[n]\),
every random vector \(z=z(X)\), and any
sequence \(k=(k_n)_{n\in\N}\),
\[
    \Norm{\norm{z^\top \mathbb D_i[\hat\theta] }^* }_{L^{k_n}}
    \le
    O \left( \frac{k_n^2}{n} \right)
    \left(
        \Norm{z^\top Gx_i }_{L^{2k_n}}
        +
        \Norm{\norm{z}_2 }_{L^{2k_n}}
    \right).
\]
\end{lemma}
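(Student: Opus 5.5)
We start from the explicit formula~\eqref{eq:derivative_hat_theta}. For any \(h\in\R^p\) with \(\norm{h}_2\le1\),
\[
    z^\top\mathbb D_i[\hat\theta][h]
    =
    -\frac1n\Big[
        L''(x_i^\top\hat\theta)\,(z^\top Gx_i)\,(\hat\theta^\top h)
        +
        s_i\, z^\top G h
    \Big].
\]
We therefore only need deterministic pointwise bounds on the two terms, followed by Hölder's inequality.

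For the first term we use \(\norm{L''}_\infty=O(1)\) (Assumption~\ref{ass:5regularity}) and \(|\hat\theta^\top h|\le\norm{\hat\theta}_2\). Taking the supremum over \(h\), this term is at most \(\frac{O(1)}{n}\norm{z^\top Gx_i}\norm{\hat\theta}_2\).

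For the second term we need an operator-norm bound on \(G\). Since \(\nabla^2\Psi_X(\hat\theta)=\frac1n\sum_j L''(x_j^\top\hat\theta)x_jx_j^\top+\nabla^2\rho(\hat\theta)\succeq\kappa I_p\), we get \(\norm{G}_{\op}\le\kappa^{-1}=O(1)\). Hence \(\sup_{\norm{h}_2\le1}|z^\top Gh|=\norm{Gz}_2\le\kappa^{-1}\norm{z}_2\), and the second term is at most \(\frac{O(1)}{n}|s_i|\norm{z}_2\). Combining the two,
\[
    \norm{z^\top\mathbb D_i[\hat\theta]}^*
    \le
    \frac{O(1)}{n}\Big(\norm{z^\top Gx_i}\,\norm{\hat\theta}_2+|s_i|\,\norm{z}_2\Big).
\]

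We then take \(L^{k_n}\) norms. Minkowski's inequality followed by Cauchy--Schwarz (Hölder with exponents \(2k_n,2k_n\)) gives
\[
    \Norm{\norm{z^\top\mathbb D_i[\hat\theta]}^*}_{L^{k_n}}
    \le
    \frac{O(1)}{n}\Big(\Norm{z^\top Gx_i}_{L^{2k_n}}\Norm{\norm{\hat\theta}_2}_{L^{2k_n}}
    +\Norm{s_i}_{L^{2k_n}}\Norm{\norm{z}_2}_{L^{2k_n}}\Big).
\]
Theorem~\ref{thm:direct-hat-bound} gives \(\Norm{\norm{\hat\theta}_2}_{L^{2k_n}}=O(2k_n)=O(k_n)\), and Lemma~\ref{lem:uniform-prediction-moments} gives \(\Norm{s_i}_{L^{2k_n}}=O(4k_n^2)=O(k_n^2)\). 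Since \(k_n\ge1\), both prefactors are \(O(k_n^2)\), which yields the stated bound.

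None of the steps is a real obstacle. The only care needed is that \(z\) is an arbitrary random vector, possibly depending on \(x_i\). This is why no conditional independence is used: \(z\) is simply decoupled from \(\hat\theta\) and \(s_i\) by Hölder. It is also why the term \(z^\top Gx_i\) is kept intact rather than bounded by \(\norm{z}_2\norm{x_i}_2\), which would lose a factor \(\sqrt n\).
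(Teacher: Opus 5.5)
Your proposal is correct and follows the paper's proof essentially line by line. You contract \eqref{eq:derivative_hat_theta} with \(z\) and bound the result pointwise using \(\norm{L''}_\infty=O(1)\) and \(\norm{G}_{\op}\le\kappa^{-1}\), then apply H\"older at exponent \(2k_n\) together with Theorem~\ref{thm:direct-hat-bound} and Lemma~\ref{lem:uniform-prediction-moments}, which is exactly how the paper argues.
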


\begin{proof}
Multiplying the differential identity~\eqref{eq:derivative_hat_theta} by \(z^\top\), we get, for every
\(h\in\R^p\),
\[
    z^\top \mathbb D_i[\hat\theta]\,[h]
    =
    -\frac1n h^\top
    \left[
        L''(x_i^\top\hat\theta)(x_i^\top Gz)\hat\theta
        +
        s_iGz
    \right].
\]
Since \(\norm{G}_{\op}\le\kappa^{-1}\) and \(\norm{L''}_\infty=O(1)\),
\[
    \norm{z^\top \mathbb D_i[\hat\theta]}^*
    \le
    \frac{O(1)}{n}
    \left(
        |x_i^\top Gz|\,\norm{\hat\theta}_2
        +
        |s_i|\,\norm{z}_2
    \right).
\]
Taking \(L^{k_n}\)-norms and applying H\"older's inequality, together with
Theorem~\ref{thm:direct-hat-bound} and
Lemma~\ref{lem:uniform-prediction-moments}, proves the claim.
\end{proof}

\begin{lemma}[Empirical prediction energy]
\label{lem:empirical-prediction-energy}
Under Assumptions~\ref{ass:1dim}--\ref{ass:5regularity}, for every sequence of
positive integers \(k_n\),
\[
    \left\|
        \frac1n\sum_{i=1}^n (x_i^\top\hat\theta)^2
    \right\|_{L^{k_n}}
    =
    O(k_n^2),
    \qquad
    \left\|
        \frac1n\sum_{i=1}^n s_i^2
    \right\|_{L^{k_n}}
    =
    O(k_n^2).
\]
\end{lemma}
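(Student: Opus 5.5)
Both bounds follow from Minkowski's inequality in \(L^{k_n}\) combined with Lemma~\ref{lem:uniform-prediction-moments}. For an average of nonnegative random variables, the \(L^{k_n}\)-norm is at most the average of the \(L^{k_n}\)-norms, since \(k_n\ge1\). Hence
\[
    \left\|
        \frac1n\sum_{i=1}^n (x_i^\top\hat\theta)^2
    \right\|_{L^{k_n}}
    \le
    \frac1n\sum_{i=1}^n
    \Norm{(x_i^\top\hat\theta)^2}_{L^{k_n}}
    =
    \frac1n\sum_{i=1}^n
    \Norm{x_i^\top\hat\theta}_{L^{2k_n}}^2
    \le
    \sup_{i\in[n]}\Norm{x_i^\top\hat\theta}_{L^{2k_n}}^2 .
\]
Apply Lemma~\ref{lem:uniform-prediction-moments} with the sequence \(2k_n\). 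It gives \(\sup_i\Norm{x_i^\top\hat\theta}_{L^{2k_n}}=O(k_n^2)\), and therefore a bound \(O(k_n^4)\). The same reasoning with \(s_i\) in place of \(x_i^\top\hat\theta\) yields the analogous bound for \(\frac1n\sum_i s_i^2\).

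This crude route produces \(O(k_n^4)\), not the stated \(O(k_n^2)\), so the exponent has to be improved. I would do this with a deterministic argument that avoids the termwise moments. From Lemma~\ref{lem:curvature_bound_t_hat_theta} with \(t=0\),
\[
    \frac{\kappa_L}{n}\norm{X^\top\hat\theta}_2^2
    \le
    -\nabla\Psi_X(0)^\top\hat\theta
    \le
    \norm{\nabla\Psi_X(0)}_2\,\norm{\hat\theta}_2
    \le
    \kappa^{-1}\norm{\nabla\Psi_X(0)}_2^2 ,
\]
where the last step uses Lemma~\ref{lem:stability-direct}. The left-hand side is \(\frac{\kappa_L}{n}\sum_i (x_i^\top\hat\theta)^2\), so
\[
    \frac1n\sum_i (x_i^\top\hat\theta)^2
    \le
    (\kappa\kappa_L)^{-1}\norm{\nabla\Psi_X(0)}_2^2 .
\]
Taking \(L^{k_n}\)-norms gives \(\Norm{\norm{\nabla\Psi_X(0)}_2}_{L^{2k_n}}^2\), which is \(O(k_n^2)\) by Lemma~\ref{lem:gradient-zero}. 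This is exactly the claimed rate.

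For the scores, use \(|L'(t)|\le O(1)(1+|t|)\), which follows from Assumptions~\ref{ass:2origin} and~\ref{ass:5regularity}. It gives \(s_i^2\le O(1)\bigl(1+(x_i^\top\hat\theta)^2\bigr)\), and averaging over \(i\) transfers the previous bound: \(\frac1n\sum_i s_i^2=O(1)+O(1)\cdot\frac1n\sum_i(x_i^\top\hat\theta)^2\), whose \(L^{k_n}\)-norm is \(O(k_n^2)\).

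The only delicate point is obtaining the sharp exponent \(k_n^2\). The deterministic inequality above gives it directly, because it reduces everything to the gradient at the origin, whose moments grow only linearly in \(k_n\) by Lemma~\ref{lem:gradient-zero}. Routing through the individual bounds \(O(k_n^2)\) of Lemma~\ref{lem:uniform-prediction-moments} and then squaring loses this and only gives \(k_n^4\). No other obstacle is expected.
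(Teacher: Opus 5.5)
Your argument is correct and is essentially the paper's proof: both apply Lemma~\ref{lem:curvature_bound_t_hat_theta} at $t=0$ to reduce to $\norm{\nabla\Psi_X(0)}_2^2$, invoke Lemma~\ref{lem:gradient-zero} at exponent $2k_n$, and transfer to the scores through the linear growth of $L'$. The only cosmetic difference is that you bound $-\nabla\Psi_X(0)^\top\hat\theta$ by Cauchy--Schwarz together with Lemma~\ref{lem:stability-direct}. The paper instead uses Young's inequality and absorbs the resulting $\frac{\kappa}{2}\norm{\hat\theta}_2^2$ into the left-hand side.
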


\begin{proof}
Applying Lemma~\ref{lem:curvature_bound_t_hat_theta} with \(t=0\), we get
\[
    \frac{\kappa_L}{n}\sum_{i=1}^n (x_i^\top\hat\theta)^2
    +
    \kappa\norm{\hat\theta}_2^2
    \le
    -\nabla\Psi_X(0)^\top\hat\theta .
\]
By Young's inequality,
\[
    -\nabla\Psi_X(0)^\top\hat\theta
    \le
    \frac1{2\kappa}\norm{\nabla\Psi_X(0)}_2^2
    +
    \frac{\kappa}{2}\norm{\hat\theta}_2^2 .
\]
After cancellation,
\[
    \frac1n\sum_{i=1}^n (x_i^\top\hat\theta)^2
    \le
    O(1)\norm{\nabla\Psi_X(0)}_2^2 .
\]
Taking \(L^{k_n}\)-norms and using Lemma~\ref{lem:gradient-zero} at exponent
\(2k_n\), we obtain the first bound.

Since \(\norm{L''}_\infty=O(1)\) and \(|L'(0)|=O(1)\), $|L'(t)|^2\le O(1)(1+t^2)$, and therefore:
\[
    \frac1n\sum_{i=1}^n s_i^2
    \le
    O(1)\left(
        1+\frac1n\sum_{i=1}^n (x_i^\top\hat\theta)^2
    \right),
\]
which gives the second bound.
\end{proof}
\begin{lemma}[Square-summed background sensitivity]
\label{lem:square-summed-background-sensitivity}
Under Assumptions~\ref{ass:1dim}--\ref{ass:5regularity}, for every random vector
\(z=z(X)\) and any sequence of integers \(k_n\ge1\),
\[
    \left\|
        \sum_{i=1}^n
        \left( \norm{z^\top \mathbb D_i[\hat\theta]}^* \right)^2
    \right\|_{L^{k_n}}
    \le
    O \left( \frac{k_n^2}{n} \right)
    \Norm{\norm{z}_2}_{L^{4k_n}}^2 .
\]
\end{lemma}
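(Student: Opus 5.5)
We start from the pointwise bound established in the proof of Lemma~\ref{lem:differential-identity-minimizer}:
\[
    \norm{z^\top \mathbb D_i[\hat\theta]}^*
    \le
    \frac{O(1)}{n}
    \left(
        |x_i^\top Gz|\,\norm{\hat\theta}_2
        +
        |s_i|\,\norm{z}_2
    \right).
\]
We square this and use \((a+b)^2\le 2a^2+2b^2\). Summing over \(i\) then gives
\[
    \sum_{i=1}^n
    \left(\norm{z^\top \mathbb D_i[\hat\theta]}^*\right)^2
    \le
    \frac{O(1)}{n^2}
    \left(
        \norm{\hat\theta}_2^2\,\norm{X^\top Gz}_2^2
        +
        \norm{z}_2^2\sum_{i=1}^n s_i^2
    \right).
\]
The point is to avoid bounding each summand separately. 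A termwise bound would produce a factor \(n\) times \(\Norm{z^\top G x_i}^2\), and the latter is not controlled for a general random \(z\) depending on \(x_i\). The sum over \(i\) instead assembles the quantities into the global objects \(X^\top Gz\) and \(\sum_i s_i^2\), both of which we can control.

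For the second term, we write \(\sum_i s_i^2 = n\cdot\frac1n\sum_i s_i^2\). This gives \(\frac{1}{n}\norm{z}_2^2\cdot\frac1n\sum_i s_i^2\). We take \(L^{k_n}\)-norms and apply Cauchy--Schwarz/H\"older:
\[
    \Norm{\norm{z}_2^2\tfrac1n\textstyle\sum_i s_i^2}_{L^{k_n}}
    \le
    \Norm{\norm{z}_2}_{L^{4k_n}}^2
    \Norm{\tfrac1n\textstyle\sum_i s_i^2}_{L^{2k_n}}.
\]
By Lemma~\ref{lem:empirical-prediction-energy}, the last factor is \(O(k_n^2)\). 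This yields the contribution \(O(k_n^2/n)\Norm{\norm{z}_2}_{L^{4k_n}}^2\).

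For the first term, we use \(\norm{X^\top Gz}_2\le \norm{X}_{\op}\norm{G}_{\op}\norm{z}_2\le \kappa^{-1}\norm{X}_{\op}\norm{z}_2\). Here \(\norm{G}_{\op}\le\kappa^{-1}\) follows from strong convexity of \(\rho\) and convexity of \(L\). The first term is therefore at most
\[
    \frac{O(1)}{n^2}\norm{\hat\theta}_2^2\norm{X}_{\op}^2\norm{z}_2^2.
\]
We need \(\Norm{\norm{X}_{\op}^2}_{L^{k}}=O(n)\) with at most polynomial dependence in \(k\). Lemma~\ref{lem:op_norm_X} states this only at unit level, so we would instead appeal to the underlying Lemma~\ref{lem:operator-norm-poincare-matrix}. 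Alternatively, we can avoid the operator norm entirely: \(\norm{X^\top Gz}_2^2\le \norm{z}_2^2\,\norm{G X X^\top G}_{\op}\), and we bound \(\norm{XX^\top}_{\op}\) through the curvature inequality. This is the step I expect to be the main obstacle.

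If \(\norm{X}_{\op}^2/n\) has moments of order \(O(k^2)\) at most, then H\"older's inequality with \(\Norm{\norm{\hat\theta}_2}_{L^{\cdot}}=O(k)\) from Theorem~\ref{thm:direct-hat-bound} gives \(O(\mathrm{poly}(k_n)/n)\Norm{\norm{z}_2}_{L^{4k_n}}^2\). The trouble is that the target power is exactly \(k_n^2\). The factor \(\norm{\hat\theta}_2^2\) alone already costs \(k_n^2\), so the operator-norm factor must be handled with only constant loss in \(k\). A Poincar\'e concentration of \(\norm{X}_{\op}\), being Lipschitz in \(X\), around its mean \(O(\sqrt n)\) gives \(\Norm{\norm{X}_{\op}}_{L^{k}}\le O(\sqrt n)+O(k)\). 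Since \(k_n\lesssim\sqrt n\) in any relevant regime, \(\norm{X}_{\op}^2/n\) then has moments \(O(1+k^2/n)\). I would use this and absorb the lower-order \(k^2/n\) correction, possibly adjusting the exponent bookkeeping (\(4k_n\) versus \(8k_n\)) via H\"older.
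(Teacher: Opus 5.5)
Your reduction to the summed bound and your treatment of the score term match the paper's. The gap is in the first term. The step you flag as the main obstacle is a one-line deterministic inequality, and the route you actually commit to does not prove the lemma as stated.

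The curvature inequality cannot bound $\norm{XX^\top}_{\op}$, which is random. What it bounds, pointwise, is the combination $GXX^\top G$. Indeed $G^{-1}=\frac1n\sum_i L''(x_i^\top\hat\theta)x_ix_i^\top+\nabla^2\rho(\hat\theta)\succeq\frac{\kappa_L}{n}XX^\top$, so $G^{1/2}XX^\top G^{1/2}\preceq\frac{n}{\kappa_L}I_p$ and
\[
\norm{X^\top Gz}_2^2=z^\top G^{1/2}\big(G^{1/2}XX^\top G^{1/2}\big)G^{1/2}z\le\frac{n}{\kappa_L}\,z^\top Gz\le\frac{n}{\kappa_L\kappa}\norm{z}_2^2 .
\]
This is exactly the paper's argument. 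The first term becomes $\frac{O(1)}{n}\norm{\hat\theta}_2^2\norm{z}_2^2$. H\"older with $L^{2k_n}$ on each factor, together with Theorem~\ref{thm:direct-hat-bound}, then gives $O(k_n^2/n)\Norm{\norm{z}_2}_{L^{4k_n}}^2$ for every sequence $k_n$. No moment of $\norm{X}_{\op}$ is needed, and no random-matrix input.

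Your fallback through concentration of $\norm{X}_{\op}$ gives, after the three-factor H\"older you sketch, $\Norm{\norm{X}_{\op}^2}_{L^{4k_n}}=O(n+k_n^2)$. Hence you get a bound of order $\frac{k_n^2}{n}\big(1+\frac{k_n^2}{n}\big)\Norm{\norm{z}_2}_{L^{4k_n}}^2$.

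This loss is intrinsic to going through $\norm{X}_{\op}$. Under a Poincar\'e assumption its $L^k$ norm can genuinely be of order $\sqrt n+k$; for instance, a single Laplace-distributed entry already has $L^k$ norm of order $k$. So the bound matches the claim only when $k_n=O(\sqrt n)$.

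The lemma is asserted for arbitrary integer sequences $k_n\ge1$. Restricting to ``$k_n\lesssim\sqrt n$ in any relevant regime'' adds a hypothesis; it is not a correction you can absorb. The paper only applies the lemma with fixed $k$, so your weaker version would suffice downstream, but it is not the stated result. That route also leans on Lemma~\ref{lem:operator-norm-poincare-matrix} and its external empirical-covariance input, which the deterministic inequality makes unnecessary.
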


\begin{proof}
The pointwise bound in the proof of
Lemma~\ref{lem:differential-identity-minimizer} gives
\[
   \left( \norm{z^\top \mathbb D_i[\hat\theta]}^* \right)^2
    \le
    \frac{O(1)}{n^2}
    \left[
        (x_i^\top Gz)^2 |\hat\theta|_2^2
        +
        s_i^2\norm{z}_2^2
    \right].
\]
By Assumption~\ref{ass:5regularity},
\[
    G^{-1}
    =
    \frac1n\sum_{i=1}^n L''(x_i^\top\hat\theta)x_ix_i^\top
    +
    \nabla^2\rho(\hat\theta)
    \succeq
    \frac{\kappa_L}{n}\sum_{i=1}^n x_ix_i^\top .
\]
Hence
\[
    G\left(\frac1n\sum_{i=1}^n x_ix_i^\top\right)G
    \preceq
    \kappa_L^{-1}G
    \preceq
    O(1)I_p,
\]
and therefore
\[
    \sum_{i=1}^n (x_i^\top Gz)^2
    \le
    O(n)\norm{z}_2^2.
\]
Summing the pointwise bound over \(i\) yields
\[
    \sum_{i=1}^n
    \left( \norm{z^\top \mathbb D_i[\hat\theta]}^* \right)^2
    \le
    \frac{O(1)}{n}
    \left[
        |\hat\theta|_2^2
        +
        \frac1n\sum_{i=1}^n s_i^2
    \right]
    \norm{z}^2.
\]
Taking \(L^{k_n}\)-norms and using H\"older's inequality gives
\[
\begin{aligned}
    &\left\|
        \sum_{i=1}^n
        \left( \norm{z^\top \mathbb D_i[\hat\theta]}^* \right)^2
    \right\|_{L^{k_n}}  
    \le
    \frac{O(1)}{n}
    \left\|
        \norm{\hat\theta}_2^2
        +
        \frac1n\sum_{i=1}^n s_i^2
    \right\|_{L^{2k_n}}
    \Norm{\norm{z}_2^2}_{L^{2k_n}} .
\end{aligned}
\]
By Theorem~\ref{thm:direct-hat-bound} and
Lemma~\ref{lem:empirical-prediction-energy}, applied at exponent \(2k_n\),
the first factor is \(O(k_n^2)\). Since
\[
    \Norm{\norm{z}_2^2}_{L^{2k_n}}
    =
    \Norm{\norm{z}_2}_{L^{4k_n}}^2,
\]
the claim follows.
\end{proof}

\begin{corollary}[Variance bounds for smooth functions of the minimizer]
\label{cor:variance-linear-poincare}
Under Assumptions~\ref{ass:1dim}--\ref{ass:5regularity}, for every sufficiently
smooth \(f:\mathbb R^p\to\mathbb R\),
\[
    \Var(f(\hat\theta))
    \le
    O\left(
        \frac1n
        \Norm{\norm{\nabla f(\hat\theta)}_2}_{L^4}^2
    \right).
\]
\end{corollary}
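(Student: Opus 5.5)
The plan is to apply the tensorized Poincaré inequality of Lemma~\ref{lem:concentration_X} to the composite map \(X\mapsto f(\hat\theta(X))\), and then to control the resulting sum of squared column sensitivities with Lemma~\ref{lem:square-summed-background-sensitivity}. By the chain rule, for each \(i\in[n]\) and each direction \(h\in\R^p\),
\[
    \mathbb D_i[f(\hat\theta)][h]
    =
    \nabla f(\hat\theta)^\top\,\mathbb D_i[\hat\theta][h].
\]
Hence \(\norm{\mathbb D_i[f(\hat\theta)]}_2^{*}=\norm{z^\top\mathbb D_i[\hat\theta]}^{*}\) with the random vector \(z:=\nabla f(\hat\theta(X))\). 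The differentiability of \(\hat\theta\) in \(x_i\) was justified above via the implicit function theorem, so this step is legitimate.

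Lemma~\ref{lem:concentration_X} then gives
\[
    \Var(f(\hat\theta))
    \le
    C_P\sum_{i=1}^n
    \Norm{\norm{z^\top\mathbb D_i[\hat\theta]}^{*}}_{L^2}^2
    =
    C_P\,\E\!\left[\sum_{i=1}^n\left(\norm{z^\top\mathbb D_i[\hat\theta]}^{*}\right)^2\right].
\]
Here we exchange sum and expectation, so the right-hand side is the \(L^1\)-norm of the square-summed sensitivity. We now invoke Lemma~\ref{lem:square-summed-background-sensitivity} with the constant sequence \(k_n=1\) and this choice of \(z\). It bounds the expectation by \(O(1/n)\,\Norm{\norm{\nabla f(\hat\theta)}_2}_{L^4}^2\). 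Since \(C_P=O(1)\) by Assumption~\ref{ass:poincare-data}, this is exactly the claim.

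The only point needing care is that Lemma~\ref{lem:concentration_X} is stated for \emph{sufficiently smooth} functions of \(X\) with finite right-hand side. We must check that \(X\mapsto f(\hat\theta(X))\) qualifies, which should be the main technical obstacle, though a mild one. \(\hat\theta\) is \(C^1\) in \(X\) by the implicit function theorem, since \(\nabla\Psi_X\) is \(C^1\) jointly in \((X,\theta)\) and its Jacobian in \(\theta\) is \(\succeq\kappa I_p\). So \(f\circ\hat\theta\) is \(C^1\) whenever \(f\) is. If the right-hand side \(\Norm{\norm{\nabla f(\hat\theta)}_2}_{L^4}\) is infinite there is nothing to prove. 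Otherwise, the pointwise bound in the proof of Lemma~\ref{lem:square-summed-background-sensitivity}, combined with the moment bounds of Theorem~\ref{thm:direct-hat-bound} and Lemma~\ref{lem:empirical-prediction-energy} and Hölder's inequality, shows that the Poincaré right-hand side is finite. Thus the inequality applies, by a standard approximation argument if the Poincaré inequality is only available for bounded Lipschitz test functions.
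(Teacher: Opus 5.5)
Your proposal is correct and follows the paper's proof exactly: the chain rule gives $\norm{\mathbb D_i[f(\hat\theta)]}_2^*=\norm{\nabla f(\hat\theta)^\top\mathbb D_i[\hat\theta]}^*$, then you apply the tensorized Poincar\'e inequality of Lemma~\ref{lem:concentration_X}, and then Lemma~\ref{lem:square-summed-background-sensitivity} with $z=\nabla f(\hat\theta)$ and $k_n=1$. Your remarks on the $C^1$ regularity of $X\mapsto\hat\theta(X)$ via the implicit function theorem, and on finiteness of the Poincar\'e right-hand side, are points the paper leaves implicit, and you handle them correctly.
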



\begin{proof}
By the chain rule,
\[
    \norm{\mathbb D_i[f(\hat\theta)]}_2^*
    =
    \norm{\nabla f(\hat\theta)^\top\mathbb D_i[\hat\theta]}^* .
\]
Applying Lemma~\ref{lem:concentration_X} and then
Lemma~\ref{lem:square-summed-background-sensitivity} with
\(z=\nabla f(\hat\theta)\) and \(k_n=1\) gives
\[
\begin{aligned}
    \Var(f(\hat\theta))
    &\le
    C_P\,\E\sum_{i=1}^n
    \left(
        \norm{
            \nabla f(\hat\theta)^\top\mathbb D_i[\hat\theta]
        }^*
    \right)^2 
    \le
    O\left(
        \frac1n
        \Norm{\norm{\nabla f(\hat\theta)}_2}_{L^4}^2
    \right).
\end{aligned}
\]
\end{proof}

\begin{remark}
\label{rem:sigma-upper}
Corollary~\ref{cor:variance-linear-poincare} gives, in the setting of
Theorem~\ref{thm:main-clt},
\[
    \sigma_n^2
    =
    n\,\Var(u^\top\hat\theta)
    =
    O(1).
\]
Thus the additional variance assumption in Theorem~\ref{thm:main-clt} is only
the lower bound \(\inf_n\sigma_n^2>0\).
\end{remark}

Until now, we were able to show in Lemma~\ref{lem:loo-prediction-stability} that $\||\delta_i\hat \theta|_2\|_{L^k}\leq O(1/\sqrt n)$ however that do not allow us to reach already the bound $\|\sqrt n u^\top \delta_i\hat \theta|\|_{L^k}\leq O(1/\sqrt n)$ that is needed to be able to use the Wasserstein bound of theorem~\ref{the:conditional-loo-perturbative}. This estimate will be though provided through a precise estimation of the leave-one-out displacement in next section

\section{Leave-one-out perturbation expansion}
\label{sec:loo}

The goal of this section is to obtain quantitative expansions of the
leave-one-out displacement
\[
    \delta_i\hat\theta
    :=
    \hat\theta-\hat\theta_{-i},
    \qquad i\in[n].
\]
The prediction stability estimates proved below give direct control of
\(X^\top\delta_i\hat\theta\), while the Wasserstein bound requires a more
precise description of deterministic projections \(u^\top\delta_i\hat\theta\).
We first derive a first-order expansion, then introduce an averaged-leverage
score which removes the leading dependence of the score on the random leverage
\(n^{-1}x_i^\top G_{-i}x_i\). Finally, under the third-order curvature
assumptions and the condition \(\nabla^3\rho\equiv0\), we derive a projected
second-order expansion based on this averaged score.

For \(i\in[n]\), write
\[
    \Psi_X^{-i}(\theta)
    :=
    \frac1n\sum_{\substack{j\in[n]\\j\ne i}}
    L(x_j^\top\theta)+\rho(\theta),
    \qquad
    \hat\theta_{-i}
    :=
    \argmin_{\theta\in\R^p}\Psi_X^{-i}(\theta),
\]
and
\[
    G_{-i}
    :=
    \left(\nabla^2\Psi_X^{-i}(\hat\theta_{-i})\right)^{-1},
    \qquad
    s_i
    :=
    L'(x_i^\top\hat\theta).
\]
By Assumption~\ref{ass:5regularity}, \(G_{-i}\) is well-defined and
\[
    \norm{G_{-i}}_{\op}\le \kappa^{-1}.
\]

\subsection{First-order expansion}
\label{sse:first_order}

\begin{lemma}[Exact leave-one-out identity]
\label{lem:loo-exact}
Under Assumption~\ref{ass:5regularity}, for every \(i\in[n]\),
\[
    \delta_i\hat\theta
    =
    -\frac{s_i}{n}\bar G_i x_i,
    \qquad
    \bar G_i
    :=
    \left[
        \int_0^1
        \nabla^2\Psi_X^{-i}
        \big(
            \hat\theta_{-i}+t\delta_i\hat\theta
        \big)\,dt
    \right]^{-1}.
\]
\end{lemma}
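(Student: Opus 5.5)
The identity follows from comparing the two first-order conditions and applying the fundamental theorem of calculus to the gradient of the leave-one-out objective along the segment joining the two minimizers.

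\emph{Step 1: the two first-order conditions.} We have
\[
    0=\nabla\Psi_X(\hat\theta)=\nabla\Psi_X^{-i}(\hat\theta)+\frac{1}{n}L'(x_i^\top\hat\theta)x_i=\nabla\Psi_X^{-i}(\hat\theta)+\frac{s_i}{n}x_i,
\]
and, by definition of \(\hat\theta_{-i}\), also \(\nabla\Psi_X^{-i}(\hat\theta_{-i})=0\). Subtracting the two identities gives
\[
    \nabla\Psi_X^{-i}(\hat\theta)-\nabla\Psi_X^{-i}(\hat\theta_{-i})=-\frac{s_i}{n}x_i .
\]

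\emph{Step 2: integral form of the gradient increment.} Since \(\Psi_X^{-i}\) is \(C^2\) (as \(L\) and \(\rho\) are \(C^2\) by Assumption~\ref{ass:5regularity}), the fundamental theorem of calculus applied to \(t\mapsto\nabla\Psi_X^{-i}(\hat\theta_{-i}+t\delta_i\hat\theta)\) gives
\[
    \nabla\Psi_X^{-i}(\hat\theta)-\nabla\Psi_X^{-i}(\hat\theta_{-i})
    =\left[\int_0^1\nabla^2\Psi_X^{-i}(\hat\theta_{-i}+t\delta_i\hat\theta)\,dt\right]\delta_i\hat\theta .
\]
The integrand is continuous in \(t\), so the matrix integral is well defined.

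\emph{Step 3: inversion.} By Assumption~\ref{ass:5regularity}, \(\nabla^2\Psi_X^{-i}(\theta)\succeq\kappa I_p\) for every \(\theta\), because the loss terms contribute positive semidefinite matrices \(\frac1n L''(x_j^\top\theta)x_jx_j^\top\) and \(\nabla^2\rho\succeq\kappa I_p\). Averaging over \(t\in[0,1]\) preserves this lower bound, so the integrated Hessian is invertible and \(\bar G_i\) is well defined, with \(\norm{\bar G_i}_{\op}\le\kappa^{-1}\). Multiplying the combined identity of Steps 1 and 2 by \(\bar G_i\) yields
\[
    \delta_i\hat\theta=-\frac{s_i}{n}\bar G_i x_i .
\]

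No step is a real obstacle. The only points needing care are that the score appearing is the train score \(s_i=L'(x_i^\top\hat\theta)\) (because it comes from the full objective evaluated at \(\hat\theta\)), and that invertibility of \(\bar G_i\) must be justified by uniform strong convexity along the whole segment, not only at its endpoints.
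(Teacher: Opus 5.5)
Your proposal is correct and follows essentially the same route as the paper. Both compare the first-order conditions $\nabla\Psi_X^{-i}(\hat\theta)=-\frac{s_i}{n}x_i$ and $\nabla\Psi_X^{-i}(\hat\theta_{-i})=0$, write the gradient increment as the integrated Hessian applied to $\delta_i\hat\theta$, and invert it using the uniform lower bound $\kappa I_p$ from Assumption~\ref{ass:5regularity}.
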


\begin{proof}
Since \(\nabla\Psi_X(\hat\theta)=0\) and
\[
    \Psi_X(\theta)
    =
    \Psi_X^{-i}(\theta)+\frac1nL(x_i^\top\theta),
\]
we have
\[
    \nabla\Psi_X^{-i}(\hat\theta)
    =
    -\frac1nL'(x_i^\top\hat\theta)x_i
    =
    -\frac{s_i}{n}x_i.
\]
Moreover, \(\nabla\Psi_X^{-i}(\hat\theta_{-i})=0\). Hence
\[
    -\frac{s_i}{n}x_i
    =
    \nabla\Psi_X^{-i}(\hat\theta)
    -
    \nabla\Psi_X^{-i}(\hat\theta_{-i})
    =
    \left[
        \int_0^1
        \nabla^2\Psi_X^{-i}
        \big(
            \hat\theta_{-i}+t\delta_i\hat\theta
        \big)\,dt
    \right]\delta_i\hat\theta .
\]
The integrated Hessian is invertible by Assumption~\ref{ass:5regularity}, and
the claim follows.
\end{proof}

Write
\[
    \bar G_i^{-1}
    =
    G_{-i}^{-1}+R_i,
    \qquad
    R_i
    =
    R_i^\rho+R_i^L,
\]
where
\begin{align}
    R_i^\rho
    &:=
    \int_0^1
    \left[
        \nabla^2\rho(\hat\theta_{-i}+t\delta_i\hat\theta)
        -
        \nabla^2\rho(\hat\theta_{-i})
    \right]\,dt,
    \label{eq:def_Rrho}
\end{align}
and
\begin{align}
    R_i^L
    &:=
    \frac1n X_{-i}\Gamma_iX_{-i}^\top .
    \label{eq:def_RL}
\end{align}
Here \(\Gamma_i\) is the diagonal matrix, indexed by \(j\ne i\), with entries
\[
    (\Gamma_i)_j
    :=
    \int_0^1
    \left[
        L''\!\left(
            x_j^\top(\hat\theta_{-i}+t\delta_i\hat\theta)
        \right)
        -
        L''(x_j^\top\hat\theta_{-i})
    \right]\,dt .
\]
The resolvent identity gives
\begin{align}
    G_{-i}-\bar G_i
    =
    \bar G_iR_iG_{-i}.
    \label{eq:resolvent-bar-G}
\end{align}

\begin{lemma}[Curvature bounds for the resolvents]
\label{lem:curvature-resolvent}
Under Assumption~\ref{ass:5regularity}, for every \(i\in[n]\) and every
realization,
\[
    \norm{\bar G_i}_{\op}+\norm{G_{-i}}_{\op}=O(1),
\]
\[
    \norm{\bar G_iX_{-i}}_{\op}
    +
    \norm{G_{-i}X_{-i}}_{\op}
    =
    O(\sqrt n),
\]
and
\[
    \norm{X_{-i}^\top\bar G_iX_{-i}}_{\op}
    +
    \norm{X_{-i}^\top G_{-i}X_{-i}}_{\op}
    =
    O(n).
\]
\end{lemma}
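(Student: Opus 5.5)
The three bounds follow from Löwner-order comparisons, using that each inverse Hessian dominates both \(\kappa I_p\) and \(\frac{\kappa_L}{n}X_{-i}X_{-i}^\top\). By Assumption~\ref{ass:5regularity}, for every \(\theta\in\R^p\),
\[
    \nabla^2\Psi_X^{-i}(\theta)
    =
    \frac1n\sum_{j\ne i}L''(x_j^\top\theta)x_jx_j^\top+\nabla^2\rho(\theta)
    \succeq
    \frac{\kappa_L}{n}X_{-i}X_{-i}^\top+\kappa I_p .
\]
This inequality is preserved under integration in \(t\). Hence, writing \(M\) for either \(G_{-i}^{-1}=\nabla^2\Psi_X^{-i}(\hat\theta_{-i})\) or \(\bar G_i^{-1}=\int_0^1\nabla^2\Psi_X^{-i}(\hat\theta_{-i}+t\delta_i\hat\theta)\,dt\), we get the same lower bound \(M\succeq\frac{\kappa_L}{n}X_{-i}X_{-i}^\top+\kappa I_p\). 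Both matrices are symmetric positive definite, so the argument can treat \(G_{-i}\) and \(\bar G_i\) simultaneously with \(A:=M^{-1}\).

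\textbf{Step 1: \(\norm{A}_{\op}\le\kappa^{-1}\).} This is immediate from \(M\succeq\kappa I_p\).

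\textbf{Step 2: \(\norm{X_{-i}^\top AX_{-i}}_{\op}\le n/\kappa_L\).} Since \(M\succeq\frac{\kappa_L}{n}X_{-i}X_{-i}^\top\), conjugating by \(A^{1/2}\) gives
\[
    \frac{\kappa_L}{n}A^{1/2}X_{-i}X_{-i}^\top A^{1/2}\preceq I_p .
\]
Therefore \(\norm{A^{1/2}X_{-i}}_{\op}^2\le n/\kappa_L\), and
\[
    \norm{X_{-i}^\top AX_{-i}}_{\op}
    =
    \norm{A^{1/2}X_{-i}}_{\op}^2
    \le
    \frac{n}{\kappa_L}.
\]
This is the same computation already used in the proof of Lemma~\ref{lem:square-summed-background-sensitivity}.

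\textbf{Step 3: \(\norm{AX_{-i}}_{\op}=O(\sqrt n)\).} Factor \(AX_{-i}=A^{1/2}\cdot A^{1/2}X_{-i}\). Then
\[
    \norm{AX_{-i}}_{\op}
    \le
    \norm{A^{1/2}}_{\op}\norm{A^{1/2}X_{-i}}_{\op}
    \le
    \kappa^{-1/2}\sqrt{n/\kappa_L}.
\]

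Summing the bounds for \(A=G_{-i}\) and \(A=\bar G_i\) gives the three claims. All constants depend only on \(\kappa\) and \(\kappa_L\), which are \(O(1)\) by assumption, so the bounds hold deterministically and uniformly in \(i\).

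There is no genuine obstacle in this argument. The only point needing care is to use the fact that \(\bar G_i^{-1}\) is an average of Hessians, each satisfying the same lower bound. This rather than, for instance, \(\norm{X_{-i}}_{\op}\), which is only controlled in probability, is what makes the bounds hold for every realization.
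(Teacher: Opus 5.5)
Your proposal is correct and follows the paper's proof essentially verbatim: you use the common lower bound $\kappa I_p+\frac{\kappa_L}{n}X_{-i}X_{-i}^\top$ for both $G_{-i}^{-1}$ and the averaged Hessian $\bar G_i^{-1}$, then conjugate by the square root of the resolvent. The only cosmetic difference is in the middle bound: you factor $AX_{-i}=A^{1/2}\cdot A^{1/2}X_{-i}$, whereas the paper writes $\norm{MX_{-i}}_{\op}^2=\norm{MX_{-i}X_{-i}^\top M}_{\op}\le\frac{n}{\kappa_L}\norm{M}_{\op}$, which is the same estimate.
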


\begin{proof}
For every \(\theta\),
\[
    \nabla^2\Psi_X^{-i}(\theta)
    \succeq
    \kappa I_p+\frac{\kappa_L}{n}X_{-i}X_{-i}^\top .
\]
The same lower bound holds after averaging in the definition of
\(\bar G_i^{-1}\). Thus \(G_{-i}^{-1}\) and \(\bar G_i^{-1}\) are both bounded
below by the right-hand side. This first gives
\[
    \norm{G_{-i}}_{\op}+\norm{\bar G_i}_{\op}=O(1).
\]
If \(M\in\{G_{-i},\bar G_i\}\), then
\[
    M^{-1}
    \succeq
    \frac{\kappa_L}{n}X_{-i}X_{-i}^\top.
\]
Therefore
\[
    M^{1/2}X_{-i}X_{-i}^\top M^{1/2}
    \preceq
    \frac{n}{\kappa_L}I_p.
\]
It follows that
\[
    \norm{MX_{-i}}_{\op}^2
    =
    \norm{MX_{-i}X_{-i}^\top M}_{\op}
    \le
    \frac{n}{\kappa_L}\norm{M}_{\op}
    =
    O(n),
\]
and
\[
    \norm{X_{-i}^\top MX_{-i}}_{\op}
    =
    \norm{M^{1/2}X_{-i}X_{-i}^\top M^{1/2}}_{\op}
    \le
    \frac{n}{\kappa_L}.
\]
This proves the claim.
\end{proof}

\begin{lemma}[Score comparison and score moments]
\label{lem:score-moments}
Under Assumptions~\ref{ass:1dim}--\ref{ass:5regularity}, define
\[
    s_i^-:=L'(x_i^\top\hat\theta_{-i}).
\]
Then, for every \(i\in[n]\),
\[
    |s_i|\le |s_i^-|.
\]
Moreover, for every fixed \(k\ge1\),
\[
    \sup_{i\in[n]}\Norm{s_i}_{L^k}
    +
    \sup_{i\in[n]}\Norm{s_i^-}_{L^k}
    =
    O_k(1).
\]
\end{lemma}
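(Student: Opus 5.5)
The plan has two parts: a deterministic pathwise inequality comparing the two scores, followed by a moment bound for the test score that improves Lemma~\ref{lem:test-score-control} from \(O(k_n^2)\) to \(O_k(1)\) at fixed \(k\).

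For the comparison \(|s_i|\le|s_i^-|\), we use the exact identity of Lemma~\ref{lem:loo-exact}. Multiplying \(\delta_i\hat\theta=-\frac{s_i}{n}\bar G_ix_i\) on the left by \(x_i^\top\) gives
\[
    x_i^\top\hat\theta-x_i^\top\hat\theta_{-i}
    =
    -\frac{s_i}{n}\,x_i^\top\bar G_ix_i ,
\]
where \(q_i:=\frac1n x_i^\top\bar G_ix_i\ge0\) because \(\bar G_i\succ0\). Set \(a:=x_i^\top\hat\theta\) and \(b:=x_i^\top\hat\theta_{-i}\). The mean value theorem gives \(s_i-s_i^-=L'(a)-L'(b)=\lambda(a-b)\) with \(\lambda\ge\kappa_L>0\). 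Substituting \(a-b=-q_is_i\) yields \(s_i-s_i^-=-\lambda q_is_i\), that is,
\[
    s_i(1+\lambda q_i)=s_i^- .
\]
Since \(1+\lambda q_i\ge1\), it follows that \(|s_i|\le|s_i^-|\). This step is entirely deterministic and holds for every realization.

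For the moments, the first part reduces everything to bounding \(\Norm{s_i^-}_{L^k}\) uniformly in \(i\). We have \(|s_i^-|\le O(1)(1+|x_i^\top\hat\theta_{-i}|)\), and we condition on \(X_{-i}\), of which \(\hat\theta_{-i}\) is a function. Writing \(x_i^\top\hat\theta_{-i}=(x_i-\E x_i)^\top\hat\theta_{-i}+\E[x_i]^\top\hat\theta_{-i}\), Lemma~\ref{lem:moments_of_x_i} (via Lemma~\ref{lem:linear-moments-poincare}) at fixed exponent \(k\) bounds the conditional \(L^k_i\) norm of the centered part by \(O_k(1)\norm{\hat\theta_{-i}}_2\). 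The mean part is bounded by \(O(1)\norm{\hat\theta_{-i}}_2\) using Assumption~\ref{ass:3bounded-means}. Taking the outer \(L^k\) norm and applying Theorem~\ref{thm:direct-hat-bound} with the constant sequence \(k_n=k\) gives \(\Norm{\norm{\hat\theta_{-i}}_2}_{L^k}=O(k)=O_k(1)\). Hence \(\Norm{s_i^-}_{L^k}=O_k(1)\) uniformly in \(i\), and the same bound transfers to \(s_i\) by the first part.

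The only delicate point is the sign argument in the comparison. It needs \(\bar G_i\) to be positive definite, which holds because every averaged Hessian is bounded below by \(\kappa I_p\). It also needs the mean-value slope of \(L'\) to be nonnegative, which follows from the uniform convexity \(L''\ge\kappa_L\). The moment bound itself is routine. Indeed, Lemma~\ref{lem:test-score-control} already gives \(O(k_n^2)\), and the statement here is just its specialization to fixed \(k\).
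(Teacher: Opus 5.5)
Your proposal is correct and matches the paper's proof. The paper writes $s_i-s_i^-=\alpha_i\,x_i^\top\delta_i\hat\theta$ with the integrated slope $\alpha_i=\int_0^1 L''\big(x_i^\top(\hat\theta_{-i}+t\delta_i\hat\theta)\big)\,dt\ge0$ in place of your mean-value slope, obtains $s_i\big(1+\tfrac{\alpha_i}{n}x_i^\top\bar G_ix_i\big)=s_i^-$, and then bounds $s_i^-$ by conditioning on $X_{-i}$ and combining Lemma~\ref{lem:moments_of_x_i} with Theorem~\ref{thm:direct-hat-bound}, exactly as you do.
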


\begin{proof}
Let
\[
    \alpha_i
    :=
    \int_0^1
    L''\!\left(
        x_i^\top(\hat\theta_{-i}+t\delta_i\hat\theta)
    \right)\,dt .
\]
Then
\[
    s_i
    =
    s_i^-+\alpha_i x_i^\top\delta_i\hat\theta .
\]
By Lemma~\ref{lem:loo-exact},
\[
    x_i^\top\delta_i\hat\theta
    =
    -\frac{s_i}{n}x_i^\top\bar G_i x_i.
\]
Thus
\[
    s_i
    \left(
        1+\frac{\alpha_i}{n}x_i^\top\bar G_i x_i
    \right)
    =
    s_i^-.
\]
Since \(\alpha_i\ge0\) and \(\bar G_i\succeq0\), we obtain
\[
    |s_i|\le |s_i^-|.
\]

It remains to bound \(s_i^-\). By Assumptions~\ref{ass:2origin} and
\ref{ass:5regularity},
\[
    |L'(t)|
    \le
    |L'(0)|+\norm{L''}_\infty |t|
    \le
    O(1)(1+|t|).
\]
Conditionally on \(X_{-i}\), the vector \(\hat\theta_{-i}\) is deterministic
and independent of \(x_i\). Hence Lemma~\ref{lem:moments_of_x_i} gives
\[
    \Norm{x_i^\top\hat\theta_{-i}}_{L_i^k}
    \le
    O_k(1)\norm{\hat\theta_{-i}}_2 .
\]
Taking the \(L^k\)-norm over \(X_{-i}\) and using
Theorem~\ref{thm:direct-hat-bound}, we get
\[
    \sup_{i\in[n]}
    \Norm{x_i^\top\hat\theta_{-i}}_{L^k}
    =
    O_k(1).
\]
Therefore
\[
    \sup_{i\in[n]}\Norm{s_i^-}_{L^k}=O_k(1),
\]
and the comparison \(|s_i|\le |s_i^-|\) gives the same bound for \(s_i\).
\end{proof}

\begin{lemma}[Cross-leverage bounds]
\label{lem:cross-leverage-poincare}
Under Assumptions~\ref{ass:1dim}--\ref{ass:5regularity}, for every fixed
\(k\ge1\),
\[
    \sup_{i\in[n]}
    \Norm{
        \norm{X_{-i}^\top G_{-i}x_i}_\infty
    }_{L^k}
    =
    O_k(\sqrt n\,\log n),
\]
and
\[
    \sup_{i\in[n]}
    \Norm{
        \norm{X_{-i}^\top G_{-i}x_i}_2
    }_{L^k}
    =
    O_k(n).
\]
\end{lemma}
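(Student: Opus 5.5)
The plan is to condition on \(X_{-i}\). Then \(G_{-i}\) and \(X_{-i}\) are frozen and independent of \(x_i\), so the vector \(v:=X_{-i}^\top G_{-i}x_i\in\R^{n-1}\) is a linear image of \(x_i\) with deterministic coefficients.

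\textbf{The \(\ell^2\) bound.} This follows directly from Lemma~\ref{lem:curvature-resolvent}. It gives
\[
    \norm{X_{-i}^\top G_{-i}x_i}_2\le\norm{X_{-i}^\top G_{-i}}_{\op}\norm{x_i}_2=O(\sqrt n)\norm{x_i}_2 ,
\]
and Lemma~\ref{lem:moments_of_x_i} gives \(\Norm{\norm{x_i}_2}_{L^k}=O_k(\sqrt n)\). Together these yield \(O_k(n)\) uniformly in \(i\). No randomness of \(X_{-i}\) enters beyond the deterministic operator bound, since \(\norm{G_{-i}X_{-i}}_{\op}=O(\sqrt n)\) holds for every realization.

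\textbf{The \(\ell^\infty\) bound.} Each coordinate is \(v_j=w_j^\top x_i\) with \(w_j:=G_{-i}x_j\) for \(j\ne i\). Conditionally on \(X_{-i}\), these are linear forms in \(x_i\) with deterministic directions. By Lemma~\ref{lem:moments_of_x_i} (through Lemma~\ref{lem:linear-moments-poincare} and the bounded means),
\[
    \Norm{w_j^\top x_i}_{L_i^{q}}\le O(q)\norm{w_j}_2 .
\]
These moment bounds are uniform in \(j\) once the directions are normalized. Applying the maximal inequality recalled after Lemma~\ref{lem:moments_of_x_i} (Lemma~\ref{lem:max-polynomial-moment-growth}) conditionally gives
\[
    \Norm{\max_{j\ne i}|w_j^\top x_i|}_{L_i^k}\le O(\max(k,\log n))\max_{j\ne i}\norm{w_j}_2 .
\]
Concretely, one takes \(q\asymp\log n\) and bounds the max by an \(\ell^q\) sum.

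It then remains to show \(\Norm{\max_{j}\norm{G_{-i}x_j}_2}_{L^k}=O_k(\sqrt n)\). This is immediate from \(\norm{G_{-i}}_{\op}\le\kappa^{-1}\) and \(\max_j\norm{x_j}_2\). The column-norm moments \(O(k\sqrt n)\) combine with the maximal lemma to give \(\Norm{\max_j\norm{x_j}_2}_{L^k}=O(\sqrt n\log n)\) at worst. Better, the Euclidean norm of a Poincaré vector concentrates around its mean at scale \(O(1)\), so \(\max_j\norm{x_j}_2\le O(\sqrt n)+O(\log n)\) in \(L^k\). Combining with Hölder yields \(O_k(\sqrt n\log n)\).

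\textbf{Main obstacle.} The delicate point is the bookkeeping of the log factors. A naive product of \(\log n\) from the max over \(j\) of \(w_j^\top x_i\) with another \(\log n\) from \(\max_j\norm{x_j}_2\) would give \((\log n)^2\). To avoid this, I would use the concentration of \(\norm{x_j}_2\) via Poincaré: \(\norm{x_j}_2\) is \(1\)-Lipschitz, so \(\norm{x_j}_2-\E\norm{x_j}_2\) has exponential tails with \(O(1)\) scale. This gives \(\max_j\norm{x_j}_2=O(\sqrt n)\) in every \(L^k\), keeping a single \(\log n\).
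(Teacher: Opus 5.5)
Your proposal is correct and follows the paper's proof: condition on $X_{-i}$, apply the linear Poincar\'e moment bound to the forms $x_i^\top G_{-i}x_j$, use Lemma~\ref{lem:max-polynomial-moment-growth} conditionally, and get the $\ell^2$ bound from $\norm{G_{-i}X_{-i}}_{\op}=O(\sqrt n)$ together with Lemma~\ref{lem:moments_of_x_i}. The one difference is in bounding the directions: $G_{-i}x_j$ is a column of $G_{-i}X_{-i}$, so the paper gets $\norm{G_{-i}x_j}_2\le\norm{G_{-i}X_{-i}}_{\op}=O(\sqrt n)$ deterministically from Lemma~\ref{lem:curvature-resolvent}, the same bound you already use for the $\ell^2$ part; this makes your concentration argument for $\max_j\norm{x_j}_2$ (valid, but extra) and the log-bookkeeping ``obstacle'' unnecessary.
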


\begin{proof}
Condition on \(X_{-i}\). Then \(G_{-i}\) and \((x_j)_{j\ne i}\) are fixed and
independent of \(x_i\). For every \(j\ne i\),
Lemma~\ref{lem:curvature-resolvent} gives
\[
    \norm{G_{-i}x_j}_2
    \le
    \norm{G_{-i}X_{-i}}_{\op}
    =
    O(\sqrt n).
\]
Thus, conditionally on \(X_{-i}\), the linear moment bound under the Poincar\'e
assumption gives, for every integer \(q\ge1\),
\[
    \max_{j\ne i}
    \Norm{x_i^\top G_{-i}x_j}_{L_i^q}
    \le
    O(q\sqrt n).
\]
Applying Lemma~\ref{lem:max-polynomial-moment-growth} conditionally on \(X_{-i}\) gives
\[
    \Norm{
        \max_{j\ne i}|x_i^\top G_{-i}x_j|
    }_{L_i^k}
    \le
    O_k(\sqrt n\,\log n).
\]
Taking the \(L^k\)-norm over \(X_{-i}\) proves the first estimate.

For the second estimate, Lemma~\ref{lem:curvature-resolvent} gives
\[
    \norm{X_{-i}^\top G_{-i}x_i}_2
    \le
    \norm{G_{-i}X_{-i}}_{\op}\norm{x_i}_2
    =
    O(\sqrt n)\norm{x_i}_2.
\]
The claim follows from Lemma~\ref{lem:moments_of_x_i}.
\end{proof}

\begin{lemma}[Frobenius bound for the loss-curvature drift]
\label{lem:gamma-frobenius-bound}
Under Assumptions~\ref{ass:1dim}--\ref{ass:6smooth-curvature}, for every fixed
\(k\ge1\),
\[
    \sup_{i\in[n]}
    \Norm{\norm{\Gamma_i}_F}_{L^k}
    =
    O_k(1).
\]
\end{lemma}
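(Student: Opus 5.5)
The plan is a pointwise bound on each diagonal entry of \(\Gamma_i\) via the Lipschitz property of \(L''\), followed by a sum over \(j\) controlled by the prediction stability estimate.

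\emph{Step 1 (pointwise bound).} By Assumption~\ref{ass:6smooth-curvature}, \(L''\) is \(C_L\)-Lipschitz. Hence, for every \(j\ne i\),
\[
    |(\Gamma_i)_j|
    \le
    \int_0^1 C_L\, t\,|x_j^\top\delta_i\hat\theta|\,dt
    =
    \frac{C_L}{2}\,|x_j^\top\delta_i\hat\theta| .
\]

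\emph{Step 2 (summing over \(j\)).} Since \(\Gamma_i\) is diagonal, squaring and summing gives
\[
    \norm{\Gamma_i}_F^2
    =
    \sum_{j\ne i}(\Gamma_i)_j^2
    \le
    \frac{C_L^2}{4}\sum_{j\ne i}(x_j^\top\delta_i\hat\theta)^2
    =
    \frac{C_L^2}{4}\norm{X_{-i}^\top\delta_i\hat\theta}_2^2 ,
\]
that is, \(\norm{\Gamma_i}_F\le \frac{C_L}{2}\norm{X_{-i}^\top\delta_i\hat\theta}_2\) deterministically.

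\emph{Step 3 (moments).} The proof of Lemma~\ref{lem:loo-prediction-stability} gives the deterministic bound \(\norm{X_{-i}^\top\delta_i\hat\theta}_2\le \norm{X^\top\delta_i\hat\theta}_2\le \kappa_L^{-1}|s_i^-|\). Combining this with Steps 1–2,
\[
    \norm{\Gamma_i}_F \le \frac{C_L}{2\kappa_L}\,|s_i^-| .
\]
Lemma~\ref{lem:score-moments} gives \(\sup_i\Norm{s_i^-}_{L^k}=O_k(1)\) for fixed \(k\). Taking \(L^k\)-norms therefore yields \(\sup_{i\in[n]}\Norm{\norm{\Gamma_i}_F}_{L^k}=O_k(1)\), uniformly in \(i\).

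It is important to use Lemma~\ref{lem:score-moments} here rather than the cruder \(O(k_n^2)\) bound of Lemma~\ref{lem:loo-prediction-stability}: the statement only concerns fixed \(k\), and the \(O_k(1)\) moment bound on \(s_i^-\) is exactly what makes the constant independent of \(n\). Beyond this choice there is no real obstacle. The argument goes through because the Frobenius norm of the diagonal matrix \(\Gamma_i\) is controlled by the prediction displacement, which prediction stability makes \(O(1)\) rather than \(O(\sqrt n)\).
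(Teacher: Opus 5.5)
Your proof is correct and follows the paper's argument. The Lipschitz property of $L''$ gives $\norm{\Gamma_i}_F\le O(1)\norm{X_{-i}^\top\delta_i\hat\theta}_2$, and prediction stability then controls the right-hand side. One small correction to your closing remark: citing Lemma~\ref{lem:loo-prediction-stability} directly with the constant sequence $k_n\equiv k$ already gives $O(k^2)=O_k(1)$, independent of $n$. Routing through Lemma~\ref{lem:score-moments} is therefore an equivalent variant, not an essential choice.
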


\begin{proof}
By Assumption~\ref{ass:6smooth-curvature}, for \(j\ne i\),
\[
    |(\Gamma_i)_j|
    \le
    O(1)|x_j^\top\delta_i\hat\theta|.
\]
Therefore
\[
    \norm{\Gamma_i}_F
    \le
    O(1)\norm{X_{-i}^\top\delta_i\hat\theta}_2.
\]
The result follows from Lemma~\ref{lem:loo-prediction-stability}.
\end{proof}

\begin{remark}\label{rem:why_frob_norm}
At this stage we only need a Frobenius bound on \(\Gamma_i\). The sharper
entrywise estimate
\[
    \sup_{i\in[n]}
    \Norm{\norm{\Gamma_i}_{\op}}_{L^k}
    =
    O_k\!\left(\frac{\log n}{\sqrt n}\right)
\]
will follow later from the projected first-order estimate (see Lemma~\ref{lem:second-order-curvature-remainders-xi}). The Frobenius bound
is sufficient for the first-order expansion because it is used through
\[
    \norm{\Gamma_i v}_2
    \le
    \norm{\Gamma_i}_F\norm{v}_\infty ,
    \qquad v\in\R^{n-1}.
\]
\end{remark}

\begin{lemma}[Regularizer drift term]
\label{lem:rho-drift}
Under Assumptions~\ref{ass:1dim}--\ref{ass:6smooth-curvature}, for every fixed
\(k\ge1\),
\[
    \sup_{i\in[n]}
    \Norm{
        \norm{
            \frac{s_i}{n}\bar G_iR_i^\rho G_{-i}x_i
        }_2
    }_{L^k}
    =
    O_k\!\left(\frac1n\right).
\]
\end{lemma}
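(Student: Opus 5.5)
We bound the vector norm pointwise by a product of factors and then use H\"older's inequality together with the moment bounds proved earlier. Submultiplicativity of the operator norm gives
\[
    \norm{\frac{s_i}{n}\bar G_iR_i^\rho G_{-i}x_i}_2
    \le
    \frac{|s_i|}{n}\,\norm{\bar G_i}_{\op}\,\norm{R_i^\rho}_{\op}\,\norm{G_{-i}}_{\op}\,\norm{x_i}_2 .
\]
Lemma~\ref{lem:curvature-resolvent} bounds both resolvents by $O(1)$ deterministically. Next we control $R_i^\rho$ through the Lipschitz property of $\nabla^2\rho$ in Assumption~\ref{ass:6smooth-curvature}. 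From the definition~\eqref{eq:def_Rrho},
\[
    \norm{R_i^\rho}_{\op}
    \le
    \int_0^1 C_\rho\, t\,\norm{\delta_i\hat\theta}_2\,dt
    =
    \frac{C_\rho}{2}\norm{\delta_i\hat\theta}_2 .
\]
Combining the two displays, the quantity is pointwise at most $O(1)\,n^{-1}|s_i|\,\norm{\delta_i\hat\theta}_2\,\norm{x_i}_2$.

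The crude estimate is not enough. Lemma~\ref{lem:loo-prediction-stability} gives $\norm{\delta_i\hat\theta}_2=O(n^{-1/2})$ and Lemma~\ref{lem:moments_of_x_i} gives $\norm{x_i}_2=O(\sqrt n)$ in $L^k$. Their product is $O(1)$, so the bound is only $O(1/n)$ if we can use these estimates jointly. Since $\delta_i\hat\theta$ and $x_i$ are dependent, we avoid taking the product of separate moment bounds. Instead we exploit the exact identity of Lemma~\ref{lem:loo-exact}, $\delta_i\hat\theta=-\frac{s_i}{n}\bar G_ix_i$, which gives
\[
    \norm{\delta_i\hat\theta}_2\,\norm{x_i}_2
    \le
    \frac{|s_i|}{n}\,\norm{\bar G_i}_{\op}\,\norm{x_i}_2^2
    =
    O(1)\,|s_i|\,\frac{\norm{x_i}_2^2}{n}.
\]
This yields the pointwise bound $O(1)\,n^{-1}\,s_i^2\,\norm{x_i}_2^2/n$.

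Finally we take $L^k$ norms and apply H\"older's inequality. Lemma~\ref{lem:score-moments} gives $\Norm{s_i^2}_{L^{2k}}=O_k(1)$ uniformly in $i$. Lemma~\ref{lem:moments_of_x_i} with fixed exponent gives $\Norm{\norm{x_i}_2^2/n}_{L^{2k}}=O_k(1)$ uniformly in $i$. Together these give the claimed $O_k(1/n)$ bound uniformly in $i\in[n]$.

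The only delicate point is the pairing of $\norm{\delta_i\hat\theta}_2$ with $\norm{x_i}_2$. Using Lemma~\ref{lem:loo-prediction-stability} directly would cost only the harmless Cauchy--Schwarz loss in fixed $k$, so that route would also work. The identity-based route is nonetheless cleaner and makes the $1/n$ scaling explicit. Both routes use only the $O_k(1)$ moment bounds for $s_i$ from Lemma~\ref{lem:score-moments}, rather than the $k_n^2$ bounds.
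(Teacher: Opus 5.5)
Your proof is correct and is essentially the paper's argument: submultiplicativity with the deterministic resolvent bounds, $\norm{R_i^\rho}_{\op}\le O(1)\norm{\delta_i\hat\theta}_2$ from the Lipschitz property of $\nabla^2\rho$, then H\"older with Lemmas~\ref{lem:score-moments} and~\ref{lem:moments_of_x_i}. The only variation is in bounding $\norm{\delta_i\hat\theta}_2$: the paper uses the pointwise bound $\norm{\delta_i\hat\theta}_2\le O(1)|s_i^-|/\sqrt n$ from the proof of Lemma~\ref{lem:loo-prediction-stability}, obtaining $\frac{O(1)}{n\sqrt n}|s_i|\,|s_i^-|\,\norm{x_i}_2$, whereas you use the identity of Lemma~\ref{lem:loo-exact}. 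Your early claim that the ``crude'' route is not enough because of dependence is mistaken, since H\"older does not need independence, and you concede this yourself at the end.
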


\begin{proof}
By Assumption~\ref{ass:6smooth-curvature} and
Lemma~\ref{lem:loo-prediction-stability},
\begin{align}
    \norm{R_i^\rho}_{\op}
    \le
    O(1)\norm{\delta_i\hat\theta}_2
    \le
    O(1)\frac{|s_i^-|}{\sqrt n}.
    \label{eq:bound_R_rho}
\end{align}
Consequently,
\[
    \norm{
        \frac{s_i}{n}\bar G_iR_i^\rho G_{-i}x_i
    }_2
    \le
    \frac{O(1)}{n\sqrt n}
    |s_i|\,|s_i^-|\,\norm{x_i}_2.
\]
The claim follows from Lemmas~\ref{lem:score-moments} and
\ref{lem:moments_of_x_i}, together with H\"older's inequality.
\end{proof}

\begin{lemma}[Loss drift term]
\label{lem:loss-drift}
Under Assumptions~\ref{ass:1dim}--\ref{ass:6smooth-curvature}, for every fixed
\(k\ge1\),
\[
    \sup_{i\in[n]}
    \Norm{
        \norm{
            \frac{s_i}{n}\bar G_iR_i^LG_{-i}x_i
        }_2
    }_{L^k}
    =
    O_k\!\left(\frac{\log n}{n}\right).
\]
\end{lemma}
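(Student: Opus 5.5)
We bound the vector \(\frac{s_i}{n}\bar G_iR_i^LG_{-i}x_i\) by unwrapping \(R_i^L=\frac1nX_{-i}\Gamma_iX_{-i}^\top\). Pointwise,
\[
    \norm{\frac{s_i}{n}\bar G_iR_i^LG_{-i}x_i}_2
    =
    \frac{|s_i|}{n^2}
    \norm{\bar G_iX_{-i}\,\Gamma_i\,X_{-i}^\top G_{-i}x_i}_2 .
\]
We first use Lemma~\ref{lem:curvature-resolvent} to bound \(\norm{\bar G_iX_{-i}}_{\op}=O(\sqrt n)\) deterministically. This reduces the problem to controlling \(\norm{\Gamma_i v}_2\) with \(v:=X_{-i}^\top G_{-i}x_i\in\R^{n-1}\).

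As anticipated in Remark~\ref{rem:why_frob_norm}, we will not use the operator norm of \(\Gamma_i\), which is not yet available. Since \(\Gamma_i\) is diagonal, we instead write
\[
    \norm{\Gamma_iv}_2\le\norm{\Gamma_i}_F\norm{v}_\infty .
\]
This yields the pointwise estimate
\[
    \norm{\frac{s_i}{n}\bar G_iR_i^LG_{-i}x_i}_2
    \le
    \frac{O(1)}{n^{3/2}}\,|s_i|\,\norm{\Gamma_i}_F\,
    \norm{X_{-i}^\top G_{-i}x_i}_\infty .
\]

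We then take \(L^k\)-norms and apply H\"older's inequality with exponent \(3k\) on each of the three random factors:
\begin{itemize}
    \item \(\Norm{s_i}_{L^{3k}}=O_k(1)\) by Lemma~\ref{lem:score-moments};
    \item \(\Norm{\norm{\Gamma_i}_F}_{L^{3k}}=O_k(1)\) by Lemma~\ref{lem:gamma-frobenius-bound}, which relies on Assumption~\ref{ass:6smooth-curvature} and Lemma~\ref{lem:loo-prediction-stability};
    \item \(\Norm{\norm{X_{-i}^\top G_{-i}x_i}_\infty}_{L^{3k}}=O_k(\sqrt n\log n)\) by the first estimate of Lemma~\ref{lem:cross-leverage-poincare}.
\end{itemize}
The product is \(O_k(n^{-3/2}\cdot\sqrt n\log n)=O_k(\log n/n)\). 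All constants are uniform in \(i\), so the supremum over \(i\in[n]\) gives the stated bound.

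The delicate point is the choice of norms in splitting \(\Gamma_iv\). A naive bound \(\norm{\Gamma_i}_{\op}\norm{v}_2\) with only the Frobenius control on \(\Gamma_i\) and \(\norm{v}_2=O(n)\) would lose a factor \(\sqrt n\). The \(\ell^\infty\) control of the cross-leverages is what recovers the rate. It costs only the \(\log n\) factor, which comes from the maximal inequality (Lemma~\ref{lem:max-polynomial-moment-growth}) used inside Lemma~\ref{lem:cross-leverage-poincare}.

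We should also check that \(\Gamma_i\) is indexed compatibly with \(v\), i.e.\ over \(j\neq i\). This holds by the definition \eqref{eq:def_RL}. Since \(\Gamma_i\) depends on \(x_i\), we apply H\"older's inequality unconditionally rather than conditioning on \(X_{-i}\).
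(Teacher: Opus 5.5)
Your proof is correct and is essentially the paper's argument. You use the same pointwise bound $\frac{|s_i|}{n^2}\norm{\bar G_iX_{-i}}_{\op}\norm{\Gamma_i}_F\norm{X_{-i}^\top G_{-i}x_i}_\infty$, obtained from the diagonal estimate $\norm{\Gamma_i v}_2\le\norm{\Gamma_i}_F\norm{v}_\infty$, and then apply H\"older's inequality with Lemmas~\ref{lem:score-moments}, \ref{lem:curvature-resolvent}, \ref{lem:gamma-frobenius-bound}, and \ref{lem:cross-leverage-poincare}.
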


\begin{proof}
Since \(R_i^L=n^{-1}X_{-i}\Gamma_iX_{-i}^\top\),
\[
    \norm{
        \frac{s_i}{n}\bar G_iR_i^LG_{-i}x_i
    }_2
    \le
    \frac{|s_i|}{n^2}
    \norm{\bar G_iX_{-i}}_{\op}\,
    \norm{\Gamma_i}_F\,
    \norm{X_{-i}^\top G_{-i}x_i}_\infty .
\]
By Lemmas~\ref{lem:score-moments},
\ref{lem:curvature-resolvent},
\ref{lem:gamma-frobenius-bound}, and
\ref{lem:cross-leverage-poincare}, H\"older's inequality gives
\[
    \sup_{i\in[n]}
    \Norm{
        \norm{
            \frac{s_i}{n}\bar G_iR_i^LG_{-i}x_i
        }_2
    }_{L^k}
    =
    O_k\!\left(\frac{\log n}{n}\right).
\]
\end{proof}

Define the first-order leave-one-out displacement based on the train score by
\[
    \delta_{i,\mathrm{tr}}^{(0)}\hat\theta
    :=
    -\frac{s_i}{n}G_{-i}x_i.
\]

\begin{theorem}[First-order vector leave-one-out expansion]
\label{thm:loo-remainder-tilde}
Under Assumptions~\ref{ass:1dim}--\ref{ass:6smooth-curvature}, for every fixed
\(k\ge1\),
\[
    \sup_{i\in[n]}
    \Norm{
        \norm{
            \delta_i\hat\theta-\delta_{i,\mathrm{tr}}^{(0)}\hat\theta
        }_2
    }_{L^k}
    =
    O_k\!\left(\frac{\log n}{n}\right),
\]
and
\[
    \sup_{i\in[n]}
    \Norm{
        \norm{
            X_{-i}^\top
            (\delta_i\hat\theta-\delta_{i,\mathrm{tr}}^{(0)}\hat\theta)
        }_2
    }_{L^k}
    =
    O_k\!\left(\frac{\log n}{\sqrt n}\right).
\]
\end{theorem}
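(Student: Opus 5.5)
The plan is to start from the exact identity of Lemma~\ref{lem:loo-exact} and the resolvent identity~\eqref{eq:resolvent-bar-G}. Together they give
\[
    \delta_i\hat\theta-\delta_{i,\mathrm{tr}}^{(0)}\hat\theta
    =
    -\frac{s_i}{n}(\bar G_i-G_{-i})x_i
    =
    \frac{s_i}{n}\bar G_iR_iG_{-i}x_i
    =
    \frac{s_i}{n}\bar G_iR_i^\rho G_{-i}x_i
    +
    \frac{s_i}{n}\bar G_iR_i^LG_{-i}x_i .
\]
For the first estimate, Lemma~\ref{lem:rho-drift} bounds the regularizer piece by \(O_k(1/n)\), and Lemma~\ref{lem:loss-drift} bounds the loss piece by \(O_k(\log n/n)\) in \(L^k\), uniformly in \(i\). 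The triangle inequality for the \(L^k\)-norm then gives the vector bound. So the first assertion is essentially already assembled by the preceding lemmas.

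For the second estimate, I multiply the same decomposition by \(X_{-i}^\top\) and bound each piece. The main tool is \(\norm{X_{-i}^\top\bar G_i}_{\op}=O(\sqrt n)\) from Lemma~\ref{lem:curvature-resolvent}.

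\emph{Regularizer piece.} We have
\[
    \norm{X_{-i}^\top\tfrac{s_i}{n}\bar G_iR_i^\rho G_{-i}x_i}_2
    \le
    \tfrac{|s_i|}{n}\,O(\sqrt n)\,\norm{R_i^\rho}_{\op}\,O(1)\,\norm{x_i}_2 .
\]
Inserting~\eqref{eq:bound_R_rho}, \(\norm{R_i^\rho}_{\op}\le O(1)|s_i^-|/\sqrt n\), and \(\norm{\norm{x_i}_2}_{L^k}=O_k(\sqrt n)\) gives \(O_k(1/\sqrt n)\) by H\"older's inequality and Lemma~\ref{lem:score-moments}.

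\emph{Loss piece.} Write \(X_{-i}^\top\bar G_iR_i^LG_{-i}x_i=n^{-1}(X_{-i}^\top\bar G_iX_{-i})\Gamma_i(X_{-i}^\top G_{-i}x_i)\). Lemma~\ref{lem:curvature-resolvent} gives \(\norm{X_{-i}^\top\bar G_iX_{-i}}_{\op}=O(n)\). Using \(\norm{\Gamma_iv}_2\le\norm{\Gamma_i}_F\norm{v}_\infty\) as in Remark~\ref{rem:why_frob_norm}, we get
\[
    \norm{X_{-i}^\top\tfrac{s_i}{n}\bar G_iR_i^LG_{-i}x_i}_2
    \le
    \frac{|s_i|}{n^2}\,O(n)\,\norm{\Gamma_i}_F\,\norm{X_{-i}^\top G_{-i}x_i}_\infty .
\]
Lemma~\ref{lem:gamma-frobenius-bound}, Lemma~\ref{lem:cross-leverage-poincare} and Lemma~\ref{lem:score-moments} combined by H\"older give \(O_k(n^{-1}\cdot\sqrt n\log n)=O_k(\log n/\sqrt n)\).

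The constants are uniform in \(i\) because every cited lemma is. The only delicate point is using the \(\ell^\infty\) cross-leverage bound rather than the \(\ell^2\) one in the loss piece. The \(\ell^2\) bound costs a factor \(\sqrt n/\log n\) and would destroy the rate. Pairing \(\norm{\Gamma_i}_F\), which is only \(O_k(1)\), with \(\norm{\cdot}_\infty\) is exactly what makes the estimate close. Everything else is a routine H\"older bookkeeping of fixed-order moments.
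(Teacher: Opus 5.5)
Your proposal is correct and follows the paper's proof essentially verbatim. You use the same decomposition into $\frac{s_i}{n}\bar G_iR_i^\rho G_{-i}x_i+\frac{s_i}{n}\bar G_iR_i^LG_{-i}x_i$ via Lemma~\ref{lem:loo-exact} and \eqref{eq:resolvent-bar-G}, and Lemmas~\ref{lem:rho-drift} and~\ref{lem:loss-drift} for the vector bound. For the projected bound you make the same estimates: \eqref{eq:bound_R_rho} with $\norm{X_{-i}^\top\bar G_i}_{\op}=O(\sqrt n)$ on the regularizer piece, and $\norm{X_{-i}^\top\bar G_iX_{-i}}_{\op}=O(n)$ paired with $\norm{\Gamma_i}_F$ and the $\ell^\infty$ cross-leverage bound on the loss piece.
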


\begin{proof}
Using Lemma~\ref{lem:loo-exact} and \eqref{eq:resolvent-bar-G},
\[
    \delta_i\hat\theta-\delta_{i,\mathrm{tr}}^{(0)}\hat\theta
    =
    \frac{s_i}{n}\bar G_iR_iG_{-i}x_i
    =
    \frac{s_i}{n}\bar G_iR_i^\rho G_{-i}x_i
    +
    \frac{s_i}{n}\bar G_iR_i^LG_{-i}x_i.
\]
The first estimate follows from Lemmas~\ref{lem:rho-drift} and
\ref{lem:loss-drift}.

For the projected estimate, multiply the preceding identity by
\(X_{-i}^\top\). For the regularizer contribution, using
\eqref{eq:bound_R_rho}, Lemma~\ref{lem:curvature-resolvent}, and
\(\norm{G_{-i}x_i}_2\le O(1)\norm{x_i}_2\), we get
\[
    \Norm{
        \norm{
            X_{-i}^\top
            \frac{s_i}{n}\bar G_iR_i^\rho G_{-i}x_i
        }_2
    }_{L^k}
    =
    O_k\!\left(\frac1{\sqrt n}\right).
\]
For the loss contribution,
\[
    \norm{
        X_{-i}^\top
        \frac{s_i}{n}\bar G_iR_i^LG_{-i}x_i
    }_2
    \le
    \frac{|s_i|}{n^2}
    \norm{X_{-i}^\top\bar G_iX_{-i}}_{\op}\,
    \norm{\Gamma_i}_F\,
    \norm{X_{-i}^\top G_{-i}x_i}_\infty .
\]
Using Lemmas~\ref{lem:score-moments},
\ref{lem:curvature-resolvent},
\ref{lem:gamma-frobenius-bound}, and
\ref{lem:cross-leverage-poincare}, we obtain
\[
    \sup_{i\in[n]}
    \Norm{
        \norm{
            X_{-i}^\top
            \frac{s_i}{n}\bar G_iR_i^LG_{-i}x_i
        }_2
    }_{L^k}
    =
    O_k\!\left(\frac{\log n}{\sqrt n}\right).
\]
Combining the two contributions proves the projected estimate.
\end{proof}

Recall that the CLT will be proved using the following simplification of
Theorem~\ref{the:conditional-loo-perturbative}:
\begin{align}\label{eq:loo_bound_wasserstein_bound}
    d_W \left( \frac{f_n(X)- \mathbb E[f_n(X)]}{\sigma_n}, \mathcal N \right)
    \leq C \left( \frac{n}{\sigma_n^2} \omega_c + \frac{n}{\sigma_n^3}\omega_\Delta^3 \right).
\end{align}
Recall the notation for the leave-one-out increment of
\(f_n(X)=\sqrt n\,u^\top \hat \theta\):
\[
    \delta_i f_n(X)
    =
    f_n(X)-f_n^{-i}(X_{-i})
    =
    \sqrt n\,u^\top\delta_i\hat\theta(X).
\]
One can bound the intrinsic increment term appearing
in~\eqref{eq:loo_bound_wasserstein_bound}:
\begin{align}\label{eq:borne_omega_delta}
    \omega_\Delta
    =
    \sup_{i\in[n]}
    \Norm{\delta_i f_n(X)}_{L^3}
    =
    O\!\left(\frac{\log n}{\sqrt n}\right).
\end{align}
This follows from Theorem~\ref{thm:loo-remainder-tilde} and the following
lemma.

\begin{lemma}\label{lem:borne_delta_tr_0}
Under Assumptions~\ref{ass:1dim}--\ref{ass:6smooth-curvature},
\[
    \sup_{i\in[n]}
    \Norm{
        u^\top\delta_{i,\mathrm{tr}}^{(0)}\hat\theta
    }_{L^k}
    =
    O_k\!\left(\frac1n\right).
\]
\end{lemma}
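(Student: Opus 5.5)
Since \(\delta_{i,\mathrm{tr}}^{(0)}\hat\theta=-\frac{s_i}{n}G_{-i}x_i\), the statement reduces to
\[
    \Norm{s_i\,u^\top G_{-i}x_i}_{L^k}=O_k(1)
\]
uniformly in \(i\). By H\"older's inequality,
\[
    \Norm{s_i\,u^\top G_{-i}x_i}_{L^k}
    \le
    \Norm{s_i}_{L^{2k}}\,\Norm{u^\top G_{-i}x_i}_{L^{2k}} .
\]
Lemma~\ref{lem:score-moments} gives \(\sup_i\Norm{s_i}_{L^{2k}}=O_k(1)\). It therefore suffices to show that \(\sup_i\Norm{u^\top G_{-i}x_i}_{L^{2k}}=O_k(1)\).

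For this second factor we exploit the leave-one-out structure. The matrix \(G_{-i}=(\nabla^2\Psi_X^{-i}(\hat\theta_{-i}))^{-1}\) is a measurable function of \(X_{-i}\) only, hence independent of \(x_i\). Conditionally on \(X_{-i}\), the vector \(v_i:=G_{-i}u\) is deterministic, using the symmetry of \(G_{-i}\). It satisfies \(\norm{v_i}_2\le\norm{G_{-i}}_{\op}\norm{u}_2\le\kappa^{-1}\) by Lemma~\ref{lem:curvature-resolvent}. Applying Lemma~\ref{lem:moments_of_x_i} conditionally, with the fixed exponent \(2k\) and the direction \(v_i/\norm{v_i}_2\), gives
\[
    \Norm{v_i^\top x_i}_{L_i^{2k}}\le O_k(1)\norm{v_i}_2\le O_k(1).
\]
This bound is deterministic and uniform in \(i\) and in \(X_{-i}\), so taking the \(L^{2k}\)-norm over \(X_{-i}\) preserves it. Combining with the score bound proves the lemma.

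The only point requiring care is the independence of \(G_{-i}\) from \(x_i\). This is why the expansion is written with \(G_{-i}\) rather than \(G\) or \(\bar G_i\), and why no logarithmic loss appears. The score \(s_i\) does depend on \(x_i\), but H\"older's inequality decouples it, and its moments are already uniformly controlled. Note also that Lemma~\ref{lem:moments_of_x_i} is stated for deterministic directions; the conditional application is legitimate because the direction is frozen once \(X_{-i}\) is fixed, and the Poincar\'e constant together with the mean bound are uniform.

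Finally, combining this estimate with Theorem~\ref{thm:loo-remainder-tilde} at \(k=3\) gives
\[
    \omega_\Delta\le\sqrt n\left(O\!\left(\tfrac1n\right)+O\!\left(\tfrac{\log n}{n}\right)\right)=O\!\left(\tfrac{\log n}{\sqrt n}\right).
\]
This is \eqref{eq:borne_omega_delta}.
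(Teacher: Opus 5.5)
Your proof is correct and follows the paper's argument essentially verbatim: bound $|u^\top\delta_{i,\mathrm{tr}}^{(0)}\hat\theta|\le \frac{|s_i|}{n}|u^\top G_{-i}x_i|$, control $u^\top G_{-i}x_i$ by conditioning on $X_{-i}$ (so that $G_{-i}u$ is a frozen vector of bounded norm) and applying Lemma~\ref{lem:moments_of_x_i}, then combine with Lemma~\ref{lem:score-moments} via H\"older's inequality. Your closing deduction of \eqref{eq:borne_omega_delta} from Theorem~\ref{thm:loo-remainder-tilde} also matches how the paper uses this lemma.
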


\begin{proof}
For the first scalar bound,
\[
    |u^\top\delta_{i,\mathrm{tr}}^{(0)}\hat\theta|
    \le
    \frac{|s_i|}{n}|u^\top G_{-i}x_i|.
\]
Conditionally on \(X_{-i}\), the vector \(G_{-i}u\) is deterministic and has
bounded Euclidean norm. Hence Lemma~\ref{lem:moments_of_x_i} gives
\[
    \Norm{u^\top G_{-i}x_i}_{L^k}
    =
    O_k(1).
\]
Together with Lemma~\ref{lem:score-moments} and H\"older's inequality, this
proves the result.
\end{proof}

\begin{remark}[Necessity of a second-order expansion of the leave-one-out displacement]
\label{rem:csqce_wasserstein_bound_one_order_leaveone_out_expension}

The bound
\begin{align}\label{eq:bound_delata_i_1st_expansion}
    \sup_{i\in[n]}
    \Norm{
        \norm{
            \delta_i\hat\theta-\delta_{i,\mathrm{tr}}^{(0)}\hat\theta
        }_2
    }_{L^k}
    =
    O_k\!\left(\frac{\log n}{n}\right),
\end{align}
given by Theorem~\ref{thm:loo-remainder-tilde}, is not sufficient to control
the intrinsic covariance quantity appearing in~\eqref{eq:loo_bound_wasserstein_bound}:
\[
    \omega_c
    =
    n\sup_{j\in[n]}
    \sqrt{
        \Var\!\left(
            \Cov_j\!\left(
                u^\top\delta_j\hat\theta(X),
                u^\top\delta_j\hat\theta(X^{A_j})
            \right)
        \right)
    }.
\]
A direct but sharp computation, which we omit for conciseness, only gives from
\eqref{eq:bound_delata_i_1st_expansion} the bound
\begin{align*}
    & n^2\left\vert \Var\!\left(
            \Cov_j\!\left(
                u^\top\delta_j\hat\theta(X),
                u^\top\delta_j\hat\theta(X^{A_j})
            \right)
        \right)
        -
        \Var\!\left(
            \Cov_j\!\left(
                u^\top\delta_{j,\mathrm{tr}}^{(0)}\hat\theta(X),
                u^\top\delta_{j,\mathrm{tr}}^{(0)}\hat\theta(X^{A_j})
            \right)
        \right) \right\vert  \\
    &\qquad
    =
    O\!\left(\frac{(\log n)^4}{n^2}\right).
\end{align*}
This is too large for the target bound \(\omega_c=o(1/n)\), as can be checked
from~\eqref{eq:loo_bound_wasserstein_bound}. This motivates the second-order
expansions below. Before doing so, and to better disentangle the dependence of
\(s_i\) on \(x_i\), we introduce an approximation of \(s_i\) which can be
expressed as a Lipschitz functional of \(x_i^\top \hat \theta_{-i}\), namely
\(s_i^\zeta\). This will be crucial for efficiently bounding \(\omega_c\).
More specifically, we need to work with \(s_i^\zeta\) in order to apply
Lemma~\ref{lem:differentiated-active-score-contractions} with sufficiently
sharp bounds.
\end{remark}

\subsection{Averaged-leverage score}
\label{subsec:averaged-leverage-score}

We now replace the train score \(s_i=L'(x_i^\top\hat\theta)\) by a
leave-one-out score depending on \(x_i\) only through
\(x_i^\top\hat\theta_{-i}\). Let us introduce the averaged leverage:
\[
    \gamma_i
    :=
    \E\!\left[
        \frac1n x_i^\top G_{-i}x_i
    \right].
\]
Since \(x_i\) is independent of \(G_{-i}\), this can also be written as
\[
    \gamma_i
    =
    \E\!\left[
        \frac1n\tr(\Sigma_iG_{-i})
    \right],
    \qquad
    \Sigma_i:=\E[x_ix_i^\top].
\]
By Assumptions~\ref{ass:poincare-data} and~\ref{ass:3bounded-means},
\[
    \norm{\Sigma_i}_{\op}=O(1),
\]
and since \(\norm{G_{-i}}_{\op}=O(1)\) and \(p=O(n)\),
\[
    0\le \gamma_i=O(1).
\]

The following lemma allows to replace the
conditional averaged leverage by its full expectation.
\begin{lemma}[Concentration of the averaged leverage]
\label{lem:deterministic-gamma-concentration}
Under Assumptions~\ref{ass:1dim}--\ref{ass:7third-order-curvature}, for every
fixed \(k\ge1\),
\[
    \sup_{i\in[n]}
    \Norm{
        \frac1n x_i^\top G_{-i}x_i-\gamma_i
    }_{L^k}
    =
    O_k\!\left(\frac1{\sqrt n}\right).
\]
\end{lemma}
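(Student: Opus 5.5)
We split the difference into a quadratic-form fluctuation and a trace fluctuation:
\[
    \frac1n x_i^\top G_{-i}x_i-\gamma_i
    =
    \frac1n\big(x_i^\top G_{-i}x_i-\tr(\Sigma_iG_{-i})\big)
    +
    \frac1n\big(\tr(\Sigma_iG_{-i})-\E\tr(\Sigma_iG_{-i})\big).
\]

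\emph{First term.} Conditionally on \(X_{-i}\), the matrix \(G_{-i}\) is symmetric and independent of \(x_i\). Lemma~\ref{lem:conditional-quadratic-form-poincare} therefore gives a conditional \(L_i^k\)-bound of order \(O_k(\norm{G_{-i}}_F)\). Since \(\norm{G_{-i}}_F\le\sqrt p\,\norm{G_{-i}}_{\op}=O(\sqrt n)\) by Lemma~\ref{lem:curvature-resolvent} and \(p=O(n)\), dividing by \(n\) yields \(O_k(n^{-1/2})\), uniformly in \(X_{-i}\). Integrating over \(X_{-i}\) then gives the same bound in \(L^k\).

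\emph{Second term.} This is the main step. Set \(F(X_{-i}):=\frac1n\tr(\Sigma_iG_{-i})\). We want \(\Norm{F-\E F}_{L^k}=O_k(n^{-1/2})\), and plan to use the \(L^k\)-version of the tensorized Poincar\'e inequality (Lemma~\ref{lem:concentration_X}, via Proposition~\ref{prop:tensorization}). This reduces the task to a moment bound on \(\big(\sum_{j\ne i}(\norm{\mathbb D_j F}^*)^2\big)^{1/2}\) of order \(O_k(n^{-1/2})\).

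Differentiating, \(\mathbb D_j[G_{-i}]=-G_{-i}\,\mathbb D_j[\nabla^2\Psi^{-i}_X(\hat\theta_{-i})]\,G_{-i}\). The Hessian derivative contains three pieces:
\begin{itemize}
    \item the direct term \(\frac1n L''(x_j^\top\hat\theta_{-i})(hx_j^\top+x_jh^\top)\);
    \item the term \(\frac1n L''' (x_j^\top\hat\theta_{-i})(h^\top\hat\theta_{-i})x_jx_j^\top\);
    \item indirect terms through \(\mathbb D_j\hat\theta_{-i}\), namely \(\frac1n\sum_l L'''(x_l^\top\hat\theta_{-i})(x_l^\top\mathbb D_j\hat\theta_{-i}[h])x_lx_l^\top\) plus a \(\nabla^3\rho\) term. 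Here Assumption~\ref{ass:7third-order-curvature} is used.
\end{itemize}
Writing \(B:=G_{-i}\Sigma_iG_{-i}\), which satisfies \(\norm{B}_{\op}=O(1)\), each piece contributes \(\frac1n\tr(B\,\cdot)\).

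The direct terms give \(\frac{O(1)}{n^2}|h^\top Bx_j|\) and \(\frac{O(1)}{n^2}|h^\top\hat\theta_{-i}|\,x_j^\top Bx_j\). After squaring and summing over \(j\), they are controlled by \(\norm{BX_{-i}}_{\op}^2/n^4=O(n^{-3})\) and by \(n\cdot\norm{\hat\theta_{-i}}^2(\max_j\norm{x_j}^2)^2/n^4\). The latter is \(O(\mathrm{polylog}/n)\) in \(L^k\) by Lemma~\ref{lem:moments_of_x_i} and Theorem~\ref{thm:direct-hat-bound}; this crude bound already suffices.

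For the indirect term, put \(D:=\diag\big(L'''(x_l^\top\hat\theta_{-i})\,x_l^\top Bx_l\big)\). Then
\[
    \frac1n\sum_l L'''(\cdot)\,(x_l^\top Bx_l)\,x_l^\top v
    =
    \frac1n\mathbf 1^\top DX_{-i}^\top v,
\]
with \(v=\mathbb D_j\hat\theta_{-i}[h]\). This is a linear functional \(z^\top v\) with
\[
    z:=\frac1n X_{-i}D\mathbf 1,
    \qquad
    \norm{z}_2\le\frac1n\norm{X_{-i}}_{\op}\sqrt n\max_l|x_l^\top Bx_l|=O(\mathrm{polylog}).
\]
Lemma~\ref{lem:square-summed-background-sensitivity}, applied to \(\hat\theta_{-i}\) (same proof), then gives \(\sum_j(\norm{z^\top\mathbb D_j\hat\theta_{-i}}^*)^2=O(\mathrm{polylog}/n)\). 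The \(\nabla^3\rho\) term is handled the same way, using \(\norm{\nabla^3\rho[v]}_{\op}\le\gamma_\rho\norm{v}\) and \(\tr\) bounded by \(p\,\norm{\cdot}_{\op}\), which after the \(1/n\) factor gives a vector \(z\) of bounded norm. Lemma~\ref{lem:op_norm_X} controls \(\norm{X_{-i}}_{\op}\).

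The main obstacle is the polylogarithmic losses coming from \(\max_l\norm{x_l}\) and \(\max_l |x_l^\top B x_l|\), since the statement has a clean \(n^{-1/2}\) rate. To remove them, I would bound \(x_l^\top Bx_l\) through \(\tr(\Sigma_l B)+O(\sqrt n)\) fluctuations, which is a deterministic \(O(n)\), instead of taking the maximum. That is, I would keep the sums \(\sum_l\) and use Cauchy--Schwarz with \(\norm{X_{-i}}_{\op}\) rather than sup-bounds. If this refinement fails, the resulting polylog factor would still be harmless for the downstream use.
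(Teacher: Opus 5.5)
Your proposal follows the paper's proof essentially step for step: the same split into a conditional quadratic-form fluctuation and a trace fluctuation, and the same reduction via the tensorized $L^k$-Poincar\'e inequality (Lemma~\ref{lem:tensorized-Lk-poincare}). It also uses the same local/background decomposition of $\mathbb D_j[G_{-i}^{-1}]$, with the background term written as $z^\top\mathbb D_j[\hat\theta_{-i}][h]$ and fed into Lemma~\ref{lem:square-summed-background-sensitivity}. Your sum-keeping refinement, $\Norm{(\sum_\ell\norm{x_\ell}_2^4)^{1/2}}_{L^{k}}=O_k(n^{3/2})$ paired with $\Norm{\norm{X_{-i}}_{\op}}_{L^k}=O_k(\sqrt n)$ instead of maxima, is exactly how the paper gets the clean $n^{-1/2}$ rate; the max-based version alone only gives $\mathrm{polylog}/\sqrt n$, so the refinement is needed rather than optional.

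Fix two bookkeeping slips. First, your $z=\frac1nX_{-i}D\mathbf 1$ (the paper's $z_i^\gamma$) has norm of order $n$, not polylogarithmic, because $x_l^\top Bx_l\approx\tr(\Sigma_lB)$ can be of order $p$. It is the outer factor $\frac1n$ in $F$ that turns the resulting $O_k(\sqrt n)$ square-summed bound into $O_k(n^{-1/2})$. Second, $\sum_j\norm{Bx_j}_2^2=\norm{BX_{-i}}_F^2=O(n^2)$, a Frobenius rather than an operator-norm quantity, though this term is still negligible.
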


\begin{proof}
We decompose
\[
\begin{aligned}
    \frac1n x_i^\top G_{-i}x_i-\gamma_i
    &=
    \frac1n
    \left(
        x_i^\top G_{-i}x_i-\tr(\Sigma_iG_{-i})
    \right)
    +
    \left(
        \frac1n\tr(\Sigma_iG_{-i})
        -
        \E\!\left[\frac1n\tr(\Sigma_iG_{-i})\right]
    \right).
\end{aligned}
\]

We first treat the conditional quadratic fluctuation. Conditionally on
\(X_{-i}\), the matrix \(G_{-i}\) is deterministic and independent of \(x_i\).
Thus Lemma~\ref{lem:conditional-quadratic-form-poincare} gives
\[
    \sup_{i\in[n]}
    \left\|
        \frac1n
        \left(
            x_i^\top G_{-i}x_i-\tr(\Sigma_iG_{-i})
        \right)
    \right\|_{L^k}
        \le \sup_{i\in[n]}
    O_k\!\left(\frac{\norm{G_{-i}}_F}{n}\right)
    =
    O_k\!\left(\frac1{\sqrt n}\right),
\]
since \(\sup_{i\in[n]}\norm{G_{-i}}_{\op}=O(1)\) and \(p=O(n)\).

It remains to control
\[
    \frac1n\tr(\Sigma_iG_{-i})
    -
    \E\!\left[\frac1n\tr(\Sigma_iG_{-i})\right].
\]
By Lemma~\ref{lem:tensorized-Lk-poincare}, it is enough to prove
\[
    \sup_{i\in[n]}
    \left\|
        \left(
            \sum_{j\ne i}
            \left(
                \norm{
                    \mathbb D_j\!\left[
                        \frac1n\tr(\Sigma_iG_{-i})
                    \right]
                }_2^*
            \right)^2
        \right)^{1/2}
    \right\|_{L^k}
    =
    O_k\!\left(\frac1{\sqrt n}\right).
\]

Fix \(j\ne i\). By the resolvent differential identity,
\[
\begin{aligned}
    \mathbb D_j\!\left[
        \frac1n\tr(\Sigma_iG_{-i})
    \right][h]
    &=
    \frac1n\tr\!\left(
        \Sigma_i\mathbb D_j[G_{-i}][h]
    \right) 
    =
    -\frac1n
    \tr\!\left(
        G_{-i}\Sigma_iG_{-i}\,
        \mathbb D_j[G_{-i}^{-1}][h]
    \right).
\end{aligned}
\]

We now write explicitly the derivative of \(G_{-i}^{-1}\). Since
\[
    G_{-i}^{-1}
    =
    \frac1n\sum_{\ell\ne i}
    L''(x_\ell^\top\hat\theta_{-i})x_\ell x_\ell^\top
    +
    \nabla^2\rho(\hat\theta_{-i}),
\]
we decompose
\[
    \mathbb D_j[G_{-i}^{-1}][h]
    =
    \mathcal L_{ij}[h]+\mathcal B_{ij}[h],
\]
where the local part, coming from the direct differentiation of the column
\(x_j\), is
\[
\begin{aligned}
    \mathcal L_{ij}[h]
    &:=
    \frac1n
    L''(x_j^\top\hat\theta_{-i})
    \left(
        h x_j^\top+x_jh^\top
    \right) 
    +
    \frac1n
    L'''(x_j^\top\hat\theta_{-i})
    (h^\top\hat\theta_{-i})
    x_jx_j^\top,
\end{aligned}
\]
and the background part, coming from the variation of
\(\hat\theta_{-i}\), is
\[
\begin{aligned}
    \mathcal B_{ij}[h]
    &:=
    \frac1n
    \sum_{\ell\ne i}
    L'''(x_\ell^\top\hat\theta_{-i})
    \left(
        x_\ell^\top\mathbb D_j[\hat\theta_{-i}][h]
    \right)
    x_\ell x_\ell^\top
    +
    \nabla^3\rho(\hat\theta_{-i})
    \big[
        \mathbb D_j[\hat\theta_{-i}][h]
    \big].
\end{aligned}
\]
Thus the derivative of the trace splits into a local and a background
contribution.

We first bound the local contribution. Using
\(\norm{L''}_\infty=O(1)\), \(\norm{L'''}_\infty=O(1)\),
\(\norm{G_{-i}\Sigma_iG_{-i}}_{\op}=O(1)\), and
\(\norm{h}_2\le1\), we have
\[
\begin{aligned}
    &\left|
        \frac1n
        \tr\!\left(
            G_{-i}\Sigma_iG_{-i}\mathcal L_{ij}[h]
        \right)
    \right|
    \le
    \frac{O(1)}{n^2}
    \left(
        \norm{G_{-i}\Sigma_iG_{-i}x_j}_2
        +
        \norm{\hat\theta_{-i}}_2
        \left|
            x_j^\top G_{-i}\Sigma_iG_{-i}x_j
        \right|
    \right).
\end{aligned}
\]
Consequently,
\[
\begin{aligned}
    &\left\|
        \left(
            \sum_{j\ne i}
            \left[
                \sup_{\norm{h}_2\le1}
                \left|
                    \frac1n
                    \tr\!\left(
                        G_{-i}\Sigma_iG_{-i}\mathcal L_{ij}[h]
                    \right)
                \right|
            \right]^2
        \right)^{1/2}
    \right\|_{L^k}                                           \\
    &\qquad\le
    \frac{O(1)}{n^2}
    \left\|
        \left(
            \sum_{j\ne i}
            \norm{G_{-i}\Sigma_iG_{-i}x_j}_2^2
        \right)^{1/2}
    \right\|_{L^k}
    +
    \frac{O(1)}{n^2}
    \left\|
        \norm{\hat\theta_{-i}}_2
        \left(
            \sum_{j\ne i}
            \left|
                x_j^\top G_{-i}\Sigma_iG_{-i}x_j
            \right|^2
        \right)^{1/2}
    \right\|_{L^k}.
\end{aligned}
\]
The first term is \(O_k(n^{-1})\), since
\[
    \left(
        \sum_{j\ne i}
        \norm{G_{-i}\Sigma_iG_{-i}x_j}_2^2
    \right)^{1/2}
    \le
    O(1)\norm{X_{-i}}_F
\]
and \(\Norm{\norm{X_{-i}}_F}_{L^k}=O_k(n)\). For the second term,
\[
    \left|
        x_j^\top G_{-i}\Sigma_iG_{-i}x_j
    \right|
    \le
    \norm{G_{-i}\Sigma_iG_{-i}}_{\op}\norm{x_j}_2^2 
    \le
    \norm{G_{-i}}_{\op}^2\norm{\Sigma_i}_{\op}\norm{x_j}_2^2
    \le
    O(1)\norm{x_j}_2^2,
\]
so
\begin{align*}
        \left\|
        \left(
            \sum_{j\ne i}
            \left|
                x_j^\top G_{-i}\Sigma_iG_{-i}x_j
            \right|^2
        \right)^{1/2}
    \right\|_{L^{2k}}
    &\le
    O(1)
    \left(
        \sum_{j\ne i}
        \Norm{\norm{x_j}_2^2}_{L^{2k}}^2
    \right)^{1/2}\\
    &=
    O(1)
    \left(
        \sum_{j\ne i}
        \Norm{\norm{x_j}_2}_{L^{4k}}^4
    \right)^{1/2}
    =
    O_k(n^{3/2}).
\end{align*}

Together with
\[
    \Norm{\norm{\hat\theta_{-i}}_2}_{L^{2k}}=O_k(1),
\]
this gives
\[
    \left\|
        \left(
            \sum_{j\ne i}
            \left[
                \sup_{\norm{h}_2\le1}
                \left|
                    \frac1n
                    \tr\!\left(
                        G_{-i}\Sigma_iG_{-i}\mathcal L_{ij}[h]
                    \right)
                \right|
            \right]^2
        \right)^{1/2}
    \right\|_{L^k}
    =
    O_k\!\left(\frac1{\sqrt n}\right).
\]

We now treat the background contribution. Define the random vector
\(z_i^\gamma\in\R^p\) by
\[
\begin{aligned}
    z_i^\gamma
    &:=
    \frac1n\sum_{\ell\ne i}
    L'''(x_\ell^\top\hat\theta_{-i})
    \left(
        x_\ell^\top G_{-i}\Sigma_iG_{-i}x_\ell
    \right)
    x_\ell
    +
    z_i^{\gamma,\rho},
\end{aligned}
\]
where \(z_i^{\gamma,\rho}\) is the representing vector of the linear functional
\[
    v
    \mapsto
    \tr\!\left(
        G_{-i}\Sigma_iG_{-i}
        \nabla^3\rho(\hat\theta_{-i})[v]
    \right),
\]
that is,
\[
    (z_i^{\gamma,\rho})^\top v
    =
    \tr\!\left(
        G_{-i}\Sigma_iG_{-i}
        \nabla^3\rho(\hat\theta_{-i})[v]
    \right).
\]
Then, for every \(h\in\R^p\),
\[
    \tr\!\left(
        G_{-i}\Sigma_iG_{-i}\mathcal B_{ij}[h]
    \right)
    =
    (z_i^\gamma)^\top
    \mathbb D_j[\hat\theta_{-i}][h].
\]
Thus the background contribution to the square-summed derivative is
\[
\begin{aligned}
    &\left(
        \sum_{j\ne i}
        \left[
            \sup_{\norm{h}_2\le1}
            \left|
                \frac1n
                \tr\!\left(
                    G_{-i}\Sigma_iG_{-i}\mathcal B_{ij}[h]
                \right)
            \right|
        \right]^2
    \right)^{1/2} 
    =
    \frac1n
    \left(
        \sum_{j\ne i}
        \left(
            \norm{
                (z_i^\gamma)^\top
                \mathbb D_j[\hat\theta_{-i}]
            }^*
        \right)^2
    \right)^{1/2}.
\end{aligned}
\]

We first record the bound
\[
    \Norm{\norm{z_i^\gamma}_2}_{L^{4k}}
    =
    O_k(n).
\]
Indeed, set
\[
    A_i:=G_{-i}\Sigma_iG_{-i}.
\]
For the regularizer contribution, Assumption~\ref{ass:7third-order-curvature}
gives
\[
\begin{aligned}
    \norm{z_i^{\gamma,\rho}}_2
    &=
    \sup_{\norm{v}_2\le1}
    \left|
        \tr\!\left(
            A_i\nabla^3\rho(\hat\theta_{-i})[v]
        \right)
    \right|                                                   \\
    &\le
    \norm{A_i}_{*}
    \sup_{\norm{v}_2\le1}
    \norm{\nabla^3\rho(\hat\theta_{-i})[v]}_{\op}
    \le
    O(1)\norm{A_i}_{*}.
\end{aligned}
\]
Since \(\norm{A_i}_{\op}=O(1)\) and \(\operatorname{rank}(A_i)\le p=O(n)\),
\[
    \norm{z_i^{\gamma,\rho}}_2
    =
    O(n).
\]
This is sufficient for the argument below, since the loss contribution is also
\(O_k(n)\) in \(L^{4k}\).
For the loss part, using \(\norm{L'''}_\infty=O(1)\),
\(\norm{G_{-i}\Sigma_iG_{-i}}_{\op}=O(1)\), and
Lemma~\ref{lem:op_norm_X},
\[
\begin{aligned}
    &\left\|
        \frac1n\sum_{\ell\ne i}
        L'''(x_\ell^\top\hat\theta_{-i})
        \left(
            x_\ell^\top G_{-i}\Sigma_iG_{-i}x_\ell
        \right)
        x_\ell
    \right\|_2
    \le
    \frac{O(1)}{n}
    \norm{X_{-i}}_{\op}
    \left(
        \sum_{\ell\ne i}
        \norm{x_\ell}_2^4
    \right)^{1/2}.
\end{aligned}
\]
Taking the \(L^{4k}\)-norm and using
\[
    \Norm{\norm{X_{-i}}_{\op}}_{L^{8k}}=O_k(\sqrt n),
    \qquad
    \left\|
        \left(
            \sum_{\ell\ne i}\norm{x_\ell}_2^4
        \right)^{1/2}
    \right\|_{L^{8k}}
    =
    O_k(n^{3/2}),
\]
we obtain the claimed \(O_k(n)\) bound for
\(\Norm{\norm{z_i^\gamma}_2}_{L^{4k}}\).

We may now invoke Lemma~\ref{lem:square-summed-background-sensitivity}, applied
to the leave-one-out minimizer \(\hat\theta_{-i}\). The same proof applies
verbatim after replacing \(X,\hat\theta,G\) by
\(X_{-i},\hat\theta_{-i},G_{-i}\), with the normalization still equal to
\(1/n\). Hence
\[
\begin{aligned}
    &\left\|
        \left(
            \sum_{j\ne i}
            \left(
                \norm{
                    (z_i^\gamma)^\top
                    \mathbb D_j[\hat\theta_{-i}]
                }^*
            \right)^2
        \right)^{1/2}
    \right\|_{L^k}  
    \le
    O_k\!\left(\frac1{\sqrt n}\right)
    \Norm{\norm{z_i^\gamma}_2}_{L^{4k}}
    =
    O_k(\sqrt n).
\end{aligned}
\]
Multiplying by the outer factor \(1/n\), we get
\[
\begin{aligned}
    &\left\|
        \left(
            \sum_{j\ne i}
            \left[
                \sup_{\norm{h}_2\le1}
                \left|
                    \frac1n
                    \tr\!\left(
                        G_{-i}\Sigma_iG_{-i}\mathcal B_{ij}[h]
                    \right)
                \right|
            \right]^2
        \right)^{1/2}
    \right\|_{L^k}
    =
    O_k\!\left(\frac1{\sqrt n}\right).
\end{aligned}
\]

Combining the local and background contributions gives
\[
    \sup_{i\in[n]}
    \left\|
        \left(
            \sum_{j\ne i}
            \left(
                \norm{
                    \mathbb D_j\!\left[
                        \frac1n\tr(\Sigma_iG_{-i})
                    \right]
                }_2^*
            \right)^2
        \right)^{1/2}
    \right\|_{L^k}
    =
    O_k\!\left(\frac1{\sqrt n}\right).
\]
The tensorized \(L^k\)-Poincar\'e inequality therefore yields
\[
    \sup_{i\in[n]}
    \left\|
        \frac1n\tr(\Sigma_iG_{-i})
        -
        \E\!\left[
            \frac1n\tr(\Sigma_iG_{-i})
        \right]
    \right\|_{L^k}
    =
    O_k\!\left(\frac1{\sqrt n}\right).
\]
Together with the conditional quadratic-form estimate at the beginning of the
proof, this proves
\[
    \sup_{i\in[n]}
    \Norm{
        \frac1n x_i^\top G_{-i}x_i-\gamma_i
    }_{L^k}
    =
    O_k\!\left(\frac1{\sqrt n}\right),
\]
\end{proof}

For \(t\in\R\), define \(\zeta_i(t)\) as the unique solution of (see Lemma~\ref{lem:def_zeta} for a justification of this definition)
\[
    z+\gamma_iL'(z)=t,
\]
where
\[
    \gamma_i
    :=
    \E\!\left[
        \frac1n x_i^\top G_{-i}x_i
    \right].
\]
We also set
\[
    s_i^\zeta
    :=
    L'\!\left(
        \zeta_i(x_i^\top\hat\theta_{-i})
    \right).
\]

\begin{lemma}[Definition and elementary properties of \(\zeta_i\)]
\label{lem:def_zeta}
Under Assumption~\ref{ass:5regularity}, for every \(i\in[n]\) and every
\(t\in\R\), the equation
\[
    z+\gamma_iL'(z)=t
\]
admits a unique solution \(z=\zeta_i(t)\). Moreover, \(t\mapsto\zeta_i(t)\) is
\(1\)-Lipschitz and
\[
    |\zeta_i(t)|
    \le
    |t|+\gamma_i|L'(0)|.
\]
Consequently, under Assumptions~\ref{ass:1dim}--\ref{ass:5regularity}, for
every fixed \(k\ge1\),
\[
    \sup_{i\in[n]}\Norm{s_i^\zeta}_{L^k}
    =
    O_k(1).
\]
\end{lemma}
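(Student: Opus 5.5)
Since \(\gamma_i\ge0\) is a deterministic constant (already shown \(0\le\gamma_i=O(1)\)), we study \(\phi_i(z):=z+\gamma_iL'(z)\). By Assumption~\ref{ass:5regularity}, \(L'\) is \(C^1\) with \(L''\ge\kappa_L>0\), so
\[
    \phi_i'(z)=1+\gamma_iL''(z)\ge 1 .
\]
Hence \(\phi_i\) is strictly increasing. Moreover \(\phi_i(z)-\phi_i(0)\ge z\) for \(z\ge0\) and \(\le z\) for \(z\le0\), so \(\phi_i(z)\to\pm\infty\) as \(z\to\pm\infty\). By the intermediate value theorem, \(\phi_i\) is a bijection of \(\R\), which gives existence and uniqueness of \(\zeta_i(t)=\phi_i^{-1}(t)\).

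For the Lipschitz property, take \(z=\zeta_i(t)\) and \(z'=\zeta_i(t')\). The mean value theorem gives \(|t-t'|=|\phi_i(z)-\phi_i(z')|\ge|z-z'|\), so \(\zeta_i\) is \(1\)-Lipschitz. Since \(\phi_i(0)=\gamma_iL'(0)\), we have \(\zeta_i(\gamma_iL'(0))=0\), and the Lipschitz bound yields
\[
    |\zeta_i(t)|\le|t-\gamma_iL'(0)|\le|t|+\gamma_i|L'(0)| .
\]

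For the moment bound, we first use \(|L'(z)|\le|L'(0)|+\norm{L''}_\infty|z|\le O(1)(1+|z|)\), which follows from Assumptions~\ref{ass:2origin} and~\ref{ass:5regularity}. Combined with the bound on \(\zeta_i\) and \(\gamma_i=O(1)\), this gives
\[
    |s_i^\zeta|\le O(1)\bigl(1+|x_i^\top\hat\theta_{-i}|\bigr)
\]
uniformly in \(i\). The proof of Lemma~\ref{lem:score-moments} already established \(\sup_i\Norm{x_i^\top\hat\theta_{-i}}_{L^k}=O_k(1)\), by conditioning on \(X_{-i}\) and then applying Lemma~\ref{lem:moments_of_x_i} and Theorem~\ref{thm:direct-hat-bound}. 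Minkowski's inequality then gives \(\sup_i\Norm{s_i^\zeta}_{L^k}=O_k(1)\).

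The only point needing care is the surjectivity of \(\phi_i\), which the monotone lower bound handles; the rest is routine.
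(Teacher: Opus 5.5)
Your proof is correct and follows essentially the same route as the paper. In both, the derivative bound $1+\gamma_iL''\ge1$ gives bijectivity and a $1$-Lipschitz inverse, the size bound comes from $\zeta_i(\gamma_iL'(0))=0$, and the moment bound follows from the linear growth of $L'$ together with $\sup_{i}\Norm{x_i^\top\hat\theta_{-i}}_{L^k}=O_k(1)$ (obtained by conditioning on $X_{-i}$), with your explicit surjectivity check $\phi_i(z)-\phi_i(0)\ge z$ for $z\ge0$ being just a more careful rendering of the paper's limit statement.
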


\begin{proof}
Let
\[
    H_i(z):=z+\gamma_iL'(z).
\]
Then
\[
    H_i'(z)=1+\gamma_iL''(z)\ge1+\gamma_i\kappa_L\ge1.
\]
Hence \(H_i\) is strictly increasing. Moreover, since \(H_i'(z)\ge1\),
\[
    H_i(z)\to+\infty
    \quad\text{as }z\to+\infty,
    \qquad
    H_i(z)\to-\infty
    \quad\text{as }z\to-\infty.
\]
Thus \(H_i\) is a bijection from \(\R\) to \(\R\), and
\(\zeta_i=H_i^{-1}\) is well-defined. Since \(H_i'\ge1\), its inverse is
\(1\)-Lipschitz. In particular,
\[
    |\zeta_i(t)|
    =
    |\zeta_i(t)-\zeta_i(H_i(0))|
    \le
    |t-H_i(0)|
    \le
    |t|+\gamma_i|L'(0)|.
\]

Taking \(t=x_i^\top\hat\theta_{-i}\), conditionally on \(X_{-i}\), the vector
\(\hat\theta_{-i}\) is deterministic and independent of \(x_i\). Hence
Lemma~\ref{lem:moments_of_x_i} and Theorem~\ref{thm:direct-hat-bound} give
\[
    \sup_{i\in[n]}
    \Norm{x_i^\top\hat\theta_{-i}}_{L^k}
    =
    O_k(1).
\]
Since \(\gamma_i=O(1)\), \(|L'(0)|=O(1)\), and
\[
    |L'(z)|
    \le
    |L'(0)|+\norm{L''}_\infty |z|,
\]
we obtain
\[
    \sup_{i\in[n]}
    \Norm{
        L'\!\left(
            \zeta_i(x_i^\top\hat\theta_{-i})
        \right)
    }_{L^k}
    =
    O_k(1),
\]
which is the desired bound on \(s_i^\zeta\).
\end{proof}

\begin{lemma}[Derivative of the averaged inverse score]
\label{lem:derivative-zeta}
Under Assumption~\ref{ass:5regularity}, for each \(i\in[n]\), the deterministic
map
\[
    t\mapsto\zeta_i(t),
    \qquad
    \zeta_i(t)+\gamma_iL'(\zeta_i(t))=t,
\]
is \(C^1\), and
\[
    \zeta_i'(t)
    =
    \frac{1}{1+\gamma_iL''(\zeta_i(t))}.
\]
In particular, for \(j\ne i\),
\[
    \mathbb D_j\!\left[
        \zeta_i(x_i^\top\hat\theta_{-i})
    \right][h]
    =
    \frac{
        x_i^\top\mathbb D_j[\hat\theta_{-i}][h]
    }
    {
        1+\gamma_iL''\!\left(
            \zeta_i(x_i^\top\hat\theta_{-i})
        \right)
    },
\]
and
\[
    \mathbb D_j[s_i^\zeta][h]
    =
    \frac{
        L''\!\left(
            \zeta_i(x_i^\top\hat\theta_{-i})
        \right)
    }
    {
        1+\gamma_iL''\!\left(
            \zeta_i(x_i^\top\hat\theta_{-i})
        \right)
    }
    x_i^\top\mathbb D_j[\hat\theta_{-i}][h].
\]
\end{lemma}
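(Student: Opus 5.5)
The plan is to apply the inverse function theorem to \(H_i(z):=z+\gamma_iL'(z)\) and then use the chain rule. By Lemma~\ref{lem:def_zeta}, \(H_i\) is a bijection of \(\R\), and \(\zeta_i=H_i^{-1}\). Under Assumption~\ref{ass:5regularity}, \(L\in C^2\), so \(H_i\in C^1\) with
\[
    H_i'(z)=1+\gamma_iL''(z)\ge1+\gamma_i\kappa_L\ge1>0,
\]
using \(\gamma_i\ge0\). Since \(H_i'\) never vanishes, the one-dimensional inverse function theorem gives that \(\zeta_i\) is \(C^1\) with \(\zeta_i'(t)=1/H_i'(\zeta_i(t))\), which is the stated formula. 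Equivalently, one may differentiate the identity \(\zeta_i(t)+\gamma_iL'(\zeta_i(t))=t\) once differentiability is known. The only point to check is that \(\gamma_i\) is a deterministic constant, an expectation that does not depend on \(X\). Hence \(\zeta_i\) is a fixed deterministic function, and no derivative of \(\gamma_i\) enters the later Fréchet derivatives.

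For the Fréchet derivatives, fix \(j\ne i\). The column \(x_i\) is not moved by \(\mathbb D_j\), so \(\mathbb D_j[x_i^\top\hat\theta_{-i}][h]=x_i^\top\mathbb D_j[\hat\theta_{-i}][h]\). Here \(\hat\theta_{-i}\) is differentiable in \(x_j\) by the implicit function theorem applied to the strongly convex \(C^2\) objective \(\Psi_X^{-i}\), exactly as in the derivation of \eqref{eq:derivative_hat_theta}. The chain rule with the scalar \(C^1\) map \(\zeta_i\) then gives the second displayed identity. Applying the chain rule once more with \(L'\), which is \(C^1\) with derivative \(L''\), multiplies this by \(L''(\zeta_i(x_i^\top\hat\theta_{-i}))\) and yields the formula for \(\mathbb D_j[s_i^\zeta][h]\).

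There is no real obstacle here. The only subtle points are the nonvanishing of \(H_i'\), which needs \(\gamma_i\ge0\) (immediate since \(G_{-i}\succeq0\)), and the observation that \(\gamma_i\) is nonrandom and so contributes no background derivative.
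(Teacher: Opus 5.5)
Your proof is correct and follows essentially the same route as the paper. The paper applies the implicit function theorem to \(F(z,t)=z+\gamma_iL'(z)-t\) with \(\partial_zF\ge1\), which is equivalent to your inverse-function argument for \(H_i\); it then differentiates the identity \(\zeta_i(x_i^\top\hat\theta_{-i})+\gamma_iL'(\zeta_i(x_i^\top\hat\theta_{-i}))=x_i^\top\hat\theta_{-i}\) in \(x_j\), using that \(\gamma_i\) is deterministic, which gives the same result as your direct chain rule, and your remarks on \(\gamma_i\ge0\) and on the differentiability of \(\hat\theta_{-i}\) are accurate details the paper leaves implicit.
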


\begin{proof}
The derivative of \(\zeta_i\) follows from the implicit function theorem
applied to
\[
    F(z,t):=z+\gamma_iL'(z)-t.
\]
Indeed,
\[
    \partial_zF(z,t)=1+\gamma_iL''(z)\ge1,
\]
so the implicit derivative is well-defined and gives
\[
    \zeta_i'(t)
    =
    \frac1{1+\gamma_iL''(\zeta_i(t))}.
\]

Since \(\gamma_i\) is deterministic, differentiating
\[
    \zeta_i(x_i^\top\hat\theta_{-i})
    +
    \gamma_i
    L'\!\left(
        \zeta_i(x_i^\top\hat\theta_{-i})
    \right)
    =
    x_i^\top\hat\theta_{-i}
\]
with respect to the column \(x_j\), \(j\ne i\), gives
\[
    \left[
        1+
        \gamma_iL''\!\left(
            \zeta_i(x_i^\top\hat\theta_{-i})
        \right)
    \right]
    \mathbb D_j\!\left[
        \zeta_i(x_i^\top\hat\theta_{-i})
    \right][h]
    =
    x_i^\top\mathbb D_j[\hat\theta_{-i}][h].
\]
This proves the formula for
\(\mathbb D_j[\zeta_i(x_i^\top\hat\theta_{-i})]\). The formula for
\(\mathbb D_j[s_i^\zeta]\) follows from the chain rule.
\end{proof}

\begin{lemma}[Approximation of the train score]
\label{lem:score-xi-approximation}
Under Assumptions~\ref{ass:1dim}--\ref{ass:7third-order-curvature}, for every
fixed \(k\ge1\),
\[
    \sup_{i\in[n]}
    \Norm{s_i-s_i^\zeta}_{L^k}
    =
    O_k\!\left(\frac{\log n}{\sqrt n}\right).
\]
\end{lemma}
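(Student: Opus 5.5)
The plan is to compare the two fixed-point equations satisfied by \(x_i^\top\hat\theta\) and by \(\zeta_i(x_i^\top\hat\theta_{-i})\), and then use the contraction property of \(H_i(z)=z+\gamma_iL'(z)\). Set \(t_i:=x_i^\top\hat\theta_{-i}\) and \(z_i:=x_i^\top\hat\theta\). By Lemma~\ref{lem:loo-exact},
\[
    z_i=t_i-\frac{s_i}{n}x_i^\top\bar G_ix_i
    =t_i-\gamma_i L'(z_i)-e_i,
    \qquad
    e_i:=s_i\Big(\frac1n x_i^\top\bar G_ix_i-\gamma_i\Big).
\]
Hence \(H_i(z_i)=t_i-e_i\), while \(H_i(\zeta_i(t_i))=t_i\). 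Since \(\zeta_i=H_i^{-1}\) is \(1\)-Lipschitz (Lemma~\ref{lem:def_zeta}), we get \(|z_i-\zeta_i(t_i)|\le|e_i|\). Because \(\norm{L''}_\infty=O(1)\), this gives \(|s_i-s_i^\zeta|\le O(1)|e_i|\). It therefore suffices to show \(\Norm{e_i}_{L^k}=O_k(\log n/\sqrt n)\) uniformly in \(i\). By Hölder and Lemma~\ref{lem:score-moments}, this reduces to bounding \(\Norm{\frac1n x_i^\top\bar G_ix_i-\gamma_i}_{L^{2k}}\).

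We split this leverage error as
\[
    \frac1n x_i^\top\bar G_ix_i-\gamma_i
    =\Big(\frac1n x_i^\top G_{-i}x_i-\gamma_i\Big)
    -\frac1n x_i^\top\bar G_iR_iG_{-i}x_i,
\]
using the resolvent identity \eqref{eq:resolvent-bar-G}. The first term is \(O_k(n^{-1/2})\) by Lemma~\ref{lem:deterministic-gamma-concentration}.

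The second term splits further into a \(\rho\) part and an \(L\) part. For the \(\rho\) part, \eqref{eq:bound_R_rho} gives
\[
    \frac1n|x_i^\top\bar G_iR_i^\rho G_{-i}x_i|\le O(1)\frac{|s_i^-|\,\norm{x_i}_2^2}{n^{3/2}}.
\]
By Lemma~\ref{lem:moments_of_x_i} this is \(O_k(n^{-1/2})\).

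The loss part is where I expect the main difficulty, since crude bounds lose too much. Write it as
\[
    \frac1{n^2}(X_{-i}^\top\bar G_ix_i)^\top\Gamma_i(X_{-i}^\top G_{-i}x_i).
\]
It is bounded by
\[
    \frac1{n^2}\norm{X_{-i}^\top\bar G_ix_i}_2\,\norm{\Gamma_i}_F\,\norm{X_{-i}^\top G_{-i}x_i}_\infty.
\]
Here \(\norm{\Gamma_i}_F=O_k(1)\) by Lemma~\ref{lem:gamma-frobenius-bound}, and the \(\infty\)-norm is \(O_k(\sqrt n\log n)\) by Lemma~\ref{lem:cross-leverage-poincare}. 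The factor \(\norm{X_{-i}^\top\bar G_ix_i}_2\le\norm{X_{-i}^\top\bar G_i}_{\op}\norm{x_i}_2=O_k(n)\) by Lemmas~\ref{lem:curvature-resolvent} and~\ref{lem:moments_of_x_i}. Together these give \(O_k(\log n/\sqrt n)\), which is exactly the target rate, so no sharper entrywise bound on \(\Gamma_i\) is needed.

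Two points need care. First, \(\bar G_i\) depends on \(x_i\), so the conditional argument must be avoided on that side; the Cauchy--Schwarz split above uses only the deterministic operator bounds for \(\bar G_i\), and the conditional lemma only for \(G_{-i}\). Second, all moments must be taken with Hölder at doubled exponents. Collecting the three contributions and multiplying by \(|s_i|\) yields the claim.
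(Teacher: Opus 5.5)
Your proof is correct and follows the paper's argument. It uses the same decomposition of $\frac1n x_i^\top\bar G_ix_i-\gamma_i$ into the concentration term (Lemma~\ref{lem:deterministic-gamma-concentration}), the regularizer drift, and the loss drift bounded via $\norm{\Gamma_i}_F\,\norm{X_{-i}^\top G_{-i}x_i}_\infty$, which yields the rate $O_k(\log n/\sqrt n)$. The only cosmetic difference is which map you invert: you use the deterministic $z\mapsto z+\gamma_iL'(z)$, so the leverage error is weighted by $s_i$ (controlled by Lemma~\ref{lem:score-moments}), whereas the paper inverts $z\mapsto z+\lambda_iL'(z)$ with the random leverage $\lambda_i$ and gets the weight $s_i^\zeta$ (controlled by Lemma~\ref{lem:def_zeta}).
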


\begin{proof}
Set
\[
    \lambda_i:=\frac1n x_i^\top\bar G_i x_i.
\]
By Lemma~\ref{lem:loo-exact},
\[
    x_i^\top\hat\theta
    =
    x_i^\top\hat\theta_{-i}
    -
    \lambda_iL'(x_i^\top\hat\theta).
\]
On the other hand, by definition of \(\zeta_i(x_i^\top\hat\theta_{-i})\),
\[
    \zeta_i(x_i^\top\hat\theta_{-i})
    =
    x_i^\top\hat\theta_{-i}
    -
    \gamma_i
    L'\!\left(
        \zeta_i(x_i^\top\hat\theta_{-i})
    \right).
\]

Let
\[
    H_i^\lambda(z):=z+\lambda_iL'(z).
\]
Since
\[
    \lambda_i=\frac1n x_i^\top\bar G_i x_i\ge0
\]
and \(L''\ge\kappa_L>0\), we have
\[
    (H_i^\lambda)'(z)
    =
    1+\lambda_iL''(z)
    \ge1.
\]
Hence \(H_i^\lambda\) is increasing with inverse Lipschitz constant at most
one: for all \(z,z'\in\R\),
\[
    |z-z'|
    \le
    |H_i^\lambda(z)-H_i^\lambda(z')|.
\]
By Lemma~\ref{lem:loo-exact},
\[
    H_i^\lambda(x_i^\top\hat\theta)
    =
    x_i^\top\hat\theta_{-i}.
\]
On the other hand, by definition of \(\zeta_i\),
\[
\begin{aligned}
    H_i^\lambda\!\left(
        \zeta_i(x_i^\top\hat\theta_{-i})
    \right)
    &=
    \zeta_i(x_i^\top\hat\theta_{-i})
    +
    \lambda_i
    L'\!\left(
        \zeta_i(x_i^\top\hat\theta_{-i})
    \right)                                      \\
    &=
    x_i^\top\hat\theta_{-i}
    +
    (\lambda_i-\gamma_i)
    L'\!\left(
        \zeta_i(x_i^\top\hat\theta_{-i})
    \right).
\end{aligned}
\]
Consequently,
\[
\begin{aligned}
    &\left|
        x_i^\top\hat\theta
        -
        \zeta_i(x_i^\top\hat\theta_{-i})
    \right| 
    \le
    \left|
        H_i^\lambda(x_i^\top\hat\theta)
        -
        H_i^\lambda\!\left(
            \zeta_i(x_i^\top\hat\theta_{-i})
        \right)
    \right|
    =
    |\lambda_i-\gamma_i|
    \left|
        L'\!\left(
            \zeta_i(x_i^\top\hat\theta_{-i})
        \right)
    \right|.
\end{aligned}
\]
Since the map \(z\mapsto z+\lambda_iL'(z)\) has derivative at least one,
\[
    \left|
        x_i^\top\hat\theta
        -
        \zeta_i(x_i^\top\hat\theta_{-i})
    \right|
    \le
    |\lambda_i-\gamma_i|\,
    \left|
        L'\!\left(
            \zeta_i(x_i^\top\hat\theta_{-i})
        \right)
    \right|.
\]
Therefore,
\[
\begin{aligned}
    |s_i-s_i^\zeta|
    &=
    \left|
        L'(x_i^\top\hat\theta)
        -
        L'\!\left(
            \zeta_i(x_i^\top\hat\theta_{-i})
        \right)
    \right|
    \le
    \norm{L''}_\infty
    \left|
        x_i^\top\hat\theta
        -
        \zeta_i(x_i^\top\hat\theta_{-i})
    \right|
    \le
    O(1)|\lambda_i-\gamma_i|\,|s_i^\zeta|.
\end{aligned}
\]
It remains to bound \(\lambda_i-\gamma_i\). We decompose
\[
    \lambda_i-\gamma_i
    =
    \left(
        \frac1n x_i^\top G_{-i}x_i-\gamma_i
    \right)
    +
    \frac1n x_i^\top(\bar G_i-G_{-i})x_i .
\]
The first term is bounded by Lemma~\ref{lem:deterministic-gamma-concentration}:
\[
    \left\|
        \frac1n x_i^\top G_{-i}x_i-\gamma_i
    \right\|_{L^k}
    =
    O_k\!\left(\frac1{\sqrt n}\right).
\]

For the second term, use
\[
    \bar G_i-G_{-i}
    =
    -\bar G_iR_iG_{-i}.
\]
The regularizer contribution is bounded by
\[
\begin{aligned}
    \left|
        \frac1n
        x_i^\top\bar G_iR_i^\rho G_{-i}x_i
    \right|
    &\le
    \frac1n
    \norm{x_i}_2^2
    \norm{\bar G_i}_{\op}
    \norm{R_i^\rho}_{\op}
    \norm{G_{-i}}_{\op}
    \le
    \frac{O(1)}{n\sqrt n}
    |s_i^-|\norm{x_i}_2^2,
\end{aligned}
\]
where we used \eqref{eq:bound_R_rho}. Hence
\[
    \left\|
        \frac1n
        x_i^\top\bar G_iR_i^\rho G_{-i}x_i
    \right\|_{L^k}
    =
    O_k(n^{-1/2}).
\]
For the loss contribution,
\[
\begin{aligned}
    \left|
        \frac1n
        x_i^\top\bar G_iR_i^LG_{-i}x_i
    \right|
    &\le
    \frac1{n^2}
    \norm{X_{-i}^\top\bar G_i x_i}_2\,
    \norm{\Gamma_i}_F\,
    \norm{X_{-i}^\top G_{-i}x_i}_\infty .
\end{aligned}
\]
By Lemma~\ref{lem:curvature-resolvent},
\[
    \norm{X_{-i}^\top\bar G_i x_i}_2
    \le
    \norm{\bar G_iX_{-i}}_{\op}\norm{x_i}_2
    =
    O(\sqrt n)\norm{x_i}_2.
\]
Using Lemmas~\ref{lem:moments_of_x_i},
\ref{lem:gamma-frobenius-bound}, and
\ref{lem:cross-leverage-poincare}, we get
\[
    \left\|
        \frac1n
        x_i^\top\bar G_iR_i^LG_{-i}x_i
    \right\|_{L^k}
    =
    O_k\!\left(\frac{\log n}{\sqrt n}\right).
\]
Thus
\[
    \Norm{\lambda_i-\gamma_i}_{L^k}
    =
    O_k\!\left(\frac{\log n}{\sqrt n}\right).
\]
Since \(\Norm{s_i^\zeta}_{L^{2k}}=O_k(1)\) by
Lemma~\ref{lem:def_zeta}, H\"older's inequality gives
\[
    \Norm{s_i-s_i^\zeta}_{L^k}
    =
    O_k\!\left(\frac{\log n}{\sqrt n}\right).
\]
\end{proof}

From now on, in the scalar leave-one-out approximation, we use the averaged
first-order displacement
\[
    \delta_i^{(0)}\hat\theta
    :=
    -\frac{s_i^\zeta}{n}G_{-i}x_i.
\]

\begin{lemma}[First-order bounds with deterministic averaged score]
\label{lem:first-order-averaged-score-bounds}
Under Assumptions~\ref{ass:1dim}--\ref{ass:7third-order-curvature}, for every
fixed \(k\ge1\),
\[
    \sup_{i\in[n]}
    \left\|
        \norm{
            \delta_i\hat\theta-\delta_i^{(0)}\hat\theta
        }_2
    \right\|_{L^k}
    =
    O_k\!\left(\frac{\log n}{n}\right),
\]
and
\[
    \sup_{i\in[n]}
    \left\|
        \norm{
            X_{-i}^\top
            (\delta_i\hat\theta-\delta_i^{(0)}\hat\theta)
        }_2
    \right\|_{L^k}
    =
    O_k\!\left(\frac{\log n}{\sqrt n}\right).
\]
\end{lemma}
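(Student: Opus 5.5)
The plan is to compare \(\delta_i^{(0)}\hat\theta\) with the train-score displacement \(\delta_{i,\mathrm{tr}}^{(0)}\hat\theta=-\frac{s_i}{n}G_{-i}x_i\) and use the triangle inequality:
\[
    \delta_i\hat\theta-\delta_i^{(0)}\hat\theta
    =
    \big(\delta_i\hat\theta-\delta_{i,\mathrm{tr}}^{(0)}\hat\theta\big)
    -
    \frac{s_i-s_i^\zeta}{n}\,G_{-i}x_i .
\]
Theorem~\ref{thm:loo-remainder-tilde} already bounds the first bracket at the required rates, both in Euclidean norm, \(O_k(\log n/n)\), and after projection by \(X_{-i}^\top\), \(O_k(\log n/\sqrt n)\). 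So everything reduces to controlling the correction term \(\frac{s_i-s_i^\zeta}{n}G_{-i}x_i\).

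For the Euclidean bound, use \(\norm{G_{-i}x_i}_2\le\norm{G_{-i}}_{\op}\norm{x_i}_2\), where \(\norm{G_{-i}}_{\op}=O(1)\) by Lemma~\ref{lem:curvature-resolvent} and \(\Norm{\norm{x_i}_2}_{L^{2k}}=O_k(\sqrt n)\) by Lemma~\ref{lem:moments_of_x_i}. Hölder's inequality with Lemma~\ref{lem:score-xi-approximation} at exponent \(2k\) then gives
\[
    \frac1n\,O_k\!\left(\frac{\log n}{\sqrt n}\right)O_k(\sqrt n)=O_k\!\left(\frac{\log n}{n}\right).
\]

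For the projected bound, estimate \(\norm{X_{-i}^\top G_{-i}x_i}_2\) by the second estimate of Lemma~\ref{lem:cross-leverage-poincare}, which gives \(O_k(n)\) in every \(L^{k}\). Combining with Hölder and Lemma~\ref{lem:score-xi-approximation} yields
\[
    \frac1n\,O_k\!\left(\frac{\log n}{\sqrt n}\right)O_k(n)=O_k\!\left(\frac{\log n}{\sqrt n}\right).
\]
Taking suprema over \(i\) concludes, since all the invoked bounds are uniform in \(i\).

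There is no real obstacle here. The only point requiring care is to apply each earlier lemma at a doubled exponent for Hölder, which is legitimate because all of them hold for every fixed \(k\) with \(i\)-uniform constants. The substantive work, namely the concentration of the leverage needed to replace \(s_i\) by \(s_i^\zeta\), was already done in Lemma~\ref{lem:score-xi-approximation} and Lemma~\ref{lem:deterministic-gamma-concentration}.
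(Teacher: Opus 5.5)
Your proposal is correct and matches the paper's proof essentially verbatim. The paper uses the same decomposition into the train-score remainder controlled by Theorem~\ref{thm:loo-remainder-tilde} plus the score-replacement term $\frac{s_i-s_i^\zeta}{n}G_{-i}x_i$. It bounds the latter through H\"older's inequality, combining Lemma~\ref{lem:score-xi-approximation} with Lemma~\ref{lem:moments_of_x_i} for the Euclidean norm and with the second estimate of Lemma~\ref{lem:cross-leverage-poincare} for the $X_{-i}^\top$-projected norm.
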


\begin{proof}
Write
\[
    \delta_i\hat\theta-\delta_i^{(0)}\hat\theta
    =
    \delta_i\hat\theta-\delta_{i,\mathrm{tr}}^{(0)}\hat\theta
    -
    \frac{s_i-s_i^\zeta}{n}G_{-i}x_i.
\]
By Lemma~\ref{lem:score-xi-approximation} and
Lemma~\ref{lem:moments_of_x_i},
\[
    \left\|
        \frac{s_i-s_i^\zeta}{n}G_{-i}x_i
    \right\|_{L^k}
    =
    O_k\!\left(\frac{\log n}{n}\right).
\]
Combining this with Theorem~\ref{thm:loo-remainder-tilde} gives the first
bound.

For the projected bound,
\[
    \norm{
        X_{-i}^\top
        \frac{s_i-s_i^\zeta}{n}G_{-i}x_i
    }_2
    \le
    \frac{|s_i-s_i^\zeta|}{n}
    \norm{X_{-i}^\top G_{-i}x_i}_2.
\]
Lemma~\ref{lem:score-xi-approximation} and
Lemma~\ref{lem:cross-leverage-poincare} give
\[
    \left\|
        \norm{
            X_{-i}^\top
            \frac{s_i-s_i^\zeta}{n}G_{-i}x_i
        }_2
    \right\|_{L^k}
    =
    O_k\!\left(\frac{\log n}{\sqrt n}\right).
\]
The projected estimate follows again from
Theorem~\ref{thm:loo-remainder-tilde}.
\end{proof}
\begin{corollary}[First-order bounds with deterministic averaged score]
\label{cor:first-order-averaged-score-bounds}
Under Assumptions~\ref{ass:1dim}--\ref{ass:7third-order-curvature}, for every
fixed \(k\ge1\),
\[
    \sup_{i\in[n]}
    \left\|
        \norm{X_{-i}^\top\delta_i^{(0)}\hat\theta}_2
    \right\|_{L^k}
    =
    O_k(1),
\qquad
\text{and}
\qquad
    \sup_{i\in[n]}
    \left\|
        \norm{X_{-i}^\top\delta_i^{(0)}\hat\theta}_\infty
    \right\|_{L^k}
    =
    O_k\!\left(\frac{\log n}{\sqrt n}\right).
\]
Consequently,
\[
    \sup_{i\in[n]}
    \left\|
        \norm{X_{-i}^\top\delta_i\hat\theta}_\infty
    \right\|_{L^k}
    =
    O_k\!\left(\frac{\log n}{\sqrt n}\right).
\]
\end{corollary}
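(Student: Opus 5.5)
Since $\delta_i^{(0)}\hat\theta=-\frac{s_i^\zeta}{n}G_{-i}x_i$, we have the exact identity
\[
    X_{-i}^\top\delta_i^{(0)}\hat\theta
    =
    -\frac{s_i^\zeta}{n}\,X_{-i}^\top G_{-i}x_i ,
\]
so both bounds of the corollary reduce to the cross-leverage estimates of Lemma~\ref{lem:cross-leverage-poincare} combined with the score moments of Lemma~\ref{lem:def_zeta}. Concretely, for the Euclidean norm, H\"older's inequality gives
\[
    \Norm{\norm{X_{-i}^\top\delta_i^{(0)}\hat\theta}_2}_{L^k}
    \le
    \frac1n\Norm{s_i^\zeta}_{L^{2k}}
    \Norm{\norm{X_{-i}^\top G_{-i}x_i}_2}_{L^{2k}}
    =
    \frac1n\,O_k(1)\,O_k(n)
    =
    O_k(1),
\]
uniformly in $i$. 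For the supremum norm the same H\"older step is used with the first estimate of Lemma~\ref{lem:cross-leverage-poincare}:
\[
    \Norm{\norm{X_{-i}^\top\delta_i^{(0)}\hat\theta}_\infty}_{L^k}
    \le
    \frac1n\,O_k(1)\,O_k(\sqrt n\log n)
    =
    O_k\!\left(\frac{\log n}{\sqrt n}\right).
\]
All constants come from lemmas stated uniformly in $i\in[n]$, so the suprema over $i$ pass through.

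For the consequence, I would use the triangle inequality together with $\norm{\cdot}_\infty\le\norm{\cdot}_2$:
\[
    \norm{X_{-i}^\top\delta_i\hat\theta}_\infty
    \le
    \norm{X_{-i}^\top\delta_i^{(0)}\hat\theta}_\infty
    +
    \norm{X_{-i}^\top(\delta_i\hat\theta-\delta_i^{(0)}\hat\theta)}_2 .
\]
The first term is controlled by the bound just proved. The second is $O_k(\log n/\sqrt n)$ in $L^k$ by the projected estimate of Lemma~\ref{lem:first-order-averaged-score-bounds}. Together these give the claim.

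There is no real obstacle here. The one point to check is that the $\sqrt n\log n$ sup-norm cross-leverage bound, rather than the cruder $\ell_2$ one, is what yields the $\log n/\sqrt n$ rate. That bound was itself obtained in Lemma~\ref{lem:cross-leverage-poincare} by a conditional maximal inequality over the $n-1$ linear forms $x_i^\top G_{-i}x_j$. It is also worth noting that the final bound does not require the sup norm of the remainder to be small on its own: its $\ell_2$ norm already matches the target rate.
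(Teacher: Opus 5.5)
Your proposal is correct and follows the paper's proof essentially line by line: the paper also bounds $\norm{X_{-i}^\top\delta_i^{(0)}\hat\theta}_2$ and $\norm{X_{-i}^\top\delta_i^{(0)}\hat\theta}_\infty$ by $\frac{|s_i^\zeta|}{n}$ times the corresponding norm of $X_{-i}^\top G_{-i}x_i$, and it concludes via Lemmas~\ref{lem:def_zeta} and~\ref{lem:cross-leverage-poincare} together with H\"older. It then obtains the last claim by adding the projected $\ell_2$ remainder bound of Lemma~\ref{lem:first-order-averaged-score-bounds}, which is exactly your triangle-inequality step with $\norm{\cdot}_\infty\le\norm{\cdot}_2$.
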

\begin{proof}
    Let us bound:
    \[
    \norm{X_{-i}^\top\delta_i^{(0)}\hat\theta}_2
    \le
    \frac{|s_i^\zeta|}{n}
    \norm{X_{-i}^\top G_{-i}x_i}_2,
\]
and
\[
    \norm{X_{-i}^\top\delta_i^{(0)}\hat\theta}_\infty
    \le
    \frac{|s_i^\zeta|}{n}
    \norm{X_{-i}^\top G_{-i}x_i}_\infty.
\]
The \(L^k\)-bounds follow from Lemma~\ref{lem:def_zeta} and
Lemma~\ref{lem:cross-leverage-poincare}. The final bound for
\(X_{-i}^\top\delta_i\hat\theta\) follows by adding the projected error bound
proved above.

\end{proof}

\subsection{Second-order expansion}
\label{subsec:second-order-loo}

In this subsection we work under
Assumptions~\ref{ass:1dim}--\ref{ass:7third-order-curvature} and under the
supplementary condition \(\nabla^3\rho\equiv0\). Thus \(\nabla^2\rho\) is
constant and \(R_i^\rho=0\).

Define the diagonal matrix \(\tilde \Gamma_i\), indexed by \(j\ne i\), by
\[
    (\tilde \Gamma_i)_j
    :=
    \frac12
    L'''(x_j^\top\hat\theta_{-i})\,x_j^\top\delta_i^{(0)}\hat\theta
    =
    -\frac{s_i^\zeta}{2n}
    L'''(x_j^\top\hat\theta_{-i})\,x_j^\top G_{-i}x_i,
\]
and set
\[
    \tilde R_i
    :=
    \frac1nX_{-i}\tilde \Gamma_iX_{-i}^\top.
\]
The second-order correction is
\[
    \delta_i^{(1)}\hat\theta
    :=
    \frac{s_i^\zeta}{n}G_{-i}\tilde R_iG_{-i}x_i.
\]

\begin{lemma}[Second-order curvature remainder with deterministic averaged score]
\label{lem:second-order-curvature-remainders-xi}
Let \(E_i:=\Gamma_i-\tilde \Gamma_i\), equivalently
\[
    R_i^L-\tilde R_i
    =
    \frac1nX_{-i}E_iX_{-i}^\top.
\]
Then, for every fixed \(k\ge1\),
\[
    \sup_{i\in[n]}
    \left\|
        \norm{E_i}_F
    \right\|_{L^k}
    =
    O_k\!\left(\frac{\log n}{\sqrt n}\right).
\]
Moreover,
\[
    \sup_{i\in[n]}
    \left\|
        \norm{\tilde\Gamma_i}_F
    \right\|_{L^k}
    =
    O_k(1),
\]
and
\[
    \sup_{i\in[n]}
    \Norm{\norm{\Gamma_i}_{\op}}_{L^k}+\sup_{i\in[n]}
    \left\|
        \norm{\tilde\Gamma_i}_{\op}
    \right\|_{L^k}
    =
    O_k\!\left(\frac{\log n}{\sqrt n}\right).
\]
\end{lemma}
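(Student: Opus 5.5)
The plan is to Taylor expand the integrand defining \((\Gamma_i)_j\) entrywise to second order and compare with \((\tilde\Gamma_i)_j\). For \(j\ne i\), set \(a_j:=x_j^\top\hat\theta_{-i}\) and \(b_j:=x_j^\top\delta_i\hat\theta\). By Assumption~\ref{ass:7third-order-curvature} (\(\norm{L''''}_\infty=O(1)\)),
\[
    \int_0^1\big[L''(a_j+tb_j)-L''(a_j)\big]dt
    =
    \tfrac12 L'''(a_j)\,b_j + r_j,
    \qquad |r_j|\le O(1)\,b_j^2 .
\]
Hence
\[
    (E_i)_j=\tfrac12L'''(a_j)\,x_j^\top(\delta_i\hat\theta-\delta_i^{(0)}\hat\theta)+r_j .
\]
Using \(\norm{L'''}_\infty=O(1)\), this gives the pointwise bound
\[
    \norm{E_i}_F
    \le
    O(1)\norm{X_{-i}^\top(\delta_i\hat\theta-\delta_i^{(0)}\hat\theta)}_2
    +
    O(1)\Big(\sum_{j\ne i}b_j^4\Big)^{1/2}.
\]
The first term is \(O_k(\log n/\sqrt n)\) in \(L^k\) by the projected estimate of Lemma~\ref{lem:first-order-averaged-score-bounds}. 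For the second term, I will bound \((\sum_j b_j^4)^{1/2}\le \norm{X_{-i}^\top\delta_i\hat\theta}_\infty\,\norm{X_{-i}^\top\delta_i\hat\theta}_2\) and apply Hölder. The \(\ell^\infty\) factor is \(O_k(\log n/\sqrt n)\) by Corollary~\ref{cor:first-order-averaged-score-bounds}, and the \(\ell^2\) factor is \(O_k(1)\) by Lemma~\ref{lem:loo-prediction-stability} at fixed \(k\). This yields the bound on \(\norm{E_i}_F\).

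The Frobenius bound on \(\tilde\Gamma_i\) is immediate. We have \(\norm{\tilde\Gamma_i}_F\le O(1)\norm{X_{-i}^\top\delta_i^{(0)}\hat\theta}_2\), which is \(O_k(1)\) by the first estimate of Corollary~\ref{cor:first-order-averaged-score-bounds}.

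For the operator norms, both matrices are diagonal, so the operator norm is the maximal entry. First, \(\norm{\tilde\Gamma_i}_{\op}\le O(1)\norm{X_{-i}^\top\delta_i^{(0)}\hat\theta}_\infty=O_k(\log n/\sqrt n)\) by Corollary~\ref{cor:first-order-averaged-score-bounds}. Second, the Lipschitz property of \(L''\) from Assumption~\ref{ass:6smooth-curvature} gives \(|(\Gamma_i)_j|\le O(1)|b_j|\), so \(\norm{\Gamma_i}_{\op}\le O(1)\norm{X_{-i}^\top\delta_i\hat\theta}_\infty\). This is again \(O_k(\log n/\sqrt n)\) by the final statement of Corollary~\ref{cor:first-order-averaged-score-bounds}.

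All bounds are uniform in \(i\), because every cited estimate carries a \(\sup_{i}\). I expect no real obstacle. The only delicate point is the quadratic Taylor remainder: the crude bound \(\sum_j b_j^4\le \norm{X_{-i}^\top\delta_i\hat\theta}_2^4\) would only give \(O_k(1)\), which is too weak. The sharp rate requires splitting one factor into the \(\ell^\infty\) norm and invoking the entrywise estimate.
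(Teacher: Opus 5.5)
Your proposal is correct and follows essentially the same route as the paper: a second-order Taylor expansion of the integrand defining \((\Gamma_i)_j\) (using the Lipschitz bound on \(L'''\)), the split \(\big(\sum_{j\ne i} b_j^4\big)^{1/2}\le \norm{X_{-i}^\top\delta_i\hat\theta}_\infty\norm{X_{-i}^\top\delta_i\hat\theta}_2\) combined with H\"older, and the \(\ell^\infty\) and \(\ell^2\) bounds of Corollary~\ref{cor:first-order-averaged-score-bounds} for the operator- and Frobenius-norm claims. The only cosmetic difference is that you cite Lemma~\ref{lem:loo-prediction-stability} for the \(O_k(1)\) bound on \(\norm{X_{-i}^\top\delta_i\hat\theta}_2\), whereas the paper obtains it from Lemma~\ref{lem:first-order-averaged-score-bounds} together with the corollary; both citations give the same bound.
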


\begin{proof}
By Taylor's formula and the Lipschitz bound on \(L'''\),
\[
    |(E_i)_j|
    \le
    O(1)|x_j^\top(\delta_i\hat\theta-\delta_i^{(0)}\hat\theta)|
    +
    O(1)|x_j^\top\delta_i\hat\theta|^2.
\]
Therefore
\[
    \norm{E_i}_F
    \le
    O(1)\norm{
        X_{-i}^\top(\delta_i\hat\theta-\delta_i^{(0)}\hat\theta)
    }_2
    +
    O(1)
    \norm{X_{-i}^\top\delta_i\hat\theta}_\infty
    \norm{X_{-i}^\top\delta_i\hat\theta}_2 .
\]
The first term is \(O_k((\log n)/\sqrt n)\) by
Lemma~\ref{lem:first-order-averaged-score-bounds}. The second has the same
order by Lemma~\ref{lem:first-order-averaged-score-bounds},
Corollary~\ref{cor:first-order-averaged-score-bounds}, and H\"older's inequality. Hence
\[
    \sup_{i\in[n]}
    \left\|
        \norm{E_i}_F
    \right\|_{L^k}
    =
    O_k\!\left(\frac{\log n}{\sqrt n}\right).
\]

The Frobenius bound follows from
\[
    \norm{\tilde\Gamma_i}_F
    \le
    \frac{O(1)|s_i^\zeta|}{n}
    \norm{X_{-i}^\top G_{-i}x_i}_2,
\]
and the operator bounds follows from
\[
    \norm{\Gamma_i}_{\op}
    \le
    O(1)\norm{X_{-i}^\top\delta_i\hat\theta}_\infty
    \qquad \text{and}\qquad
    \norm{\tilde\Gamma_i}_{\op}
    \le
    \frac{O(1)|s_i^\zeta|}{n}
    \norm{X_{-i}^\top G_{-i}x_i}_\infty.
\]
We conclude using Lemmas~\ref{lem:first-order-averaged-score-bounds},~\ref{lem:def_zeta},~\ref{lem:cross-leverage-poincare} and Corollary~\ref{cor:first-order-averaged-score-bounds}.
\end{proof}
\begin{theorem}[Projected second-order expansion with deterministic averaged score]
\label{thm:second-order-loo-xi}
Under Assumptions~\ref{ass:1dim}--\ref{ass:7third-order-curvature} and
\(\nabla^3\rho\equiv0\), for every fixed \(k\ge1\) and every deterministic
\(u\in\R^p\) with \(\norm{u}_2\le1\),
\[
    \sup_{i\in[n]}
    \left\|
        u^\top
        \left(
            \delta_i\hat\theta
            -
            \delta_i^{(0)}\hat\theta
            -
            \delta_i^{(1)}\hat\theta
        \right)
    \right\|_{L^k}
    =
    O_k\!\left(\frac{(\log n)^2}{n^{3/2}}\right).
\]
Furthermore,
\[
    \sup_{i\in[n]}
    \left\|
        \norm{
            \delta_i\hat\theta
            -
            \delta_i^{(0)}\hat\theta
            -
            \delta_i^{(1)}\hat\theta
        }_2
    \right\|_{L^k}
    =
    O_k\!\left(\frac{\log n}{n}\right).
\]
\end{theorem}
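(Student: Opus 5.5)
We start from the exact identity of Lemma~\ref{lem:loo-exact} together with the resolvent identity \eqref{eq:resolvent-bar-G}. Since \(\nabla^3\rho\equiv0\) we have \(R_i^\rho=0\), so that
\[
    \delta_i\hat\theta
    =
    -\frac{s_i}{n}G_{-i}x_i+\frac{s_i}{n}\bar G_iR_i^LG_{-i}x_i .
\]
We then write \(s_i=s_i^\zeta+(s_i-s_i^\zeta)\), \(R_i^L=\tilde R_i+\frac1nX_{-i}E_iX_{-i}^\top\), and \(\bar G_i=G_{-i}-\bar G_iR_i^LG_{-i}\). This gives the decomposition
\[
    \delta_i\hat\theta-\delta_i^{(0)}\hat\theta-\delta_i^{(1)}\hat\theta
    =
    T_1+T_2+T_3+T_4,
\]
where
\[
    T_1:=-\frac{s_i-s_i^\zeta}{n}G_{-i}x_i,
    \qquad
    T_2:=\frac{s_i-s_i^\zeta}{n}\bar G_iR_i^LG_{-i}x_i,
\]
\[
    T_3:=\frac{s_i^\zeta}{n^2}\bar G_iX_{-i}E_iX_{-i}^\top G_{-i}x_i,
    \qquad
    T_4:=-\frac{s_i^\zeta}{n}\bar G_iR_i^LG_{-i}\tilde R_iG_{-i}x_i .
\]

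\textbf{The Euclidean bound.} We bound each \(T_j\) in \(\norm{\cdot}_2\) exactly as in Lemma~\ref{lem:loss-drift}.

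\(T_1\): Lemma~\ref{lem:score-xi-approximation} and \(\norm{x_i}_2=O(\sqrt n)\) give \(O(\log n/\sqrt n)\cdot\sqrt n/n=O(\log n/n)\).

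\(T_2\): we use \(\norm{\bar G_iX_{-i}}_{\op}=O(\sqrt n)\), \(\norm{\Gamma_i}_F=O(1)\), \(\norm{X_{-i}^\top G_{-i}x_i}_\infty=O(\sqrt n\log n)\) and the \(\log n/\sqrt n\) factor from the score difference. The result is \(O((\log n)^2/n^{3/2})\).

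\(T_3\): the same estimate with \(\norm{E_i}_F=O(\log n/\sqrt n)\) from Lemma~\ref{lem:second-order-curvature-remainders-xi} gives \(O((\log n)^2/n^{3/2})\).

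\(T_4\): we use \(\norm{\bar G_iR_i^L}_{\op}\le O(1)\norm{\Gamma_i}_{\op}=O(\log n/\sqrt n)\), because \(\norm{X_{-i}X_{-i}^\top}_{\op}/n=O(1)\) by Lemma~\ref{lem:op_norm_X}. Combined with \(\norm{\tilde R_iG_{-i}x_i}_2\le O(\log n/\sqrt n)\cdot O(\sqrt n)\), and accounting for the prefactor \(1/n\), this is \(O((\log n)^2/n^{3/2})\).

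Hence the second claim holds, and in fact only \(T_1\) is of order \(\log n/n\). Hölder's inequality, together with the moment bounds of Lemmas~\ref{lem:def_zeta}, \ref{lem:cross-leverage-poincare} and~\ref{lem:moments_of_x_i}, handles all the randomness.

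\textbf{The projected bound.} Here \(T_2\), \(T_3\) and \(T_4\) are already \(O((\log n)^2/n^{3/2})\) even in Euclidean norm, so the whole difficulty is \(u^\top T_1=-\frac{s_i-s_i^\zeta}{n}u^\top G_{-i}x_i\). The crude bound on \(u^\top T_1\) only gives \(O(\log n/n^{3/2})\) times \(\Norm{u^\top G_{-i}x_i}_{L^k}=O_k(1)\), using Lemma~\ref{lem:moments_of_x_i} conditionally on \(X_{-i}\) as in Lemma~\ref{lem:borne_delta_tr_0}. Combined with Lemma~\ref{lem:score-xi-approximation} via Hölder, this yields \(O_k(\log n/n^{3/2})\), which is within the target. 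Had we been unlucky there, we would have expanded \(s_i-s_i^\zeta\approx -L''\cdot(\lambda_i-\gamma_i)s_i^\zeta/(1+\gamma_iL'')\).

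\textbf{Main obstacle.} The step to check most carefully is \(T_4\). The operator-norm bound \(\norm{R_i^L}_{\op}\lesssim\norm{\Gamma_i}_{\op}\) relies on the sharper entrywise estimate of Lemma~\ref{lem:second-order-curvature-remainders-xi}, and also on \(\norm{X_{-i}}_{\op}^2/n\) being bounded in every \(L^k\). If the latter moment is unavailable, we would instead write \(\bar G_iR_i^L=\frac1n(\bar G_iX_{-i})\Gamma_iX_{-i}^\top\). We would then bound \(\norm{X_{-i}^\top G_{-i}\tilde R_iG_{-i}x_i}_2\) through \(\norm{X_{-i}^\top G_{-i}X_{-i}}_{\op}=O(n)\) from Lemma~\ref{lem:curvature-resolvent}, which stays deterministic.
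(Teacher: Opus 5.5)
Your proof is correct and follows essentially the paper's route: the exact leave-one-out identity combined with the resolvent identity, a four-term split, and the bounds of Lemmas~\ref{lem:score-xi-approximation}, \ref{lem:second-order-curvature-remainders-xi} and~\ref{lem:cross-leverage-poincare}, put together by H\"older's inequality. The differences are only organizational: you peel off $s_i-s_i^\zeta$ before splitting $R_i^L$, so your score-difference cross term multiplies $\bar G_iR_i^L$ rather than $G_{-i}\tilde R_iG_{-i}$, and you read the projected bound off the Euclidean smallness of $T_2$--$T_4$. For $T_4$ you use moments of $\norm{X_{-i}}_{\op}$, whereas the paper uses the deterministic bound $\norm{X_{-i}^\top G_{-i}X_{-i}}_{\op}=O(n)$.
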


\begin{proof}
Since \(\nabla^3\rho\equiv0\), \(R_i=R_i^L\). Starting from
\[
    \delta_i\hat\theta
    =
    -\frac{s_i}{n}\bar G_i x_i,
    \qquad
    \delta_i^{(0)}\hat\theta
    =
    -\frac{s_i^\zeta}{n}G_{-i}x_i,
\]
and using
\[
    \bar G_i-G_{-i}
    =
    -\bar G_iR_i^LG_{-i},
\]
we get
\[
\begin{aligned}
    \delta_i\hat\theta
    -
    \delta_i^{(0)}\hat\theta
    -
    \delta_i^{(1)}\hat\theta
    &=
    -\frac{s_i-s_i^\zeta}{n}G_{-i}x_i                         \\
    &\quad
    +
    \frac{s_i}{n}\bar G_i(R_i^L-\tilde R_i)G_{-i}x_i          \\
    &\quad
    +
    \frac{s_i}{n}(\bar G_i-G_{-i})\tilde R_iG_{-i}x_i         \\
    &\quad
    +
    \frac{s_i-s_i^\zeta}{n}G_{-i}\tilde R_iG_{-i}x_i .
\end{aligned}
\]
We bound the deterministic projection of these four terms.

First,
\[
    \left|
        \frac{s_i-s_i^\zeta}{n}u^\top G_{-i}x_i
    \right|
    \le
    \frac{|s_i-s_i^\zeta|}{n}|u^\top G_{-i}x_i|.
\]
By Lemma~\ref{lem:score-xi-approximation} and
Lemma~\ref{lem:moments_of_x_i},
\[
    \left\|
        \frac{s_i-s_i^\zeta}{n}u^\top G_{-i}x_i
    \right\|_{L^k}
    =
    O_k\!\left(\frac{\log n}{n^{3/2}}\right).
\]

Second, since \(R_i^L-\tilde R_i=n^{-1}X_{-i}E_iX_{-i}^\top\),
\[
\begin{aligned}
    &\left|
        \frac{s_i}{n}
        u^\top\bar G_i(R_i^L-\tilde R_i)G_{-i}x_i
    \right|
    \le
    \frac{|s_i|}{n^2}
    \norm{X_{-i}^\top\bar G_i u}_2
    \norm{E_i}_F
    \norm{X_{-i}^\top G_{-i}x_i}_\infty 
    =O_k\!\left(\frac{(\log n)^2}{n^{3/2}}\right),
\end{aligned}
\]
using $\norm{X_{-i}^\top\bar G_i u}_2=O(\sqrt n)$ and 
Lemmas~\ref{lem:score-moments},
\ref{lem:second-order-curvature-remainders-xi},~\ref{lem:cross-leverage-poincare}.

For the third term, use
\[
    \bar G_i-G_{-i}
    =
    -\frac1n\bar G_iX_{-i}\Gamma_iX_{-i}^\top G_{-i},
    \qquad
    \tilde R_i
    =
    \frac1nX_{-i}\tilde\Gamma_iX_{-i}^\top.
\]
Then
\[
\begin{aligned}
    &\left|
        \frac{s_i}{n}
        u^\top(\bar G_i-G_{-i})\tilde R_iG_{-i}x_i
    \right|                                                  \\
    &\qquad\le
    \frac{|s_i|}{n^3}
    \norm{X_{-i}^\top\bar G_i u}_2
    \norm{\Gamma_i}_{\op}
    \norm{X_{-i}^\top G_{-i}X_{-i}}_{\op}
    \norm{\tilde\Gamma_i}_F
    \norm{X_{-i}^\top G_{-i}x_i}_\infty .
\end{aligned}
\]
By Lemmas~\ref{lem:score-moments},
\ref{lem:curvature-resolvent},
\ref{lem:second-order-curvature-remainders-xi}, and
\ref{lem:cross-leverage-poincare}, this term is
\[
    O_k\!\left(\frac{(\log n)^2}{n^{3/2}}\right)
\]
in \(L^k\).

For the fourth term,
\[
\begin{aligned}
    \left|
        \frac{s_i-s_i^\zeta}{n}
        u^\top G_{-i}\tilde R_iG_{-i}x_i
    \right|
    &\le
    \frac{|s_i-s_i^\zeta|}{n^2}
    \norm{X_{-i}^\top G_{-i}u}_2
    \norm{\tilde\Gamma_i}_{\op}
    \norm{X_{-i}^\top G_{-i}x_i}_2 .
\end{aligned}
\]
Using
\[
    \norm{X_{-i}^\top G_{-i}u}_2=O(\sqrt n),
    \qquad
    \Norm{\norm{X_{-i}^\top G_{-i}x_i}_2}_{L^k}=O_k(n),
\]
together with Lemmas~\ref{lem:score-xi-approximation} and
\ref{lem:second-order-curvature-remainders-xi}, we get
\[
    \left\|
        \frac{s_i-s_i^\zeta}{n}
        u^\top G_{-i}\tilde R_iG_{-i}x_i
    \right\|_{L^k}
    =
    O_k\!\left(\frac{(\log n)^2}{n^{3/2}}\right).
\]
This proves the projected estimate.

The vector estimate follows from the same decomposition. The first term is
bounded by
\[
    \left\|
        \frac{s_i-s_i^\zeta}{n}|G_{-i}x_i|_2
    \right\|_{L^k}
    =
    O_k\!\left(\frac{\log n}{n}\right),
\]
and the other three terms are
\(O_k((\log n)^2/n^{3/2})\) in vector norm by the same estimates as above,
replacing \(\norm{X_{-i}^\top\bar G_i u}_2\) by
\(\norm{\bar G_iX_{-i}}_{\op}\) that has the same order. Hence the vector remainder is
\(O_k((\log n)/n)\).
\end{proof}

\begin{remark}[Consequence for the Wasserstein bound]
\label{rem:csqce_wasserstein_bound_scd_order_leaveone_out_expension}
Define
\[
\begin{aligned}
    \mathfrak d_i(X)
    &:=
    \sqrt n\,u^\top
    \left(
        \delta_i^{(0)}\hat\theta(X)
        +
        \delta_i^{(1)}\hat\theta(X)
    \right)                                      \\
    &=
    -\frac{s_i^\zeta}{\sqrt n}u^\top G_{-i}x_i
    +
    \frac{s_i^\zeta}{n^{3/2}}
    u^\top G_{-i}X_{-i}\tilde\Gamma_iX_{-i}^\top G_{-i}x_i .
\end{aligned}
\]
By Theorem~\ref{thm:second-order-loo-xi},
\[
    \sup_{i\in[n]}
    \Norm{
        \delta_i f_n(X)-\mathfrak d_i(X)
    }_{L^4}
    =
    O\!\left(\frac{(\log n)^2}{n}\right).
\]
The same estimate holds with \(X\) replaced by any \(X^A\), since \(X^A\) has
the same distribution as \(X\). Consequently,
\[
\begin{aligned}
    &\left|
        c_n^x
        -
        \sup_{\substack{i,j\in[n]\\i\ne j}}
        \left\|
            \mathfrak d_i(X)
            -
            \E_j\big[\mathfrak d_i(X)\big]
        \right\|_{L^4}
    \right|
    =
    O\!\left(\frac{(\log n)^2}{n}\right).
\end{aligned}
\]
Moreover,
\[
    \sup_{j\in[n]}
    \Norm{\delta_j f_n(X)}_{L^4}
    +
    \sup_{j\in[n]}
    \Norm{\mathfrak d_j(X)}_{L^4}
    =
    O\!\left(\frac{\log n}{\sqrt n}\right).
\]
Using conditional Cauchy--Schwarz and Jensen's inequality, the two preceding
bounds imply, uniformly in \(j\),
\[
\begin{aligned}
    \left\|
        \Cov_j\!\left(
            \delta_j f_n(X),
            \delta_j f_n(X^{A_j})
        \right)
        -
        \Cov_j\!\left(
            \mathfrak d_j(X),
            \mathfrak d_j(X^{A_j})
        \right)
    \right\|_{L^2}
    =
    O\!\left(\frac{(\log n)^3}{n^{3/2}}\right).
\end{aligned}
\]
Therefore
\[
\begin{aligned}
    &\left|
        \omega_c
        -
        \sup_{j\in[n]}
        \sqrt{
            \Var\!\left(
                \Cov_j\!\left(
                    \mathfrak d_j(X),
                    \mathfrak d_j(X^{A_j})
                \right)
            \right)
        }
    \right|
    \le
    O\!\left(\frac{(\log n)^3}{n^{3/2}}\right).
\end{aligned}
\]
\end{remark}



\section{Sensitivity analysis for the covariance terms}
\label{sec:sensitivity-covariances}

\subsection{Notation and preliminary bounds}
\label{subsec:covariance-preliminaries}

We now prove the estimate needed to control \(c_n^x\). The estimates are first
written in a generic form, without the leave-one-out index. They apply to the
leave-one-out quantities by replacing
\[
    X,\hat\theta,G,\tilde\Gamma_x,x,s_x^\zeta
    \quad\text{with}\quad
    X_{-i},\hat\theta_{-i},G_{-i},\tilde\Gamma_i,x_i,s_i^\zeta .
\]
This replacement does not change the estimates: the columns remain independent,
the normalization is still \(1/n\), the curvature lower bounds are unchanged,
and all moment and Poincar\'e constants are uniform. Throughout this section
\(r\ge1\) is fixed, and we write \(\log n\) instead of \(\log(en)\), which is
equivalent for our asymptotic bounds.

Let \(x\in\R^p\) be an auxiliary column, independent of \(X\), satisfying the
same columnwise assumptions as the columns of \(X\). 
We fix a deterministic scalar \(\gamma_x\) satisfying
\[
    0\le \gamma_x=O(1),
\]
corresponding in the leave-one-out application to
\[
    \gamma_i
    =
    \E\!\left[
        \frac1n x_i^\top G_{-i}x_i
    \right].
\]
With this convention, the map
\[
    z\mapsto z+\gamma_xL'(z)
\]
has derivative \(1+\gamma_xL''(z)\ge1\), and hence is a bijection of
\(\R\) onto \(\R\). Thus \(\zeta_x:\R\to\R\) satisfying
\[
    \zeta_x(t)+\gamma_xL'(\zeta_x(t))=t,
\]
 is well-defined and \(1\)-Lipschitz.
Further set
\[
    s_x^\zeta
    :=
    L'\!\left(
        \zeta_x(x^\top\hat\theta)
    \right).
\]
Under the replacement
\[
    (X,\hat\theta,G,x,\gamma_x)
    \mapsto
    (X_{-i},\hat\theta_{-i},G_{-i},x_i,\gamma_i),
\]
this is precisely
\[
    s_i^\zeta
    =
    L'\!\left(
        \zeta_i(x_i^\top\hat\theta_{-i})
    \right).
\]

Throughout the generic estimates, set
\[
    G
    :=
    \left(\nabla^2\Psi_X(\hat\theta)\right)^{-1},
    \qquad
    s_j:=L'(x_j^\top\hat\theta),
    \qquad j\in[n].
\]
Here \(s_j\) denotes the usual train score of the background sample \(X\); it is
not replaced by an averaged score.

We introduce the diagonal matrix \(\tilde\Gamma_x\), indexed by the columns of
\(X\), by
\[
    (\tilde\Gamma_x)_\ell
    :=
    -\frac{s_x^\zeta}{2n}
    L'''(x_\ell^\top\hat\theta)\,x_\ell^\top Gx .
\]
With \(X=X_{-i}\) and \(x=x_i\), this is exactly the matrix
\(\tilde\Gamma_i\) used in the second-order leave-one-out expansion.


We shall repeatedly use the following deterministic consequences of the
curvature argument in Lemma~\ref{lem:curvature-resolvent}:
\[
    \norm{G}_{\op}=O(1),
    \qquad
    \norm{GX}_{\op}
    =
    \norm{X^\top G}_{\op}
    =
    O(\sqrt n),
    \qquad
    \norm{X^\top GX}_{\op}=O(n).
\]
Similarly, the generic version of Lemma~\ref{lem:cross-leverage-poincare}
gives, for every fixed \(r\ge1\),
\[
    \Norm{\norm{Gx}_2}_{L^r}
    =
    O_r(\sqrt n),
    \qquad
    \Norm{\norm{X^\top Gx}_2}_{L^r}
    =
    O_r(n),
    \qquad
    \Norm{\norm{X^\top Gx}_\infty}_{L^r}
    =
    O_r(\sqrt n\log n).
\]
Moreover, by Lemma~\ref{lem:def_zeta},
\[
    \Norm{s_x^\zeta}_{L^r}=O_r(1).
\]

\begin{lemma}[Generic leverage bounds]
\label{lem:generic-leverage-bounds}
Under Assumptions~\ref{ass:1dim}--\ref{ass:7third-order-curvature} and
\(\nabla^3\rho\equiv0\), for every fixed integer \(r\ge1\), and every
deterministic \(u\in\R^p\) with \(\norm{u}_2\le1\),
\[
    \Norm{u^\top Gx}_{L^r}
    =
    O_r(1),
    \qquad
    \Norm{\norm{X^\top Gu}_2}_{L^r}
    =
    O_r(\sqrt n).
\]
\end{lemma}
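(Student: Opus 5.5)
Both bounds follow from the independence of the auxiliary column \(x\) from \(X\) together with the deterministic curvature bounds already recorded; no new idea is needed.

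For the first bound, condition on \(X\). Then \(G\) is a deterministic matrix, \(Gu\) is a deterministic vector, and \(\norm{Gu}_2\le\norm{G}_{\op}\norm{u}_2=O(1)\) deterministically by the generic form of Lemma~\ref{lem:curvature-resolvent}. Since \(x\) is independent of \(X\) and satisfies the columnwise Assumptions~\ref{ass:3bounded-means} and~\ref{ass:poincare-data}, apply Lemma~\ref{lem:moments_of_x_i} with the unit direction \(Gu/\norm{Gu}_2\) and fixed exponent \(r\). This gives \(\Norm{u^\top Gx}_{L_x^r}\le O_r(1)\norm{Gu}_2=O_r(1)\) conditionally on \(X\). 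The constant is uniform in the realization of \(X\), so taking the \(L^r\)-norm over \(X\) (the tower property) yields \(\Norm{u^\top Gx}_{L^r}=O_r(1)\). This is the same argument as in Lemma~\ref{lem:borne_delta_tr_0}.

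For the second bound, no probabilistic input is needed. We have
\[
\norm{X^\top Gu}_2\le\norm{X^\top G}_{\op}\norm{u}_2\le \norm{GX}_{\op}=O(\sqrt n)
\]
deterministically. This uses the bound \(\norm{GX}_{\op}^2=\norm{GXX^\top G}_{\op}\le \frac{n}{\kappa_L}\norm{G}_{\op}\), which comes from \(G^{-1}\succeq \frac{\kappa_L}{n}XX^\top\) as in the proof of Lemma~\ref{lem:curvature-resolvent}. A deterministic bound passes to every \(L^r\)-norm.

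The only point that needs care is the first bound, and even there the issue is just that \(G\) depends on \(X\) alone and not on \(x\), so the conditional moment bound is legitimate. In the leave-one-out application this corresponds to \(G_{-i}\) being independent of \(x_i\). The assumption \(\nabla^3\rho\equiv0\) plays no role here.
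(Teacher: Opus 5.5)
Your proof is correct and follows essentially the same route as the paper: condition on \(X\) so that \(Gu\) is a fixed vector of norm \(O(1)\), apply Lemma~\ref{lem:moments_of_x_i} to the independent column \(x\) and integrate over \(X\). The second bound is the deterministic estimate \(\norm{X^\top G}_{\op}=O(\sqrt n)\) from the curvature argument.
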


\begin{proof}
Conditionally on \(X\), \(Gu\) is deterministic and
\(\norm{Gu}_2=O(1)\). Lemma~\ref{lem:moments_of_x_i} gives
\[
    \Norm{u^\top Gx}_{L_x^r}=O_r(1),
\]
and the first estimate follows by taking the \(L^r\)-norm over \(X\). The
second estimate is deterministic:
\[
    \norm{X^\top Gu}_2
    \le
    \norm{X^\top G}_{\op}\norm{u}_2
    =
    O(\sqrt n).
\]
\end{proof}

\begin{lemma}[Bounds involving \(\tilde\Gamma_x\)]
\label{lem:generic-tilde-gamma-bounds}
Under Assumptions~\ref{ass:1dim}--\ref{ass:7third-order-curvature} and
\(\nabla^3\rho\equiv0\), for every fixed integer \(r\ge1\),
\[
    \Norm{\norm{\tilde\Gamma_x}_{\op}}_{L^r}
    =
    O_r\!\left(\frac{\log n}{\sqrt n}\right),
\qquad
    \Norm{
        \norm{\tilde\Gamma_x X^\top Gx}_2
    }_{L^r}
    =
    O_r(\sqrt n\log n),
\]
and
\[
    \Norm{
        \norm{X^\top GX\tilde\Gamma_x X^\top Gx}_2
    }_{L^r}
    =
    O_r(n^{3/2}\log n).
\]
\end{lemma}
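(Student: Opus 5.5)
All three estimates will come from pointwise inequalities expressed through the generic leverage quantities listed before Lemma~\ref{lem:generic-leverage-bounds}, followed by H\"older's inequality. The starting point is that \(\tilde\Gamma_x\) is diagonal with entries
\[
    |(\tilde\Gamma_x)_\ell|
    \le
    \frac{\norm{L'''}_\infty}{2n}\,|s_x^\zeta|\,|x_\ell^\top Gx|,
\]
and \(\norm{L'''}_\infty=O(1)\) by Assumption~\ref{ass:7third-order-curvature}.

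For the operator norm, the diagonal structure gives
\[
    \norm{\tilde\Gamma_x}_{\op}
    =
    \max_\ell|(\tilde\Gamma_x)_\ell|
    \le
    \frac{O(1)}{n}\,|s_x^\zeta|\,\norm{X^\top Gx}_\infty .
\]
By H\"older's inequality, \(\Norm{s_x^\zeta}_{L^{2r}}=O_r(1)\), and the generic cross-leverage bound \(\Norm{\norm{X^\top Gx}_\infty}_{L^{2r}}=O_r(\sqrt n\log n)\), this is \(O_r(\log n/\sqrt n)\).

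For the second estimate I would bound
\[
    \norm{\tilde\Gamma_x X^\top Gx}_2
    \le
    \norm{\tilde\Gamma_x}_{\op}\norm{X^\top Gx}_2
    \le
    \frac{O(1)}{n}\,|s_x^\zeta|\,\norm{X^\top Gx}_\infty\,\norm{X^\top Gx}_2 .
\]
Three-factor H\"older with \(\Norm{\norm{X^\top Gx}_2}_{L^{3r}}=O_r(n)\) then gives \(O_r\big(n^{-1}\cdot\sqrt n\log n\cdot n\big)=O_r(\sqrt n\log n)\), as claimed.

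For the third estimate I would use the deterministic bound \(\norm{X^\top GX}_{\op}=O(n)\) from the curvature argument of Lemma~\ref{lem:curvature-resolvent}, in its generic version. This gives
\[
    \norm{X^\top GX\tilde\Gamma_x X^\top Gx}_2
    \le
    O(n)\,\norm{\tilde\Gamma_x X^\top Gx}_2 ,
\]
and the second estimate yields \(O_r(n^{3/2}\log n)\).

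The one point requiring care is that the generic leverage bounds, in particular the \(\ell_\infty\) bound with its \(\log n\) factor, are applied with \(\hat\theta,G\) built from \(X\) and with an independent auxiliary column \(x\). This is exactly the setting of Lemma~\ref{lem:cross-leverage-poincare}: conditionally on \(X\), each \(Gx_\ell\) has norm \(O(\sqrt n)\), so the \(p\) linear forms \(x^\top Gx_\ell\) have moments \(O(q\sqrt n)\), and a maximum over \(n\) such forms costs a \(\log n\) factor. The one check needed is that the H\"older exponents used (up to \(3r\)) are fixed multiples of \(r\), so all implicit constants remain \(O_r(1)\). No other step should pose difficulties.
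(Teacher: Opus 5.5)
Your proof is correct and follows the paper's argument essentially step for step. You use the diagonal bound $\norm{\tilde\Gamma_x}_{\op}\le O(1)\,n^{-1}|s_x^\zeta|\,\norm{X^\top Gx}_\infty$ together with $\Norm{s_x^\zeta}_{L^r}=O_r(1)$ and the auxiliary-column cross-leverage bound, then $\norm{\tilde\Gamma_x X^\top Gx}_2\le\norm{\tilde\Gamma_x}_{\op}\norm{X^\top Gx}_2$ with H\"older, and finally the deterministic bound $\norm{X^\top GX}_{\op}=O(n)$. There is one harmless slip in your last paragraph: the linear forms $x^\top Gx_\ell$ are indexed by $\ell\in[n]$, so there are $n$ of them rather than $p$, which changes nothing since $p=O(n)$.
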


\begin{proof}
By the definition of \(\tilde\Gamma_x\) and the boundedness of \(L'''\),
\[
    \norm{\tilde\Gamma_x}_{\op}
    \le
    \frac{O(1)|s_x^\zeta|}{n}
    \norm{X^\top Gx}_\infty .
\]
Using \(\Norm{s_x^\zeta}_{L^r}=O_r(1)\) and the generic cross-leverage bound,
we obtain
\[
    \Norm{\norm{\tilde\Gamma_x}_{\op}}_{L^r}
    =
    O_r\!\left(\frac{\log n}{\sqrt n}\right).
\]
Next,
\[
    \norm{\tilde\Gamma_x X^\top Gx}_2
    \le
    \norm{\tilde\Gamma_x}_{\op}\norm{X^\top Gx}_2,
\]
and the second estimate follows from H\"older's inequality and
\(\Norm{\norm{X^\top Gx}_2}_{L^r}=O_r(n)\). Finally,
\[
    \norm{X^\top GX\tilde\Gamma_x X^\top Gx}_2
    \le
    \norm{X^\top GX}_{\op}
    \norm{\tilde\Gamma_x X^\top Gx}_2,
\]
and the deterministic bound \(\norm{X^\top GX}_{\op}=O(n)\) gives the last
estimate.
\end{proof}

\begin{lemma}[Uniform background drift]
\label{lem:uniform-background-drift}
Under Assumptions~\ref{ass:1dim}--\ref{ass:7third-order-curvature} and
\(\nabla^3\rho\equiv0\), define, for \(j\in[n]\),
\[
    \Delta_j
    :=
    \diag_{\ell\ne j}
    \left(
        L''(x_\ell^\top\hat\theta)
        -
        L''(x_\ell^\top\hat\theta_{-j})
    \right).
\]
Then, for every fixed integer \(r\ge1\),
\[
    \left\|
        \sup_{j\in[n]}
        \norm{\Delta_j}_{\op}
    \right\|_{L^r}
    =
    O_r\!\left(
        \frac{(\log n)^6}{\sqrt n}
    \right).
\]
\end{lemma}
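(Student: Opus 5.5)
By the Lipschitz bound on \(L''\) (Assumption~\ref{ass:6smooth-curvature}), for every \(j\) and \(\ell\ne j\) we have
\[
    \norm{L''(x_\ell^\top\hat\theta)-L''(x_\ell^\top\hat\theta_{-j})}
    \le C_L\norm{x_\ell^\top\delta_j\hat\theta},
\]
so pointwise \(\norm{\Delta_j}_{\op}\le C_L\norm{X_{-j}^\top\delta_j\hat\theta}_\infty\). The statement therefore reduces to a bound, uniform over both \(j\) and \(\ell\), on
\[
    \sup_{j}\max_{\ell\ne j}\norm{x_\ell^\top\delta_j\hat\theta}.
\]
Corollary~\ref{cor:first-order-averaged-score-bounds} already gives \(O_k(\log n/\sqrt n)\) for each fixed \(j\). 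The only new ingredient is to move the supremum over \(j\) inside the expectation. The plan is to pass from fixed-\(k\) moment bounds to moments of order \(k\asymp\log n\), and then use the union bound \(\Norm{\sup_j Y_j}_{L^r}\le n^{1/k}\sup_j\Norm{Y_j}_{L^k}\) with \(k=\max(r,\log n)\), so that \(n^{1/k}=O(1)\). The price is the polynomial dependence on \(k\) of every constant, which is where the power \((\log n)^6\) will come from.

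\textbf{Step 1: a pointwise bound.} I would first write a deterministic bound in which every random factor has explicit polynomial moment growth. Using Lemma~\ref{lem:loo-exact},
\[
    x_\ell^\top\delta_j\hat\theta=-\frac{s_j}{n}x_\ell^\top\bar G_jx_j,
\]
and by the resolvent identity \eqref{eq:resolvent-bar-G} with \(R_j=R_j^L\),
\[
    x_\ell^\top\bar G_jx_j
    =x_\ell^\top G_{-j}x_j-\frac1n x_\ell^\top\bar G_jX_{-j}\Gamma_jX_{-j}^\top G_{-j}x_j .
\]
Applying Cauchy--Schwarz with \(\norm{\bar G_jX_{-j}}_{\op}=O(\sqrt n)\) (Lemma~\ref{lem:curvature-resolvent}) and \(\norm{\Gamma_j}_F\le O(1)\norm{X_{-j}^\top\delta_j\hat\theta}_2\le O(1)\norm{s_j^-}\) (proof of Lemma~\ref{lem:loo-prediction-stability}) gives
\[
    \norm{X_{-j}^\top\delta_j\hat\theta}_\infty
    \le \frac{\norm{s_j^-}}{n}\Big(M_j+\frac{O(1)}{\sqrt n}\norm{s_j^-}\,\norm{x_\ell}_2\,\frac{M_j}{1}\cdot\frac{\max_\ell\norm{x_\ell}_2}{\sqrt n}\Big),
    \qquad M_j:=\norm{X_{-j}^\top G_{-j}x_j}_\infty .
\]
To keep the bound clean I would instead bound the correction term by \(\frac1n\norm{x_\ell}_2\,O(\sqrt n)\,O(\norm{s_j^-})\,M_j\). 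Since \(\max_\ell\norm{x_\ell}_2\) has \(L^k\) norm \(O(k\sqrt n)\) up to a \(\log n\) factor (Lemma~\ref{lem:moments_of_x_i} together with the max lemma), the correction has the same order as the main term. This yields
\[
    \norm{\Delta_j}_{\op}\le \frac{O(1)}{n}\,\norm{s_j^-}\bigl(1+\norm{s_j^-}\,\tfrac{\max_\ell\norm{x_\ell}_2}{\sqrt n}\bigr)M_j .
\]

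\textbf{Step 2: moments of order \(k\).} Next I would track each factor at exponent \(k\), using the \(k_n\)-versions of the earlier lemmas:
\begin{itemize}
    \item \(\Norm{s_j^-}_{L^k}=O(k^2)\), by Lemma~\ref{lem:test-score-control};
    \item \(\Norm{\max_\ell\norm{x_\ell}_2/\sqrt n}_{L^k}=O(k)\), since \(k\ge\log n\);
    \item \(\Norm{M_j}_{L^k}=O(k\sqrt n)\), by conditioning on \(X_{-j}\) as in Lemma~\ref{lem:cross-leverage-poincare}: the entries \(x_j^\top G_{-j}x_\ell\) have \(L^q_j\) norm \(O(q\sqrt n)\), and a max of \(n\) of them has \(L^k\) norm \(O(\sqrt n\max(k,\log n))\).
\end{itemize}
Combining these by H\"older's inequality gives \(\Norm{\norm{\Delta_j}_{\op}}_{L^k}=O(k^{a}/\sqrt n)\) with \(a\le 2+2+1+1=6\), uniformly in \(j\).

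\textbf{Step 3: the union bound.} Taking \(k=\max(r,\lceil\log n\rceil)\),
\[
    \Norm{\sup_j\norm{\Delta_j}_{\op}}_{L^r}\le\Norm{\sup_j\norm{\Delta_j}_{\op}}_{L^k}\le n^{1/k}\sup_j\Norm{\norm{\Delta_j}_{\op}}_{L^k}=O\bigl((\log n)^6/\sqrt n\bigr).
\]

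\textbf{Main obstacle.} The hardest step is the bookkeeping of \(k\)-dependence. The lemmas stated for fixed \(k\) (Lemmas~\ref{lem:score-moments} and~\ref{lem:cross-leverage-poincare}) hide their growth in \(k\), so each must be re-derived with explicit polynomial growth. This is possible via the \(k_n\)-versions (Lemma~\ref{lem:moments_of_x_i}, Theorem~\ref{thm:direct-hat-bound}, Lemma~\ref{lem:test-score-control}). In particular, the quadratic dependence on \(s_j^-\) in the resolvent correction has to be controlled without the constant-order bounds used earlier. If the exponent count exceeds 6, I would rebalance by using \(\norm{s_j}\le\norm{s_j^-}\) and Lemma~\ref{lem:uniform-prediction-moments} more carefully.
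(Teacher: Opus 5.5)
Your proof is correct and is essentially the paper's argument. It uses the same reduction to $\norm{X_{-j}^\top\delta_j\hat\theta}_\infty$, the same split into the $G_{-j}$ leading term plus the $\Gamma_j$ resolvent correction, and the same log count: $(\log n)^4$ from the squared score, and one $\log n$ each from a column norm and from the cross-leverage maximum. The only real difference is bookkeeping: the paper applies Lemma~\ref{lem:max-polynomial-moment-growth} to each factor ($\sup_j\norm{s_j}$, $\sup_j\norm{x_j}_2$, and the $O(n^2)$ family $x_\ell^\top G_{-j}x_j$) before H\"older, whereas you take moments of order $k\asymp\log n$ for fixed $j$ and union-bound once at the end. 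Your first display in Step~1 is garbled (a stray $\norm{x_\ell}_2$ and $M_j/1$), but the cleaned-up bound you then use, $\norm{\Delta_j}_{\op}\le\frac{O(1)}{n}\norm{s_j^-}\bigl(1+\norm{s_j^-}\max_\ell\norm{x_\ell}_2/\sqrt n\bigr)M_j$, is correct.
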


\begin{proof}
By the Lipschitz property of \(L''\),
\[
    \norm{\Delta_j}_{\op}
    \le
    O(1)
    \norm{
        X_{-j}^\top(\hat\theta-\hat\theta_{-j})
    }_\infty .
\]
Using the train-score first-order term
\[
    \delta_{j,\mathrm{tr}}^{(0)}\hat\theta
    :=
    -\frac{s_j}{n}G_{-j}x_j,
\]
we get
\[
\begin{aligned}
    \sup_{j\in[n]}\norm{\Delta_j}_{\op}
    &\le
    O(1)
    \sup_{j\in[n]}
    \norm{X_{-j}^\top\delta_{j,\mathrm{tr}}^{(0)}\hat\theta}_\infty 
    +
    O(1)
    \sup_{j\in[n]}
    \norm{
        X_{-j}^\top
        \left(
            \delta_j\hat\theta-\delta_{j,\mathrm{tr}}^{(0)}\hat\theta
        \right)
    }_2 .
\end{aligned}
\]

For the leading term,
\[
    \norm{X_{-j}^\top\delta_{j,\mathrm{tr}}^{(0)}\hat\theta}_\infty
    \le
    \frac{|s_j|}{n}
    \norm{X_{-j}^\top G_{-j}x_j}_\infty .
\]
The sequence-level score bound from Lemma~\ref{lem:uniform-prediction-moments}
gives
\[
    \forall q\ge1:\qquad
    \sup_{j\in[n]}\Norm{s_j}_{L^q}=O(q^2).
\]
Thus Lemma~\ref{lem:max-polynomial-moment-growth} gives
\[
    \left\|
        \sup_{j\in[n]} |s_j|
    \right\|_{L^r}
    =
    O_r((\log n)^2).
\]
Moreover, conditionally on \(X_{-j}\), the variables
\((x_\ell^\top G_{-j}x_j)_{\ell\ne j}\) are linear forms in \(x_j\), with
coefficients of Euclidean norm at most \(O(\sqrt n)\). Therefore,
\[
    \forall q\ge1:\qquad
    \sup_{\substack{j,\ell\in[n]\\ \ell\ne j}}
    \Norm{x_\ell^\top G_{-j}x_j}_{L^q}
    \le
    O(q\sqrt n).
\]
Applying Lemma~\ref{lem:max-polynomial-moment-growth} to the \(O(n^2)\) family
\((x_\ell^\top G_{-j}x_j)_{\ell\ne j}\), we obtain
\[
    \left\|
        \sup_{j\in[n]}
        \norm{X_{-j}^\top G_{-j}x_j}_\infty
    \right\|_{L^r}
    =
    O_r(\sqrt n\log n).
\]
Hence
\[
    \left\|
        \sup_{j\in[n]}
        \norm{X_{-j}^\top\delta_{j,\mathrm{tr}}^{(0)}\hat\theta}_\infty
    \right\|_{L^r}
    =
    O_r\!\left(
        \frac{(\log n)^3}{\sqrt n}
    \right).
\]

It remains to control the first-order remainder uniformly in \(j\). Under
\(\nabla^3\rho\equiv0\), the regularizer drift vanishes, and the proof of
Theorem~\ref{thm:loo-remainder-tilde} gives
\[
\begin{aligned}
    &\norm{
        X_{-j}^\top
        \left(
            \delta_j\hat\theta-\delta_{j,\mathrm{tr}}^{(0)}\hat\theta
        \right)
    }_2
    \le
    \frac{|s_j|}{n^2}
    \norm{X_{-j}^\top\bar G_jX_{-j}}_{\op}
    \norm{\Gamma_j}_F
    \norm{X_{-j}^\top G_{-j}x_j}_\infty .
\end{aligned}
\]
By Lemma~\ref{lem:curvature-resolvent},
\[
    \norm{X_{-j}^\top\bar G_jX_{-j}}_{\op}=O(n).
\]
Moreover, as in the proof of the Frobenius bound for \(\Gamma_j\),
\[
    \norm{\Gamma_j}_F
    \le
    O(1)\norm{X_{-j}^\top\delta_j\hat\theta}_2
    \le
    \frac{O(1)|s_j|}{n}
    \norm{X_{-j}^\top\bar G_j}_{\op}
    \norm{x_j}_2
    \le
    O(1)\frac{|s_j|}{\sqrt n}\norm{x_j}_2 .
\]
Using
\[
    \left\|
        \sup_{j\in[n]}|s_j|
    \right\|_{L^r}
    =
    O_r((\log n)^2),
\]
and
\[
    \left\|
        \sup_{j\in[n]}\norm{x_j}_2
    \right\|_{L^r}
    +
    \left\|
        \sup_{j\in[n]}
        \norm{X_{-j}^\top G_{-j}x_j}_\infty
    \right\|_{L^r}
    =
    O_r(\sqrt n\log n),
\]
H\"older's inequality yields
\[
\begin{aligned}
    &\left\|
        \sup_{j\in[n]}
        \norm{
            X_{-j}^\top
            \left(
                \delta_j\hat\theta-\delta_{j,\mathrm{tr}}^{(0)}\hat\theta
            \right)
        }_2
    \right\|_{L^r} 
    =
    O_r\!\left(
        \frac{(\log n)^6}{\sqrt n}
    \right).
\end{aligned}
\]
Combining the leading term and the first-order remainder gives the claim.
\end{proof}

\begin{lemma}[Fixed-direction and auxiliary-column full-resolvent leverage]
\label{lem:fixed-direction-full-resolvent-leverage}
Under Assumptions~\ref{ass:1dim}--\ref{ass:7third-order-curvature} and
\(\nabla^3\rho\equiv0\), let \(u\in\R^p\) be deterministic with
\(\norm{u}_2\le1\). Then, for every fixed integer \(r\ge1\),
\[
    \Norm{\norm{X^\top Gu}_\infty}_{L^r}
    =
    O_r\!\left((\log n)^6\right)
    \qquad \text{and}\qquad
    \Norm{\norm{X^\top Gx}_\infty}_{L^r}
    =
    O_r(\sqrt n\log n).
\]
\end{lemma}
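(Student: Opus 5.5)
Two separate bounds are needed, and I would handle them differently. The second one, on $\norm{X^\top Gx}_\infty$, is the easy one. The auxiliary column $x$ is independent of $X$, so conditionally on $X$ the vectors $Gx_\ell$ are deterministic. By the generic form of Lemma~\ref{lem:curvature-resolvent}, $\norm{Gx_\ell}_2\le\norm{GX}_{\op}=O(\sqrt n)$ for every $\ell$. Lemma~\ref{lem:moments_of_x_i} then gives $\Norm{x_\ell^\top Gx}_{L^q_x}\le O(q\sqrt n)$ uniformly in $\ell$. Applying Lemma~\ref{lem:max-polynomial-moment-growth} conditionally on $X$ to these $n$ linear forms gives $O_r(\sqrt n\log n)$, and integrating over $X$ preserves this. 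This is exactly the argument of Lemma~\ref{lem:cross-leverage-poincare}.

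The first bound is harder, because $Gu$ depends on every column, including $x_\ell$ itself. I would pass to the leave-one-out resolvent. For each $\ell$, write
\[
    x_\ell^\top Gu = x_\ell^\top G_{-\ell}u + x_\ell^\top (G-G_{-\ell})u .
\]
The first term is a linear form in $x_\ell$ with coefficient vector $G_{-\ell}u$, which is independent of $x_\ell$ and has norm $O(1)$. So its moments are $O(q)$ uniformly in $\ell$, and the maximum over the $n$ indices is $O_r(\log n)$ by Lemma~\ref{lem:max-polynomial-moment-growth}.

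For the correction term, $\nabla^2\rho$ is constant, so
\[
    G^{-1}-G_{-\ell}^{-1}=\frac1n L''(x_\ell^\top\hat\theta)x_\ell x_\ell^\top+\frac1n X_{-\ell}\Delta_\ell X_{-\ell}^\top ,
\]
with $\Delta_\ell$ as in Lemma~\ref{lem:uniform-background-drift}. The resolvent identity $G-G_{-\ell}=-G(G^{-1}-G_{-\ell}^{-1})G_{-\ell}$ then splits the correction into two pieces:
\[
    -\frac{L''(x_\ell^\top\hat\theta)}{n}(x_\ell^\top Gx_\ell)(x_\ell^\top G_{-\ell}u)
    \qquad\text{and}\qquad
    -\frac1n x_\ell^\top GX_{-\ell}\Delta_\ell X_{-\ell}^\top G_{-\ell}u .
\]

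For the rank-one piece I would not bound $x_\ell^\top Gx_\ell$ directly. Sherman--Morrison applied to the rank-one part gives $\frac1n x_\ell^\top Gx_\ell\le \frac1n x_\ell^\top\tilde G_{-\ell}x_\ell$, where $\tilde G_{-\ell}\succeq 0$ is the inverse of $G^{-1}$ with the $\ell$th rank-one term removed. More simply, the term $\frac{L''}{n}x_\ell x_\ell^\top\preceq G^{-1}$ gives $\frac{L''}{n}x_\ell^\top Gx_\ell\le 1$ deterministically, so the rank-one piece is bounded by $|x_\ell^\top G_{-\ell}u|$. That is already controlled, with maximum $O_r(\log n)$.

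For the background piece I would use Cauchy--Schwarz in the form
\[
    \frac1n\norm{X_{-\ell}^\top Gx_\ell}_2\,\norm{\Delta_\ell}_{\op}\,\norm{X_{-\ell}^\top G_{-\ell}u}_2 .
\]
Here $\norm{X_{-\ell}^\top G_{-\ell}u}_2=O(\sqrt n)$ deterministically and $\norm{X_{-\ell}^\top Gx_\ell}_2\le O(\sqrt n)\max_\ell\norm{x_\ell}_2$. By Lemma~\ref{lem:moments_of_x_i} and the maximal lemma, $\max_\ell\norm{x_\ell}_2$ is $O_r(\sqrt n\log n)$ in $L^r$. Lemma~\ref{lem:uniform-background-drift} gives $\sup_\ell\norm{\Delta_\ell}_{\op}=O_r((\log n)^6/\sqrt n)$ in $L^r$. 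Hölder then bounds the background piece by $O_r((\log n)^7)$, which is one log factor worse than the claimed $(\log n)^6$.

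\textbf{Main obstacle.} The difficulty is this final bookkeeping of logarithms: the crude estimate $\max_\ell\norm{x_\ell}_2=O(\sqrt n\log n)$ costs one log too many. To remove it I would use $\norm{GX}_{\op}=O(\sqrt n)$ together with a better bound on $\norm{x_\ell}_2$. By Lemma~\ref{lem:column-euclidean-moments}, $\norm{x_\ell}_2$ concentrates around $\sqrt n$ with $O(q)$ fluctuations, so its maximum over $\ell$ is $O(\sqrt n+\log n)=O(\sqrt n)$. This yields
\[
    \frac1n\cdot O(n)\cdot\frac{(\log n)^6}{\sqrt n}\cdot O(\sqrt n)=O_r((\log n)^6).
\]
Summing the three contributions, each taking the maximum over $\ell$ inside the $L^r$-norm, gives the claim.
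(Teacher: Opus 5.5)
\paragraph{Overall.} Your architecture matches the paper's proof, and the auxiliary-column bound is argued exactly as the paper does it. The first bound splits into three pieces:
\begin{itemize}
  \item the leave-one-out term $x_\ell^\top G_{-\ell}u$;
  \item a rank-one piece, harmless because $\frac{L''}{n}x_\ell^\top Gx_\ell\in[0,1]$; the paper gets the same effect from the Sherman--Morrison factor $1/(1+a_jx_j^\top\widetilde G_jx_j)$;
  \item a background piece controlled through Lemma~\ref{lem:uniform-background-drift}.
\end{itemize}

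\paragraph{The gap.} The problem is the step you single out as the main obstacle. Lemma~\ref{lem:column-euclidean-moments} does not give $O(q)$ fluctuations of $\norm{x_\ell}_2$. It only gives $\Norm{\norm{x_\ell}_2}_{L^q}\le\sqrt p\,O(q)=O(q\sqrt n)$. That is precisely the moment growth that produced $O_r(\sqrt n\log n)$ for the maximum in the first place, so as written your argument still ends at $O_r((\log n)^7)$.

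The claim $\Norm{\max_\ell\norm{x_\ell}_2}_{L^r}=O_r(\sqrt n)$ is true, but it needs Lipschitz concentration of the norm:
\begin{itemize}
  \item Apply Lemma~\ref{lem:bobkov_ledoux} to the bounded $1$-Lipschitz truncations $x\mapsto\norm{x}_2\wedge M$, passing to the limit as in the proof of Lemma~\ref{lem:linear-moments-poincare}. This gives $\Norm{\norm{x_\ell}_2-\E\norm{x_\ell}_2}_{L^q}\le 6\sqrt{C_P}\,q$.
  \item Combine this with $\E\norm{x_\ell}_2\le\norm{\E x_\ell}_2+\sqrt{C_Pp}=O(\sqrt n)$.
  \item Apply Lemma~\ref{lem:max-polynomial-moment-growth} to the centered norms.
\end{itemize}

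\paragraph{A simpler fix, which is the paper's.} No column-norm maximum is needed. The inequality you already used for the rank-one piece gives a deterministic bound. From $G^{-1}\succeq\frac{\kappa_L}{n}XX^\top$ you get $\norm{X^\top Gx_\ell}_2^2\le\frac{n}{\kappa_L}\,x_\ell^\top Gx_\ell$. Together with $\frac{\kappa_L}{n}x_\ell^\top Gx_\ell\le1$, this yields $\norm{X_{-\ell}^\top Gx_\ell}_2\le n/\kappa_L$ for every realization.

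Since $Gx_j=\widetilde G_jx_j/(1+a_jx_j^\top\widetilde G_jx_j)$, this is exactly the paper's estimate $\sqrt{n/\kappa_L}\,\sqrt t/(1+(\kappa_L/n)t)=O(n)$. With it, your background piece is at most $O(1)\norm{\Delta_\ell}_{\op}\norm{X_{-\ell}^\top G_{-\ell}u}_2=O(\sqrt n)\sup_\ell\norm{\Delta_\ell}_{\op}$ pointwise. Lemma~\ref{lem:uniform-background-drift} then gives $O_r((\log n)^6)$ directly, with no H\"older step and no extra logarithm to remove.
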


\begin{proof}
We first prove the fixed-direction bound. Fix \(j\in[n]\). We use the
decomposition
\[
    G^{-1}
    =
    G_{-j}^{-1}
    +
    B_j^{\mathrm{bg}}
    +
    B_j^{\mathrm{loc}},
\]
where
\[
    B_j^{\mathrm{loc}}
    =
    \frac1nL''(x_j^\top\hat\theta)x_jx_j^\top
    =
    a_jx_jx_j^\top,
    \qquad
    a_j:=\frac1nL''(x_j^\top\hat\theta)\ge\frac{\kappa_L}{n},
\]
and
\[
    B_j^{\mathrm{bg}}
    =
    \frac1nX_{-j}\Delta_jX_{-j}^\top.
\]
Set
\[
    \widetilde G_j
    :=
    \left(
        G_{-j}^{-1}
        +
        B_j^{\mathrm{bg}}
    \right)^{-1}.
\]
Since \(\nabla^3\rho\equiv0\),
\[
    \widetilde G_j^{-1}
    =
    \nabla^2\Psi_X^{-j}(\hat\theta)
    \succeq
    \kappa I_p
    +
    \frac{\kappa_L}{n}X_{-j}X_{-j}^\top .
\]
By the Sherman--Morrison formula,
\[
    G
    =
    \widetilde G_j
    -
    \frac{
        a_j\widetilde G_jx_jx_j^\top\widetilde G_j
    }{
        1+a_jx_j^\top\widetilde G_jx_j
    }.
\]
Therefore
\[
    x_j^\top Gu
    =
    \frac{x_j^\top\widetilde G_ju}
    {1+a_jx_j^\top\widetilde G_jx_j}.
\]
Using the resolvent identity
\[
    \widetilde G_j-G_{-j}
    =
    -\widetilde G_jB_j^{\mathrm{bg}}G_{-j},
\]
we obtain
\[
    x_j^\top Gu
    =
    \frac{x_j^\top G_{-j}u}
    {1+a_jx_j^\top\widetilde G_jx_j}
    -
    \frac{
        x_j^\top\widetilde G_jB_j^{\mathrm{bg}}G_{-j}u
    }{
        1+a_jx_j^\top\widetilde G_jx_j
    }.
\]
Since the denominator is at least one,
\[
    |x_j^\top Gu|
    \le
    |x_j^\top G_{-j}u|
    +
    \frac{
        |x_j^\top\widetilde G_jB_j^{\mathrm{bg}}G_{-j}u|
    }{
        1+a_jx_j^\top\widetilde G_jx_j
    }.
\]

For the first term, conditionally on \(X_{-j}\), \(G_{-j}u\) is deterministic
and has Euclidean norm \(O(1)\). Hence, for every integer \(q\ge1\),
\[
    \sup_{j\in[n]}
    \Norm{x_j^\top G_{-j}u}_{L^q}
    =
    O(q).
\]
Lemma~\ref{lem:max-polynomial-moment-growth} gives
\[
    \left\|
        \sup_{j\in[n]} |x_j^\top G_{-j}u|
    \right\|_{L^r}
    =
    O_r(\log n).
\]

For the background correction, the curvature lower bound on
\(\widetilde G_j^{-1}\) gives
\[
    \widetilde G_jX_{-j}X_{-j}^\top\widetilde G_j
    \preceq
    \frac{n}{\kappa_L}\widetilde G_j.
\]
Thus
\[
    \norm{X_{-j}^\top\widetilde G_jx_j}_2^2
    =
    x_j^\top\widetilde G_jX_{-j}X_{-j}^\top\widetilde G_jx_j
    \le
    \frac{n}{\kappa_L}x_j^\top\widetilde G_jx_j.
\]
Since \(a_j\ge\kappa_L/n\), for \(t:=x_j^\top\widetilde G_jx_j\ge0\),
\[
    \frac{\norm{X_{-j}^\top\widetilde G_jx_j}_2}
    {1+a_jx_j^\top\widetilde G_jx_j}
    \le
    \sqrt{\frac{n}{\kappa_L}}
    \frac{\sqrt t}{1+(\kappa_L/n)t}
    \le
    O(n).
\]
Therefore
\[
\begin{aligned}
    &\frac{
        |x_j^\top\widetilde G_jB_j^{\mathrm{bg}}G_{-j}u|
    }{
        1+a_jx_j^\top\widetilde G_jx_j
    }
    =
    \frac1n
    \frac{
        |x_j^\top\widetilde G_jX_{-j}
        \Delta_jX_{-j}^\top G_{-j}u|
    }{
        1+a_jx_j^\top\widetilde G_jx_j
    }
    \le
    O(1)\,
    \norm{\Delta_j}_{\op}
    \norm{X_{-j}^\top G_{-j}u}_2 .
\end{aligned}
\]
Moreover,
\[
    \norm{X_{-j}^\top G_{-j}u}_2
    \le
    \norm{X_{-j}^\top G_{-j}}_{\op}\norm{u}_2
    =
    O(\sqrt n).
\]
Hence
\[
    \sup_{j\in[n]}
    \frac{
        |x_j^\top\widetilde G_jB_j^{\mathrm{bg}}G_{-j}u|
    }{
        1+a_jx_j^\top\widetilde G_jx_j
    }
    \le
    O(\sqrt n)
    \sup_{j\in[n]}\norm{\Delta_j}_{\op}.
\]
Lemma~\ref{lem:uniform-background-drift} gives
\[
    \left\|
        \sup_{j\in[n]}
        \frac{
            |x_j^\top\widetilde G_jB_j^{\mathrm{bg}}G_{-j}u|
        }{
            1+a_jx_j^\top\widetilde G_jx_j
        }
    \right\|_{L^r}
    =
    O_r((\log n)^6).
\]
Combining the leave-one-out term and the background correction proves
\[
    \Norm{\norm{X^\top Gu}_\infty}_{L^r}
    =
    O_r((\log n)^6).
\]

We now prove the auxiliary-column estimate. Since \(x\) is independent of
\(X\), we condition on \(X\). Then \(G\) and the vectors
\((Gx_j)_{j\in[n]}\) are fixed, and
\[
    (X^\top Gx)_j
    =
    x_j^\top Gx
    =
    x^\top Gx_j .
\]
By the curvature bound,
\[
    \max_{j\in[n]}\norm{Gx_j}_2
    \le
    \norm{GX}_{\op}
    =
    O(\sqrt n).
\]
Therefore, conditionally on \(X\), Lemma~\ref{lem:moments_of_x_i} gives, for
every integer \(q\ge1\),
\[
    \max_{j\in[n]}
    \Norm{x^\top Gx_j}_{L_x^q}
    \le
    O(q\sqrt n).
\]
Applying Lemma~\ref{lem:max-polynomial-moment-growth} conditionally on \(X\), we obtain
\[
    \Norm{
        \norm{X^\top Gx}_\infty
    }_{L_x^r}
    =
    \Norm{
        \max_{j\in[n]}|x^\top Gx_j|
    }_{L_x^r}
    \le
    O_r(\sqrt n\log n).
\]
The right-hand side is deterministic, so taking the \(L^r\)-norm over the
background variables proves
\[
    \Norm{\norm{X^\top Gx}_\infty}_{L^r}
    =
    O_r(\sqrt n\log n).
\]
\end{proof}

\subsection{Contracted active-background term}
\label{subsec:contracted-active-background}

It remains to bound the contribution to \(\omega_c\). 
The notation \(x\in\R^p\) denotes the active column, independent of the background
matrix \(X=(x_1,\ldots,x_n)\), and
\[
    G
    =
    \left(\nabla^2\Psi_X(\hat\theta)\right)^{-1}.
\]
The deterministic averaged leverage associated with the active column is denoted
by \(\gamma_x\), and we define
\[
    s_x^\zeta
    :=
    L'\!\left(
        \zeta_x(x^\top\hat\theta)
    \right),
    \qquad
    \zeta_x(t)+\gamma_xL'(\zeta_x(t))=t.
\]

We use a check accent for quantities evaluated at the checked background
\(X^{A_j}\):
\[
    \check G:=G(X^{A_j}),
    \qquad
    \check{\hat\theta}:=\hat\theta(X^{A_j}),
    \qquad
    \check s_x^\zeta:=s_x^\zeta(X^{A_j}),
    \qquad
    \check{\tilde\Gamma}_x:=\tilde\Gamma_x(X^{A_j}).
\]
The active column \(x\) is the same in the checked and unchecked expressions;
only the background is checked.

Define
\[
    y:=Gu,
    \qquad
    \check y:=\check G u.
\]
Let \(D\) be the diagonal matrix, indexed by the columns of \(X\), with entries
\[
    D_\ell
    :=
    L'''(x_\ell^\top\hat\theta)\,x_\ell^\top y,
\]
and set
\[
    Y
    :=
    GXDX^\top G.
\]
The checked matrices \(\check D\) and \(\check Y\) are defined analogously.
Recall that
\[
    \tilde\Gamma_x
    =\diag_{\ell\in[n]}-\frac{s_x^\zeta}{2n}
    L'''(x_\ell^\top\hat\theta)\,x_\ell^\top Gx,
\]
Therefore the explicit approximation can be written as
\[
    \mathfrak d_x
    =
    \mathfrak d_x^{(0)}
    +
    \mathfrak d_x^{(1)},
\]
where
\[
    \mathfrak d_x^{(0)}
    =
    -\frac{s_x^\zeta}{\sqrt n}\,y^\top x,
    \qquad
    \mathfrak d_x^{(1)}
    =
    -\frac{(s_x^\zeta)^2}{2n^{5/2}}\,x^\top Yx .
\]
The checked statistics
\(\check{\mathfrak d}_x,\check{\mathfrak d}_x^{(0)},\check{\mathfrak d}_x^{(1)}\)
are obtained by replacing \(s_x^\zeta,y,Y\) with
\(\check s_x^\zeta,\check y,\check Y\).

In this subsection, \(\E_x\), \(\Var_x\), and \(\Cov_x\) denote conditional
expectation, variance, and covariance with respect to the active column \(x\)
only, conditionally on all background variables. 
For a background-measurable matrix \(B\), and for fixed \(q\ge1\), write
\[
    \mathcal Q_q(B)
    :=
    \Norm{x^\top Bx}_{L_x^q}.
\]
We also write
\[
    \Sigma_x:=\E_x[xx^\top]
\]
for the second-moment matrix of the active column. By
Assumptions~\ref{ass:poincare-data} and~\ref{ass:3bounded-means},
\[
    \norm{\Sigma_x}_{\op}=O(1).
\]


For background-measurable vectors \(a,b\in\R^p\) and symmetric matrices
\(B,C\in\R^{p\times p}\), introduce the active contractions
\[
\begin{aligned}
    \mathcal C^{00}(a,b)
    &:=
    \Cov_x\!\left(
        s_x^\zeta\,a^\top x,
        \check s_x^\zeta\,b^\top x
    \right),                                                   \\
    \mathcal C^{01}(a,C)
    &:=
    \Cov_x\!\left(
        s_x^\zeta\,a^\top x,
        (\check s_x^\zeta)^2\,x^\top Cx
    \right),                                                   \\
    \mathcal C^{10}(B,b)
    &:=
    \Cov_x\!\left(
        (s_x^\zeta)^2\,x^\top Bx,
        \check s_x^\zeta\,b^\top x
    \right),                                                   \\
    \mathcal C^{11}(B,C)
    &:=
    \Cov_x\!\left(
        (s_x^\zeta)^2\,x^\top Bx,
        (\check s_x^\zeta)^2\,x^\top Cx
    \right).
\end{aligned}
\]
Then

\begin{align}\label{borne_Cov_x}
    \Cov_x\!\left(
        \mathfrak d_x,
        \check{\mathfrak d}_x
    \right)
    &=
    \frac1n\mathcal C^{00}(y,\check y)
    +
    \frac1{2n^3}\mathcal C^{01}(y,\check Y)
    +
    \frac1{2n^3}\mathcal C^{10}(Y,\check y)
    +
    \frac1{4n^5}\mathcal C^{11}(Y,\check Y).
\end{align}

\begin{lemma}[Square-summed derivatives of the active score]
\label{lem:square-summed-active-score-derivatives}
Under Assumptions~\ref{ass:1dim}--\ref{ass:7third-order-curvature}, for every
fixed integer \(r\ge1\),
\[
    \left\|
        \left(
            \sum_{\ell=1}^n
            \left(
                \norm{\mathbb D_\ell[s_x^\zeta]}^*
            \right)^2
        \right)^{1/2}
    \right\|_{L^r}
    =
    O_r(1).
\]
The same estimate holds for the checked score:
\[
    \left\|
        \left(
            \sum_{\ell=1}^n
            \left(
                \norm{\mathbb D'_\ell[\check s_x^\zeta]}^*
            \right)^2
        \right)^{1/2}
    \right\|_{L^r}
    =
    O_r(1).
\]
\end{lemma}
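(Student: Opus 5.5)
The plan is to differentiate $s_x^\zeta$ with respect to each background column $x_\ell$ through the chain rule, and then control the square sum with the background-sensitivity bound for the minimizer. In the generic setting, $\gamma_x$ is deterministic and $x$ is independent of $X$, so Lemma~\ref{lem:derivative-zeta} (with $(X_{-i},\hat\theta_{-i},x_i)$ replaced by $(X,\hat\theta,x)$) gives
\[
    \mathbb D_\ell[s_x^\zeta][h]
    =
    \frac{L''(\zeta_x(x^\top\hat\theta))}{1+\gamma_xL''(\zeta_x(x^\top\hat\theta))}\,
    x^\top\mathbb D_\ell[\hat\theta][h].
\]
The prefactor is bounded by $\norm{L''}_\infty=O(1)$, because the denominator is at least one. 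Hence
\[
    \norm{\mathbb D_\ell[s_x^\zeta]}^*
    \le
    O(1)\,\norm{x^\top\mathbb D_\ell[\hat\theta]}^* .
\]
The statement therefore reduces to a square-summed sensitivity of $\hat\theta$ in the random direction $z=x$.

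Next I will apply Lemma~\ref{lem:square-summed-background-sensitivity} with $z=x$ and exponent $k_n=r$ (or $\lceil r/2\rceil$). This lemma holds for an arbitrary random vector $z$, so independence is not even needed at this step. It gives
\[
    \Big\|\sum_\ell (\norm{x^\top\mathbb D_\ell[\hat\theta]}^*)^2\Big\|_{L^{r}}
    \le
    O_r(1/n)\,\Norm{\norm{x}_2}_{L^{4r}}^2 .
\]
By Lemma~\ref{lem:moments_of_x_i}, $\Norm{\norm{x}_2}_{L^{4r}}=O_r(\sqrt n)$. The right-hand side is therefore $O_r(1)$, and taking square roots yields the first claim.

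There is one subtlety I expect to be the main point to check. Applying Lemma~\ref{lem:square-summed-background-sensitivity} bluntly uses $\sum_\ell(x_\ell^\top Gx)^2\le O(n)\norm{x}_2^2$, which costs a factor $n$. This factor is exactly compensated by the $1/n^2$ prefactor in the pointwise bound, so the naive route closes without loss. I will verify it directly from the pointwise estimate
\[
    (\norm{x^\top\mathbb D_\ell[\hat\theta]}^*)^2
    \le
    \frac{O(1)}{n^2}\big[(x_\ell^\top Gx)^2\norm{\hat\theta}_2^2+s_\ell^2\norm{x}_2^2\big].
\]
Summing over $\ell$ gives the bound $\frac{O(1)}{n}\big(\norm{\hat\theta}_2^2+\frac1n\sum_\ell s_\ell^2\big)\norm{x}_2^2$. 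The result then follows from Hölder's inequality, Theorem~\ref{thm:direct-hat-bound}, Lemma~\ref{lem:empirical-prediction-energy}, and $\Norm{\norm{x}_2^2}_{L^{2r}}=O_r(n)$.

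For the checked score, $\check s_x^\zeta$ is the same functional evaluated at $X^{A_j}$, which has the same law as $X$, and $\mathbb D'_\ell$ differentiates with respect to the $\ell$-th column of $X^{A_j}$. The computation is therefore identical with $(X,\hat\theta,G)$ replaced by $(X^{A_j},\check{\hat\theta},\check G)$. The $\gamma_x$ is unchanged and deterministic, and $x$ is still independent of $X^{A_j}$, so the same bound holds.
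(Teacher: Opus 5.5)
Your proposal is correct and is essentially the paper's own argument: you apply the chain rule through Lemma~\ref{lem:derivative-zeta}, bound the prefactor because the denominator is at least one, invoke Lemma~\ref{lem:square-summed-background-sensitivity} with $z=x$, and use $\Norm{\norm{x}_2}_{L^{4r}}=O_r(\sqrt n)$ from Lemma~\ref{lem:moments_of_x_i}, treating the checked score identically. Your direct check from the pointwise estimate is just that lemma's proof written out, and it correctly confirms that the factor $n$ from $\sum_\ell (x_\ell^\top Gx)^2\le O(n)\norm{x}_2^2$ is exactly absorbed by the $1/n^2$ prefactor.
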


\begin{proof}
By Lemma~\ref{lem:derivative-zeta}, in the generic background notation,
\[
    \mathbb D_\ell[s_x^\zeta][h]
    =
    \frac{
        L''(\zeta_x(x^\top\hat\theta))
    }{
        1+\gamma_xL''(\zeta_x(x^\top\hat\theta))
    }
    x^\top\mathbb D_\ell[\hat\theta][h].
\]
The prefactor is uniformly bounded. Hence
\[
    \norm{\mathbb D_\ell[s_x^\zeta]}^*
    \le
    O(1)\norm{x^\top\mathbb D_\ell[\hat\theta]}^*.
\]
Applying Lemma~\ref{lem:square-summed-background-sensitivity} conditionally on
the active column \(x\), with \(z=x\), gives
\[
\begin{aligned}
    \left\|
        \left(
            \sum_{\ell=1}^n
            \left(
                \norm{x^\top\mathbb D_\ell[\hat\theta]}^*
            \right)^2
        \right)^{1/2}
    \right\|_{L^r}
    &\le
    \frac{O_r(1)}{\sqrt n}
    \Norm{\norm{x}_2}_{L^{4r}}
    =
    O_r(1),
\end{aligned}
\]
using Lemma~\ref{lem:moments_of_x_i}. The checked estimate is identical.
\end{proof}
\begin{lemma}[Background matrix bounds]
\label{lem:background-matrix-bounds_bounded}
Under Assumptions~\ref{ass:1dim}--\ref{ass:7third-order-curvature} and
\(\nabla^3\rho\equiv0\), for every fixed integers \(q,r\ge1\),
\[
    \Norm{\norm{y}_2}_{L^r}=O_r(1),
    \qquad
    \Norm{\mathcal Q_q(Y)}_{L^r}
    =
    O_{q,r}\!\left(n^2(\log n)^6\right).
\]
The same estimates hold for \(\check y\) and \(\check Y\).
\end{lemma}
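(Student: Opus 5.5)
The bound on \(y=Gu\) is deterministic. Since \(\norm{G}_{\op}\le\kappa^{-1}\) and \(\norm{u}_2\le1\), we get \(\norm{y}_2\le\kappa^{-1}=O(1)\) pointwise, so every \(L^r\)-norm is \(O(1)\). The same argument applies to \(\check y=\check Gu\), because \(\check G\) satisfies the same curvature lower bound.

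For \(\mathcal Q_q(Y)=\Norm{x^\top Yx}_{L_x^q}\) with \(Y=GXDX^\top G\), we start from
\[
    |x^\top Yx|
    =
    |(X^\top Gx)^\top D(X^\top Gx)|
    \le
    \norm{D}_{\op}\,\norm{X^\top Gx}_2^2 .
\]
The diagonal entries of \(D\) are \(L'''(x_\ell^\top\hat\theta)\,x_\ell^\top Gu\). Since \(\norm{L'''}_\infty=O(1)\), this gives \(\norm{D}_{\op}\le O(1)\norm{X^\top Gu}_\infty\), and \(D\) is background-measurable. Conditionally on the background, we therefore have
\[
    \mathcal Q_q(Y)
    \le
    O(1)\,\norm{X^\top Gu}_\infty\,
    \Norm{\norm{X^\top Gx}_2^2}_{L_x^q}.
\]
For the second factor we use the deterministic bound \(\norm{X^\top Gx}_2\le\norm{X^\top G}_{\op}\norm{x}_2=O(\sqrt n)\norm{x}_2\) from Lemma~\ref{lem:curvature-resolvent}. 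Since \(x\) is independent of the background, Lemma~\ref{lem:moments_of_x_i} gives \(\Norm{\norm{x}_2^2}_{L_x^q}=O_q(n)\) deterministically. Hence \(\Norm{\norm{X^\top Gx}_2^2}_{L_x^q}=O_q(n^2)\), a background-deterministic bound.

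Taking the \(L^r\)-norm over the background then only requires \(\Norm{\norm{X^\top Gu}_\infty}_{L^r}=O_r((\log n)^6)\), which is Lemma~\ref{lem:fixed-direction-full-resolvent-leverage}. This yields \(\Norm{\mathcal Q_q(Y)}_{L^r}=O_{q,r}(n^2(\log n)^6)\).

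The checked versions follow identically, because \(X^{A_j}\) has the same distribution as \(X\) and the active column \(x\) remains independent of the checked background. Lemma~\ref{lem:fixed-direction-full-resolvent-leverage} and the deterministic curvature bounds therefore apply verbatim to \(\check G\), \(\check D\) and \(\check Y\).

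The only nontrivial input is the polylogarithmic sup-norm bound on \(X^\top Gu\). Here the full resolvent \(G\) depends on every column, so the naive bound \(\norm{X^\top Gu}_\infty\le\norm{X^\top Gu}_2=O(\sqrt n)\) would lose a factor \(\sqrt n\). That is precisely why Lemma~\ref{lem:fixed-direction-full-resolvent-leverage}, with its Sherman--Morrison and uniform background-drift argument, is needed. Since that lemma is already available, the remaining steps are routine.
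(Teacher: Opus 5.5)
Your proof is correct and follows the paper's argument. You write \(x^\top Yx=z^\top Dz\) with \(z=X^\top Gx\), bound \(\norm{D}_{\op}\le O(1)\norm{X^\top Gu}_\infty\) and control it through Lemma~\ref{lem:fixed-direction-full-resolvent-leverage}, and show \(\Norm{\norm{z}_2^2}_{L_x^q}=O_q(n^2)\) deterministically. The only cosmetic difference is that you inline the bound \(\norm{X^\top Gx}_2\le O(\sqrt n)\norm{x}_2\), whereas the paper cites it as the generic cross-leverage estimate.
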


\begin{proof}
Recall that
\[
    y=Gu,
    \qquad
    D=\diag_{m\in[n]}
    \left(
        L'''(x_m^\top\hat\theta)\,x_m^\top y
    \right),
    \qquad
    Y=GXDX^\top G.
\]
Since \(\norm{G}_{\op}=O(1)\) and \(\norm{u}_2\le1\),
\[
    \norm{y}_2=O(1).
\]

Set
\[
    z:=X^\top Gx.
\]
Then
\[
    x^\top Yx=z^\top Dz.
\]
By the boundedness of \(L'''\),
\[
    \norm{D}_{\op}
    \le
    O(1)\norm{X^\top y}_\infty .
\]
Moreover, Lemma~\ref{lem:fixed-direction-full-resolvent-leverage} gives
\[
    \Norm{\norm{X^\top y}_\infty}_{L^r}
    =
    O_r((\log n)^6),
\]
while the generic cross-leverage estimate gives, conditionally on the
background,
\[
    \Norm{\norm{X^\top Gx}_2}_{L_x^{2q}}
    =
    O_q(n).
\]
Therefore
\[
\begin{aligned}
    \mathcal Q_q(Y)
    &=
    \Norm{z^\top Dz}_{L_x^q}
    \le
    \norm{D}_{\op}\,
    \Norm{\norm{z}_2^2}_{L_x^q}                         \\
    &\le
    O_q(n^2)\norm{X^\top y}_\infty .
\end{aligned}
\]
Taking the \(L^r\)-norm over the background gives
\[
    \Norm{\mathcal Q_q(Y)}_{L^r}
    =
    O_{q,r}\!\left(n^2(\log n)^6\right).
\]
The checked estimate is identical, since the checked background has the same
law and satisfies the same bounds.
\end{proof}

\begin{lemma}[Differentiated active-score contractions]
\label{lem:differentiated-active-score-contractions}
Under Assumptions~\ref{ass:1dim}--\ref{ass:7third-order-curvature} and
\(\nabla^3\rho\equiv0\), for every fixed integer \(r\ge1\),
\begin{itemize}
    \item 
$\begin{aligned}
    &\left\|
        \left(
            \sum_{\ell=1}^n
            \left(
                \norm{
                    \Cov_x\!\left(
                        \mathbb D_\ell[s_x^\zeta]\,y^\top x,
                        \check s_x^\zeta\,\check y^\top x
                    \right)
                }^*
            \right)^2
        \right)^{1/2}
    \right\|_{L^r}
    =
    O_r\!\left(\frac1{\sqrt n}\right).
\end{aligned}$
\item
$\begin{aligned}
    &\left\|
        \left(
            \sum_{\ell=1}^n
            \left(
                \norm{
                    \Cov_x\!\left(
                        \mathbb D_\ell[s_x^\zeta]\,y^\top x,
                        (\check s_x^\zeta)^2\,x^\top\check Yx
                    \right)
                }^*
            \right)^2
        \right)^{1/2}
    \right\|_{L^r}
    =
O_r\!\left(n^{3/2}(\log n)^6\right).
\end{aligned}$
\item 
$\begin{aligned}
    &\left\|
        \left(
            \sum_{\ell=1}^n
            \left(
                \norm{
                    \Cov_x\!\left(
                        2s_x^\zeta\,\mathbb D_\ell[s_x^\zeta]\,x^\top Yx,
                        (\check s_x^\zeta)^2\,x^\top\check Yx
                    \right)
                }^*
            \right)^2
        \right)^{1/2}
    \right\|_{L^r}
=
O_r\!\left(n^{7/2}(\log n)^{12}\right).
\end{aligned}$
\end{itemize}
The same bound holds with the checked and unchecked roles interchanged, in particular for the score derivatives appearing in \(\mathcal C^{10}(Y,\check y)\).
\end{lemma}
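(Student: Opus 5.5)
The plan is to factor each covariance into a background-only derivative direction times an active-column quantity, and then reduce every square-summed derivative to the single square-summed bound of Lemma~\ref{lem:square-summed-background-sensitivity}.

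\textbf{Factorization.} By Lemma~\ref{lem:derivative-zeta},
\[
    \mathbb D_\ell[s_x^\zeta][h]=\beta_x\,x^\top\mathbb D_\ell[\hat\theta][h],
    \qquad
    \beta_x:=\frac{L''(\zeta_x(x^\top\hat\theta))}{1+\gamma_xL''(\zeta_x(x^\top\hat\theta))}\in[0,O(1)].
\]
Set \(v_{\ell,h}:=\mathbb D_\ell[\hat\theta][h]\), which is background-measurable. Then, for instance,
\[
    \Cov_x\!\left(\mathbb D_\ell[s_x^\zeta][h]\,y^\top x,\ \check s_x^\zeta\,\check y^\top x\right)
    =\Cov_x\!\left(\beta_x\,(v_{\ell,h}^\top x)(y^\top x),\ \check s_x^\zeta\,\check y^\top x\right).
\]
This expression is linear in \(v_{\ell,h}\). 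So it can be written as \(w_1^\top v_{\ell,h}\) for a background-measurable vector
\[
    w_1:=\E_x\!\left[\beta_x(y^\top x)\big(\check s_x^\zeta\check y^\top x-\E_x[\check s_x^\zeta\check y^\top x]\big)\,x\right].
\]
Likewise, the second item becomes \(w_2^\top v_{\ell,h}\), with \(\check s_x^\zeta\check y^\top x\) replaced by \((\check s_x^\zeta)^2x^\top\check Yx\). The third item becomes \(w_3^\top v_{\ell,h}\), with
\[
    w_3=\E_x\!\left[2s_x^\zeta\beta_x(x^\top Yx)\big((\check s_x^\zeta)^2x^\top\check Yx-\E_x[\cdot]\big)\,x\right].
\]
Hence \(\norm{\Cov_x(\ldots)}^*=\norm{w_k^\top\mathbb D_\ell[\hat\theta]}^*\) exactly.

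\textbf{Square-summed bound.} Each \(w_k\) is a function of the background only (both \(X\) and the checked \(X^{A_j}\)). Lemma~\ref{lem:square-summed-background-sensitivity} applied with \(z=w_k\) then gives
\[
    \left\|\left(\sum_\ell(\norm{w_k^\top\mathbb D_\ell[\hat\theta]}^*)^2\right)^{1/2}\right\|_{L^r}
    \le O_r(n^{-1/2})\,\Norm{\norm{w_k}_2}_{L^{4r}}.
\]
Its proof is pointwise in \(z\) followed by H\"older, so any random \(z\) is allowed. The derivative here is taken in the unchecked background, and \(\check X\) shares the unresampled columns; this is harmless because \(w_k\) enters only through its norm. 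It therefore suffices to show
\[
    \Norm{\norm{w_1}_2}_{L^{4r}}=O(1),\qquad
    \Norm{\norm{w_2}_2}_{L^{4r}}=O(n^2(\log n)^6),\qquad
    \Norm{\norm{w_3}_2}_{L^{4r}}=O(n^4(\log n)^{12}).
\]

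\textbf{Bounding \(\norm{w_k}_2\).} Write \(\norm{w}_2=\sup_{\norm{e}_2\le1}e^\top w\). Each \(e^\top w_k\) is a conditional covariance \(\Cov_x(\beta_x(e^\top x)A,B)\), which I bound by conditional Cauchy--Schwarz and H\"older:
\[
    \big|\Cov_x(\beta_x(e^\top x)A,B)\big|\le O(1)\,\Norm{e^\top x}_{L_x^{4}}\Norm{A}_{L_x^{8}}\Norm{B-\E_xB}_{L_x^{8}}.
\]
The factors are controlled as follows. Lemma~\ref{lem:moments_of_x_i} gives \(\Norm{e^\top x}_{L_x^q}=O_q(1)\) uniformly in \(e\). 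Lemma~\ref{lem:background-matrix-bounds_bounded} gives \(\Norm{y^\top x}_{L_x^q}=O_q(\norm{y}_2)\) and \(\mathcal Q_q(Y),\mathcal Q_q(\check Y)\lesssim n^2\norm{X^\top y}_\infty\). Finally, \(\Norm{s_x^\zeta}_{L_x^q}\) is bounded by \(O(1)(1+|\hat\theta|_2)\), via \(|\zeta_x(t)|\le|t|+O(1)\). The outer \(L^{4r}\) norms then follow from H\"older's inequality, Theorem~\ref{thm:direct-hat-bound}, and Lemma~\ref{lem:fixed-direction-full-resolvent-leverage}; the last supplies the \((\log n)^6\) factor per occurrence of \(Y\).

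Multiplying out gives \(O(1)\), \(O(n^2(\log n)^6)\), and \(O(n^4(\log n)^{12})\). After the factor \(n^{-1/2}\) these are exactly the three claimed rates. Swapping checked and unchecked roles is symmetric, as the statement asserts.

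The main obstacle is justifying that the supremum over \(e\) can be taken before the background \(L^{4r}\)-norm without loss. I would handle it by noting that the conditional-moment bounds used are uniform in \(\norm{e}_2\le1\) and deterministic given the background, so \(\norm{w_k}_2\) is pointwise dominated by a product of background random variables. A secondary concern is the loss of the centering in \(B\). It is not needed: \(\Norm{B-\E_xB}\le2\Norm{B}\).
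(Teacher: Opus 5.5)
Your proposal is correct and follows essentially the same route as the paper. You factor the score derivative through Lemma~\ref{lem:derivative-zeta} into a background vector $w_k$ paired with $\mathbb D_\ell[\hat\theta][h]$, apply Lemma~\ref{lem:square-summed-background-sensitivity} with $z=w_k$, and bound $\norm{w_k}_2$ by duality, conditional H\"older, and the $\mathcal Q_q(Y)\lesssim n^2\norm{X^\top y}_\infty$ estimate from Lemma~\ref{lem:background-matrix-bounds_bounded}.
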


\begin{proof}
We prove the unchecked estimates; the checked estimates are identical. By
Lemma~\ref{lem:derivative-zeta},
\[
    \mathbb D_\ell[s_x^\zeta][h]
    =
    \frac{
        L''(\zeta_x(x^\top\hat\theta))
    }{
        1+\gamma_xL''(\zeta_x(x^\top\hat\theta))
    }
    x^\top\mathbb D_\ell[\hat\theta][h].
\]
The prefactor is bounded.

For the \(00\)-term, for every \(\norm{h}_2\le1\),
\[
\begin{aligned}
    &\Cov_x\!\left(
        \mathbb D_\ell[s_x^\zeta][h]\,y^\top x,
        \check s_x^\zeta\,\check y^\top x
    \right)                                                \\
    &\quad =
    \left(
        \E_x\!\left[
            \frac{
                L''(\zeta_x(x^\top\hat\theta))
            }{
                1+\gamma_xL''(\zeta_x(x^\top\hat\theta))
            }
            x\,(y^\top x)
            \left(
                \check s_x^\zeta\,\check y^\top x
                -
                \E_x[\check s_x^\zeta\,\check y^\top x]
            \right)
        \right]
    \right)^\top
    \mathbb D_\ell[\hat\theta][h].
\end{aligned}
\]
Consequently, after taking the supremum over \(\norm{h}_2\le1\) and summing in
\(\ell\), Lemma~\ref{lem:square-summed-background-sensitivity} gives
\[
\begin{aligned}
    &\left\|
        \left(
            \sum_{\ell=1}^n
            \left(
                \norm{
                    \Cov_x\!\left(
                        \mathbb D_\ell[s_x^\zeta]\,y^\top x,
                        \check s_x^\zeta\,\check y^\top x
                    \right)
                }^*
            \right)^2
        \right)^{1/2}
    \right\|_{L^r}                                      \\
    &\qquad\le
    \frac{O_r(1)}{\sqrt n}
    \left\|
        \norm{
            \E_x\!\left[
                \frac{
                    L''(\zeta_x(x^\top\hat\theta))
                }{
                    1+\gamma_xL''(\zeta_x(x^\top\hat\theta))
                }
                x\,(y^\top x)
                \left(
                    \check s_x^\zeta\,\check y^\top x
                    -
                    \E_x[\check s_x^\zeta\,\check y^\top x]
                \right)
            \right]
        }_2
    \right\|_{L^{4r}} 
\end{aligned}
\]
We bound the vector inside the last display by duality. For every
background-measurable \(v\in\R^p\) with \(\norm{v}_2\le1\), conditional
Cauchy--Schwarz and the active linear moment bounds give
\[
\begin{aligned}
    &\left|
        \E_x\!\left[
            (v^\top x)(y^\top x)
            \left(
                \check s_x^\zeta\,\check y^\top x
                -
                \E_x[\check s_x^\zeta\,\check y^\top x]
            \right)
        \right]
    \right|
    \le
    O(1)\norm{y}_2\norm{\check y}_2
    \Norm{\check s_x^\zeta}_{L_x^4}.
\end{aligned}
\]
Taking the supremum over \(\norm{v}_2\le1\), then the \(L^{4r}\)-norm over the
background, yields
\[
    \left\|
        \norm{
            \E_x\!\left[
                \frac{
                    L''(\zeta_x(x^\top\hat\theta))
                }{
                    1+\gamma_xL''(\zeta_x(x^\top\hat\theta))
                }
                x\,(y^\top x)
                \left(
                    \check s_x^\zeta\,\check y^\top x
                    -
                    \E_x[\check s_x^\zeta\,\check y^\top x]
                \right)
            \right]
        }_2
    \right\|_{L^{4r}}
    =
    O_r(1).
\]
Consequently the \(00\)-term is \(O_r(n^{-1/2})\).
By conditional Cauchy--Schwarz, Lemmas~\ref{lem:moments_of_x_i},~~\ref{lem:def_zeta} and~\ref{lem:background-matrix-bounds_bounded}.
The checked derivative gives the same estimate.

For the one-quadratic active slot, the same duality argument gives
\[
    \norm{
        \E_x\!\left[
            \frac{
                L''(\zeta_x(x^\top\hat\theta))
            }{
                1+\gamma_xL''(\zeta_x(x^\top\hat\theta))
            }
            x\,(y^\top x)
            \left(
                (\check s_x^\zeta)^2x^\top\check Yx
                -
                \E_x[(\check s_x^\zeta)^2x^\top\check Yx]
            \right)
        \right]
    }_2
    \le
    O(1)\norm{y}_2
    \mathcal Q_4(\check Y)
    \Norm{\check s_x^\zeta}_{L_x^8}^2 .
\]
Applying Lemmas~\ref{lem:square-summed-background-sensitivity} and~\ref{lem:background-matrix-bounds_bounded} as above gives
\[
    \left\|
        \left(
            \sum_{\ell=1}^n
            \left(
                \norm{
                    \Cov_x\!\left(
                        \mathbb D_\ell[s_x^\zeta]\,y^\top x,
                        (\check s_x^\zeta)^2\,x^\top\check Yx
                    \right)
                }^*
            \right)^2
        \right)^{1/2}
    \right\|_{L^r}
    \leq O_r((\log n)^6n^{3/2}).
\]
The same argument gives the corresponding bound for
\(\mathcal C^{10}(Y,\check y)\).

For the two-quadratic active slot, duality gives
\[
    \norm{
        \E_x\!\left[
            x\, (x^\top Yx)
            \left(
                (\check s_x^\zeta)^2x^\top\check Yx
                -
                \E_x[(\check s_x^\zeta)^2x^\top\check Yx]
            \right)
        \right]
    }_2
    \le
    O(1)\mathcal Q_8(Y)\mathcal Q_8(\check Y)
    \Norm{\check s_x^\zeta}_{L_x^{16}}^2 .
\]
By Lemma~\ref{lem:background-matrix-bounds_bounded},
\[
    \Norm{\mathcal Q_8(Y)}_{L^{8r}}
    +
    \Norm{\mathcal Q_8(\check Y)}_{L^{8r}}
    =
    O_r\!\left(n^2(\log n)^6\right).
\]
Hence
\[
\begin{aligned}
    &\left\|
        \left(
            \sum_{\ell=1}^n
            \left(
                \norm{
                    \Cov_x\!\left(
                        2s_x^\zeta\,\mathbb D_\ell[s_x^\zeta]\,
                        x^\top Yx,
                        (\check s_x^\zeta)^2x^\top\check Yx
                    \right)
                }^*
            \right)^2
        \right)^{1/2}
    \right\|_{L^r}
    =
    O_r\!\left(n^{7/2}(\log n)^{12}\right).
\end{aligned}
\]

This proves all the stated estimates.
\end{proof}

To bound the terms appearing when differentiating the terms $y, \check y, Y, \check Y$, we make use of the following preliminary Lemma.
\begin{lemma}[Active-column contractions]
\label{lem:active-column-contractions-detailed}
Under Assumptions~\ref{ass:1dim}--\ref{ass:7third-order-curvature}, for every
fixed integer \(r\ge1\), there exists a nonnegative background-measurable
random variable \(R\), with
\[
    \Norm{R}_{L^r}=O_r(1),
\]
such that, for all background-measurable vectors \(a,b\in\R^p\) and symmetric
matrices \(B,C\in\R^{p\times p}\),
\begin{align*}
    &|\mathcal C^{00}(a,b)|
    \le
    R\norm{a}_2\norm{b}_2,
&&
    |\mathcal C^{01}(a,C)|
    \le
    R\norm{a}_2\,\mathcal Q_4(C),
\\
    &|\mathcal C^{10}(B,b)|
    \le
    R\,\mathcal Q_4(B)\norm{b}_2,
&&
    |\mathcal C^{11}(B,C)|
    \le
    R\,\mathcal Q_4(B)\mathcal Q_4(C).
\end{align*}
\end{lemma}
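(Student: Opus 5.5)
The approach is to bound each covariance by conditional Cauchy--Schwarz in the active column \(x\), and then collect every background-dependent factor not involving \(a,b,B,C\) into a single random variable \(R\). For any two \(x\)-measurable quantities \(U,V\),
\[
    |\Cov_x(U,V)|\le \Norm{U}_{L_x^2}\Norm{V}_{L_x^2}.
\]
We then split each factor by H\"older in \(L_x\).

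For the four active quantities, H\"older gives
\[
    \Norm{s_x^\zeta\,a^\top x}_{L_x^2}\le \Norm{s_x^\zeta}_{L_x^4}\Norm{a^\top x}_{L_x^4},
\]
and
\[
    \Norm{(s_x^\zeta)^2\,x^\top Bx}_{L_x^2}\le \Norm{s_x^\zeta}_{L_x^8}^2\,\mathcal Q_4(B),
\]
with the analogous bounds for the checked score. Since \(a\) is background-measurable and \(x\) is independent of the background, Lemma~\ref{lem:moments_of_x_i}, applied conditionally with direction \(a/\norm{a}_2\), gives
\[
    \Norm{a^\top x}_{L_x^4}\le O(1)\norm{a}_2
\]
with a deterministic constant. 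The quadratic factor is already in the form \(\mathcal Q_4(B)\) by definition, so no further estimate is needed there.

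This leads to the choice
\[
    R:=O(1)\left(1+\Norm{s_x^\zeta}_{L_x^8}+\Norm{\check s_x^\zeta}_{L_x^8}\right)^{4},
\]
which dominates every product of score factors that occurs. The four inequalities then follow directly, for instance
\[
    |\mathcal C^{01}(a,C)|\le O(1)\norm{a}_2\Norm{s_x^\zeta}_{L_x^4}\Norm{\check s_x^\zeta}_{L_x^8}^2\,\mathcal Q_4(C)\le R\norm{a}_2\,\mathcal Q_4(C).
\]
Here \(R\) is background-measurable and independent of \(a,b,B,C\), which gives the required uniformity over all such arguments.

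It remains to show \(\Norm{R}_{L^r}=O_r(1)\). By Jensen, \(\E\big[\Norm{s_x^\zeta}_{L_x^8}^{4r}\big]\le \E|s_x^\zeta|^{\max(8,4r)}\) up to the trivial power adjustment. This is \(O_r(1)\) by Lemma~\ref{lem:def_zeta} in the generic form recorded at the start of Section~\ref{sec:sensitivity-covariances}, where \(\Norm{s_x^\zeta}_{L^r}=O_r(1)\). The checked score has the same law because \(X^{A_j}\) is distributed like \(X\), so the same bound holds for it.

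The only step requiring care is the linear moment bound for \(a^\top x\) with a random direction \(a\). It holds because conditionally on the background \(a\) is frozen while \(x\) retains its Poincar\'e law. No other obstacle is expected, and the statement is essentially bookkeeping.
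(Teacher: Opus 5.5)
Your proposal is correct and matches the paper's proof essentially line for line. You use the same choice $R=C\bigl(1+\|s_x^\zeta\|_{L_x^8}+\|\check s_x^\zeta\|_{L_x^8}\bigr)^4$, the same conditional Cauchy--Schwarz and H\"older splitting of score factors against $\|a^\top x\|_{L_x^4}$ or $\mathcal Q_4(\cdot)$, and Lemma~\ref{lem:def_zeta} together with equality in law of the checked background to get $\|R\|_{L^r}=O_r(1)$. The Jensen step passing from the conditional $L_x^8$ norm to unconditional moments, which the paper leaves implicit, is handled correctly in your version.
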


\begin{proof}
Set
\[
    R
    :=
    C
    \left(
        1+\Norm{s_x^\zeta}_{L_x^8}
        +
        \Norm{\check s_x^\zeta}_{L_x^8}
    \right)^4.
\]
By Lemma~\ref{lem:def_zeta}, and the same argument applied to the checked
background,
\[
    \Norm{R}_{L^r}=O_r(1).
\]
For the linear-linear contraction, conditional Cauchy--Schwarz gives
\[
\begin{aligned}
    |\mathcal C^{00}(a,b)|
    &\le
    \Norm{s_x^\zeta\,a^\top x}_{L_x^2}
    \Norm{\check s_x^\zeta\,b^\top x}_{L_x^2}        \\
    &\le
    \Norm{s_x^\zeta}_{L_x^4}
    \Norm{\check s_x^\zeta}_{L_x^4}
    \Norm{a^\top x}_{L_x^4}
    \Norm{b^\top x}_{L_x^4}
    \le
    R\norm{a}_2\norm{b}_2 .
\end{aligned}
\]
For the mixed contractions,
\[
\begin{aligned}
    |\mathcal C^{01}(a,C)|
    &\le
    \Norm{s_x^\zeta\,a^\top x}_{L_x^2}
    \Norm{(\check s_x^\zeta)^2x^\top Cx}_{L_x^2}
    \le
    R\norm{a}_2\,\mathcal Q_4(C),
\end{aligned}
\]
and the proof of the bound for \(\mathcal C^{10}\) is identical. Finally,
\[
\begin{aligned}
    |\mathcal C^{11}(B,C)|
    &\le
    \Norm{(s_x^\zeta)^2x^\top Bx}_{L_x^2}
    \Norm{(\check s_x^\zeta)^2x^\top Cx}_{L_x^2}
    \le
    R\,\mathcal Q_4(B)\mathcal Q_4(C).
\end{aligned}
\]
This proves the lemma.
\end{proof}
\begin{lemma}[Square-summed derivatives of \(y\)]
\label{lem:square-summed-y-derivative}
Under Assumptions~\ref{ass:1dim}--\ref{ass:7third-order-curvature} and
\(\nabla^3\rho\equiv0\), for every fixed integer \(r\ge1\), and for every background-measurable vector \(v=v(X)\),
\[
    \left\|
        \left(
            \sum_{\ell=1}^n
            \left(
                \norm{v^\top\mathbb D_\ell[y]}^*
            \right)^2
        \right)^{1/2}
    \right\|_{L^r}
    \le
    \frac{O_r((\log n)^6)}{\sqrt n}
    \Norm{\norm{v}_2}_{L^{4r}}.
\]
The same estimates hold for the checked variables, with
\(\mathbb D'_\ell\) in place of \(\mathbb D_\ell\).
\end{lemma}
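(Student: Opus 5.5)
We compute the derivative of \(y=Gu\) explicitly and then reduce the square sum to Lemma~\ref{lem:square-summed-background-sensitivity}, plus a local term. Since \(\nabla^3\rho\equiv0\), the Hessian of the regularizer is constant. Differentiating \(G^{-1}G=I\) in the direction of the column \(x_\ell\) gives
\[
    \mathbb D_\ell[y][h]
    =
    -G\,\mathbb D_\ell[G^{-1}][h]\,y ,
\]
with
\[
\begin{aligned}
    \mathbb D_\ell[G^{-1}][h]
    &=
    \frac1n L''(x_\ell^\top\hat\theta)\left(hx_\ell^\top+x_\ell h^\top\right)
    +\frac1n L'''(x_\ell^\top\hat\theta)(h^\top\hat\theta)x_\ell x_\ell^\top\\
    &\quad
    +\frac1n\sum_{m}L'''(x_m^\top\hat\theta)
    \big(x_m^\top\mathbb D_\ell[\hat\theta][h]\big)x_mx_m^\top .
\end{aligned}
\]
Set \(w:=Gv\), so that \(\norm{w}_2\le O(1)\norm{v}_2\). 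Then \(v^\top\mathbb D_\ell[y][h]\) splits into three pieces:
\begin{itemize}
    \item a \emph{local} piece, \(-\frac1n L''_\ell\big((w^\top h)(x_\ell^\top y)+(w^\top x_\ell)(h^\top y)\big)\);
    \item a \emph{local third-order} piece, \(-\frac1n L'''_\ell(h^\top\hat\theta)(w^\top x_\ell)(x_\ell^\top y)\);
    \item a \emph{background} piece, \(-\tilde z^\top\mathbb D_\ell[\hat\theta][h]\), where \(\tilde z:=\frac1n XD'X^\top w\) and \(D'=\diag_m\big(L'''(x_m^\top\hat\theta)\,x_m^\top y\big)\), i.e.\ \(D'=D\).
\end{itemize}

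\textbf{Background piece.} Lemma~\ref{lem:square-summed-background-sensitivity} applied with \(z=\tilde z\) gives the bound \(O_r(n^{-1/2})\Norm{\norm{\tilde z}_2}_{L^{4r}}\). Moreover
\[
    \norm{\tilde z}_2\le \frac1n\norm{X}_{\op}^2\norm{D}_{\op}\norm{w}_2 .
\]
Deterministically \(\norm{X}_{\op}^2=O(n)\) is not available, but \(\norm{GX}_{\op}=O(\sqrt n)\) is. To exploit it, write \(\tilde z\) with \(G\) placed on both sides where possible. Concretely, I would apply the lemma to \(z=\tilde z\) but bound \(\norm{X}_{\op}\) via Lemma~\ref{lem:op_norm_X} in \(L^{8r}\). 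By Lemma~\ref{lem:fixed-direction-full-resolvent-leverage}, \(\norm{D}_{\op}\le O(1)\norm{X^\top y}_\infty=O_r((\log n)^6)\). Hölder's inequality then yields \(O_r((\log n)^6)n^{-1/2}\Norm{\norm{v}_2}_{L^{8r}}\)-type bounds. To land exactly on the stated \(L^{4r}\) norm of \(v\), the Hölder exponents need adjusting. I expect this bookkeeping to be the main nuisance: the product \(\norm{D}_{\op}\norm{X}_{\op}^2\norm{v}_2\) has to be split carefully. The fallback is to use the deterministic \(\norm{G^{1/2}X}_{\op}=O(\sqrt n)\) through the factorization \(\tilde z=\frac1n XDX^\top Gv\), bounding
\[
    \norm{X^\top Gv}_2\le O(\sqrt n)\norm{v}_2
    \qquad\text{deterministically},
\]
which leaves only one random \(\norm{X}_{\op}\) factor.

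\textbf{Local pieces.} These are summed directly over \(\ell\). For the first local piece the sum of squares is at most
\[
    \frac{O(1)}{n^2}\sum_\ell\Big(\norm{w}_2^2(x_\ell^\top y)^2+(w^\top x_\ell)^2\norm{y}_2^2\Big)
    \le \frac{O(1)}{n^2}\Big(\norm{w}_2^2\norm{X^\top y}_2^2+\norm{X^\top w}_2^2\Big).
\]
Since \(\norm{X^\top G}_{\op}=O(\sqrt n)\), both \(\norm{X^\top y}_2\) and \(\norm{X^\top Gv}_2\) are \(O(\sqrt n)\) times \(\norm{u}_2\) or \(\norm{v}_2\). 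Hence this contributes \(O(n^{-1/2})\norm{v}_2\).

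For the third-order local piece the sum of squares is at most
\[
    \frac{1}{n^2}\norm{\hat\theta}_2^2\,\norm{X^\top y}_\infty^2\,\norm{X^\top w}_2^2
    \le \frac{O(1)}{n}\norm{\hat\theta}_2^2\,\norm{X^\top y}_\infty^2\,\norm{v}_2^2 .
\]
Theorem~\ref{thm:direct-hat-bound}, Lemma~\ref{lem:fixed-direction-full-resolvent-leverage} and Hölder's inequality then bound this by \(O_r((\log n)^6)n^{-1/2}\Norm{\norm{v}_2}_{L^{4r}}\) after taking square roots and \(L^r\) norms.

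Summing the three contributions gives the claim. The checked version is verbatim, using the checked background, which has the same law.
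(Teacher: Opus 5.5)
Your decomposition into local, local third-order and background pieces is the paper's, and your bounds on the two local pieces match the paper's bounds. The background piece is where your route differs.

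\textbf{Your route.} You feed $\tilde z=\frac1n XDX^\top Gv$ into Lemma~\ref{lem:square-summed-background-sensitivity} as a black box. That lemma's bound is expressed through $\norm{z}_2$: the left factor $G$ in $\mathbb D_\ell[\hat\theta][h]=-\frac1nG[L''(x_\ell^\top\hat\theta)(h^\top\hat\theta)x_\ell+s_\ell h]$ is absorbed via $\norm{G}_{\op}=O(1)$. So you must control $\norm{\tilde z}_2$ itself, which brings in the random factor $\norm{X}_{\op}$ and Lemma~\ref{lem:op_norm_X}.

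\textbf{The paper's route.} It substitutes this explicit formula, so only $G\tilde z$ and $X^\top G\tilde z$ appear. These are handled by the deterministic bounds $\norm{GX}_{\op}=O(\sqrt n)$ and $\norm{X^\top GX}_{\op}=O(n)$:
\[
\Big(\sum_{\ell}\big(\norm{\tilde z^\top\mathbb D_\ell[\hat\theta]}^*\big)^2\Big)^{1/2}
\le\frac{O(1)}{\sqrt n}\Big(\norm{\hat\theta}_2+\big(\tfrac1n\textstyle\sum_\ell s_\ell^2\big)^{1/2}\Big)\norm{X^\top y}_\infty\norm{v}_2 .
\]
This is the ``$G$ on both sides'' you were looking for. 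It comes for free from the structure of $\mathbb D_\ell[\hat\theta]$, not from rewriting $\tilde z$.

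\textbf{The exponent issue.} Your route is still correct, but your fallback does not settle the exponent issue. Applying the lemma at exponent $r$ returns $\Norm{\norm{\tilde z}_2}_{L^{4r}}$. Since $\tilde z$ is $\norm{v}_2$ times the unbounded factors $n^{-1/2}\norm{X}_{\op}$ and $\norm{D}_{\op}$, H\"older then only gives $\Norm{\norm{v}_2}_{L^q}$ with $q>4r$. Instead, use the pointwise inequality from that lemma's proof,
\[
\sum_\ell(\norm{\tilde z^\top\mathbb D_\ell[\hat\theta]}^*)^2\le\frac{O(1)}{n}\Big(\norm{\hat\theta}_2^2+\tfrac1n\textstyle\sum_\ell s_\ell^2\Big)\norm{\tilde z}_2^2 ,
\]
together with $\norm{\tilde z}_2\le O(n^{-1/2})\norm{X}_{\op}\norm{D}_{\op}\norm{v}_2$. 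Then apply H\"older once with all four factors in $L^{4r}$. This gives exactly the stated $O_r((\log n)^6)n^{-1/2}\Norm{\norm{v}_2}_{L^{4r}}$. For $r\ge2$, applying the lemma at exponent $\lceil r/2\rceil$ also works.

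\textbf{Comparison.} The paper's route is more self-contained: purely deterministic resolvent bounds, with no random-matrix operator-norm input. Yours is more modular. In the downstream uses, $v$ has all moments bounded, so the exponent distinction does not matter there.
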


\begin{proof}
Since \(y=Gu\), for \(\norm{h}_2\le1\),
\[
    \mathbb D_\ell[y][h]
    =
    -G
    \left(
        \mathcal L_\ell[h]
        +
        \mathcal B_\ell[h]
    \right)y,
\]
where
\[
\begin{aligned}
    \mathcal L_\ell[h]
    &:=
    \frac1n
    L''(x_\ell^\top\hat\theta)
    \left(
        hx_\ell^\top+x_\ell h^\top
    \right)
    +
    \frac1n
    L'''(x_\ell^\top\hat\theta)
    (h^\top\hat\theta)x_\ell x_\ell^\top,
\end{aligned}
\]
and
\[
\begin{aligned}
    \mathcal B_\ell[h]
    &:=
    \frac1n
    \sum_{m=1}^n
    L'''(x_m^\top\hat\theta)
    \left(
        x_m^\top\mathbb D_\ell[\hat\theta][h]
    \right)
    x_m x_m^\top
    +
    \nabla^3\rho(\hat\theta)
    \big[
        \mathbb D_\ell[\hat\theta][h]
    \big].
\end{aligned}
\]
Under \(\nabla^3\rho\equiv0\), the last term vanishes.

For the local part,
\[
\begin{aligned}
    \left|
        v^\top G\mathcal L_\ell[h]y
    \right|
    &\le
    \frac{O(1)}{n}
    \big[
        \norm{v}_2\,|x_\ell^\top y|
        +
        |x_\ell^\top Gv|\,\norm{y}_2
    +
        \norm{\hat\theta}_2
        |x_\ell^\top Gv|\,
        |x_\ell^\top y|
    \big].
\end{aligned}
\]
Therefore
\[
\begin{aligned}
    &\left(
        \sum_{\ell=1}^n
        \left(
            \norm{v^\top G\mathcal L_\ell y}^*
        \right)^2
    \right)^{1/2}                                           \\
    &\qquad\qquad\le
    \frac{O(1)}{n}
    \left[
        \norm{v}_2\norm{X^\top y}_2
        +
        \norm{y}_2\norm{X^\top Gv}_2
        +
        \norm{\hat\theta}_2
        \norm{X^\top y}_\infty
        \norm{X^\top Gv}_2
    \right].
\end{aligned}
\]
Using
\[
    \norm{y}_2=O(1),
    \qquad
    \norm{X^\top y}_2=O(\sqrt n),
    \qquad
    \norm{X^\top Gv}_2\le O(\sqrt n)\norm{v}_2,
\]
and
\[
    \Norm{\norm{X^\top y}_\infty}_{L^r}
    =
    O_r((\log n)^6)
\]
by Lemma~\ref{lem:fixed-direction-full-resolvent-leverage}, we get
\[
    \left\|
        \left(
            \sum_{\ell=1}^n
            \left(
                \norm{v^\top G\mathcal L_\ell y}^*
            \right)^2
        \right)^{1/2}
    \right\|_{L^r}
    \le
    \frac{O_r((\log n)^6)}{\sqrt n}
    \Norm{\norm{v}_2}_{L^{4r}} .
\]

For the background part, using the displayed expression of
\(\mathcal B_\ell[h]\),
\[
\begin{aligned}
    v^\top G\mathcal B_\ell[h]y
    &=
    \frac1n
    \sum_{m=1}^n
    (x_m^\top Gv)
    L'''(x_m^\top\hat\theta)
    (x_m^\top\mathbb D_\ell[\hat\theta][h])
    (x_m^\top y).
\end{aligned}
\]
Using
\[
    \mathbb D_\ell[\hat\theta][h]
    =
    -\frac1nG
    \left[
        L''(x_\ell^\top\hat\theta)(h^\top\hat\theta)x_\ell
        +
        s_\ell h
    \right],
    \qquad
    s_\ell:=L'(x_\ell^\top\hat\theta),
\]
and writing \(F=\diag_m(L'''(x_m^\top\hat\theta))\), we obtain
\[
\begin{aligned}
    v^\top G\mathcal B_\ell[h]y
    &=
    -\frac{L''(x_\ell^\top\hat\theta)(h^\top\hat\theta)}{n^2}
    (X^\top Gv\odot X^\top y)^\top
    F
    X^\top Gx_\ell                                      \\
    &\quad
    -\frac{s_\ell}{n^2}
    (X^\top Gv\odot X^\top y)^\top
    F
    X^\top Gh .
\end{aligned}
\]
Hence
\[
\begin{aligned}
    &\left(
        \sum_{\ell=1}^n
        \left(
            \norm{
                v^\top G\mathcal B_\ell y
            }^*
        \right)^2
    \right)^{1/2}                                      \\
    &\quad\le
    \frac{O(1)}{n^2}
    \left[
        \norm{\hat\theta}_2
        \norm{
            X^\top GX
            F
            (X^\top Gv\odot X^\top y)
        }_2
        +
        \left(\sum_{\ell=1}^n s_\ell^2\right)^{1/2}
        \norm{
            GX
            F
            (X^\top Gv\odot X^\top y)
        }_2
    \right].
\end{aligned}
\]
Since \(L'''\) is bounded,
\[
    \norm{
        F
        (X^\top Gv\odot X^\top y)
    }_2
    \le
    O(1)\norm{X^\top y}_\infty\norm{X^\top Gv}_2
    \le
    O(\sqrt n)\norm{X^\top y}_\infty\norm{v}_2.
\]
Therefore
\[
    \norm{
        X^\top GX
        F
        (X^\top Gv\odot X^\top y)
    }_2
    \le
    O(n^{3/2})\norm{X^\top y}_\infty\norm{v}_2,
\]
and
\[
    \norm{
        GX
        F
        (X^\top Gv\odot X^\top y)
    }_2
    \le
    O(n)\norm{X^\top y}_\infty\norm{v}_2.
\]
Using
\[
    \Norm{\norm{X^\top y}_\infty}_{L^r}
    =
    O_r((\log n)^6),
    \qquad
    \Norm{\norm{\hat\theta}_2}_{L^r}=O_r(1),
    \qquad
    \Norm{
        \left(\sum_{\ell=1}^n s_\ell^2\right)^{1/2}
    }_{L^r}
    =
    O_r(\sqrt n),
\]
we obtain
\[
    \left\|
        \left(
            \sum_{\ell=1}^n
            \left(
                \norm{
                    v^\top G\mathcal B_\ell y
                }^*
            \right)^2
        \right)^{1/2}
    \right\|_{L^r}
    \le
    \frac{O_r((\log n)^6)}{\sqrt n}
    \Norm{\norm{v}_2}_{L^{4r}} .
\]
This proves the first square-summed derivative estimate.

\end{proof}

\begin{lemma}[Derivative of the minimizer]
\label{lem:generic-derivative-minimizer}
Under Assumptions~\ref{ass:1dim}--\ref{ass:5regularity}, for every fixed
integer \(r\ge1\),
\[
    \sup_{j\in[n]}
    \Norm{
        \norm{\mathbb D_j[\hat\theta]}_2^*
    }_{L^r}
    =
    O_r\!\left(\frac1{\sqrt n}\right),
\]
and
\[
    \sup_{j\in[n]}
    \Norm{
        \norm{X^\top\mathbb D_j[\hat\theta]}_2^*
    }_{L^r}
    =
    O_r(1).
\]
Consequently,
\[
    \left\|
        \left(
            \sum_{j=1}^n
            \left(
                \norm{\mathbb D_j[\hat\theta]}_2^*
            \right)^2
        \right)^{1/2}
    \right\|_{L^r}
    =
    O_r(1),
\]
and
\[
    \left\|
        \left(
            \sum_{j=1}^n
            \left(
                \norm{X^\top\mathbb D_j[\hat\theta]}_2^*
            \right)^2
        \right)^{1/2}
    \right\|_{L^r}
    =
    O_r(\sqrt n).
\]
\end{lemma}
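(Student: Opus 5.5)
The plan is to start from the explicit differential identity~\eqref{eq:derivative_hat_theta},
\[
    \mathbb D_j[\hat\theta][h]
    =
    -\frac1n\Big[L''(x_j^\top\hat\theta)(h^\top\hat\theta)\,Gx_j+s_jGh\Big],
\]
and bound each piece using the deterministic curvature facts $\norm{G}_{\op}=O(1)$, $\norm{GX}_{\op}=O(\sqrt n)$, and $\norm{X^\top GX}_{\op}=O(n)$. These follow from the argument of Lemma~\ref{lem:curvature-resolvent} applied to the full sample.

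For the first estimate, take the supremum over $\norm{h}_2\le1$:
\[
    \norm{\mathbb D_j[\hat\theta]}_2^*
    \le
    \frac{O(1)}{n}\Big(\norm{\hat\theta}_2\norm{Gx_j}_2+|s_j|\Big).
\]
Since $\norm{Gx_j}_2\le\norm{GX}_{\op}=O(\sqrt n)$ deterministically, this gives $O(n^{-1/2})\norm{\hat\theta}_2+O(n^{-1})|s_j|$. Theorem~\ref{thm:direct-hat-bound} and Lemma~\ref{lem:score-moments} (fixed $r$, so the moments are $O_r(1)$) then yield $O_r(n^{-1/2})$ uniformly in $j$.

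For the projected estimate, multiply by $X^\top$:
\[
    \norm{X^\top\mathbb D_j[\hat\theta]}_2^*
    \le
    \frac{O(1)}{n}\Big(\norm{\hat\theta}_2\norm{X^\top Gx_j}_2+|s_j|\,\norm{X^\top G}_{\op}\Big).
\]
We have $\norm{X^\top Gx_j}_2\le\norm{X^\top GX}_{\op}=O(n)$, since $Gx_j$ is the $j$th column of $GX$ and so $X^\top Gx_j=X^\top GXe_j$. Also $\norm{X^\top G}_{\op}=O(\sqrt n)$. The bound is therefore $O(1)\norm{\hat\theta}_2+O(n^{-1/2})|s_j|$, which is $O_r(1)$ in $L^r$ by the same moment bounds. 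Note that the crude bound $\norm{x_j}_2\sqrt n$ would lose a factor $\sqrt n$; the key point is to use the full-sample operator bound on $X^\top GX$ rather than $\norm{x_j}_2$.

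For the square-summed consequences, I would not sum the uniform bounds naively in $L^r$. Instead, sum the pointwise bounds:
\[
    \sum_j\big(\norm{\mathbb D_j[\hat\theta]}_2^*\big)^2
    \le
    \frac{O(1)}{n^2}\Big(\norm{\hat\theta}_2^2\norm{GX}_F^2+\sum_j s_j^2\Big).
\]
Using $\norm{GX}_F^2=\sum_j\norm{Gx_j}_2^2\le n\norm{GX}_{\op}^2=O(n^2)$, this becomes $O(1)\norm{\hat\theta}_2^2+O(n^{-1})\cdot\frac1n\sum_js_j^2\cdot n$, which is $O(1)\big(\norm{\hat\theta}_2^2+\frac1n\sum_js_j^2\big)$. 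Lemma~\ref{lem:empirical-prediction-energy} and Theorem~\ref{thm:direct-hat-bound} give $O_r(1)$ for its square root.

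Similarly, $\sum_j\norm{X^\top Gx_j}_2^2=\norm{X^\top GX}_F^2\le n\norm{X^\top GX}_{\op}^2=O(n^3)$ and $\sum_j s_j^2\norm{X^\top G}_{\op}^2=O(n)\sum_js_j^2$. Hence the sum is $O(n)\big(\norm{\hat\theta}_2^2+\frac1n\sum_js_j^2\big)$, and its square root is $O_r(\sqrt n)$.

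Alternatively, Minkowski's inequality applied to the uniform bounds gives the same square-summed results, since $\norm{(\sum_j a_j^2)^{1/2}}_{L^r}\le(\sum_j\norm{a_j}_{L^r}^2)^{1/2}$ for $r\ge2$. The main care point throughout is using Frobenius/operator bounds on $GX$ and $X^\top GX$ rather than per-column Euclidean norms.
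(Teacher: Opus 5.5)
Your argument is correct and is essentially the paper's: you start from the differential identity, bound the two resulting terms, take moments with Theorem~\ref{thm:direct-hat-bound} and the score bounds, and pass to the square sums (the paper does this by Minkowski, which is your stated alternative). The only variation is that you control $\norm{Gx_j}_2$ and $\norm{X^\top Gx_j}_2$ deterministically through $\norm{GX}_{\op}$ and $\norm{X^\top GX}_{\op}$, whereas the paper goes through $\norm{x_j}_2$. Contrary to your remark, the per-column route loses nothing: the paper's bound $\norm{X^\top Gx_j}_2\le\norm{X^\top G}_{\op}\norm{x_j}_2=O(\sqrt n)\norm{x_j}_2$ is $O_r(n)$ in $L^r$ by Lemma~\ref{lem:moments_of_x_i}, the same order as your deterministic bound $\norm{X^\top GX}_{\op}=O(n)$.
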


\begin{proof}
For \(\norm{h}_2\le1\), the derivative identity gives
\[
    \mathbb D_j[\hat\theta][h]
    =
    -\frac1nG
    \left[
        L''(x_j^\top\hat\theta)(h^\top\hat\theta)x_j
        +
        s_j h
    \right].
\]
Therefore
\[
    \norm{\mathbb D_j[\hat\theta][h]}_2
    \le
    \frac{O(1)}{n}
    \left(
        \norm{\hat\theta}_2\norm{x_j}_2
        +
        |s_j|
    \right).
\]
The first estimate follows from Theorem~\ref{thm:direct-hat-bound},
Lemma~\ref{lem:moments_of_x_i}, and Lemma~\ref{lem:uniform-prediction-moments}.

Multiplying the same identity by \(X^\top\), we get
\[
    X^\top\mathbb D_j[\hat\theta][h]
    =
    -\frac1n
    \left[
        L''(x_j^\top\hat\theta)(h^\top\hat\theta)X^\top Gx_j
        +
        s_j X^\top Gh
    \right].
\]
Using
\[
    \norm{X^\top Gh}_2
    \le
    O(\sqrt n)\norm{h}_2,
    \qquad
    \norm{X^\top Gx_j}_2
    \le
    \norm{X^\top G}_{\op}\norm{x_j}_2
    =
    O(\sqrt n)\norm{x_j}_2,
\]
we obtain
\[
    \norm{X^\top\mathbb D_j[\hat\theta]}_2^*
    \le
    \frac{O(1)}{\sqrt n}
    \left(
        \norm{\hat\theta}_2\norm{x_j}_2
        +
        |s_j|
    \right).
\]
The second estimate follows again from Theorem~\ref{thm:direct-hat-bound},
Lemma~\ref{lem:moments_of_x_i}, and
Lemma~\ref{lem:uniform-prediction-moments}.

Finally, by Minkowski's inequality in \(L^r(\ell_2)\),
\[
\begin{aligned}
    \left\|
        \left(
            \sum_{j=1}^n
            \left(
                \norm{\mathbb D_j[\hat\theta]}_2^*
            \right)^2
        \right)^{1/2}
    \right\|_{L^r}
    &\le
    \left(
        \sum_{j=1}^n
        \Norm{
            \norm{\mathbb D_j[\hat\theta]}_2^*
        }_{L^r}^2
    \right)^{1/2}
    =
    O_r(1),
\end{aligned}
\]
and similarly
\[
\begin{aligned}
    \left\|
        \left(
            \sum_{j=1}^n
            \left(
                \norm{X^\top\mathbb D_j[\hat\theta]}_2^*
            \right)^2
        \right)^{1/2}
    \right\|_{L^r}
    &\le
    \left(
        \sum_{j=1}^n
        \Norm{
            \norm{X^\top\mathbb D_j[\hat\theta]}_2^*
        }_{L^r}^2
    \right)^{1/2}
    =
    O_r(\sqrt n).
\end{aligned}
\]
\end{proof}

\begin{lemma}[Background-resolvent active quadratic estimate]
\label{lem:background-resolvent-active-quadratic}
Under Assumptions~\ref{ass:1dim}--\ref{ass:7third-order-curvature} and
\(\nabla^3\rho\equiv0\), for every fixed integers \(q,r\ge1\),
\[
\begin{aligned}
    &\left\|
        \left(
            \sum_{\ell=1}^n
            \left[
                \sup_{\norm{h}_2\le1}
                \Norm{
                    x^\top GX D X^\top G\mathcal B_\ell[h]Gx
                }_{L_x^q}
            \right]^2
        \right)^{1/2}
    \right\|_{L^r}
    =
    O_{q,r}\!\left(n^{3/2}(\log n)^6\right).
\end{aligned}
\]
where for \(\ell\in[n]\) and \(h\in\mathbb R^p\), we denoted,
\[
    \mathcal B_\ell[h]
    :=
    \frac1nX\Lambda_\ell(h)X^\top,
\qquad \text{with}\ 
    \Lambda_\ell(h)
    := \diag_{m\in [n]}
    L'''(x_m^\top\hat\theta)\,
    x_m^\top\mathbb D_\ell[\hat\theta][h].
\]
\end{lemma}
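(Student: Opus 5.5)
The plan is to reduce the sum over $\ell$ to a single contraction against $\mathbb D_\ell[\hat\theta]$, to evaluate that derivative with the explicit formula~\eqref{eq:derivative_hat_theta}, and to bound the resulting active-column vector entrywise in $L_x$-norms. Taking the maximum over coordinates would lose a logarithm.

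\textbf{Step 1: reduction to one active vector.} Set
\[
    a:=X^\top Gx\in\R^n,
    \qquad
    M:=X^\top GX,
    \qquad
    F:=\diag_m\bigl(L'''(x_m^\top\hat\theta)\bigr),
    \qquad
    b:=MDa .
\]
Then $x^\top GXDX^\top G\mathcal B_\ell[h]Gx=\frac1n\,b^\top\Lambda_\ell(h)a$, since $M$ is symmetric. This equals
\[
    \frac1n\,z^\top\mathbb D_\ell[\hat\theta][h],
    \qquad
    z:=XF(a\odot b).
\]
Inserting $\mathbb D_\ell[\hat\theta][h]=-\frac1nG\bigl[L''(x_\ell^\top\hat\theta)(h^\top\hat\theta)x_\ell+s_\ell h\bigr]$, as in the proof of Lemma~\ref{lem:generic-derivative-minimizer}, gives, uniformly in $\norm{h}_2\le1$,
\[
    \bigl|x^\top GXDX^\top G\mathcal B_\ell[h]Gx\bigr|
    \le
    \frac{O(1)}{n^2}\Bigl(\norm{\hat\theta}_2\,|x_\ell^\top Gz|+|s_\ell|\,\norm{Gz}_2\Bigr).
\]
The right-hand side does not depend on $h$. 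We may therefore pass the supremum inside the $L_x^q$-norm. By Minkowski in $\ell_2$, the $\ell$-sum is bounded by
\[
    \frac{O(1)}{n^2}\Bigl(\norm{\hat\theta}_2\,\Norm{\norm{MF(a\odot b)}_2}_{L_x^q}
    +\Bigl(\sum_\ell s_\ell^2\Bigr)^{1/2}\Norm{\norm{GXF(a\odot b)}_2}_{L_x^q}\Bigr).
\]
Lemma~\ref{lem:curvature-resolvent} gives $\norm{M}_{\op}=O(n)$ and $\norm{GX}_{\op}=O(\sqrt n)$, and $\norm{F}_{\op}=O(1)$. Theorem~\ref{thm:direct-hat-bound} and Lemma~\ref{lem:empirical-prediction-energy} give $\norm{\hat\theta}_2=O_r(1)$ and $(\sum_\ell s_\ell^2)^{1/2}=O_r(\sqrt n)$ in $L^r$. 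Both terms are therefore at most
\[
    \frac{O(1)}{n}\times\Norm{\norm{a\odot b}_2}_{L_x^q}
\]
times background factors with $O_r(1)$ moments. The lemma thus reduces to showing $\Norm{\norm{a\odot b}_2}_{L_x^q}=O(n^{5/2})\,\norm{X^\top y}_\infty$.

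\textbf{Step 2: entrywise bound on $a\odot b$ (the main obstacle).} The naive bounds $\norm{a\odot b}_2\le\norm{a}_\infty\norm{b}_2$ or $\norm{a}_2\norm{b}_\infty$ cost an extra $\log n$ or a power of $n$. The maximum over $m$ is what creates the spurious logarithm. Instead I will write
\[
    \Norm{\norm{a\odot b}_2^2}_{L_x^{q/2}}
    \le\sum_m\Norm{a_m}_{L_x^{2q}}^2\,\Norm{b_m}_{L_x^{2q}}^2 .
\]
Both $a_m=x^\top Gx_m$ and $b_m=x^\top v_m$, with $v_m:=GXDM_{\cdot m}$, are linear forms in the active column $x$ with background-measurable coefficients. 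Lemma~\ref{lem:moments_of_x_i}, applied conditionally on the background, gives
\[
    \Norm{a_m}_{L_x^{2q}}\le O_q(\norm{Gx_m}_2)=O_q(\sqrt n)
\]
and
\[
    \Norm{b_m}_{L_x^{2q}}\le O_q(\norm{v_m}_2)\le O_q\bigl(\sqrt n\,\norm{D}_{\op}\,\norm{M}_{\op}\bigr)=O_q\bigl(n^{3/2}\norm{X^\top y}_\infty\bigr),
\]
using $\norm{D}_{\op}\le O(1)\norm{X^\top y}_\infty$. Summing the $n$ terms gives $\norm{a\odot b}_2=O_q(n^{5/2})\norm{X^\top y}_\infty$ in $L_x^q$, with no logarithm from a maximum.

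\textbf{Step 3: conclusion.} Combining Steps 1 and 2, the square-summed quantity is at most
\[
    O_q(n^{3/2})\,\norm{X^\top y}_\infty\Bigl(\norm{\hat\theta}_2+n^{-1/2}\Bigl(\sum_\ell s_\ell^2\Bigr)^{1/2}\Bigr).
\]
Take the $L^r$-norm over the background and apply H\"older. Lemma~\ref{lem:fixed-direction-full-resolvent-leverage} gives $\Norm{\norm{X^\top y}_\infty}_{L^{2r}}=O_r((\log n)^6)$, and the moment bounds quoted in Step 1 control the remaining factors. This yields $O_{q,r}(n^{3/2}(\log n)^6)$.

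The delicate point is Step 2. One must avoid ever taking $\norm{a}_\infty$, and must keep $b$ as a linear form in $x$, so that the only logarithms come from $\norm{X^\top y}_\infty$.
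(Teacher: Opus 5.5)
There is a genuine gap. It sits in Step~1, not in Step~2; Step~2 is correct.

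\textbf{The gap.} The lemma puts the $L_x^q$-norm inside the $\ell$-sum. So after your pointwise bound you must control $\bigl(\sum_\ell \Norm{x_\ell^\top Gz}_{L_x^q}^2\bigr)^{1/2}$. You replace this by $\Norm{\norm{MF(a\odot b)}_2}_{L_x^q}$, but for $q\ge2$ Minkowski's inequality goes the other way:
\[
\Norm{\norm{W}_2}_{L^q}^2=\Norm{\textstyle\sum_\ell W_\ell^2}_{L^{q/2}}\le\textstyle\sum_\ell\Norm{W_\ell}_{L^q}^2 .
\]
This is exactly the direction you use, correctly, in Step~2. The reverse inequality fails in general. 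Take $W_\ell$ to be indicators of a partition into events of probability $1/n$: the left side equals $1$, while $\sum_\ell\Norm{W_\ell}_{L^q}^2=n^{1-2/q}$.

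Your argument therefore only covers $q\le2$. The lemma is needed with $q=4$, since it feeds $\mathcal Q_4(\mathbb D_\ell[Y])^*$ through Lemmas~\ref{lem:active-quadratic-background-bounds} and~\ref{lem:background-gradients-active-contractions}. The $s_\ell$-term is unaffected, because $\norm{Gz}_2$ does not depend on $\ell$.

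Pulling the $L_x^q$-norm inside by the triangle inequality, $\Norm{x_\ell^\top Gz}_{L_x^q}\le\sum_m\norm{M_{\ell m}F_m}\,\Norm{a_mb_m}_{L_x^q}$, does not rescue this. The entrywise absolute value of $M=X^\top GX$ can have operator norm of order $\norm{M}_F\asymp n^{3/2}$ instead of $\norm{M}_{\op}=O(n)$, which costs a factor $\sqrt n$.

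\textbf{The repair.} What the Poincar\'e assumption gives you for higher moments of a quadratic form $x^\top Kx$ is control by $\norm{K}_F$ and $\tr(K\Sigma_x)$. So you should bound each $\Norm{x_\ell^\top Gz}_{L_x^q}$ by a background-measurable quantity before summing.

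Since $a_mb_m=x^\top Gx_m\,e_m^\top MDX^\top Gx$, you have $x_\ell^\top Gz=x^\top K_\ell x$ with $K_\ell:=GX\,\diag_m(M_{\ell m}F_m)\,MDX^\top G$. Lemma~\ref{lem:conditional-quadratic-form-poincare}, applied to the symmetric part, gives
\[
\Norm{x_\ell^\top Gz}_{L_x^q}\le O_q\bigl(\norm{K_\ell}_F+\norm{\tr(K_\ell\Sigma_x)}\bigr).
\]
For the Frobenius part, $\sum_\ell\norm{K_\ell}_F^2\le O(n^4)\norm{D}_{\op}^2\norm{M}_F^2=O(n^7)\norm{D}_{\op}^2$, using $\operatorname{rank}M\le p$. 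For the trace part, $\sum_\ell\tr(K_\ell\Sigma_x)^2=\norm{MFc}_2^2$ with $c:=\diag(MDX^\top G\Sigma_xGX)$, and $\norm{c}_2\le\norm{M}_{\op}\norm{D}_F\norm{X^\top G\Sigma_xGX}_{\op}=O(n^{5/2})$. Together with your prefactor $n^{-2}\norm{\hat\theta}_2$, this restores the bound $O_{q,r}(n^{3/2}(\log n)^6)$. Your Step~2 remains valid and still handles the $s_\ell$-term.

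This Frobenius-plus-trace split is exactly the paper's argument. The paper applies the quadratic-form bound directly to $x^\top A_\ell[h]x$ with $A_\ell[h]=\frac1nGXDX^\top GX\Lambda_\ell(h)X^\top G$. It controls the Frobenius part through $\norm{\Lambda_\ell(h)}_F\le O(1)\norm{X^\top\mathbb D_\ell[\hat\theta][h]}_2$ and Lemma~\ref{lem:generic-derivative-minimizer}, and the trace part through the explicit derivative formula. Organized that way, no analogue of your Step~2 is needed.
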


\begin{proof}
Let us recall the notations:
\begin{align*}
    y=Gu,
    \qquad
    D=\diag_{m\in[n]}
    \left(
        L'''(x_m^\top\hat\theta)\,x_m^\top y
    \right),
\end{align*}
we have
\[
    x^\top GX D X^\top G\mathcal B_\ell[h]Gx
    =
    x^\top A_\ell[h]x,
\]
where
\[
    A_\ell[h]
    :=
    \frac1n
    GXD X^\top GX\Lambda_\ell(h)X^\top G .
\]
The conditional quadratic-form bound, applied conditionally on the background
and to the symmetric part of \(A_\ell[h]\), gives
\[
    \Norm{x^\top A_\ell[h]x}_{L_x^q}
    \le
    O_q\left(
        \norm{A_\ell[h]}_F
        +
        \left|
            \tr(A_\ell[h]\Sigma_x)
        \right|
    \right).
\]
It is therefore enough to bound the Frobenius and trace contributions.

\begin{itemize}
    \item For the Frobenius contribution, using
\[
    \norm{GX}_{\op}=O(\sqrt n),
    \qquad
    \norm{X^\top GX}_{\op}=O(n),
    \qquad
    \norm{X^\top G}_{\op}=O(\sqrt n),
\]
we get, uniformly over \(\norm{h}_2\le1\),
\[
\begin{aligned}
    \norm{A_\ell[h]}_F
    &\le
    \frac1n
    \norm{GX}_{\op}
    \norm{D}_{\op}
    \norm{X^\top GX}_{\op}
    \norm{\Lambda_\ell(h)}_F
    \norm{X^\top G}_{\op}
    \le
    O(n)\norm{D}_{\op}\norm{\Lambda_\ell(h)}_F .
\end{aligned}
\]
Since \(L'''\) is bounded,
\[
    \norm{\Lambda_\ell(h)}_F
    \le
    O(1)\norm{X^\top\mathbb D_\ell[\hat\theta][h]}_2.
\]
Thus
\[
\begin{aligned}
    &\left\|
        \left(
            \sum_{\ell=1}^n
            \left[
                \sup_{\norm{h}_2\le1}
                \norm{A_\ell[h]}_F
            \right]^2
        \right)^{1/2}
    \right\|_{L^r}
    \le
    O(n)
    \left\|
        \norm{D}_{\op}
        \left(
            \sum_{\ell=1}^n
            \left(
                \norm{X^\top\mathbb D_\ell[\hat\theta]}_2^*
            \right)^2
        \right)^{1/2}
    \right\|_{L^r}.
\end{aligned}
\]
By Lemmas~\ref{lem:fixed-direction-full-resolvent-leverage} and
\ref{lem:generic-derivative-minimizer},
\[
    \Norm{\norm{D}_{\op}}_{L^{2r}}
    =
    O_r((\log n)^6),
\]
and
\[
    \left\|
        \left(
            \sum_{\ell=1}^n
            \left(
                \norm{X^\top\mathbb D_\ell[\hat\theta]}_2^*
            \right)^2
        \right)^{1/2}
    \right\|_{L^{2r}}
    =
    O_r(\sqrt n).
\]
Therefore
\[
\begin{aligned}
    &\left\|
        \left(
            \sum_{\ell=1}^n
            \left[
                \sup_{\norm{h}_2\le1}
                \norm{A_\ell[h]}_F
            \right]^2
        \right)^{1/2}
    \right\|_{L^r}
    =
    O_r\!\left(n^{3/2}(\log n)^6\right).
\end{aligned}
\]
\item We now treat the trace contribution. By cyclicity of the trace,
\[
\begin{aligned}
    \tr(A_\ell[h]\Sigma_x)
    &=
    \frac1n
    \tr\!\left(
        \Lambda_\ell(h)
        X^\top G\Sigma_xGXD X^\top GX
    \right).
\end{aligned}
\]
Let
\[
    F:=\diag_{m\in[n]}
    \left(
        L'''(x_m^\top\hat\theta)
    \right),
\qquad \text{and}\qquad 
    c
    :=
    \diag\!\left(
        X^\top G\Sigma_xGXD X^\top GX
    \right)
    \in\mathbb R^n.
\]
Since $\diag(\Lambda_\ell(h))
    =
    F X^\top\mathbb D_\ell[\hat\theta][h]$, we have
\[
    \tr(A_\ell[h]\Sigma_x)
    =
    \frac1n
    (XFc)^\top
    \mathbb D_\ell[\hat\theta][h].
\]
Using the explicit derivative formula
\[
    \mathbb D_\ell[\hat\theta][h]
    =
    -\frac1nG
    \left[
        L''(x_\ell^\top\hat\theta)
        (h^\top\hat\theta)x_\ell
        +
        s_\ell h
    \right],
\]
we obtain, uniformly over \(\norm{h}_2\le1\),
\[
\begin{aligned}
    \left|
        (XFc)^\top\mathbb D_\ell[\hat\theta][h]
    \right|
    &\le
    \frac{O(1)}{n}
    \left(
        \norm{\hat\theta}_2
        |e_\ell^\top X^\top GXFc|
        +
        |s_\ell|\norm{GXFc}_2
    \right).
\end{aligned}
\]
Therefore
\[
\begin{aligned}
    &\left(
        \sum_{\ell=1}^n
        \left[
            \sup_{\norm{h}_2\le1}
            \left|
                \tr(A_\ell[h]\Sigma_x)
            \right|
        \right]^2
    \right)^{1/2}
    \le
    \frac{O(1)}{n^2}
    \left(
        \norm{\hat\theta}_2
        \norm{X^\top GXFc}_2
        +
        \norm{s}_2\norm{GXFc}_2
    \right).
\end{aligned}
\]
Since \(|F|_{\op}\leq |L'''|_{\infty}\leq O(1)\),
\[
    \norm{GXFc}_2
    \le
    \norm{GX}_{\op}\norm{c}_2
    =
    O(\sqrt n)\norm{c}_2,
\qquad
\text{and}
\qquad
    \norm{X^\top GXFc}_2
    =
    O(n)\norm{c}_2.
\]
Moreover,
\[
    \Norm{\norm{\hat\theta}_2}_{L^{2r}}=O_r(1),
    \qquad
    \Norm{\norm{s}_2}_{L^{2r}}=O_r(\sqrt n).
\]
Thus
\[
\begin{aligned}
    &\left\|
        \left(
            \sum_{\ell=1}^n
            \left[
                \sup_{\norm{h}_2\le1}
                \left|
                    \tr(A_\ell[h]\Sigma_x)
                \right|
            \right]^2
        \right)^{1/2}
    \right\|_{L^r} 
    \le
    \frac{O_r(1)}{n}
    \Norm{\norm{c}_2}_{L^{2r}} .
\end{aligned}
\]
It remains to bound \(c\). Since
\[
    \norm{c}_2
    \le
    \norm{
        X^\top G\Sigma_xGXD X^\top GX
    }_F
    \le
    \norm{X^\top G\Sigma_xGX}_{\op}
    \norm{D}_F
    \norm{X^\top GX}_{\op}
    =O(n^{5/2}),
\]
since
\[
    \norm{\Sigma_x}_{\op}=O(1),
    \qquad
    \norm{X^\top G\Sigma_xGX}_{\op}=O(n),
    \qquad
    \norm{X^\top GX}_{\op}=O(n),
\]
and
\[
    \norm{D}_F
    \le
    O(1)\norm{X^\top y}_2
    =
    O(\sqrt n).
\]

Consequently,
\[
\begin{aligned}
    &\left\|
        \left(
            \sum_{\ell=1}^n
            \left[
                \sup_{\norm{h}_2\le1}
                \left|
                    \tr(A_\ell[h]\Sigma_x)
                \right|
            \right]^2
        \right)^{1/2}
    \right\|_{L^r}
    =
    O_r(n^{3/2}).
\end{aligned}
\]
\end{itemize}

Combining the Frobenius and trace bounds in the conditional quadratic-form
estimate yields
\[
\begin{aligned}
    &\left\|
        \left(
            \sum_{\ell=1}^n
            \left[
                \sup_{\norm{h}_2\le1}
                \Norm{
                    x^\top GX D X^\top G\mathcal B_\ell[h]Gx
                }_{L_x^q}
            \right]^2
        \right)^{1/2}
    \right\|_{L^r}
    =
    O_{q,r}\!\left(n^{3/2}(\log n)^6\right).
\end{aligned}
\]
\end{proof}

\begin{lemma}[Active quadratic norm of the background matrix]
\label{lem:active-quadratic-background-bounds}
Under Assumptions~\ref{ass:1dim}--\ref{ass:7third-order-curvature} and
\(\nabla^3\rho\equiv0\), for every fixed integers \(q,r\ge1\),
\[
    \left\|
        \left(
            \sum_{\ell=1}^n
            \left(
                \mathcal Q_q(\mathbb D_\ell[Y])^*
            \right)^2
        \right)^{1/2}
    \right\|_{L^r}
    =
    O_{q,r}\!\left(n^{3/2}(\log n)^7\right),
\]
where, for a matrix-valued statistic \(M\), we denote
\[
    \mathcal Q_q(\mathbb D_\ell[M])^*
    :=
    \sup_{\norm{h}_2\le1}
    \Norm{x^\top\mathbb D_\ell[M][h]x}_{L_x^q}.
\]
The same estimates hold for \(\check Y\), with \(\mathbb D'_\ell\) in place of
\(\mathbb D_\ell\).
\end{lemma}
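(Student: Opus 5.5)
I will differentiate \(Y=GXDX^\top G\) with the product rule and bound each resulting contribution to \(x^\top\mathbb D_\ell[Y][h]x\) separately, always writing \(z:=X^\top Gx\).

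\emph{Derivative decomposition.} For \(\norm{h}_2\le1\), we have \(\mathbb D_\ell[G][h]=-G(\mathcal L_\ell[h]+\mathcal B_\ell[h])G\) and \(\mathbb D_\ell[X][h]=he_\ell^\top\). The product rule then gives three families of terms.

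\emph{(a) Resolvent terms.} These are \(x^\top\mathbb D_\ell[G]XDX^\top Gx\) and its mirror image. The background part \(\mathcal B_\ell\) of these terms is exactly the object of Lemma~\ref{lem:background-resolvent-active-quadratic}, which gives the bound \(O(n^{3/2}(\log n)^6)\). For the local part, write
\[
    x^\top G\mathcal L_\ell[h]G\,XDz
\]
and expand \(\mathcal L_\ell\). The contributions are rank-one or rank-two quantities such as
\[
    \frac1n(h^\top Gx)(x_\ell^\top GXDz),
    \qquad
    \frac1n(x_\ell^\top Gx)(h^\top GXDz),
    \qquad
    \frac1n(h^\top\hat\theta)(x_\ell^\top Gx)(x_\ell^\top GXDz).
\]
Square-summing over \(\ell\) turns \(x_\ell^\top\) into \(X^\top\). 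Each contribution is then bounded by \(\frac1n\) times norms of \(X^\top GXDz\) (of order \(n\cdot\norm{D}_{\op}\norm{z}_2\)), \(\norm{Gx}_2\) or \(\norm{z}_\infty\), and \(\norm{\hat\theta}_2\). Using \(\norm{z}_2=O(n)\), \(\norm{z}_\infty=O(\sqrt n\log n)\) and \(\norm{D}_{\op}=O((\log n)^6)\) from Lemma~\ref{lem:fixed-direction-full-resolvent-leverage}, the worst product is
\[
    \frac1n\,\norm{z}_\infty\cdot n\,\norm{D}_{\op}\,\norm{z}_2
    \approx n^{3/2}(\log n)^7 .
\]
This is where the extra logarithm comes from.

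\emph{(b) Column terms.} Differentiating the explicit \(X\) gives \(x^\top GhD_\ell z_\ell\) and its transpose, with \(z_\ell=x_\ell^\top Gx\). Square-summing in \(\ell\) gives at most \(\norm{Gx}_2\norm{D}_{\op}\norm{z}_2=O(n^{3/2}(\log n)^6)\).

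\emph{(c) \(D\)-derivative terms.} The entries are
\[
    \mathbb D_\ell[D_m]
    =L''''(\cdot)(x_m^\top\mathbb D_\ell\hat\theta)\,x_m^\top y
    +L'''(\cdot)\,x_m^\top\mathbb D_\ell[y]
    +\mathbf 1_{m=\ell}\,L'''(\cdot)\,h^\top y .
\]
The corresponding quadratic form is \(\sum_m z_m^2\,\mathbb D_\ell[D_m]\).
\begin{itemize}
    \item[(i)] The diagonal term \(z_\ell^2\,h^\top y\) square-sums to \(\norm{z}_4^2\le\norm{z}_\infty\norm{z}_2=O(n^{3/2}\log n)\).
    \item[(ii)] The \(L''''\) term is treated by duality: it equals \(w^\top\mathbb D_\ell[\hat\theta][h]\) with \(w=XF'(z^{\odot2}\odot X^\top y)\). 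Lemma~\ref{lem:square-summed-background-sensitivity} then bounds it by \(n^{-1/2}\norm{w}_2\le n^{-1/2}\sqrt n\,\norm{z}_\infty\norm{z}_2\norm{X^\top y}_\infty\). This is of order \(n^{3/2}(\log n)^7\).
    \item[(iii)] Likewise, the \(\mathbb D_\ell[y]\) term is \(v^\top\mathbb D_\ell[y]\) with \(v=XF(z^{\odot 2})\), so Lemma~\ref{lem:square-summed-y-derivative} applies. Here \(\norm{v}_2\le\sqrt n\,\norm{z}_\infty\norm{z}_2\), giving \(n^{-1/2}(\log n)^6\cdot n^2\log n=n^{3/2}(\log n)^7\).
\end{itemize}

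\emph{Passing to norms.} All of the above are pointwise bounds in \(x\). They are converted to \(L_x^q\) and then \(L^r\) norms via Hölder, conditional independence of \(x\) and the background, and the generic leverage moments. The one subtlety is that \(v\) and \(w\) depend on \(x\), while Lemmas~\ref{lem:square-summed-background-sensitivity} and~\ref{lem:square-summed-y-derivative} require background-measurable vectors. I would handle this by applying those lemmas conditionally on \(x\), which is legitimate because \(x\) is independent of \(X\), and only then taking \(L_x^q\) norms.

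\emph{Main obstacle.} Step (c)(ii)–(iii) is the delicate part. A naive operator-norm bound loses a factor \(\sqrt n\), so the square-summed sensitivity lemmas must be used through duality to recover the claimed order.
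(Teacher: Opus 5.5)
Apart from one small slip, your proposal follows the paper's proof essentially step for step. It uses the same product-rule split of \(x^\top\mathbb D_\ell[Y][h]x=\mathbb D_\ell[z^\top Dz][h]\) into column, \(D\)-derivative and resolvent terms, and the same appeal to Lemma~\ref{lem:background-resolvent-active-quadratic} for the \(\mathcal B_\ell\) part. The seventh logarithm also comes from the same place, the factor \(\norm{z}_\infty\) in the local resolvent term. In (c)(ii)--(iii) you invoke Lemmas~\ref{lem:square-summed-background-sensitivity} and~\ref{lem:square-summed-y-derivative} by duality rather than redoing those computations with the \(n\)-vectors \(w,v\) as the paper does; this only costs one extra logarithm there and stays within the target.

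The slip is in your formula for \(\mathbb D_\ell[D_m][h]\), which omits the direct chain-rule term \(\mathbf 1_{m=\ell}\,L''''(x_\ell^\top\hat\theta)(h^\top\hat\theta)(x_\ell^\top y)\). It is harmless: it is handled like your (c)(i) and square-sums to at most \(O(1)\norm{\hat\theta}_2\norm{X^\top y}_\infty\norm{z}_\infty\norm{z}_2=O(n^{3/2}(\log n)^7)\). The paper includes this term in its local contribution \(T_\ell^{D,\mathrm{loc}}\).
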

\begin{proof}
Set
\[
    z:=X^\top Gx,
    \qquad
    a:=X^\top y,
    \qquad
    D=\diag_{m\in[n]}
    \left(
        L'''(x_m^\top\hat\theta)a_m
    \right).
\]
Then
\[
    x^\top Yx
    =
    x^\top GXDX^\top Gx
    =
    z^\top Dz.
\]
For \(\norm{h}_2\le1\),
\[
    \mathbb D_\ell[z][h]
    =
    e_\ell h^\top Gx
    +
    X^\top\mathbb D_\ell[G][h]x,
\]
and therefore
\[
    \mathbb D_\ell[x^\top Yx][h]
    =
    2z^\top D e_\ell h^\top Gx
    +
    z^\top\mathbb D_\ell[D][h]z
    +
    2z^\top D X^\top\mathbb D_\ell[G][h]x .
\]
We bound these three terms.

First,
\[
\begin{aligned}
    \left\|
        \left(
            \sum_{\ell=1}^n
            \left[
                \sup_{\norm{h}_2\le1}
                \Norm{
                    D_\ell z_\ell h^\top Gx
                }_{L_x^q}
            \right]^2
        \right)^{1/2}
    \right\|_{L^r}                                      
    &\le
    O_q
    \left\|
        \Norm{\norm{Gx}_2}_{L_x^{2q}}
        \left(
            \sum_{\ell=1}^n
            D_\ell^2\norm{Gx_\ell}_2^2
        \right)^{1/2}
    \right\|_{L^r}\\
    &\le
    O_q
    \left\|
        \Norm{\norm{Gx}_2}_{L_x^{2q}}
        \norm{D}_{\op}\norm{GX}_F
    \right\|_{L^r}                                      
    =
    O_{q,r}\!\left(n^{3/2}(\log n)^6\right),
\end{aligned}
\]
using
\[
    \Norm{\norm{Gx}_2}_{L_x^{2q}}=O_q(\sqrt n),
    \qquad
    \Norm{\norm{D}_{\op}}_{L^{3r}}=O_r((\log n)^6),
    \qquad
    \Norm{\norm{GX}_F}_{L^{3r}}=O_r(n).
\]

We now handle the term \(z^\top\mathbb D_\ell[D]z\). For \(m\in[n]\),
\[
\begin{aligned}
    \mathbb D_\ell[D_m][h]
    &=
    \mathbb D_\ell
    \left[
        L'''(x_m^\top\hat\theta)
    \right][h]a_m
    +
    L'''(x_m^\top\hat\theta)
    \left(
        \mathbf 1_{m=\ell}h^\top y
        +
        x_m^\top\mathbb D_\ell[y][h]
    \right),
\end{aligned}
\]
with
\[
    \left|
        \mathbb D_\ell
        \left[
            L'''(x_m^\top\hat\theta)
        \right][h]
    \right|
    \le
    O(1)
    \left|
        \mathbf 1_{m=\ell}h^\top\hat\theta
        +
        x_m^\top\mathbb D_\ell[\hat\theta][h]
    \right|.
\]
Thus \(z^\top\mathbb D_\ell[D][h]z\) is the sum of the following three
contributions:
\[
\begin{aligned}
    T_\ell^{D,\mathrm{loc}}[h]
    &:=
    O(1)
    \left(
        h^\top\hat\theta\,a_\ell
        +
        h^\top y
    \right)
    z_\ell^2,                                                \\
    T_\ell^{D,\theta}[h]
    &:=
    O(1)
    \sum_{m=1}^n
    L'''(x_m^\top\hat\theta)
    a_m z_m^2\,
    x_m^\top\mathbb D_\ell[\hat\theta][h],                    \\
    T_\ell^{D,y}[h]
    &:=
    \sum_{m=1}^n
    L'''(x_m^\top\hat\theta)
    z_m^2\,
    x_m^\top\mathbb D_\ell[y][h].
\end{aligned}
\]

For the local contribution,
\[
\begin{aligned}
    &\left\|
        \left(
            \sum_{\ell=1}^n
            \left[
                \sup_{\norm{h}_2\le1}
                \Norm{T_\ell^{D,\mathrm{loc}}[h]}_{L_x^q}
            \right]^2
        \right)^{1/2}
    \right\|_{L^r}                                      \\
    &\quad\le
    O_q
    \left\|
        \left(
            \norm{\hat\theta}_2\norm{a}_\infty+\norm{y}_2
        \right)
        \left(
            \sum_{\ell=1}^n
            \Norm{|z_\ell|^2}_{L_x^q}^2
        \right)^{1/2}
    \right\|_{L^r}
    \le
    O_q
    \left\|
        \left(
            \norm{\hat\theta}_2\norm{a}_\infty+\norm{y}_2
        \right)
        \left(
            \sum_{\ell=1}^n
            \norm{Gx_\ell}_2^4
        \right)^{1/2}
    \right\|_{L^r}                                      \\
    &\quad\le
    O_q
    \left\|
        \left(
            \norm{\hat\theta}_2\norm{a}_\infty+\norm{y}_2
        \right)
        \norm{GX}_{\op}\norm{GX}_F
    \right\|_{L^r} 
    =
    O_{q,r}\!\left(n^{3/2}(\log n)^6\right).
\end{aligned}
\]

For the \(\hat\theta\)-background contribution, set
\[
    w_m
    :=
    L'''(x_m^\top\hat\theta)a_m z_m^2,
    \qquad
    w=(w_m)_{m\in[n]}.
\]
Then
\[
    T_\ell^{D,\theta}[h]
    =
    O(1)\sum_{m=1}^n w_m\,x_m^\top\mathbb D_\ell[\hat\theta][h].
\]
Using
\[
    \mathbb D_\ell[\hat\theta][h]
    =
    -\frac1nG
    \left[
        L''(x_\ell^\top\hat\theta)(h^\top\hat\theta)x_\ell
        +
        s_\ell h
    \right],
\]
we have
\[
\begin{aligned}
    &\left\|
        \left(
            \sum_{\ell=1}^n
            \left[
                \sup_{\norm{h}_2\le1}
                \Norm{T_\ell^{D,\theta}[h]}_{L_x^q}
            \right]^2
        \right)^{1/2}
    \right\|_{L^r}                                      \\
    &\quad\le
    \frac{O(1)}{n}
    \left\|
        \norm{\hat\theta}_2
        \Norm{\norm{X^\top GXw}_2}_{L_x^q}
        +
        \norm{s}_2
        \Norm{\norm{GXw}_2}_{L_x^q}
    \right\|_{L^r}.
\end{aligned}
\]
Moreover,
\[
\begin{aligned}
    \left\|
        \Norm{\norm{w}_2}_{L_x^q}
    \right\|_{L^{3r}}
    &\le
    O_q
    \left\|
        \norm{a}_\infty
        \left(
            \sum_{m=1}^n
            \Norm{|z_m|^2}_{L_x^q}^2
        \right)^{1/2}
    \right\|_{L^{3r}}                                  \\
    &\le
    O_q
    \left\|
        \norm{a}_\infty
        \norm{GX}_{\op}\norm{GX}_F
    \right\|_{L^{3r}}=
    O_{q,r}\!\left(n^{3/2}(\log n)^6\right).
\end{aligned}
\]
Hence, using
\[
    \norm{X^\top GX}_{\op}=O(n),
    \qquad
    \norm{GX}_{\op}=O(\sqrt n),
    \qquad
    \Norm{\norm{\hat\theta}_2}_{L^{3r}}=O_r(1),
    \qquad
    \Norm{\norm{s}_2}_{L^{3r}}=O_r(\sqrt n),
\]
we obtain
\[
\begin{aligned}
    &\left\|
        \left(
            \sum_{\ell=1}^n
            \left[
                \sup_{\norm{h}_2\le1}
                \Norm{T_\ell^{D,\theta}[h]}_{L_x^q}
            \right]^2
        \right)^{1/2}
    \right\|_{L^r}
    =
    O_{q,r}\!\left(n^{3/2}(\log n)^6\right).
\end{aligned}
\]

For the \(y\)-background contribution, set
\[
    v_m:=L'''(x_m^\top\hat\theta)z_m^2,
    \qquad
    v=(v_m)_{m\in[n]}.
\]
Then
\[
    T_\ell^{D,y}[h]
    =
    \sum_{m=1}^n v_m\,x_m^\top\mathbb D_\ell[y][h].
\]
Since
\[
    \mathbb D_\ell[y][h]
    =
    -G\left(\mathcal L_\ell[h]+\mathcal B_\ell[h]\right)y,
\]
the local part satisfies
\[
\begin{aligned}
    &\left\|
        \left(
            \sum_{\ell=1}^n
            \left[
                \sup_{\norm{h}_2\le1}
                \Norm{
                    \sum_{m=1}^n
                    v_m x_m^\top G\mathcal L_\ell[h]y
                }_{L_x^q}
            \right]^2
        \right)^{1/2}
    \right\|_{L^r}                                      \\
    &\quad\le
    \frac{O(1)}{n}
    \left\|
        \Norm{\norm{GXv}_2}_{L_x^q}\norm{X^\top y}_2
        +
        \norm{y}_2\Norm{\norm{X^\top GXv}_2}_{L_x^q}
        +
        \norm{\hat\theta}_2\norm{a}_\infty
        \Norm{\norm{X^\top GXv}_2}_{L_x^q}
    \right\|_{L^r}.
\end{aligned}
\]
Also,
\[
\begin{aligned}
    \left\|
        \Norm{\norm{v}_2}_{L_x^q}
    \right\|_{L^{3r}}
    &\le
    O_q
    \left\|
        \left(
            \sum_{m=1}^n
            \Norm{|z_m|^2}_{L_x^q}^2
        \right)^{1/2}
    \right\|_{L^{3r}}                                  \\
    &\le
    O_q
    \left\|
        \norm{GX}_{\op}\norm{GX}_F
    \right\|_{L^{3r}}
    =
    O_{q,r}(n^{3/2}).
\end{aligned}
\]
Therefore the local \(y\)-background contribution is
\[
\begin{aligned}
    &\left\|
        \left(
            \sum_{\ell=1}^n
            \left[
                \sup_{\norm{h}_2\le1}
                \Norm{
                    \sum_{m=1}^n
                    v_m x_m^\top G\mathcal L_\ell[h]y
                }_{L_x^q}
            \right]^2
        \right)^{1/2}
    \right\|_{L^r}
    =
    O_{q,r}\!\left(n^{3/2}(\log n)^6\right).
\end{aligned}
\]
For the background part of \(\mathbb D_\ell[y]\), we argue conditionally on the
active column \(x\). Then the coefficient vector
\[
    v_m=L'''(x_m^\top\hat\theta)z_m^2
\]
is frozen, and the background-part computation in the proof of
Lemma~\ref{lem:square-summed-y-derivative} applies with \(v\) in place of the
deterministic coefficient vector. It gives, conditionally on \(x\),
\[
\begin{aligned}
    &\left(
        \sum_{\ell=1}^n
        \left[
            \sup_{\norm{h}_2\le1}
            \left|
                \sum_{m=1}^n
                v_m x_m^\top G\mathcal B_\ell[h]y
            \right|
        \right]^2
    \right)^{1/2}                                      \\
    &\qquad\le
    \frac{O(1)}{n^2}
    \left[
        \norm{\hat\theta}_2
        \norm{X^\top GX}_{\op}
        \norm{X^\top y}_\infty
        \norm{X^\top GXv}_2
        +
        \norm{s}_2
        \norm{GX}_{\op}
        \norm{X^\top y}_\infty
        \norm{X^\top GXv}_2
    \right].
\end{aligned}
\]
Using
\[
    \norm{X^\top GXv}_2
    \le
    \norm{X^\top GX}_{\op}\norm{v}_2
    =
    O(n)\norm{v}_2,
\]
together with
\[
    \left\|
        \Norm{\norm{v}_2}_{L_x^q}
    \right\|_{L^{3r}}
    =
    O_{q,r}(n^{3/2}),
\]
and the bounds
\[
    \Norm{\norm{X^\top y}_\infty}_{L^{3r}}
    =
    O_r((\log n)^6),
    \qquad
    \Norm{\norm{\hat\theta}_2}_{L^{3r}}=O_r(1),
    \qquad
    \Norm{\norm{s}_2}_{L^{3r}}=O_r(\sqrt n),
\]
we obtain
\[
\begin{aligned}
    &\left\|
        \left(
            \sum_{\ell=1}^n
            \left[
                \sup_{\norm{h}_2\le1}
                \Norm{
                    \sum_{m=1}^n
                    v_m x_m^\top G\mathcal B_\ell[h]y
                }_{L_x^q}
            \right]^2
        \right)^{1/2}
    \right\|_{L^r}
    =
    O_{q,r}\!\left(n^{3/2}(\log n)^6\right).
\end{aligned}
\]
Thus
\[
\begin{aligned}
    &\left\|
        \left(
            \sum_{\ell=1}^n
            \left[
                \sup_{\norm{h}_2\le1}
                \Norm{z^\top\mathbb D_\ell[D][h]z}_{L_x^q}
            \right]^2
        \right)^{1/2}
    \right\|_{L^r}
    =
    O_{q,r}\!\left(n^{3/2}(\log n)^6\right).
\end{aligned}
\]

It remains to bound the resolvent contribution
\[
    2z^\top D X^\top\mathbb D_\ell[G][h]x
    =
    -2z^\top D X^\top G\mathcal L_\ell[h]Gx
    -
    2z^\top D X^\top G\mathcal B_\ell[h]Gx .
\]
For the local part,
\[
\begin{aligned}
    &\left\|
        \left(
            \sum_{\ell=1}^n
            \left[
                \sup_{\norm{h}_2\le1}
                \Norm{
                    z^\top D X^\top G\mathcal L_\ell[h]Gx
                }_{L_x^q}
            \right]^2
        \right)^{1/2}
    \right\|_{L^r}                                      \\
    &\quad\le
    \frac{O(1)}{n}
    \left\|
        \Norm{
            \norm{GXDz}_2\norm{z}_2
            +
            \norm{X^\top GXDz}_2\norm{Gx}_2
            +
            \norm{\hat\theta}_2
            \norm{X^\top GXDz}_2\norm{z}_\infty
        }_{L_x^q}
    \right\|_{L^r}                                      \\
    &\quad\le
    O_{q,r}\!\left(n^{3/2}(\log n)^7\right).
\end{aligned}
\]
Indeed, we used
\[
    \norm{GXDz}_2
    \le
    \norm{GX}_{\op}\norm{D}_{\op}\norm{z}_2,
    \qquad
    \norm{X^\top GXDz}_2
    \le
    \norm{X^\top GX}_{\op}\norm{D}_{\op}\norm{z}_2,
\]
together with
\[
    \Norm{\norm{z}_2}_{L_x^{3q}}=O_q(n),
    \qquad
    \Norm{\norm{z}_\infty}_{L_x^{3q}}=O_q(\sqrt n\log n),
    \qquad
    \Norm{\norm{Gx}_2}_{L_x^{3q}}=O_q(\sqrt n).
\]

For the background part, Lemma~\ref{lem:background-resolvent-active-quadratic}
gives directly
\[
\begin{aligned}
    &\left\|
        \left(
            \sum_{\ell=1}^n
            \left[
                \sup_{\norm{h}_2\le1}
                \Norm{
                    z^\top D X^\top G\mathcal B_\ell[h]Gx
                }_{L_x^q}
            \right]^2
        \right)^{1/2}
    \right\|_{L^r}
    =
    O_{q,r}\!\left(n^{3/2}(\log n)^6\right).
\end{aligned}
\]

Combining the three displayed bounds yields
\[
\begin{aligned}
    &\left\|
        \left(
            \sum_{\ell=1}^n
            \left(
                \mathcal Q_q(\mathbb D_\ell[Y])^*
            \right)^2
        \right)^{1/2}
    \right\|_{L^r}
    =
    O_{q,r}\!\left(n^{3/2}(\log n)^7\right).
\end{aligned}
\]

 The checked estimates are
identical, since the checked background satisfies the same assumptions and has
the same distribution.
\end{proof}

\begin{lemma}[Background gradients of the active contractions]
\label{lem:background-gradients-active-contractions}
Under Assumptions~\ref{ass:1dim}--\ref{ass:7third-order-curvature} and
\(\nabla^3\rho\equiv0\), for every fixed integer \(r\ge1\),
\[
\begin{aligned}
    &\left\|
        \left(
            \sum_{\ell=1}^n
            \left(
                \norm{
                    \mathbb D_\ell[
                        \mathcal C^{00}(y,\check y)
                    ]
                }^*
            \right)^2
            +
            \sum_{\ell=1}^n
            \left(
                \norm{
                    \mathbb D'_\ell[
                        \mathcal C^{00}(y,\check y)
                    ]
                }^*
            \right)^2
        \right)^{1/2}
    \right\|_{L^r}
    =
    O_r\!\left(\frac{(\log n)^6}{\sqrt n}\right),
\end{aligned}
\]
\[
\begin{aligned}
    &\left\|
        \left(
            \sum_{\ell=1}^n
            \left(
                \norm{
                    \mathbb D_\ell[
                        \mathcal C^{01}(y,\check Y)
                    ]
                }^*
            \right)^2
            +
            \sum_{\ell=1}^n
            \left(
                \norm{
                    \mathbb D'_\ell[
                        \mathcal C^{01}(y,\check Y)
                    ]
                }^*
            \right)^2
        \right)^{1/2}
    \right\|_{L^r}
    =
    O_r\!\left(n^{3/2}(\log n)^{12}\right),
\end{aligned}
\]
and the same bound holds with \(\mathcal C^{01}(y,\check Y)\) replaced by
\(\mathcal C^{10}(Y,\check y)\). Finally,
\[
\begin{aligned}
    &\left\|
        \left(
            \sum_{\ell=1}^n
            \left(
                \norm{
                    \mathbb D_\ell[
                        \mathcal C^{11}(Y,\check Y)
                    ]
                }^*
            \right)^2
            +
            \sum_{\ell=1}^n
            \left(
                \norm{
                    \mathbb D'_\ell[
                        \mathcal C^{11}(Y,\check Y)
                    ]
                }^*
            \right)^2
        \right)^{1/2}
    \right\|_{L^r}
    =
    O_r\!\left(n^{7/2}(\log n)^{13}\right).
    \end{aligned}
\]
\end{lemma}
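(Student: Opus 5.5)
We prove Lemma~\ref{lem:background-gradients-active-contractions} by differentiating each active contraction with respect to a background column. Every factor is then hit by the product rule and controlled by one of the square-summed derivative lemmas already established. Fix \(\ell\) and a direction \(h\), and consider first \(\mathbb D_\ell\) acting on the unchecked background. The checked derivative \(\mathbb D'_\ell\) is symmetric, since \(X^{A_j}\) has the same law. Columns shared by \(X\) and \(X^{A_j}\) affect both slots; for those we simply add the two contributions, which at most doubles the constants.

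For \(\mathcal C^{00}(y,\check y)=\Cov_x(s_x^\zeta y^\top x,\check s_x^\zeta\check y^\top x)\), the \(\ell\)-derivative splits into two pieces.
\begin{itemize}
\item[(a)] The score derivative, \(\Cov_x(\mathbb D_\ell[s_x^\zeta]y^\top x,\check s_x^\zeta\check y^\top x)\). Lemma~\ref{lem:differentiated-active-score-contractions} bounds its square sum by \(O_r(n^{-1/2})\).
\item[(b)] The direction derivative, \(\Cov_x(s_x^\zeta(\mathbb D_\ell[y][h])^\top x,\check s_x^\zeta\check y^\top x)=v^\top\mathbb D_\ell[y][h]\). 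Here \(v:=\E_x[s_x^\zeta x(\check s_x^\zeta\check y^\top x-\E_x[\cdot])]\) is a background-measurable vector with \(\norm{v}_2\le R\norm{\check y}_2\), by the duality argument of Lemma~\ref{lem:active-column-contractions-detailed}.
\end{itemize}
Lemma~\ref{lem:square-summed-y-derivative} then gives the bound \(O_r((\log n)^6/\sqrt n)\) for (b), and together these yield the first claim.

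For \(\mathcal C^{01}(y,\check Y)\) the product rule produces four kinds of terms.
\begin{itemize}
\item[(i)] The score derivative in the linear slot, bounded by \(O_r(n^{3/2}(\log n)^6)\) in Lemma~\ref{lem:differentiated-active-score-contractions}.
\item[(ii)] The derivative of \(y\). Again \(v^\top\mathbb D_\ell[y]\), now with \(\norm{v}_2\le R\,\mathcal Q_4(\check Y)\). Lemmas~\ref{lem:square-summed-y-derivative} and~\ref{lem:background-matrix-bounds_bounded} give \(n^{-1/2}(\log n)^6\cdot n^2(\log n)^6=n^{3/2}(\log n)^{12}\), after H\"older in the background with exponents \(4r\).
\item[(iii)] The derivative \(2\check s_x^\zeta\mathbb D_\ell[\check s_x^\zeta]\) in the quadratic slot, which is the interchanged-role version of Lemma~\ref{lem:differentiated-active-score-contractions} and is of the same order.
\item[(iv)] The derivative of \(\check Y\). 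By Lemma~\ref{lem:active-column-contractions-detailed} applied to \(C=\mathbb D_\ell[\check Y][h]\), it is at most \(R\norm{y}_2\mathcal Q_4(\mathbb D_\ell[\check Y][h])\). Its square sum is controlled by Lemma~\ref{lem:active-quadratic-background-bounds}: \(n^{3/2}(\log n)^7\).
\end{itemize}
The worst term is (ii), which gives the stated \(n^{3/2}(\log n)^{12}\). The \(\mathcal C^{10}\) case is identical with the roles exchanged.

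For \(\mathcal C^{11}(Y,\check Y)\) we proceed the same way.
\begin{itemize}
\item The score-derivative pieces give \(n^{7/2}(\log n)^{12}\) by the third item of Lemma~\ref{lem:differentiated-active-score-contractions}.
\item The matrix-derivative pieces are bounded by \(R\,\mathcal Q_4(\mathbb D_\ell[Y][h])\mathcal Q_4(\check Y)\). Their square sum is at most \(\mathcal Q_4(\check Y)\,(\sum_\ell\mathcal Q_4(\mathbb D_\ell[Y])^{*2})^{1/2}\), which is \(n^2(\log n)^6\cdot n^{3/2}(\log n)^7=n^{7/2}(\log n)^{13}\) after H\"older.
\end{itemize}
Throughout, the supremum over \(h\) passes inside because each bound is uniform in \(\norm{h}_2\le1\). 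The background \(L^r\) norms are combined by H\"older at exponents \(2r\) or \(3r\), using \(\Norm{R}_{L^q}=O_q(1)\) for all \(q\).

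The main obstacle is justifying that the conditional covariance can be differentiated under \(\E_x\). It is also necessary to bound the quadratic-slot matrix derivative in \(\mathcal Q_4\) rather than operator norm: a naive operator-norm bound loses a factor \(n^{1/2}\). This is precisely why Lemma~\ref{lem:active-quadratic-background-bounds} is stated in terms of \(\mathcal Q_q\). We also check that the \(\E_x[\cdot]\) centering terms in the covariance, once differentiated, are dominated by the same bounds; they are, by Jensen.
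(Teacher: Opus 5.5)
Your proposal is correct and follows the paper's proof essentially step by step: product rule on each contraction, Lemma~\ref{lem:differentiated-active-score-contractions} for the score factors, duality vectors $v$ with $\norm{v}_2\le R\norm{\check y}_2$ or $\norm{v}_2\le R\,\mathcal Q_4(\check Y)$ fed into Lemma~\ref{lem:square-summed-y-derivative}, and Lemma~\ref{lem:active-column-contractions-detailed} with Lemma~\ref{lem:active-quadratic-background-bounds} for the matrix derivatives, giving the same orders. Your extra term (iii), the checked score derivative in the quadratic slot of $\mathcal C^{01}$, is not listed in the paper's proof and is not literally one of the items of Lemma~\ref{lem:differentiated-active-score-contractions}; the same duality argument with Lemma~\ref{lem:square-summed-background-sensitivity}, applied on the checked background, bounds it by $O_r(n^{3/2}(\log n)^6)$, so your conclusion is unaffected.
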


\begin{proof}
We will skip some of the terms appearing from the derivations $\mathbb D_\ell'$ when they are symmetric to those appearing through derivations $\mathbb D_\ell$ (for the terms $\mathcal C^{00}$ and $\mathcal C^{11}$).
We use when useful the active-contraction bounds given by Lemma~\ref{lem:active-column-contractions-detailed}:
\[
    |\mathcal C^{00}(a,b)|
    \le
    R\norm{a}_2\norm{b}_2,
\]
\[
    |\mathcal C^{01}(a,C)|
    \le
    R\norm{a}_2\mathcal Q_4(C),
    \qquad
    |\mathcal C^{10}(B,b)|
    \le
    R\mathcal Q_4(B)\norm{b}_2,
\]
and
\[
    |\mathcal C^{11}(B,C)|
    \le
    R\mathcal Q_4(B)\mathcal Q_4(C),
    \qquad
    \Norm{R}_{L^r}=O_r(1).
\]

For \(\mathcal C^{00}(y,\check y)\), the product rule gives
\[
\begin{aligned}
    \mathbb D_\ell[
        \mathcal C^{00}(y,\check y)
    ][h]
    &=
    \Cov_x\!\left(
        \mathbb D_\ell[s_x^\zeta][h]\,y^\top x,
        \check s_x^\zeta\,\check y^\top x
    \right)
    +
    \mathcal C^{00}(\mathbb D_\ell[y][h],\check y).
\end{aligned}
\]
By Lemma~\ref{lem:differentiated-active-score-contractions},
\[
\begin{aligned}
    &\left\|
        \left(
            \sum_{\ell=1}^n
            \left(
                \norm{
                    \Cov_x\!\left(
                        \mathbb D_\ell[s_x^\zeta]\,y^\top x,
                        \check s_x^\zeta\,\check y^\top x
                    \right)
                }^*
            \right)^2
            +
            \sum_{\ell=1}^n
            \left(
                \norm{
                    \Cov_x\!\left(
                        s_x^\zeta\,y^\top x,
                        \mathbb D'_\ell[\check s_x^\zeta]\,
                        \check y^\top x
                    \right)
                }^*
            \right)^2
        \right)^{1/2}
    \right\|_{L^r}
    \le
    O_r(n^{-1/2}).
\end{aligned}
\]
Moreover, by duality applied to \(\mathcal C^{00}\), there exist
background-measurable vectors \(v_{00}\) and \(\check v_{00}\) such that
\[
    \mathcal C^{00}(a,\check y)=\check v_{00}^\top a,
    \qquad
    \mathcal C^{00}(y,b)= v_{00}^\top b,
\]
with
\[
    \norm{\check v_{00}}_2\le R\norm{\check y}_2,
    \qquad
    \norm{v_{00}}_2\le R\norm{y}_2.
\]
Therefore, by Lemma~\ref{lem:square-summed-y-derivative} and
Lemma~\ref{lem:background-matrix-bounds_bounded},
\[
\begin{aligned}
    \left\|
        \left(
            \sum_{\ell=1}^n
            \left(
                \norm{
                    \mathcal C^{00}(\mathbb D_\ell[y],\check y)
                }^*
            \right)^2
        \right)^{1/2}
    \right\|_{L^r}                                      
    &\le
    \frac{O_r((\log n)^6)}{\sqrt n}
    \Norm{\norm{\check v_{00}}_2}_{L^{4r}}                     \\
    &\le
    \frac{O_r((\log n)^6)}{\sqrt n}
    \Norm{R\norm{\check y}_2}_{L^{4r}}
    =
    O_r\!\left(\frac{(\log n)^6}{\sqrt n}\right),
\end{aligned}
\]
and similarly
\[
\begin{aligned}
    &\left\|
        \left(
            \sum_{\ell=1}^n
            \left(
                \norm{
                    \mathcal C^{00}(y,\mathbb D'_\ell[\check y])
                }^*
            \right)^2
        \right)^{1/2}
    \right\|_{L^r}
    =
    O_r\!\left(\frac{(\log n)^6}{\sqrt n}\right).
\end{aligned}
\]
This proves the \(00\)-bound.

We now treat \(\mathcal C^{01}(y,\check Y)\). The product rule gives
\[
\begin{aligned}
    \mathbb D_\ell[
        \mathcal C^{01}(y,\check Y)
    ][h]
    &=
    \Cov_x\!\left(
        \mathbb D_\ell[s_x^\zeta][h]\,y^\top x,
        (\check s_x^\zeta)^2x^\top\check Yx
    \right)                                             \\
    &\quad+
    \mathcal C^{01}(\mathbb D_\ell[y][h],\check Y).
\end{aligned}
\]
By Lemma~\ref{lem:differentiated-active-score-contractions},
\[
\begin{aligned}
    &\left\|
        \left(
            \sum_{\ell=1}^n
            \left(
                \norm{
                    \Cov_x\!\left(
                        \mathbb D_\ell[s_x^\zeta]\,y^\top x,
                        (\check s_x^\zeta)^2x^\top\check Yx
                    \right)
                }^*
            \right)^2
        \right)^{1/2}
    \right\|_{L^r}
    \le
    O_r(n^{3/2}(\log n)^6).
\end{aligned}
\]
By duality applied to \(\mathcal C^{01}\), there exists \(\check v_{01}\) such that
\[
    \mathcal C^{01}(a,\check Y)=\check v_{01}^\top a,
    \qquad
    \norm{\check v_{01}}_2\le R\mathcal Q_4(\check Y).
\]
Thus, by Lemmas~\ref{lem:square-summed-y-derivative} and
\ref{lem:background-matrix-bounds_bounded},
\[
\begin{aligned}
    &\left\|
        \left(
            \sum_{\ell=1}^n
            \left(
                \norm{
                    \mathcal C^{01}(\mathbb D_\ell[y],\check Y)
                }^*
            \right)^2
        \right)^{1/2}
    \right\|_{L^r}\le
    \frac{O_r((\log n)^6)}{\sqrt n}
    \Norm{R\mathcal Q_4(\check Y)}_{L^{4r}} 
    \le
    O_r\!\left(n^{3/2}(\log n)^{12}\right).
\end{aligned}
\]
For the checked matrix derivative, the active contraction bound gives
\[
\begin{aligned}
    &\left(
        \sum_{\ell=1}^n
        \left(
            \norm{
                \Cov_x\!\left(
                    s_x^\zeta\,y^\top x,
                    (\check s_x^\zeta)^2
                    x^\top\mathbb D'_\ell[\check Y]x
                \right)
            }^*
        \right)^2
    \right)^{1/2} 
    \le
    R\norm{y}_2
    \left(
        \sum_{\ell=1}^n
        \left(
            \mathcal Q_4(\mathbb D'_\ell[\check Y])^*
        \right)^2
    \right)^{1/2}.
\end{aligned}
\]
Consequently, by
Lemmas~\ref{lem:active-quadratic-background-bounds} and~\ref{lem:background-matrix-bounds_bounded},
\[
\begin{aligned}
    \left\|
        \left(
            \sum_{\ell=1}^n
            \left(
                \norm{
                    \Cov_x\!\left(
                        s_x^\zeta\,y^\top x,
                        (\check s_x^\zeta)^2
                        x^\top\mathbb D'_\ell[\check Y]x
                    \right)
                }^*
            \right)^2
        \right)^{1/2}
    \right\|_{L^r}                                     
    &\le
    \left\|
        R\norm{y}_2
        \left(
            \sum_{\ell=1}^n
            \left(
                \mathcal Q_4(\mathbb D'_\ell[\check Y])^*
            \right)^2
        \right)^{1/2}
    \right\|_{L^r}                                      \\
    &\le
    O_r\!\left(n^{3/2}(\log n)^7\right).
\end{aligned}
\]
Combining the three estimates yields
\[
\begin{aligned}
    &\left\|
        \left(
            \sum_{\ell=1}^n
            \left(
                \norm{
                    \mathbb D_\ell[
                        \mathcal C^{01}(y,\check Y)
                    ]
                }^*
            \right)^2
            +
            \sum_{\ell=1}^n
            \left(
                \norm{
                    \mathbb D'_\ell[
                        \mathcal C^{01}(y,\check Y)
                    ]
                }^*
            \right)^2
        \right)^{1/2}
    \right\|_{L^r}
    =
    O_r\!\left(n^{3/2}(\log n)^{12}\right).
\end{aligned}
\]
The proof for \(\mathcal C^{10}(Y,\check y)\) is identical, with checked and
unchecked quantities interchanged.

It remains to treat \(\mathcal C^{11}(Y,\check Y)\). The differentiated-score
terms are controlled by Lemma~\ref{lem:differentiated-active-score-contractions}:
\[
\begin{aligned}
    &\left\|
        \left(
            \sum_{\ell=1}^n
            \left(
                \norm{
                    \Cov_x\!\left(
                        2s_x^\zeta
                        \mathbb D_\ell[s_x^\zeta]\,x^\top Yx,
                        (\check s_x^\zeta)^2x^\top\check Yx
                    \right)
                }^*
            \right)^2
        \right)^{1/2}
    \right\|_{L^r}
    \le
    O_r(n^{7/2}(\log n)^{12}),
\end{aligned}
\]
and the checked differentiated-score term is controlled in the same way.
For the unchecked matrix derivative,
\[
\begin{aligned}
    &\left(
        \sum_{\ell=1}^n
        \left(
            \norm{
                \Cov_x\!\left(
                    (s_x^\zeta)^2x^\top\mathbb D_\ell[Y]x,
                    (\check s_x^\zeta)^2x^\top\check Yx
                \right)
            }^*
        \right)^2
    \right)^{1/2} 
    \le
    R\mathcal Q_4(\check Y)
    \left(
        \sum_{\ell=1}^n
        \left(
            \mathcal Q_4(\mathbb D_\ell[Y])^*
        \right)^2
    \right)^{1/2}.
\end{aligned}
\]
Therefore, by Lemma~\ref{lem:background-matrix-bounds_bounded} and\ref{lem:active-quadratic-background-bounds},
\[
\begin{aligned}
    \left\|
        \left(
            \sum_{\ell=1}^n
            \left(
                \norm{
                    \Cov_x\!\left(
                        (s_x^\zeta)^2x^\top\mathbb D_\ell[Y]x,
                        (\check s_x^\zeta)^2x^\top\check Yx
                    \right)
                }^*
            \right)^2
        \right)^{1/2}
    \right\|_{L^r}                                      
    &\le
    \left\|
        R\mathcal Q_4(\check Y)
        \left(
            \sum_{\ell=1}^n
            \left(
                \mathcal Q_4(\mathbb D_\ell[Y])^*
            \right)^2
        \right)^{1/2}
    \right\|_{L^r}                                      \\
    &\le
    O_r\!\left(n^{7/2}(\log n)^{13}\right).
\end{aligned}
\]
The checked matrix derivative satisfies the same bound:
\[
\begin{aligned}
    &\left\|
        \left(
            \sum_{\ell=1}^n
            \left(
                \norm{
                    \Cov_x\!\left(
                        (s_x^\zeta)^2x^\top Yx,
                        (\check s_x^\zeta)^2
                        x^\top\mathbb D'_\ell[\check Y]x
                    \right)
                }^*
            \right)^2
        \right)^{1/2}
    \right\|_{L^r}
    \le
    O_r\!\left(n^{7/2}(\log n)^{13}\right).
\end{aligned}
\]
Combining the differentiated-score and matrix-derivative estimates gives
\[
\begin{aligned}
    &\left\|
        \left(
            \sum_{\ell=1}^n
            \left(
                \norm{
                    \mathbb D_\ell[
                        \mathcal C^{11}(Y,\check Y)
                    ]
                }^*
            \right)^2
            +
            \sum_{\ell=1}^n
            \left(
                \norm{
                    \mathbb D'_\ell[
                        \mathcal C^{11}(Y,\check Y)
                    ]
                }^*
            \right)^2
        \right)^{1/2}
    \right\|_{L^r}
    =
    O_r\!\left(n^{7/2}(\log n)^{13}\right).
\end{aligned}
\]
This completes the proof.
\end{proof}

Combining Lemma~\ref{lem:background-gradients-active-contractions} with
\eqref{borne_Cov_x} and
Remark~\ref{rem:csqce_wasserstein_bound_scd_order_leaveone_out_expension}, we
get the following bound.

\begin{corollary}[Bound on \(\omega_c\)]
\label{cor:bound-cn-bar-x}
Under Assumptions~\ref{ass:1dim}--\ref{ass:7third-order-curvature} and
\(\nabla^3\rho\equiv0\),
\[
    \omega_c
    =
    O\!\left(
        \frac{(\log n)^{13}}{n^{3/2}}
    \right).
\]
\end{corollary}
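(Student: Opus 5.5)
The plan is to reduce $\omega_c$ to the variance of the explicit covariance $\Cov_j(\mathfrak d_j(X),\mathfrak d_j(X^{A_j}))$ and then to bound that variance by a tensorized Poincaré inequality over the background variables.

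The first step uses Remark~\ref{rem:csqce_wasserstein_bound_scd_order_leaveone_out_expension}. It gives $\omega_c\le \sup_j\sqrt{\Var(\Cov_j(\mathfrak d_j(X),\mathfrak d_j(X^{A_j})))}+O((\log n)^3/n^{3/2})$. The remainder is already of the target order, so it suffices to handle the main term. Fix $j$ and use the generic notation of Section~\ref{sec:sensitivity-covariances} with $(X,\hat\theta,G,x,\gamma_x)=(X_{-j},\hat\theta_{-j},G_{-j},x_j,\gamma_j)$. The checked background $X^{A_j}$ shares some columns with $X$ and has independent copies for the others, so the background variables are $\{x_\ell,x_\ell'\}_{\ell\neq j}$. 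These are independent and each satisfies Assumption~\ref{ass:poincare-data}.

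The second step applies \eqref{borne_Cov_x}, which writes the covariance as
\[
\tfrac1n\mathcal C^{00}(y,\check y)+\tfrac1{2n^3}\bigl(\mathcal C^{01}(y,\check Y)+\mathcal C^{10}(Y,\check y)\bigr)+\tfrac1{4n^5}\mathcal C^{11}(Y,\check Y).
\]
Each piece is a background-measurable function. I bound its variance with Lemma~\ref{lem:concentration_X} (Proposition~\ref{prop:tensorization}), applied to the independent family $\{x_\ell,x'_\ell\}$. This bounds the variance by $C_P$ times the $L^2$ norm of the square-summed derivatives $\sum_\ell(\norm{\mathbb D_\ell[\cdot]}^*)^2+\sum_\ell(\norm{\mathbb D'_\ell[\cdot]}^*)^2$.

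One subtlety needs care. A shared column $x_\ell$ enters both $X$ and $X^{A_j}$, so its total derivative is the sum of the $\mathbb D_\ell$ and $\mathbb D'_\ell$ contributions. By the triangle inequality this is still controlled by the sum of squares up to a factor $2$. So in either case Lemma~\ref{lem:background-gradients-active-contractions} applies directly.

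Plugging in that lemma with $r=2$ gives standard deviations:
\begin{itemize}
\item $\mathcal C^{00}$ term: $\tfrac1n\cdot(\log n)^6/\sqrt n$;
\item $\mathcal C^{01}$ and $\mathcal C^{10}$ terms: $\tfrac1{n^3}\cdot n^{3/2}(\log n)^{12}=(\log n)^{12}/n^{3/2}$;
\item $\mathcal C^{11}$ term: $\tfrac1{n^5}\cdot n^{7/2}(\log n)^{13}=(\log n)^{13}/n^{3/2}$.
\end{itemize}
Minkowski's inequality for standard deviations then bounds the total by $O((\log n)^{13}/n^{3/2})$. All bounds in that lemma are uniform in the generic setting, so they hold uniformly in $j$, and taking the supremum over $j$ concludes.

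The main obstacle is justifying that the generic bounds transfer to the leave-one-out and checked configuration. In particular, $\gamma_x$ must stay deterministic under differentiation; it is, since $\gamma_j$ is an expectation. Independence of $x_j$ from the background also has to be preserved in both $X_{-j}$ and $X^{A_j}_{-j}$. Both facts hold by construction, since $A_j$ does not contain $j$.
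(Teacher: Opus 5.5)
Your proposal is correct and follows essentially the same route as the paper. You transfer to $\mathfrak d_j$ via Remark~\ref{rem:csqce_wasserstein_bound_scd_order_leaveone_out_expension}, decompose $\Phi_j=\Cov_j(\mathfrak d_j(X),\mathfrak d_j(X^{A_j}))$ through \eqref{borne_Cov_x}, and apply the tensorized Poincar\'e inequality over the unchecked and checked background columns, handling shared columns by summing the $\mathbb D_\ell$ and $\mathbb D'_\ell$ contributions, so that Lemma~\ref{lem:background-gradients-active-contractions} gives the $(\log n)^{13}/n^{3/2}$ rate. The only cosmetic difference is that you apply Poincar\'e to each of the four pieces and recombine with Minkowski, whereas the paper applies it once to the whole $\Phi_j$; the two are equivalent.
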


\begin{proof}
Fix the active index \(j\). In the generic notation of this subsection,
\[
    \Phi_j
    :=
    \Cov_j\!\left(
        \mathfrak d_j(X),
        \mathfrak d_j(X^{A_j})
    \right)
\]
has the same form as $\Cov_x(\mathfrak d_x,\check{\mathfrak d}_x)$. 
By \eqref{borne_Cov_x},
\[
    \Phi_j
    =
    \frac1n\mathcal C^{00}(y,\check y)
    +
    \frac1{2n^3}\mathcal C^{01}(y,\check Y)
    +
    \frac1{2n^3}\mathcal C^{10}(Y,\check y)
    +
    \frac1{4n^5}\mathcal C^{11}(Y,\check Y).
\]

We apply the tensorized Poincaré inequality to \(\Phi_j\) as a function of the
background variables and their independent copies. For coordinates that are
shared by \(X\) and \(X^{A_j}\), the derivative is the sum of the unchecked and
checked derivatives; hence it is bounded by the square-sum of the two
contributions controlled in
Lemma~\ref{lem:background-gradients-active-contractions}. Therefore
\[
\begin{aligned}
    \sqrt{\Var(\Phi_j)}
    &\le
    O(1)
    \left\|
        \left(
            \sum_{\ell}
            \left(
                \norm{\mathbb D_\ell[\Phi_j]}^*
            \right)^2
            +
            \sum_{\ell}
            \left(
                \norm{\mathbb D'_\ell[\Phi_j]}^*
            \right)^2
        \right)^{1/2}
    \right\|_{L^2}.
\end{aligned}
\]
Using Lemma~\ref{lem:background-gradients-active-contractions} and the
decomposition above, we obtain
\[
\begin{aligned}
    \sqrt{\Var(\Phi_j)}
    &\le
    \frac1n
    O\!\left(\frac{(\log n)^6}{\sqrt n}\right)
    +
    \frac1{n^3}
    O\!\left(n^{3/2}(\log n)^{12}\right) 
    +
    \frac1{n^5}
    O\!\left(n^{7/2}(\log n)^{13}\right)               \\
    &=
    O\!\left(
        \frac{(\log n)^{13}}{n^{3/2}}
    \right),
\end{aligned}
\]
uniformly in \(j\). Hence
\[
    \sup_{j\in[n]}
    \sqrt{
        \Var\!\left(
            \Cov_j\!\left(
                \mathfrak d_j(X),
                \mathfrak d_j(X^{A_j})
            \right)
        \right)
    }
    =
    O\!\left(
        \frac{(\log n)^{13}}{n^{3/2}}
    \right).
\]
This bound transfers to $\omega_c$ since, by
Remark~\ref{rem:csqce_wasserstein_bound_scd_order_leaveone_out_expension},
replacing \(\delta_j f_n\) by \(\mathfrak d_j\) changes the corresponding
\(\omega_c\) quantity by only and order $O\!\left(\frac{(\log n)^3}{n^{3/2}}\right)$.
\end{proof}



\appendix
\section{Proof of Theorem~\ref{the:conditional-loo-perturbative}}
\label{subsec:proof_of_clt-perturbative}

We start with the covariance decomposition that explains the role of the sets $[i-1]$ and justifies the identity $\mathbb E[\sum_{i=1}^n c_i] = 1$.

\begin{lemma}[Ordered covariance decomposition]
\label{lem:cov-decomp}
Let $Z=(z_1,\ldots,z_n)$ have independent entries. Let $Z'$ be an
independent copy of $Z$. Then, for any square-integrable statistics
$U=U(Z)$ and $V=V(Z)$,
\[
    \Cov(U,V)
    =
    \sum_{i=1}^n
    \E\left[
        \Cov_i\!\left(U(Z),V(Z^{[i-1]})\right)
    \right].
\]
\end{lemma}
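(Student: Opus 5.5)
The identity will come from a telescoping (martingale-type) decomposition along the chain of hybrid configurations $Z^{[0]}=Z, Z^{[1]},\ldots,Z^{[n]}=Z'$. The plan is to show that each link contributes exactly one conditional covariance term.

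\textbf{Reduction and telescoping.} Since $Z^{[n]}=Z'$ is independent of $Z$, we have $\Cov(U(Z),V(Z^{[n]}))=0$, while $\Cov(U(Z),V(Z^{[0]}))=\Cov(U,V)$. Hence
\[
    \Cov(U,V)=\sum_{i=1}^n\Big(\E\big[U(Z)V(Z^{[i-1]})\big]-\E\big[U(Z)V(Z^{[i]})\big]\Big),
\]
where the means cancel because $V(Z^{[i]})$ has the law of $V(Z)$ for every $i$, so $\E[U]\E[V]$ is the same at each step. Square integrability of $U$ and $V$, together with the equality in law of $Z^{[i]}$ and $Z$, makes every term finite.

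\textbf{Identifying each increment.} Fix $i$. The configurations $Z^{[i-1]}$ and $Z^{[i]}$ differ only in the $i$-th slot: the first contains $z_i$, the second contains $z_i'$. Condition on $\mathcal F_i:=\sigma(\{z_j,z_j'\}_{j\in[n]}\setminus\{z_i\})$.
\begin{itemize}
    \item Given $\mathcal F_i$, $V(Z^{[i]})$ is measurable and does not involve $z_i$. Therefore $\E_i[U(Z)V(Z^{[i]})]=\E_i[U(Z)]\,V(Z^{[i]})$.
    \item The quantity $\E_i[V(Z^{[i-1]})]$ integrates $z_i$ out of $V(z_1',\ldots,z_{i-1}',z_i,z_{i+1},\ldots,z_n)$. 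Since $z_i'$ has the same law as $z_i$ and is independent of the remaining variables, $\E\big[\E_i[U(Z)]\,V(Z^{[i]})\big]=\E\big[\E_i[U(Z)]\,\E_i[V(Z^{[i-1]})]\big]$. To justify this, note that $\E_i[U(Z)]$ depends only on $(z_j)_{j\neq i}$, so one can integrate out $z_i'$ and use the fact that $\E[V(Z^{[i]})\mid \{z_j,z_j'\}_{j\ne i}]$ equals $\E_i[V(Z^{[i-1]})]$ as a function of the same variables.
\end{itemize}
Consequently the $i$-th increment equals
\[
    \E\Big[\E_i\big[U(Z)V(Z^{[i-1]})\big]-\E_i[U(Z)]\,\E_i\big[V(Z^{[i-1]})\big]\Big]=\E\big[\Cov_i\big(U(Z),V(Z^{[i-1]})\big)\big].
\]
Summing over $i$ gives the lemma.

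\textbf{Main obstacle.} The only delicate point is the swap of $z_i'$ for $z_i$ in the second bullet. One must check that after conditioning on the other coordinates, $U$ does not depend on $z_i'$ and $V(Z^{[i]})$ does not depend on $z_i$, so that independence lets us factor the conditional expectation and then rename the integration variable. I would write this out carefully with Fubini on the product space; the integrability needed for Fubini comes from Cauchy--Schwarz and the equality in law of the hybrids.
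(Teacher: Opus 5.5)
Your proof is correct and follows the paper's argument: you telescope along the hybrids $Z^{[0]}=Z,\ldots,Z^{[n]}=Z'$, then identify each increment with $\E[\Cov_i(U(Z),V(Z^{[i-1]}))]$ by conditioning and exchanging $z_i'$ for $z_i$. Your explicit justification of that exchange, integrating out $z_i'$ given $\{z_j,z_j'\}_{j\neq i}$, is just a more detailed version of the single step the paper states in one line.
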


\begin{proof}
Since $Z'$ has the same law as $Z$ and is independent of $Z$,
\[
    \Cov(U,V)
    =
    \E\left[U(Z)\bigl(V(Z)-V(Z')\bigr)\right]
    =
    \sum_{i=1}^n
    \E\left[U(Z)\left(
        V(Z^{[i-1]})-V(Z^{[i]})
    \right)\right],
\]
thanks to a telescoping identity.

Fix $i$. Conditionally on all variables except $z_i$ and $z_i'$, the
random variable $V(Z^{[i]})$ is obtained from $V(Z^{[i-1]})$ by replacing
$z_i$ by an independent copy. Moreover, conditionally on
$Z_{-i},Z'_{[i-1]}$, the variables $U(Z)$ and $V(Z^{[i]})$ depend on the
independent variables $z_i$ and $z_i'$, respectively. Therefore
\[
\begin{aligned}
    &\E\left[
        U(Z)\left(
            V(Z^{[i-1]})-V(Z^{[i]})
        \right)
    \right] \\
    &\qquad =
    \E\left[
        U(Z)V(Z^{[i-1]})
    \right]
    -
    \E\left[
        \E_i[U(Z)]\,\E_i[V(Z^{[i-1]})]
    \right] \\
    &\qquad =
    \E\left[
        \Cov_i\!\left(U(Z),V(Z^{[i-1]})\right)
    \right].
\end{aligned}
\]
Summing over $i$ gives the result.
\end{proof}

\begin{lemma}[Normal Stein-factor bounds; see~\cite{Chatterjee2008}, Lemma~4.2]
\label{lem:Stein_equation_solution}
Let $N\sim\mathcal N(0,1)$ and let $h:\mathbb R\to\mathbb R$ be
$1$-Lipschitz. The Stein equation
\[
    f_h'(x)-x f_h(x)=h(x)-\E h(N)
\]
admits the solution
\[
    f_h(x)
    =
    e^{x^2/2}
    \int_{-\infty}^x
    \bigl(h(t)-\E h(N)\bigr)e^{-t^2/2}\,dt .
\]
Moreover,
\[
    \|f_h'\|_\infty \le \sqrt{\frac{2}{\pi}},
    \qquad
    |f_h'(x)-f_h'(y)|\le 2|x-y|,
    \quad x,y\in\mathbb R.
\]
\end{lemma}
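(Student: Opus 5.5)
This lemma is a restatement of Chatterjee's Lemma 4.2 for the normal Stein equation, so the plan is to reproduce the classical argument. I first check that $f_h$ solves the equation. Differentiating $f_h(x)=e^{x^2/2}\int_{-\infty}^x \tilde h(t)e^{-t^2/2}dt$, with $\tilde h:=h-\E h(N)$, gives $f_h'(x)=xf_h(x)+\tilde h(x)$ directly. Since $\int_{\R}\tilde h\,e^{-t^2/2}dt=0$, I also get the alternative representation
\[
    f_h(x)=-e^{x^2/2}\int_x^{\infty}\tilde h(t)e^{-t^2/2}dt .
\]
This shows $f_h$ is bounded: because $h$ is Lipschitz, $|\tilde h(t)|\le |t|+\E|N|$ grows at most linearly, and Mills-ratio estimates then apply on each half-line.

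For the derivative bounds, I use the convenient representation
\[
    f_h(x)=-\int_0^1\frac{1}{2\sqrt{t(1-t)}}\,\E\!\left[N\,h\big(\sqrt t\,x+\sqrt{1-t}\,N\big)\right]dt .
\]
This comes from the Ornstein--Uhlenbeck semigroup $P_s h(x)=\E h(e^{-s}x+\sqrt{1-e^{-2s}}N)$ and $f_h=-\int_0^\infty \frac{d}{dx}$-type relations, which can be rewritten via $t=e^{-2s}$. Gaussian integration by parts gives $\E[N h(\sqrt t x+\sqrt{1-t}N)]=\sqrt{1-t}\,\E[h'(\cdot)]$. Hence
\[
    f_h(x)=-\int_0^1\frac{1}{2\sqrt t}\E[h'(\sqrt t x+\sqrt{1-t}N)]dt ,
\]
which in particular yields $\|f_h\|_\infty\le 1$. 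Differentiating once more under the integral, first assuming $h\in C^2$ and then approximating, gives
\[
    f_h'(x)=-\int_0^1\frac{1}{2}\E[h''(\cdot)]dt .
\]
Integrating by parts in the Gaussian variable turns this into
\[
    f_h'(x)=-\int_0^1\frac{1}{2\sqrt{1-t}}\E[N h'(\sqrt t x+\sqrt{1-t}N)]dt .
\]
Bounding $|h'|\le1$ and $\E|N|=\sqrt{2/\pi}$, and using $\int_0^1\frac{dt}{2\sqrt{1-t}}=1$, gives $\|f_h'\|_\infty\le\sqrt{2/\pi}$.

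For the Lipschitz bound on $f_h'$, I differentiate the equation to get $f_h''=f_h+xf_h'+h'$. The terms $|f_h|$ and $|h'|$ are harmless, but the term $xf_h'$ requires care. The plan is to follow Chatterjee's route: with the last representation, $f_h''(x)=-\int_0^1\frac{\sqrt t}{2\sqrt{1-t}}\E[N h''(\cdot)]dt$, and a second Gaussian integration by parts moves a derivative back onto the Gaussian weight. This produces $\E[(N^2-1)h'(\cdot)]\frac{\sqrt t}{2(1-t)}$-type terms, whose singularity at $t=1$ must be handled by splitting the integral at $t$ near $1$. This gives $\|f_h''\|_\infty\le 2$, and by the mean value theorem, $|f_h'(x)-f_h'(y)|\le 2|x-y|$.

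The main obstacle is exactly this last constant. A naive bound diverges at $t\to1$, so I would first try to reproduce Chatterjee's Lemma 4.2 computation directly. If that fails, I would fall back to citing the result, as the statement already does, accepting the constant $2$ from \cite{Chatterjee2008}, and only verify the elementary first two claims in detail.
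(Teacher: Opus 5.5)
Your treatment of the first two claims is correct. The identity $f_h'=xf_h+h-\E h(N)$ is immediate, your Mehler-type representations and Gaussian integrations by parts are computed correctly, and $\int_0^1\frac{dt}{2\sqrt{1-t}}=1$ together with $\E\norm{N}=\sqrt{2/\pi}$ gives $\|f_h'\|_\infty\le\sqrt{2/\pi}$. You should add one line identifying your semigroup function with the $f_h$ of the statement: two solutions of the Stein equation differ by $Ce^{x^2/2}$, and both are bounded, so $C=0$. The paper gives no argument for this lemma and only cites Chatterjee, so your fallback is exactly what the paper does. As a proof, however, your proposal establishes only the first two claims.

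The gap is the Lipschitz bound for $f_h'$, where the route you sketch does not close. After your second Gaussian integration by parts,
\[
    f_h''(x)=-\int_0^1\frac{\sqrt t}{2(1-t)}\,\E\bigl[(N^2-1)\,h'(\sqrt t\,x+\sqrt{1-t}\,N)\bigr]\,dt,
\]
and with only $\|h'\|_\infty\le1$ the absolute bound $\E\norm{N^2-1}\int_0^1\frac{\sqrt t}{2(1-t)}\,dt$ is infinite. Splitting at $t$ close to $1$ does not help: on $[1-\varepsilon,1]$ you either keep this non-integrable kernel or return to the form with $h''$, which $\|h'\|_\infty$ does not control (take $h(x)=\norm{x}$). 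What is missing is an exact cancellation that produces the local term $h'(x)$, which comes from the Stein equation itself; this is precisely the $xf_h'$ term you flagged. You also need a representation whose kernel against $h'$ has a sign and total mass one. The standard argument (Stein; Chen--Goldstein--Shao) writes $h(x)-\E h(N)=\int_{-\infty}^x h'(z)\Phi(z)\,dz-\int_x^\infty h'(z)(1-\Phi(z))\,dz$, which gives
\[
    f_h(x)=-\sqrt{2\pi}\,e^{x^2/2}\Bigl[(1-\Phi(x))\int_{-\infty}^x h'(z)\Phi(z)\,dz+\Phi(x)\int_x^\infty h'(z)(1-\Phi(z))\,dz\Bigr].
\]
Inserting this into $f_h''=h'+(1+x^2)f_h+x\bigl(h-\E h(N)\bigr)$, with $a(x):=\sqrt{2\pi}(1+x^2)e^{x^2/2}$, yields
\[
    f_h''(x)=h'(x)-\bigl(a(x)(1-\Phi(x))-x\bigr)\int_{-\infty}^x h'(z)\Phi(z)\,dz-\bigl(a(x)\Phi(x)+x\bigr)\int_x^\infty h'(z)(1-\Phi(z))\,dz .
\]
Both brackets are nonnegative by the Mills-ratio bound $1-\Phi(x)\ge\frac{x}{1+x^2}\varphi(x)$ and symmetry. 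Since $\int_{-\infty}^x\Phi=x\Phi(x)+\varphi(x)$ and $\int_x^\infty(1-\Phi)=\varphi(x)-x(1-\Phi(x))$, the total weight equals $a(x)\varphi(x)-x^2=1$. Hence $\norm{f_h''}\le2$ wherever $h$ is differentiable. Since $f_h'=xf_h+h-\E h(N)$ is locally Lipschitz, integrating gives $\norm{f_h'(x)-f_h'(y)}\le2\norm{x-y}$. A pointwise mean value theorem is not the right tool here, because $f_h''$ need only exist almost everywhere.
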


And here is the proof of the theorem.
\begin{proof}[Proof of Theorem~\ref{the:conditional-loo-perturbative}]
Set
\[
    G:=g_n(Z)
    \qquad \text{and}\qquad
    G^{[i-1]}:=g_n(Z^{[i-1]}).
\]
Recall the notation
\[
    \Delta_i
    :=
    G-\E_i[G],
    \qquad
    \Delta_i^{[i-1]}
    :=
    G^{[i-1]}-\E_i[G^{[i-1]}]
\]
and
\[
    c_i
    :=
    \Cov_i\!\left(G,G^{[i-1]}\right)
    =
    \E_i\left[
        \Delta_i\Delta_i^{[i-1]}
    \right].
\]
Note first that, by Lemma~\ref{lem:cov-decomp} applied with $U=V=G$, we get
\[
    \sum_{i=1}^n \E[c_i]
    =
    \Var(G)
    =
    1.
\]

Let $h$ be $1$-Lipschitz and let $f:=f_h$ be the solution of the Stein
equation from Lemma~\ref{lem:Stein_equation_solution}. Since $\E G=0$, the
Stein equation gives
\begin{align}
\label{eq:pf_1}
    \E[h(G)]-\E[h(N)]
    =
    \E[f'(G)]-\E[Gf(G)].
\end{align}
Using Lemma~\ref{lem:cov-decomp} with $U=f(G)$ and $V=G$, we obtain
\begin{align}
\label{eq:pf_1bis}
    \E[Gf(G)]
    &=
    \Cov(f(G),G) \nonumber \\
    &=
    \sum_{i=1}^n
    \E\left[
        \Cov_i\!\left(f(G),G^{[i-1]}\right)
    \right]
    =
    \sum_{i=1}^n
    \E\left[
        \E_i\left[
            \bigl(f(G)-\E_i[f(G)]\bigr)
            \Delta_i^{[i-1]}
        \right]
    \right].
\end{align}

Let us introduce the random variable $R_i\in\mathbb R$ satisfying
\begin{align}
\label{eq:intro_R_i}
    f(G)
    =
    f(\E_i[G])+f'(\E_i[G])\Delta_i+R_i.
\end{align}
By Lemma~\ref{lem:Stein_equation_solution}, if, say, $\E_i[G]\le G$, then
\begin{align}
\label{eq:borne_R_i}
    |R_i|
    =
    \left|\int_{\E_i[G]}^G
        \bigl(f'(u)-f'(\E_i[G])\bigr)\,du
    \right|
    \le
    2\int_{\E_i[G]}^G (u-\E_i[G])\,du
    \le
    |\Delta_i|^2.
\end{align}
The case $G\le \E_i[G]$ is identical. Since $\E_i[\Delta_i]=0$,
\eqref{eq:intro_R_i} implies
\[
    f(G)-\E_i[f(G)]
    =
    f'(\E_i[G])\Delta_i+R_i-\E_iR_i.
\]
Consequently,
\begin{align}
\label{eq:pf_2}
    \E_i\left[
        \bigl(f(G)-\E_i[f(G)]\bigr)
        \Delta_i^{[i-1]}
    \right]
    =
    f'(\E_i[G])c_i
    +
    \E_i\left[
        (R_i-\E_iR_i)\Delta_i^{[i-1]}
    \right].
\end{align}
One can bound the remainder thanks to~\eqref{eq:borne_R_i}:
\begin{align}
\label{eq:remainder-bound}
    &\left|
    \E\left[
        \E_i\left[
            (R_i-\E_iR_i)\Delta_i^{[i-1]}
        \right]
    \right]
    \right| \nonumber \\
    &\qquad \le
    \E\left[
        |\Delta_i|^2|\Delta_i^{[i-1]}|
    \right]
    +
    \E\left[
        (\E_i|\Delta_i|^2)|\Delta_i^{[i-1]}|
    \right]
    \le
    2\|\Delta_i\|_{L^3}^3,
\end{align}
thanks to H\"older's inequality, Jensen's inequality, and the fact that
$\Delta_i^{[i-1]}$ has the same distribution as $\Delta_i$.

Combining \eqref{eq:pf_1}, \eqref{eq:pf_1bis}, \eqref{eq:pf_2}, and
\eqref{eq:remainder-bound}, we get
\begin{align*}
    \left|
    \E[h(G)]-\E[h(N)]
    \right|
    &\le
    \left|
        \E[f'(G)]
        -
        \sum_{i=1}^n
        \E[f'(\E_i[G])c_i]
    \right|
    +
    2\sum_{i=1}^n
    \|\Delta_i\|_{L^3}^3.
\end{align*}
Now
\[
\begin{aligned}
    \E[f'(G)]
    -
    \sum_{i=1}^n
    \E[f'(\E_i[G])c_i]
    &=
    \E\left[
        f'(G)\left(1-\sum_{i=1}^n c_i\right)
    \right]
    +
    \sum_{i=1}^n
    \E\left[
        c_i\bigl(f'(G)-f'(\E_i[G])\bigr)
    \right].
\end{aligned}
\]
Therefore, using $\|f'\|_\infty\le\sqrt{2/\pi}$,
\begin{align}
\label{eq:pf_4}
    \left|
    \E[h(G)]-\E[h(N)]
    \right|
    &\le
    \sqrt{\frac{2}{\pi}}
    \E\left|
        1-\sum_{i=1}^n c_i
    \right| \nonumber \\
    &\quad+
    \sum_{i=1}^n
    \left|
    \E\left[
        c_i\bigl(f'(G)-f'(\E_i[G])\bigr)
    \right]
    \right|
    +
    2\sum_{i=1}^n
    \|\Delta_i\|_{L^3}^3.
\end{align}

It remains to bound the second term. Since $f'$ is $2$-Lipschitz,
\[
    \left|
    \E\left[
        c_i\bigl(f'(G)-f'(\E_i[G])\bigr)
    \right]
    \right|
    \le
    2\E\left[
        |\Delta_i||c_i|
    \right].
\]
Moreover, by conditional Cauchy--Schwarz,
\[
    |c_i|
    =
    \left|
    \E_i\left[
        \Delta_i\Delta_i^{[i-1]}
    \right]
    \right|
    \le
    \sqrt{
        \E_i[\Delta_i^2] \E_i[(\Delta_i^{[i-1]})^2]
    }.
\]
Hence, by H\"older's inequality and Jensen's inequality,
\[
    \|c_i\|_{L^{3/2}}
    \le
    \|\Delta_i\|_{L^3}^2.
\]
Therefore,
\[
    \left|
    \E\left[
        c_i\bigl(f'(G)-f'(\E_i[G])\bigr)
    \right]
    \right|
    \le
    2\|\Delta_i\|_{L^3}^3.
\]
Plugging this into \eqref{eq:pf_4}, we obtain
\[
    \left|
    \E[h(G)]-\E[h(N)]
    \right|
    \le
    \sqrt{\frac{2}{\pi}}
    \E\left|
        1-\sum_{i=1}^n c_i
    \right|
    +
    4\sum_{i=1}^n
    \E|\Delta_i|^3 .
\]
Taking the supremum over all $1$-Lipschitz functions $h$ yields the result of the theorem.
\end{proof}

\section{Consequences of the Poincar\'e inequality}
\label{sec:preliminaries_data}

We say that a random vector $Z\in \mathbb R^m$ satisfies a Poincar\'e
inequality with constant $C_P$ if, for every sufficiently smooth mapping
$F:\mathbb R^m\to \mathbb R$,
\begin{align}\label{eq:poincare_C_P}
    \Var(F(Z))
    \le
    C_P\E \left[ \norm{\nabla_{Z}F(Z)}^2 \right].
\end{align}
We start with the standard tensorization property of the Poincar\'e inequality;
see, for instance, \cite{Gozlan2010Tensorization,Ledoux2001Concentration}.

\begin{proposition}[Poincar\'e inequality tensorization]
\label{prop:tensorization}
Let $Z_1,\ldots,Z_n\in \mathbb R^p$ be independent random vectors, and denote
$Z := (Z_1,\ldots,Z_n)\in \mathcal{M}_{p,n}$. Assume that each $Z_i$ satisfies
a Poincar\'e inequality with constant $C_P$. Then, for every sufficiently smooth
$F:\mathcal{M}_{p,n}\to \mathbb R$,
\[
    \Var(F(Z))
    \le
    C_P\E \left[  \norm{\nabla F(Z)}_F^2 \right]
    =
    C_P\sum_{i=1}^n \E \left[ \norm{\nabla_{i}F(Z)}^2 \right].
\]
\end{proposition}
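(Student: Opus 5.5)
We prove Proposition~\ref{prop:tensorization} with the classical Efron--Stein argument, combined with the one-block Poincar\'e inequality applied conditionally. We first reduce to $F$ with $\E[F(Z)^2]<\infty$ and $\E\norm{\nabla F(Z)}_F^2<\infty$. If the right-hand side is infinite there is nothing to prove. Otherwise a standard truncation and smoothing argument extends any bound obtained for bounded smooth $F$ with bounded gradient.

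\textbf{Martingale decomposition.} Introduce the Doob martingale
\[
    M_i := \E[F(Z)\mid Z_1,\ldots,Z_i],\qquad M_0=\E F(Z),\quad M_n=F(Z).
\]
Its increments are orthogonal in $L^2$, so
\[
    \Var(F(Z))=\sum_{i=1}^n\E[(M_i-M_{i-1})^2].
\]
By independence of the blocks, $M_{i-1}=\E_{Z_i}[M_i]$, where $\E_{Z_i}$ integrates only over $Z_i$ with $Z_1,\ldots,Z_{i-1}$ held fixed. Hence
\[
    \E[(M_i-M_{i-1})^2\mid Z_1,\ldots,Z_{i-1}]
\]
is the variance of the function $z\mapsto M_i(Z_1,\ldots,Z_{i-1},z)$ under the law of $Z_i$.

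\textbf{Conditional Poincar\'e step.} Apply the Poincar\'e inequality for $Z_i$ to this function with the earlier blocks frozen. This gives
\[
    \E[(M_i-M_{i-1})^2]\le C_P\,\E\norm{\nabla_{Z_i}M_i}^2 .
\]
Next we compare $\nabla_{Z_i}M_i$ with $\nabla_{Z_i}F$. By independence,
\[
    M_i=\int F(Z_1,\ldots,Z_i,z_{i+1},\ldots,z_n)\,\prod_{j>i}\mu_j(dz_j),
\]
where $\mu_j$ is the law of $Z_j$. Differentiating under the integral, justified by dominated convergence in the bounded-gradient case, gives
\[
    \nabla_{Z_i}M_i=\E[\nabla_{Z_i}F(Z)\mid Z_1,\ldots,Z_i].
\]
Conditional Jensen for the convex map $v\mapsto\norm{v}^2$ then yields
\[
    \E\norm{\nabla_{Z_i}M_i}^2\le\E\norm{\nabla_{Z_i}F(Z)}^2 .
\]
Summing over $i$ gives the claimed inequality. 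The identity
\[
    \sum_i\norm{\nabla_iF}^2=\norm{\nabla F}_F^2
\]
holds because the Frobenius norm is the Euclidean norm on $\mathcal M_{p,n}\cong\R^{pn}$, split into column blocks.

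\textbf{Main obstacle.} The only delicate point is regularity. We must check that $z\mapsto M_i(\ldots,z)$ is an admissible test function for the Poincar\'e inequality of $Z_i$, and that differentiation can be exchanged with the integral over the later blocks. We handle this by first assuming $F\in C^1$ with bounded gradient, where both facts follow from dominated convergence. For general $F$ we approximate by $F_k$ with $\nabla F_k\to\nabla F$ and $F_k\to F$ in $L^2$, for instance by composing with smooth truncations and mollifying. We then pass to the limit in both sides, using Fatou's lemma or $L^2$ convergence.
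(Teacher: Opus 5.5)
Your argument is correct, but it orders the ingredients differently from the paper. The paper first invokes the Efron--Stein inequality $\Var(F(Z))\le\sum_i\E[\Var_{Z_i}F(Z)]$. It then applies the Poincar\'e inequality of $Z_i$ directly to the slice $z_i\mapsto F(Z_1,\dots,z_i,\dots,Z_n)$, with all other blocks frozen. You instead apply Poincar\'e to the partially integrated function $z\mapsto M_i(Z_1,\dots,Z_{i-1},z)$. You then use conditional Jensen at the level of gradients, through $\nabla_{Z_i}M_i=\E[\nabla_{Z_i}F\mid Z_1,\dots,Z_i]$. Both give the same constant $C_P$.

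Your martingale decomposition is exactly the standard proof of Efron--Stein, but with the Jensen step moved after the Poincar\'e step rather than applied to the conditional variances. This makes your route self-contained. The price is the interchange of differentiation and integration over the later blocks. That is why you need the bounded-gradient reduction and the truncation/mollification limit.

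The paper's ordering avoids this regularity issue. Slices of a $C^1$ function are $C^1$, and Efron--Stein needs only $F\in L^2$. Fubini makes the conditional right-hand sides finite for almost every frozen configuration, and the bound is trivial otherwise. So no differentiation under the integral sign is ever required.
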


\begin{proof}
By the variance decomposition, or equivalently the Efron--Stein inequality for
independent variables,
\[
    \Var(F(Z))
    \le
    \sum_{i=1}^n
    \E\left[
        \Var_{Z_i}\big(F(Z_1,\ldots,Z_n)\big)
    \right],
\]
where $\Var_{Z_i}$ denotes variance with respect to $Z_i$ only, all other
coordinates being fixed. For each $i$, conditionally on
$(Z_1,\ldots,Z_{i-1},Z_{i+1},\ldots,Z_n)$, the map
$z_i\mapsto F(Z_1,\ldots,Z_{i-1},z_i,Z_{i+1},\ldots,Z_n)$ is a function on
$\mathbb R^p$. Applying the Poincar\'e inequality for $Z_i$ gives
\[
    \Var_{Z_i}\big(F(Z)\big)
    \le
    C_P \E_{Z_i}\left[
        \norm{\nabla_i F(Z)}_2^2
    \right].
\]
Taking expectation with respect to the remaining variables and summing over
$i$ yields
\[
    \Var(F(Z))
    \le
    C_P\sum_{i=1}^n
    \E\left[
        \norm{\nabla_i F(Z)}_2^2
    \right]
    =
    C_P\E\left[
        \norm{\nabla F(Z)}_F^2
    \right].
\]
\end{proof}
The tensorization also works for higher moments than the variance.
\begin{lemma}[Tensorized \(L^k\) Poincaré inequality]
\label{lem:tensorized-Lk-poincare}
Let \(Z=(Z_1,\ldots,Z_n)\) have independent blocks. Assume that each block
\(Z_\ell\) satisfies a Poincaré inequality with constant \(C_P\). Then, for
every integer \(k\ge2\) and every sufficiently smooth scalar function \(F\),
\[
    \Norm{F(Z)-\E F(Z)}_{L^k}
    \le
    C\sqrt{C_P}\,k\,
    \left\|
        \left(
            \sum_{\ell=1}^n
            \left(
                \norm{\mathbb D_\ell[F(Z)]}_2^{*}
            \right)^2
        \right)^{1/2}
    \right\|_{L^k},
\]
where \(C>0\) is universal. In particular, for fixed \(k\), the constant is
\(O_{k,P}(1)\).
\end{lemma}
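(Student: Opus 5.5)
Since $Z=(Z_1,\ldots,Z_n)$ has independent blocks, each satisfying a Poincaré inequality with constant $C_P$, we can work with the centered variable $U:=F(Z)-\E F(Z)$. The plan is to bootstrap the variance inequality of Proposition~\ref{prop:tensorization} to higher moments by applying it to powers of $U$. Set
\[
    \Gamma:=\Bigl(\sum_{\ell}\bigl(\norm{\mathbb D_\ell[F]}_2^*\bigr)^2\Bigr)^{1/2}
\]
for the tensorized gradient norm.

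It is enough to treat even integers $k=2m$, and more generally real exponents $q\ge2$; odd $k$ then follow from monotonicity of $L^k$ norms, at the cost of a universal constant. Apply Proposition~\ref{prop:tensorization} to $|U|^{q/2}$, using the chain rule $\norm{\nabla |U|^{q/2}}=\tfrac q2|U|^{q/2-1}\Gamma$. This gives
\[
    \E|U|^q-\bigl(\E|U|^{q/2}\bigr)^2
    \le
    C_P\frac{q^2}{4}\E\bigl[|U|^{q-2}\Gamma^2\bigr]
    \le
    C_P\frac{q^2}{4}\Norm{U}_{L^q}^{q-2}\Norm{\Gamma}_{L^q}^2,
\]
where the last step is Hölder's inequality with exponents $q/(q-2)$ and $q/2$.

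Next, control $(\E|U|^{q/2})^2=\Norm{U}_{L^{q/2}}^q$ by an induction on dyadic exponents. The base case $q=2$ is Proposition~\ref{prop:tensorization} itself, using $\Norm{\Gamma}_{L^2}\le\Norm{\Gamma}_{L^q}$. Write $a_q:=\Norm{U}_{L^q}/\Norm{\Gamma}_{L^q}$. Since $\Norm{\Gamma}_{L^{q/2}}\le\Norm{\Gamma}_{L^q}$, the inequality above yields
\[
    a_q^q\le a_{q/2}^q+C_P\frac{q^2}{4}a_q^{q-2}.
\]
This means that either $a_q\le 2^{1/q}a_{q/2}$, or $a_q^2\le C_Pq^2/2$. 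Iterating along $q=2^j$, the product of the factors $2^{1/q}$ converges, so $a_q\le C\sqrt{C_P}\,q$ holds for dyadic $q$. Interpolation, or simply monotonicity between consecutive dyadic exponents, then gives the bound for all integers $k\ge2$, with universal $C$.

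The main obstacle is only the bookkeeping in the recursion, namely making the constant linear in $k$ rather than something worse. A cleaner alternative, which I would try first if the recursion becomes messy, is the classical Aida--Stroock/Ledoux route. There one shows that the exponential moment $\E e^{\lambda U}$ is controlled when $\Gamma$ is bounded; this, however, requires a truncation argument in the unbounded-$\Gamma$ case. Because of that truncation, the direct moment recursion above is preferred. Smoothness issues, namely that $|U|^{q/2}$ is only $C^1$ when $q\ge2$, are handled by approximating $|t|^{q/2}$ with $(t^2+\varepsilon)^{q/4}$ and letting $\varepsilon\to0$.
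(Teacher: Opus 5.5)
Your core mechanism is the paper's. You apply the tensorized Poincaré inequality to $|U|^{q/2}$, use Hölder with exponents $q/(q-2)$ and $q/2$, and iterate a recursion that halves the exponent. Your dichotomy ``$a_q\le 2^{1/q}a_{q/2}$ or $a_q^2\le C_Pq^2/2$'' is just another reading of the paper's inequality $a_q^2\le C_P\frac{q^2}{4}b_q^2+a_{q/2}^2$.

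\textbf{The gap.} The step that fails is the extension from dyadic exponents to arbitrary integers $k$. Your opening reduction from even to odd $k$ fails for the same reason. Monotonicity of $L^q$ norms acts on both sides of the inequality. For $2^j<k<2^{j+1}$ you get $\|U\|_{L^k}\le\|U\|_{L^{2^{j+1}}}\le C\sqrt{C_P}\,2^{j+1}\|\Gamma\|_{L^{2^{j+1}}}$. But $\|\Gamma\|_{L^{2^{j+1}}}\ge\|\Gamma\|_{L^k}$, so this does not give a bound in terms of $\|\Gamma\|_{L^k}$. Going down to $2^j$ fails on the left side instead, since $\|U\|_{L^{2^j}}\le\|U\|_{L^k}$. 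Log-convexity interpolation has the same defect: it bounds $\|\Gamma\|_{L^k}$ from above by the endpoint norms, not from below. Abstractly, the dyadic inequalities do not imply the intermediate ones. If $\|\Gamma\|_{L^{2^{j+1}}}=\infty$, the bound at $2^{j+1}$ is vacuous, and the bound at $2^j$ says nothing about $\|U\|_{L^k}$.

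\textbf{The fix.} Your recursion already holds for every real $q\ge2$. Iterate it along $k,k/2,k/4,\dots$ rather than along $2^j$, exactly as the paper does, and stop at an exponent $q_0=k2^{-J}\in(1,2]$.
\begin{itemize}
\item Use the unnormalized form at each step: keep the term $\|U\|_{L^{q/2}}^2$ and bound $\|\Gamma\|_{L^{k2^{-i}}}\le\|\Gamma\|_{L^k}$.
\item Close the base with $\|U\|_{L^{q_0}}\le\|U\|_{L^2}\le\sqrt{C_P}\|\Gamma\|_{L^2}\le\sqrt{C_P}\|\Gamma\|_{L^k}$.
\item Do not normalize as $a_q=\|U\|_{L^q}/\|\Gamma\|_{L^q}$ at the last step. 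For $q_0<2$ that quantity has no usable base bound, because $\|\Gamma\|_{L^{q_0}}$ can be much smaller than $\|\Gamma\|_{L^2}$.
\end{itemize}
Summing the geometric series $\sum_{i\ge0}(k2^{-i})^2\le\frac43k^2$ then gives $\|U\|_{L^k}^2\le C_P\bigl(\tfrac{k^2}{3}+1\bigr)\|\Gamma\|_{L^k}^2$ for every integer $k\ge2$.
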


\begin{proof}
By Proposition~\ref{prop:tensorization}, the product vector \(Z\) satisfies
the Poincaré inequality
\[
    \Var(H(Z))
    \le
    C_P
    \E\left[
        \sum_{\ell=1}^n
        \left(
            \norm{\mathbb D_\ell[H(Z)]}_2^{*}
        \right)^2
    \right]
\]
for every sufficiently smooth scalar \(H\). Set
\[
    Y:=F(Z)-\E F(Z),
    \qquad
    S:=
    \left(
        \sum_{\ell=1}^n
        \left(
            \norm{\mathbb D_\ell[F(Z)]}_2^{*}
        \right)^2
    \right)^{1/2}.
\]
For \(q\ge2\), write
\[
    a_q:=\Norm{Y}_{L^q},
    \qquad
    b_q:=\Norm{S}_{L^q}.
\]
Applying the tensorized Poincaré inequality to \(H=|Y|^{q/2}\), with the usual
smooth approximation if necessary, gives
\[
    \Var\!\left(|Y|^{q/2}\right)
    \le
    C_P\frac{q^2}{4}
    \E\left[
        |Y|^{q-2}S^2
    \right].
\]
By Hölder's inequality,
\[
    \E\left[
        |Y|^{q-2}S^2
    \right]
    \le
    a_q^{q-2}b_q^2.
\]
Since
\[
    \E|Y|^q
    =
    \Var\!\left(|Y|^{q/2}\right)
    +
    \left(\E|Y|^{q/2}\right)^2,
\]
we obtain
\[
    a_q^q
    \le
    C_P\frac{q^2}{4}a_q^{q-2}b_q^2
    +
    a_{q/2}^q.
\]
If \(a_q=0\), there is nothing to prove. Otherwise, using \(a_{q/2}\le a_q\),
\[
    a_q^2
    \le
    C_P\frac{q^2}{4}b_q^2
    +
    a_{q/2}^2.
\]
Iterating this inequality until the exponent lies in \((1,2]\), and using the
ordinary tensorized Poincaré inequality at exponent \(2\), gives
\[
    a_q
    \le
    C\sqrt{C_P}\,q\,b_q
\]
for a universal constant \(C\). Taking \(q=k\) proves the claim.
\end{proof}

\begin{lemma}[Linear moments under Poincar\'e]
\label{lem:linear-moments-poincare}
Let $m\in \mathbb N$ and let $Z\in \mathbb R^m$ be a random vector satisfying a
Poincar\'e inequality with constant $C_P$. Then, for any $k\in \mathbb N$,
\begin{align*}
    \sup_{\|u\|\leq 1}\Norm{u^\top (Z - \mathbb E[Z]) }_{L^{k}}
    \le
    6 \sqrt{C_P}\, k.
\end{align*}
\end{lemma}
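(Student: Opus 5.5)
The plan is to reduce first to the centered case. Write $Y:=u^\top(Z-\E Z)$ with $\norm{u}_2\le 1$. The map $z\mapsto u^\top z$ is linear with gradient $u$, so $\E\norm{\nabla F}^2=\norm{u}_2^2\le1$. The Poincar\'e inequality \eqref{eq:poincare_C_P} therefore gives $\Var(Y)\le C_P$, which settles $k\le 2$ after using $\Norm{Y}_{L^1}\le\Norm{Y}_{L^2}$. Since the bound is uniform in $u$, taking the supremum at the end costs nothing.

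For higher moments I will run the same bootstrap as in the proof of Lemma~\ref{lem:tensorized-Lk-poincare}, specialized to the case where the gradient is the constant vector $u$. Fix $q\ge2$ and apply \eqref{eq:poincare_C_P} to $H(z)=|u^\top(z-\E Z)|^{q/2}$, smoothing if necessary. Its gradient has norm at most $\frac q2|Y|^{q/2-1}$, so
\[
    \E|Y|^q-\bigl(\E|Y|^{q/2}\bigr)^2\le C_P\frac{q^2}{4}\E|Y|^{q-2}\le C_P\frac{q^2}{4}a_q^{q-2},
    \qquad a_q:=\Norm{Y}_{L^q}.
\]
Dividing by $a_q^{q-2}$ (the case $a_q=0$ being trivial) and using $a_{q/2}\le a_q$ yields the recursion $a_q^2\le C_P q^2/4+a_{q/2}^2$.

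Next I iterate along $q,q/2,q/4,\dots$ until the exponent falls into $[1,2]$, where $a\le\sqrt{C_P}$ by the variance bound. Summing the geometric series gives
\[
    a_q^2\le C_P\Bigl(\tfrac{q^2}{4}\sum_{j\ge0}4^{-j}+1\Bigr)\le C_P\Bigl(\tfrac{q^2}{3}+1\Bigr),
\]
so $a_q\le 2\sqrt{C_P}\,q$ for every $q\ge1$. For integer $k$ this holds directly; non-dyadic exponents need no separate treatment, because the recursion holds for every real $q\ge2$ and we simply stop once $q/2^j\in[1,2]$. Alternatively one can use monotonicity of $L^k$ norms. Either way the result is comfortably below the stated constant $6\sqrt{C_P}\,k$.

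The only delicate point is justifying the Poincar\'e inequality for $H=|Y|^{q/2}$, which is not $C^1$ at $Y=0$ when $q<4$ and may a priori be non-integrable. I would handle this by applying the inequality to $\phi_\varepsilon(Y)$ with $\phi_\varepsilon(t)=(t^2+\varepsilon)^{q/4}$, truncated at large values. I would then pass to the limit by monotone convergence, using finiteness of $a_q$ established inductively: exponential integrability of Lipschitz functions under Poincar\'e gives all moments, and alternatively one can truncate first. This step is routine rather than a real obstacle, so the remaining work is just the bookkeeping of constants.
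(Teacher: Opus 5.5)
Your argument is correct, but it does not follow the paper's route. The paper does not bootstrap moments here. It applies the Bobkov--Ledoux exponential integrability bound (Lemma~\ref{lem:bobkov_ledoux}) to the bounded truncations \(T_M(u^\top Z)\), where \(T_M(t)=(-M)\vee(t\wedge M)\), with \(\lambda=1/(\sqrt{C_P}\norm{u}_2)\). It then uses Fatou's lemma to get \(\E\exp(|Y|/\sqrt{C_P})\le 6\), and reads off every moment at once from \(t^k\le k!e^t\) and \(k!\le k^k\), giving \(\Norm{Y}_{L^k}\le 6^{1/k}\sqrt{C_P}\,k\).

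You instead specialize the Poincar\'e-on-powers recursion to a constant gradient \(u\); the paper itself uses this mechanism later, in Lemma~\ref{lem:tensorized-Lk-poincare}. Your version is self-contained, valid for every real \(q\ge1\), and yields a better constant, \(a_q\le\sqrt{C_P(q^2/3+1)}\). The paper's route instead gives the stronger sub-exponential (exponential-moment) conclusion in one stroke, at the cost of citing an external theorem.

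On your ``delicate point'': if you justify \(a_q<\infty\) through exponential integrability of Lipschitz functions, you are importing exactly the Bobkov--Ledoux input the paper uses. The truncation option is the one that keeps your proof self-contained, and it needs no a priori finiteness at all. Set \(W_M:=\min(|Y|,M)\). The function \(W_M^{q/2}\) is bounded and Lipschitz, with gradient norm at most \(\frac q2W_M^{q/2-1}\). Hence \(b_q(M):=\Norm{W_M}_{L^q}<\infty\) satisfies the same recursion \(b_q(M)^2\le C_Pq^2/4+b_{q/2}(M)^2\), with \(b_p(M)\le\sqrt{C_P}\) for \(p\le2\), uniformly in \(M\). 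Monotone convergence then transfers the bound to \(Y\).

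A small inaccuracy: \(|t|^{q/2}\) is already \(C^1\) for \(q>2\); it only fails to be \(C^2\) when \(q<4\).
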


This lemma allows us to bound the operator norm of the $k$th-order centered
moment tensor $M_k[Z]\in(\mathbb R^m)^{\otimes k}$ of $Z$, defined, for all
$u_1,\ldots,u_k\in \mathbb R^m$, by
\begin{align*}
    M_k[Z]\cdot (u_1,\ldots, u_k)
    =
    \mathbb E \left[ u_1^\top \tilde Z \cdots u_k^\top \tilde Z \right],
\end{align*}
where $\tilde Z := Z - \mathbb E[Z]$. Indeed, H\"older's inequality and
Lemma~\ref{lem:linear-moments-poincare} give the following corollary.

\begin{corollary}\label{cor:bound-moments}
Let $m\in \mathbb N$ and let $Z\in \mathbb R^m$ be a random vector satisfying a
Poincar\'e inequality with constant $C_P$. Then, for any $k\in \mathbb N$,
\begin{align*}
    \norm{M_k[Z]}_{\mathrm{op}}
    :=
    \sup_{\|u_1\|,\ldots,\|u_k\|\le1}
    \left|M_k[Z]\cdot (u_1,\ldots,u_k)\right|
    \le
    \left(6 \sqrt{C_P}\, k\right)^k.
\end{align*}
Equivalently,
\[
    \norm{M_k[Z]}_{\mathrm{op}}^{1/k}
    \le
    6 \sqrt{C_P}\, k.
\]
\end{corollary}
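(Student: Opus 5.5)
This is the standard Poincaré moment bound, and I would prove it by the same bootstrapping argument used for Lemma~\ref{lem:tensorized-Lk-poincare}, applied to the one-dimensional linear functional. Fix \(u\in\R^m\) with \(\norm{u}_2\le1\) and set \(Y:=u^\top(Z-\E Z)\). Since \(\nabla_Z(u^\top Z)=u\), the gradient has Euclidean norm at most one, deterministically. The case \(k=1\) follows from \(k=2\) by Jensen's inequality. For \(k=2\), the Poincaré inequality~\eqref{eq:poincare_C_P} directly gives \(\E Y^2=\Var(u^\top Z)\le C_P\).

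For \(q\ge2\), let \(a_q:=\Norm{Y}_{L^q}\) and apply~\eqref{eq:poincare_C_P} to \(F=|Y|^{q/2}\). If needed, I would smooth this first, for example with \((Y^2+\varepsilon)^{q/4}\), and then let \(\varepsilon\to0\). The gradient satisfies
\[
    \norm{\nabla F}_2\le \tfrac q2|Y|^{q/2-1}.
\]
This yields
\[
    \E|Y|^q-(\E|Y|^{q/2})^2
    \le
    C_P\frac{q^2}{4}\E|Y|^{q-2}
    \le
    C_P\frac{q^2}{4}a_q^{q-2},
\]
where the last step uses Hölder's inequality. Dividing by \(a_q^{q-2}\) when \(a_q>0\), and using \((\E|Y|^{q/2})^2=a_{q/2}^q\le a_{q/2}^2a_q^{q-2}\), we obtain
\[
    a_q^2\le \frac{C_P q^2}{4}+a_{q/2}^2 .
\]

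Next I iterate along the dyadic chain \(q,q/2,q/4,\ldots\) until the exponent drops into \((1,2]\), where \(a\le\sqrt{C_P}\). This gives
\[
    a_q^2
    \le
    \frac{C_P q^2}{4}\sum_{j\ge0}4^{-j}+C_P
    \le
    C_P\left(\frac{q^2}{3}+1\right),
\]
so \(a_q\le \sqrt{C_P}\,q\) for \(q\ge2\). The recursion needs \(q/2\ge2\) at each intermediate step. For a general integer \(k\), I would either run the chain on real exponents, which works since the argument never uses integrality, or pass to the next power of two \(2^{\lceil\log_2 k\rceil}\le 2k\) by monotonicity of \(L^q\) norms. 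The second route gives \(a_k\le 2\sqrt{C_P}\,k\), comfortably below the stated \(6\sqrt{C_P}\,k\).

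The only genuinely delicate step is the justification of the chain rule for the non-smooth \(|Y|^{q/2}\) near \(Y=0\) and the finiteness of \(a_q\) needed before dividing. I would handle both with the regularization \((Y^2+\varepsilon)^{q/4}\) combined with a truncation of \(Y\) (a bounded Lipschitz cutoff). This keeps every quantity finite. One then passes to the limit by monotone convergence, using that the bounds obtained are uniform in the truncation level. Taking the supremum over \(\norm{u}\le1\) concludes.
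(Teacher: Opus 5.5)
\textbf{Missing step.} What you actually prove is the one-directional bound $\sup_{\norm{u}_2\le1}\Norm{u^\top\tilde Z}_{L^k}\le 2\sqrt{C_P}\,k$. That is Lemma~\ref{lem:linear-moments-poincare} with a better constant, not yet the corollary. Your closing sentence (``taking the supremum over $\norm{u}\le1$ concludes'') only controls the diagonal values $M_k[Z]\cdot(u,\ldots,u)=\E[(u^\top\tilde Z)^k]$. But $\norm{M_k[Z]}_{\mathrm{op}}$ is a supremum over $k$ \emph{independent} unit vectors $u_1,\ldots,u_k$. The missing line is the generalized H\"older inequality:
\[
    \left|M_k[Z]\cdot(u_1,\ldots,u_k)\right|
    \le
    \prod_{i=1}^k\Norm{u_i^\top\tilde Z}_{L^k}
    \le
    \Bigl(\sup_{\norm{u}_2\le1}\Norm{u^\top\tilde Z}_{L^k}\Bigr)^k .
\]
This is exactly how the paper passes from the lemma to the corollary. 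It is a one-line fix, but without it the bound on the multilinear operator norm is not established.

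\textbf{Comparison with the paper.} With that line added, your argument is correct. It differs from the paper in how the linear moment bound is obtained.

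The paper applies the Bobkov--Ledoux exponential integrability estimate (Lemma~\ref{lem:bobkov_ledoux}) to bounded truncations of $u^\top Z$. It passes to the limit by Fatou to get $\E e^{|Y|/\sqrt{C_P}}\le 6$, then uses $t^k\le k!\,e^t$, which yields the constant $6^{1/k}\sqrt{C_P}\,k$.

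You instead apply the Poincar\'e inequality to a regularized, truncated version of $|Y|^{q/2}$ and iterate $a_q^2\le C_Pq^2/4+a_{q/2}^2$. This is the same mechanism as in Lemmas~\ref{lem:tensorized-Lk-poincare} and~\ref{lem:conditional-Lk-poincare}, with the gradient factor replaced by the deterministic bound $\norm{u}_2\le1$.

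Your route has three advantages. It is self-contained and avoids citing an external concentration result. It gives the sharper constant $\sqrt{C_P}\,k$. And since the recursion at exponent $q$ only needs $q\ge2$, not $q/2\ge2$, the chain over real exponents works directly, so the power-of-two detour is unnecessary.

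The paper's route delivers exponential integrability in one stroke. However, linear growth of $L^k$ norms in $k$ is equivalent to that up to constants, so nothing is lost for this corollary. Your truncation and regularization plan is adequate. For the base case of the truncated variable, just note that $\E Y_M^2\le\E Y^2=\Var(u^\top Z)\le C_P$.
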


Lemma~\ref{lem:linear-moments-poincare} relies on the following classical result, taken from \cite[Proposition~4.1]{BobkovLedoux1997PoincareTalagrand}.

\begin{lemma}[Bobkov--Ledoux exponential integrability bound]
\label{lem:bobkov_ledoux}
Let $m\in \mathbb N$ and let $Z\in \mathbb R^m$ be a random vector satisfying a
Poincar\'e inequality with constant $C_P$. Let $\kappa>0$. For any bounded
$\kappa$-Lipschitz mapping $f:\mathbb R^m\to \mathbb R$,
\begin{align*}
    \forall 0\le \lambda < \frac{2}{\sqrt{C_P}\kappa}:\qquad
    \E \left[
        \exp\left(\lambda \left\vert f(Z) - \mathbb E[f(Z)]\right\vert\right)
    \right]
    \le
    \frac{4/\sqrt{C_P}+2\lambda\kappa}{2/\sqrt{C_P}-\lambda\kappa}.
\end{align*}
\end{lemma}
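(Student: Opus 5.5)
We prove the Bobkov--Ledoux lemma (Lemma~\ref{lem:bobkov_ledoux}) by the classical Herbst-type argument adapted to the Poincar\'e inequality. The approach is to apply the Poincar\'e inequality~\eqref{eq:poincare_C_P} to exponentials of $f$ and obtain a recursive inequality between Laplace transforms at $\lambda$ and $\lambda/2$, which we then iterate. Without loss of generality, replace $f$ by $f-\E f(Z)$, so that $\E f(Z)=0$. Since $f$ is bounded, all exponential moments are finite, and $e^{\lambda f/2}$ is Lipschitz, hence admissible after the usual smooth approximation.

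\textbf{Step 1 (one-step recursion).} Set $\Lambda(\lambda):=\E[e^{\lambda f(Z)}]$. Applying~\eqref{eq:poincare_C_P} to $F=e^{\lambda f/2}$, whose gradient has norm at most $\frac{\lambda\kappa}{2}e^{\lambda f/2}$, gives
\[
    \Lambda(\lambda)-\Lambda(\lambda/2)^2
    \le
    C_P\frac{\lambda^2\kappa^2}{4}\Lambda(\lambda).
\]
Hence, for $\lambda\kappa\sqrt{C_P}<2$,
\[
    \Lambda(\lambda)\le\Bigl(1-\tfrac{C_P\kappa^2\lambda^2}{4}\Bigr)^{-1}\Lambda(\lambda/2)^2 .
\]

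\textbf{Step 2 (iteration).} Iterate this bound along $\lambda/2^j$. We get
\[
    \Lambda(\lambda)\le\prod_{j\ge0}\Bigl(1-\tfrac{C_P\kappa^2\lambda^2}{4^{j+1}}\Bigr)^{-2^j}\,\lim_{m\to\infty}\Lambda(\lambda/2^m)^{2^m}.
\]
The limit equals $\exp(\lambda\E f)=1$, by a first-order expansion of $\Lambda$ near $0$, which is valid because $f$ is bounded. The remaining step is to bound the infinite product by an explicit rational function of $t:=\lambda\kappa\sqrt{C_P}/2\in[0,1)$. The product is $\prod_j(1-t^2/4^j)^{-2^j}$, and Bobkov--Ledoux bound it by $\frac{1+t}{1-t}$ using $\log\frac1{1-x}$ convexity estimates. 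This yields
\[
    \E e^{\lambda f}\le\frac{2+\lambda\kappa\sqrt{C_P}}{2-\lambda\kappa\sqrt{C_P}}.
\]

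\textbf{Step 3 (absolute value).} Pass to $|f|$ via $e^{\lambda|f|}\le e^{\lambda f}+e^{-\lambda f}$, applying Steps 1--2 to both $f$ and $-f$. This doubles the bound. Multiplying numerator and denominator by $1/\sqrt{C_P}$ then gives exactly the stated right-hand side $\frac{4/\sqrt{C_P}+2\lambda\kappa}{2/\sqrt{C_P}-\lambda\kappa}$.

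The main obstacle is the explicit product estimate in Step 2: the recursion itself is routine, but matching the precise constant requires the sharp elementary inequality for $\prod_j(1-t^2 4^{-j})^{-2^j}$. If this cannot be made to match exactly, we will simply cite~\cite[Proposition~4.1]{BobkovLedoux1997PoincareTalagrand}, as the paper does, since only the qualitative form (finite for $\lambda\kappa\sqrt{C_P}<2$) is used in Lemma~\ref{lem:linear-moments-poincare}.
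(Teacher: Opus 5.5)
Your argument is correct. It is the standard Herbst-type Laplace-transform recursion behind the cited result. The paper itself gives no proof of this lemma and only refers to \cite[Proposition~4.1]{BobkovLedoux1997PoincareTalagrand}, so your proposal supplies what the paper outsources rather than following a different route. The following steps are all sound:
\begin{itemize}
\item the one-step recursion obtained from Poincar\'e applied to $e^{\lambda f/2}$;
\item the limit $\Lambda(\lambda/2^m)^{2^m}\to e^{\lambda\E f}=1$;
\item the symmetrization $e^{\lambda|f|}\le e^{\lambda f}+e^{-\lambda f}$, which reproduces exactly $\frac{4/\sqrt{C_P}+2\lambda\kappa}{2/\sqrt{C_P}-\lambda\kappa}$.
\end{itemize}

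The one step you only asserted is the infinite-product estimate. It needs neither a convexity argument nor the reference. Set $t=\lambda\kappa\sqrt{C_P}/2\in[0,1)$ and expand $\log\frac{1}{1-x}=\sum_{k\ge1}x^k/k$. All terms are nonnegative, so you may sum the geometric series in $j$ first:
\[
\log\prod_{j\ge0}\Bigl(1-\frac{t^2}{4^j}\Bigr)^{-2^j}
=\sum_{k\ge1}\frac{t^{2k}}{k\,(1-2^{1-2k})}
\le\sum_{k\ge1}\frac{2\,t^{2k-1}}{2k-1}
=\log\frac{1+t}{1-t}.
\]
The inequality holds term by term, for two reasons. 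First, $t^{2k}\le t^{2k-1}$. Second, $\frac{1}{k(1-2^{1-2k})}\le\frac{2}{2k-1}$ is equivalent to $k\le 4^{k-1}$, which holds for every $k\ge1$ with equality at $k=1$. With this inserted, your proof is complete. The smoothing of $f$ needed to apply the Poincar\'e inequality, as it is stated for smooth functions, is routine: mollify $f$, which keeps it bounded and $\kappa$-Lipschitz, then pass to the limit by bounded convergence.

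One small correction to your closing remark: the explicit constant is in fact used. In the proof of Lemma~\ref{lem:linear-moments-poincare} the paper takes $\lambda\kappa=1/\sqrt{C_P}$, where the right-hand side equals exactly $6$. That is the source of the constant $6\sqrt{C_P}\,k$ in that lemma. Any finite constant would still suffice for the $O(k)$ moment growth used downstream.
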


\begin{proof}[Proof of Lemma~\ref{lem:linear-moments-poincare}]
Consider a deterministic vector $u\in\R^m$ satisfying $\norm{u}_2 \leq 1$. If
$u=0$, there is nothing to prove. Set
\[
    Y
    :=
    u^\top Z-\E[u^\top Z].
\]
To apply Bobkov--Ledoux's exponential integrability bound, we introduce a
quantity $M>0$ and the mapping $T_M(t):=(-M)\vee(t\wedge M)$, which allows us
to truncate the linear form. Define
\[
    Y^{(M)}
    :=
    T_M(u^\top Z)-\E[T_M(u^\top Z)].
\]
Then $Y^{(M)}$ is bounded, centered, and $\norm{u}_2$-Lipschitz as a function of
$Z$. Lemma~\ref{lem:bobkov_ledoux} applied with
$\lambda=1/(\sqrt{C_P}\norm{u}_2)$ yields
\begin{align}\label{eq:exp_exp_Ym}
    \E \left[
        \exp\!\left(\frac{|Y^{(M)}|}{\sqrt{C_P}\norm{u}_2}\right)
    \right]
    \le
    6.
\end{align}
By the Poincar\'e inequality applied to linear functions, $u^\top Z$ has finite
variance. Hence $T_M(u^\top Z)\to u^\top Z$ in $L^1$, so $Y^{(M)}$ converges to
$Y$ in probability and along a subsequence almost surely. Fatou's lemma allows
us to pass to the limit in~\eqref{eq:exp_exp_Ym}, with $Y$ in place of
$Y^{(M)}$. Therefore, using $t^k\le k!e^t$ for $t\ge0$,
\[
    \forall k\in \mathbb N:\qquad
    \E|Y|^k
    \le
    6k!\,(\sqrt{C_P}\norm{u}_2)^k.
\]
Hence, since $k!\leq k^k$,
\[
    \Norm{u^\top(Z-\E[Z]) }_{L^{k}}
    \le
    6^{1/k}\sqrt{C_P}\,k\norm{u}_2.
\]
Taking the supremum over $\norm{u}_2\le1$ gives the claim.
\end{proof}

\begin{lemma}[Euclidean moments of a random vector]
\label{lem:column-euclidean-moments}
Let $m\in \mathbb N$ and let $Z\in \mathbb R^m$ be a random vector. Then, for
any $k\geq 2$,
\begin{align*}
    \Norm{\norm{Z}_2 }_{L^{k}}
    \leq
    \sqrt m \sup_{\|u\|\leq 1} \Norm{u^\top Z  }_{L^k}.
\end{align*}
\end{lemma}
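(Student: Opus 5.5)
We prove the Euclidean moment bound by writing the norm through a random unit direction and then interchanging the $L^k$ norms. The tool is Minkowski's inequality in $L^{k/2}$, applied to the coordinate decomposition of $\norm{Z}_2^2$.

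First, write $\norm{Z}_2^2=\sum_{a=1}^m (e_a^\top Z)^2$, where $(e_a)$ is the canonical basis. Since $k\ge2$, the map $W\mapsto \Norm{W}_{L^{k/2}}$ is a norm. Minkowski's inequality therefore gives
\[
    \Norm{\norm{Z}_2}_{L^k}^2
    =
    \Norm{\norm{Z}_2^2}_{L^{k/2}}
    \le
    \sum_{a=1}^m \Norm{(e_a^\top Z)^2}_{L^{k/2}}
    =
    \sum_{a=1}^m \Norm{e_a^\top Z}_{L^k}^2 .
\]

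Second, each basis vector has unit norm, so each summand is at most $\sup_{\norm{u}\le1}\Norm{u^\top Z}_{L^k}^2$. The sum is then at most $m\sup_{\norm{u}\le1}\Norm{u^\top Z}_{L^k}^2$. Taking square roots yields the claimed bound with the factor $\sqrt m$.

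There is no real obstacle here. The only point to check is that $k\ge2$, which makes $L^{k/2}$ a genuine normed space so that the triangle inequality applies. This is exactly why the statement restricts to $k\geq 2$. If one wanted $k<2$ as well, the bound would instead follow from monotonicity of $L^k$ norms, since $\Norm{\norm{Z}_2}_{L^k}\le\Norm{\norm{Z}_2}_{L^2}$, and then the case $k=2$ applies. That case is not needed here.
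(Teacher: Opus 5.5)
Your argument is correct and it is the paper's proof: expand $\norm{Z}_2^2$ in the canonical basis and apply Minkowski's inequality in $L^{k/2}$. The paper writes the same step directly as $\Norm{(\sum_a |e_a^\top Z|^2)^{1/2}}_{L^k}\le(\sum_a\Norm{e_a^\top Z}_{L^k}^2)^{1/2}$.

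Your closing aside on $k<2$ is wrong, however. Monotonicity only puts $\sqrt m\sup_{\norm{u}\le1}\Norm{u^\top Z}_{L^2}$ on the right, and this dominates the $L^k$ quantity rather than the reverse. In fact the inequality genuinely fails for $k=1$: take $m=2$ and $Z$ uniform on the unit circle, so the left side is $1$ while the right side is $\sqrt2\cdot 2/\pi<1$. The restriction $k\ge2$ is therefore essential, not merely technical.
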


\begin{proof}
Let \((e_a)_{a\le m}\) be an orthonormal basis of \(\R^m\). For \(k\ge2\),
Minkowski's inequality gives
\[
    \left\|
        \norm{Z}_2
    \right\|_{L^k}
    =
    \left\|
        \left(
            \sum_{a=1}^m |e_a^\top Z|^2
        \right)^{1/2}
    \right\|_{L^k}
    \le
    \left(
        \sum_{a=1}^m
        \Norm{e_a^\top Z  }_{L^k}^2
    \right)^{1/2}.
\]
The stated bound follows by taking the supremum over all unit vectors.
\end{proof}

The next lemma shows that maxima of a family of random variables satisfying
similar linear moment-growth bounds grow only logarithmically with the size of
the family. Note that this type of uniform control is not available from
fixed-order moment assumptions alone.


\begin{lemma}[Maxima under polynomial moment growth]
\label{lem:max-polynomial-moment-growth}
Let \(\alpha>0\). Assume that \(Z_1,\ldots,Z_m\) are real-valued random
variables and that there exists \(C>0\) such that, for every integer \(q\ge1\),
\[
    \max_{\ell\in[m]}
    \Norm{Z_\ell}_{L^q}
    \le
    Cq^\alpha .
\]
Then, for every integer \(r\ge1\),
\[
    \left\|
        \max_{\ell\in[m]}|Z_\ell|
    \right\|_{L^r}
    \le
    e\,C\,\left(\left\lceil r\vee\log(em)\right\rceil\right)^\alpha .
\]
\end{lemma}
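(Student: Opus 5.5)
The plan is a moment argument combined with a union bound over the \(m\) variables, with the moment order chosen adaptively. Fix \(r\ge1\) and set \(q:=\lceil r\vee\log(em)\rceil\). This is an integer with \(q\ge r\) and \(q\ge\log(em)\ge1\). Since \(q\ge r\), monotonicity of \(L^p\)-norms on a probability space gives
\[
\Bigl\|\max_{\ell\in[m]}|Z_\ell|\Bigr\|_{L^r}\le\Bigl\|\max_{\ell\in[m]}|Z_\ell|\Bigr\|_{L^q}.
\]
It therefore suffices to bound the \(L^q\)-norm.

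Next, replace the maximum by a sum:
\[
\E\Bigl[\max_{\ell}|Z_\ell|^q\Bigr]\le\sum_{\ell=1}^m\E|Z_\ell|^q\le m\,(Cq^\alpha)^q.
\]
This uses the hypothesis at the integer exponent \(q\). Taking \(q\)-th roots yields
\[
\Bigl\|\max_{\ell}|Z_\ell|\Bigr\|_{L^q}\le m^{1/q}\,C\,q^\alpha.
\]

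The only step that needs care is the factor \(m^{1/q}\), and it is the reason for the choice of \(q\). Because \(q\ge\log(em)\ge\log m\), we get
\[
m^{1/q}=\exp\!\Bigl(\frac{\log m}{q}\Bigr)\le e.
\]
Combining the three displays gives \(\|\max_\ell|Z_\ell|\|_{L^r}\le e\,C\,\lceil r\vee\log(em)\rceil^\alpha\), which is the claim.

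No step is a genuine obstacle. The points to check are that the hypothesis is applied at an integer exponent, which the ceiling guarantees, and that the exponent is at least \(r\), so that the monotonicity of norms in the first step is legitimate.
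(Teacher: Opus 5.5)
Your proof is correct and is the paper's argument step for step. Like the paper, you pass from $L^r$ to $L^q$ with $q=\lceil r\vee\log(em)\rceil$, bound the $q$-th moment of the maximum by the sum of the individual $q$-th moments, and use $q\ge\log(em)$ to get $m^{1/q}\le e$.
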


\begin{proof}
Let
\[
    q:=\left\lceil r\vee\log(em)\right\rceil .
\]
Then \(q\ge r\), and
\[
\begin{aligned}
    \left\|
        \max_{\ell\in[m]}|Z_\ell|
    \right\|_{L^r}
    &\le
    \left\|
        \max_{\ell\in[m]}|Z_\ell|
    \right\|_{L^q}                                      \\
    &\le
    \left(
        \sum_{\ell=1}^m \Norm{Z_\ell}_{L^q}^q
    \right)^{1/q}
    \le
    Cq^\alpha m^{1/q}.
\end{aligned}
\]
Since \(q\ge\log(em)\), we have \(m^{1/q}\le e\), which proves the claim.
\end{proof}
\begin{lemma}[Conditional \(L^k\) Poincar\'e bound]
\label{lem:conditional-Lk-poincare}
Let \(Z=(Z_1,\ldots,Z_n)\) be a vector of independent blocks. Assume that the
block \(Z_j\) satisfies a Poincar\'e inequality with constant \(C_P\). Then, for
every integer \(k\ge2\) and every sufficiently smooth scalar function \(F\),
\[
    \left\|
        F(Z)-\E_j[F(Z)]
    \right\|_{L^k}
    \le
    C_{k,P}
    \left\|
        \norm{\mathbb D_j[F]}_2^{*}
    \right\|_{L^k},
\]
where \(C_{k,P}\) depends only on \(k\) and \(C_P\). More precisely, one can take
\(C_{k,P}=O(k\sqrt{C_P})\).
\end{lemma}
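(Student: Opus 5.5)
The plan is to reduce the conditional statement to the unconditional tensorized \(L^k\) inequality of Lemma~\ref{lem:tensorized-Lk-poincare}, applied with a single block. Freeze all blocks \(Z_\ell\), \(\ell\ne j\), and consider the function \(F_j:z\mapsto F(Z_1,\ldots,Z_{j-1},z,Z_{j+1},\ldots,Z_n)\) on the support of \(Z_j\). By independence, the conditional law of \(Z_j\) given the other blocks is the law of \(Z_j\), which satisfies the Poincar\'e inequality with constant \(C_P\). Also, \(\E_j[F(Z)]\) is exactly the mean of \(F_j(Z_j)\) under this law. The one-block case of Lemma~\ref{lem:tensorized-Lk-poincare} (that is, \(n=1\), with the block \(Z_j\)) then gives, conditionally on the other blocks,
\[
    \E_j\!\left[\left|F(Z)-\E_j[F(Z)]\right|^k\right]
    \le
    \left(C\sqrt{C_P}\,k\right)^k
    \E_j\!\left[\left(\norm{\mathbb D_j[F]}_2^{*}\right)^k\right].
\]
Here the gradient of \(F_j\) at \(Z_j\) has Euclidean norm \(\norm{\mathbb D_j[F]}_2^{*}\).

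Next, take the expectation over the remaining blocks and then the \(k\)-th root. By the tower property, this yields the stated bound with \(C_{k,P}=C\sqrt{C_P}\,k=O(k\sqrt{C_P})\).

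The only point that needs care is measurability and integrability when we apply the one-block inequality pointwise in the frozen variables. I would handle this as follows. The right-hand side is finite for almost every realization of the frozen blocks whenever the unconditional right-hand side is finite, so it suffices to apply the inequality on that full-measure set. The "sufficiently smooth" hypothesis lets us apply the smooth-approximation step already used in the proof of Lemma~\ref{lem:tensorized-Lk-poincare} to \(F_j\).

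I do not expect a real obstacle. The mild subtlety is checking that the moment-iteration argument in Lemma~\ref{lem:tensorized-Lk-poincare} only uses the Poincar\'e inequality of the underlying law. It does, so it applies verbatim to the single block \(Z_j\).
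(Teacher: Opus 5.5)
Your proposal is correct and is essentially the paper's argument. The paper freezes all blocks other than $Z_j$, reruns the moment iteration of Lemma~\ref{lem:tensorized-Lk-poincare} (Poincar\'e applied to $|F(Z)-\E_j[F(Z)]|^{q/2}$ under the law of $Z_j$, then halving the exponent), and integrates the resulting conditional $L^k$ bound over the remaining blocks, which is exactly what you get by citing the one-block case of that lemma conditionally and then using the tower property; quoting the lemma instead of repeating the iteration is a legitimate shortcut, and your handling of the null set where the conditional right-hand side is infinite is adequate.
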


\begin{proof}
It suffices to prove the corresponding conditional estimate. Fix all blocks
except \(Z_j\), and write
\[
    Y:=F(Z)-\E_j[F(Z)],
    \qquad
    G:=\norm{\mathbb D_j[F]}_2^{*}.
\]
For \(q\ge2\), set
\[
    a_q:=\Norm{Y}_{L_j^q},
    \qquad
    b_q:=\Norm{G}_{L_j^q}.
\]
The Poincar\'e inequality applied conditionally to \(|Y|^{q/2}\) gives
\[
    \Var_j\!\left(|Y|^{q/2}\right)
    \le
    C_P\frac{q^2}{4}
    \E_j\!\left[
        |Y|^{q-2}G^2
    \right].
\]
By H\"older's inequality,
\[
    \E_j\!\left[
        |Y|^{q-2}G^2
    \right]
    \le
    a_q^{q-2}b_q^2.
\]
Hence
\[
    \Var_j\!\left(|Y|^{q/2}\right)
    \le
    C_P\frac{q^2}{4}a_q^{q-2}b_q^2.
\]
Since
\[
    \E_j|Y|^q
    =
    \Var_j\!\left(|Y|^{q/2}\right)
    +
    \left(\E_j|Y|^{q/2}\right)^2,
\]
we obtain
\[
    a_q^q
    \le
    C_P\frac{q^2}{4}a_q^{q-2}b_q^2
    +
    a_{q/2}^q.
\]
If \(a_q=0\), there is nothing to prove. Otherwise, dividing by
\(a_q^{q-2}\) and using \(a_{q/2}\le a_q\), we get
\[
    a_q^2
    \le
    C_P\frac{q^2}{4}b_q^2
    +
    a_{q/2}^2.
\]
Iterating this inequality until the exponent lies in \((1,2]\), and using the
ordinary Poincar\'e inequality at exponent \(2\), gives
\[
    a_q
    \le
    C\sqrt{C_P}\,q\,b_q
\]
for a universal constant \(C\). Taking \(q=k\), we have shown that, almost
surely with respect to the remaining blocks,
\[
    \Norm{
        F(Z)-\E_j[F(Z)]
    }_{L_j^k}
    \le
    C\sqrt{C_P}\,k\,
    \Norm{
        \norm{\mathbb D_j[F]}_2^{*}
    }_{L_j^k}.
\]
Raising this inequality to the power \(k\) and integrating over the remaining
blocks yields
\[
    \left\|
        F(Z)-\E_j[F(Z)]
    \right\|_{L^k}
    \le
    C\sqrt{C_P}\,k\,
    \left\|
        \norm{\mathbb D_j[F]}_2^{*}
    \right\|_{L^k}.
\]
This proves the claim.
\end{proof}

\begin{lemma}[Operator norm of a Poincar\'e data matrix]
\label{lem:operator-norm-poincare-matrix}
Let \(Z=(z_1,\ldots,z_n)\in\mathcal M_{p,n}\) have independent columns. Assume
that each \(z_i\) satisfies a Poincar\'e inequality with constant \(C_P=O(1)\),
and that
\[
    \sup_{i\in[n]}\norm{\E z_i}_2=O(1).
\]
Then, for every fixed integer \(k\ge1\),
\[
    \Norm{\norm{Z}_{\op}}_{L^k}
    =
    O_k\!\left(\sqrt{p+n}\right).
\]
\end{lemma}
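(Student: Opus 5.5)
Write $M:=\E Z=(\E z_1,\ldots,\E z_n)$ and $\tilde Z:=Z-M$. The plan is to treat the mean part deterministically and the centered part by a net argument combined with the exponential integrability of Lemma~\ref{lem:bobkov_ledoux}.

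\textbf{Mean part.} Since $\norm{M}_{\op}\le\norm{M}_F\le \sqrt n\sup_i\norm{\E z_i}_2=O(\sqrt n)$, the triangle inequality reduces the claim to showing
\[
\Norm{\norm{\tilde Z}_{\op}}_{L^k}=O_k(\sqrt{p+n}).
\]

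\textbf{Concentration of the centered part.} The key observation is that $F(Z):=\norm{\tilde Z}_{\op}$ is a $1$-Lipschitz function of $Z$ for the Frobenius norm, because $\norm{A}_{\op}\le\norm{A}_F$. By Proposition~\ref{prop:tensorization}, $Z$ as a whole satisfies a Poincar\'e inequality with constant $C_P$. Truncating as in the proof of Lemma~\ref{lem:linear-moments-poincare} and applying Lemma~\ref{lem:bobkov_ledoux} then gives
\[
\Norm{F-\E F}_{L^k}=O(k\sqrt{C_P})=O_k(1).
\]
It therefore suffices to bound $\E\norm{\tilde Z}_{\op}=O(\sqrt{p+n})$.

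\textbf{The expectation.} I will run a standard $\varepsilon$-net argument. Take $1/4$-nets $\mathcal N_p\subset S^{p-1}$ and $\mathcal N_n\subset S^{n-1}$, of cardinalities at most $9^p$ and $9^n$. The usual approximation gives
\[
\norm{\tilde Z}_{\op}\le 2\max_{a\in\mathcal N_p,\,b\in\mathcal N_n}a^\top\tilde Z b .
\]
For fixed $a,b$, the map $Z\mapsto a^\top\tilde Z b$ is linear, centered and $1$-Lipschitz in Frobenius norm. Under the tensorized Poincar\'e inequality, Lemma~\ref{lem:bobkov_ledoux} gives the subexponential bound
\[
\E e^{\lambda|a^\top\tilde Zb|}\le 6\qquad\text{for }\lambda=1/\sqrt{C_P}.
\]
A union bound over the $9^{p+n}$ pairs then yields
\[
\E\max_{a,b}|a^\top\tilde Zb|\le \sqrt{C_P}\,\bigl(\log 6+(p+n)\log 9\bigr)=O(p+n).
\]

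\textbf{Main obstacle.} The net bound above only gives $O(p+n)$, not $O(\sqrt{p+n})$, because Poincar\'e yields merely exponential (not Gaussian) tails. This is the step I expect to be hardest. I would first try the following fix: exponential concentration suffices for Lipschitz functions at the $\sqrt{C_P}$ scale, so I want to avoid the union bound on the mean. The idea is to control $\E\norm{\tilde Z}_{\op}^2\le\E\norm{\tilde Z\tilde Z^\top}_{\op}$ through the matrix moment $\E\tr(\tilde Z\tilde Z^\top)^m$ with $m\asymp\log n$, expanding and using the independence of the columns together with the moment-tensor bounds of Corollary~\ref{cor:bound-moments}. This is the classical route to $\sqrt{p+n}$ for independent columns with log-concave-type moments.

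\textbf{Fallback.} If the combinatorics becomes too heavy, I would instead use a Poincar\'e-based estimate of the form
\[
\E\norm{\tilde Z}_{\op}\lesssim \sqrt{\E\norm{\tilde Z\tilde Z^\top}_{\op}},
\]
and bound $\norm{\sum_i \tilde z_i\tilde z_i^\top}_{\op}$ by a matrix Rosenthal (noncommutative Khintchine) inequality. Here $\norm{\E\tilde z_i\tilde z_i^\top}_{\op}=O(1)$ and $\norm{\tilde z_i}_2^2=O_k(p)$ by Lemma~\ref{lem:column-euclidean-moments}, which gives $O(n+p\log n)$ and hence $O(\sqrt{(p+n)\log n})$. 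Since this loses a logarithm, I would only accept it if the sharper moment method fails.
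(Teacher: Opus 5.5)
Your two reductions are correct and match the paper's. The mean part is $O(\sqrt n)$ deterministically. Since $Z\mapsto R:=\norm{\tilde Z}_{\op}$ is $1$-Lipschitz for the Frobenius norm, tensorization together with Lemma~\ref{lem:bobkov_ledoux} (after truncation) gives $\Norm{R-\E R}_{L^k}=O_k(1)$.

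The gap is the remaining estimate $\E R=O(\sqrt{p+n})$, which none of your three routes delivers.
\begin{itemize}
\item The net bound gives only $O(p+n)$, as you note.
\item The fallback gives only $O(\sqrt{(p+n)\log n})$, which is not the statement.
\item The trace method is not a classical route here. The columns have dependent coordinates, and the only input you propose, the tensor bounds of Corollary~\ref{cor:bound-moments}, is too crude. Through Lemma~\ref{lem:column-euclidean-moments} it yields only $\Norm{\norm{\tilde z_i}_2}_{L^{2m}}\lesssim m\sqrt p$. So already the diagonal terms $\sum_i\E\norm{\tilde z_i}_2^{2m}$ of $\E\tr(\tilde Z^\top\tilde Z)^m$ lose a factor $m\asymp\log n$ after taking $2m$-th roots, and the mixed terms are harder still.
\end{itemize}
Your fallback is essentially Rudelson's noncommutative-Khintchine bound. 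Removing exactly its logarithm, for independent columns with sub-exponential marginals and dependent coordinates (e.g.\ log-concave vectors), is the content of the Adamczak--Litvak--Pajor--Tomczak-Jaegermann theorem. That is a substantial result, not a routine moment computation.

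The paper does not prove this estimate; it imports it as Lemma~\ref{lem:empirical-covariance-input}. That lemma gives $\PP(\norm{YY^\top}_{\op}>C(p+n))\le 1/4$ under uniform $\psi_1$-type marginals and $\E\norm{y_i}_2^2\lesssim p$.

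Given your concentration step, you never needed a bound on $\E R$ directly; a constant-probability bound suffices. Set $a:=C\sqrt{p+n}$. If $\E R>a+t$, then $3/4\le\PP(R\le a)\le\PP(|R-\E R|\ge t)\le O_k(1)\,t^{-k}$, which forces $t=O_k(1)$. Hence $\E R\le a+O(1)$, and $\Norm{R}_{L^k}\le\E R+\Norm{R-\E R}_{L^k}=O_k(\sqrt{p+n})$.

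A plain union bound over a net does not even give this weaker median bound when $p\asymp n$. For a fixed unit $a$, the sum $\sum_i(a^\top\tilde z_i)^2$ can have upper tail of order $e^{-c\sqrt{p+n}}$ at level $\asymp p+n$ when the marginals are genuinely exponential. That cannot compensate for the $9^p$ points of the net.

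So your architecture is right, but its key input, a log-free median bound on $\norm{\tilde Z}_{\op}$, is missing, and you cannot obtain it along the routes you propose without substantial additional work.
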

This lemma relies on the following result taken from~\cite{AdamczakLitvakPajorTomczak2011Covariance} (one could also use $\varepsilon$-nets to prove it)
\begin{lemma}[Empirical-covariance input]
\label{lem:empirical-covariance-input}
Let \(Y=(y_1,\ldots,y_n)\in\mathbb R^{p\times n}\) have independent centered
columns. Assume that, for some constant \(K=O(1)\),
\[
    \sup_{i\in[n]}\sup_{\|u\|_2\le1}\sup_{q\ge1}
    q^{-1}\Norm{u^\top y_i}_{L^q}
    \le K,
    \qquad
    \sup_{i\in[n]}\E\|y_i\|_2^2\le K^2p .
\]
Then there exists a constant \(C=C(K)\) such that
\[
    \PP\!\left(
        \|YY^\top\|_{\op}>C(p+n)
    \right)
    \le
    \frac14 .
\]
\end{lemma}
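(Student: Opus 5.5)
We must prove Lemma (Empirical-covariance input): \(\PP(\|YY^\top\|_{\op}>C(p+n))\le 1/4\) for independent centered columns with subexponential linear marginals (\(\psi_1\)-type, \(\|u^\top y_i\|_{L^q}\le Kq\)) and \(\E\|y_i\|_2^2\le K^2p\).

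Since \(\|YY^\top\|_{\op}=\|Y\|_{\op}^2=\sup_{\|a\|_2=\|b\|_2=1}(a^\top Yb)^2\), I will run an \(\varepsilon\)-net argument on bilinear forms. The plan is to fix \(1/4\)-nets \(\mathcal N_p\subset S^{p-1}\) and \(\mathcal N_n\subset S^{n-1}\), of cardinalities at most \(9^p\) and \(9^n\). The standard net argument gives
\[
\|Y\|_{\op}\le 2\max_{a\in\mathcal N_p,b\in\mathcal N_n}|a^\top Yb| .
\]

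For fixed \(a,b\), I will write \(a^\top Yb=\sum_i b_i\,(a^\top y_i)\). This is a sum of independent centered variables \(\xi_i=a^\top y_i\) with \(\|\xi_i\|_{L^q}\le Kq\), so each \(\xi_i\) is subexponential with \(\|\xi_i\|_{\psi_1}\lesssim K\). Bernstein's inequality for subexponential sums then gives
\[
\PP\bigl(|\textstyle\sum_i b_i\xi_i|>t\bigr)\le 2\exp\!\Bigl(-c\min\bigl(\tfrac{t^2}{K^2\|b\|_2^2},\tfrac{t}{K\|b\|_\infty}\bigr)\Bigr).
\]
I will take \(t=C'K\sqrt{p+n}\). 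The Gaussian branch of the exponent is then \(\gtrsim C'^2(p+n)\). The linear branch is only \(\gtrsim C'\sqrt{p+n}/\|b\|_\infty\), and this is the main obstacle: when \(b\) is spread out it is fine, but when \(b\) is localized (\(\|b\|_\infty\sim1\)) the exponent is only \(\sqrt{p+n}\), which cannot beat a \(9^{p+n}\) union bound. This is exactly why heavy-tailed (\(\psi_1\)) columns need the Adamczak–Litvak–Pajor–Tomczak-Jaegermann argument.

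To handle the localized case, I will decompose \(b\) by level sets: its large coordinates form a set \(I\) of size \(m\). For the part of \(b\) on \(I\), I will bound \(|\sum_{i\in I}b_i a^\top y_i|\le \|Y_I\|_{\op}\), where \(Y_I\) denotes the columns indexed by \(I\). Then I will control \(\|Y_I\|_{\op}\) uniformly over \(|I|=m\) through \(\max_i\|y_i\|_2\) and the off-diagonal terms \(\sum_{i\ne j\in I}b_ib_j\,y_i^\top y_j\). The norms \(\|y_i\|_2\) are of order \(\sqrt p\) up to a \(\log n\) factor. The hypothesis \(\E\|y_i\|_2^2\le K^2p\) is used here, together with the fact that the \(\psi_1\) bound implies concentration of \(\|y_i\|_2\) only weakly; since we only need probability \(3/4\), Markov's inequality on \(\sum_i\|y_i\|_2^2\) suffices for the diagonal part. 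The off-diagonal part will be handled by a decoupled net argument over sparse vectors, with entropy \(\binom nm 9^m\approx e^{m\log(en/m)}\) matched against the Bernstein tail \(e^{-t/(K\|b\|_\infty)}\) with \(\|b\|_\infty\le m^{-1/2}\).

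In practice, rather than reproving this delicate chaining, I would cite Theorem 3.13 of \cite{AdamczakLitvakPajorTomczak2011Covariance}. That result states precisely that for independent columns with \(\sup_{\|u\|\le1}\|u^\top y_i\|_{\psi_1}\le \psi\) and \(\max_i\|y_i\|_2\le\sqrt{K'p}\) with high probability, one has \(\|Y\|_{\op}^2\lesssim p+n\) with probability close to one. To use it I will verify its hypotheses: the \(\psi_1\) condition follows from \(q^{-1}\|u^\top y_i\|_{L^q}\le K\), and the required control of \(\max_i\|y_i\|_2\) follows from the second-moment assumption via Markov's inequality on \(\sum_i\|y_i\|_2^2\le K^2pn\). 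Finally, I will choose \(C(K)\) large enough that the total failure probability is at most \(1/4\).
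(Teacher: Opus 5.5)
Your proof has a genuine gap in the final hypothesis check. To invoke the Adamczak--Litvak--Pajor--Tomczak-Jaegermann theorem you need control of $\max_i\norm{y_i}_2$ at scale $\sqrt p$ (or at least $\sqrt{p+n}$). You claim this follows from Markov's inequality applied to $\sum_i\norm{y_i}_2^2$, but it does not. Markov gives $\sum_i\norm{y_i}_2^2\le 8K^2pn$ with probability at least $7/8$, hence only $\max_i\norm{y_i}_2^2\le 8K^2pn$, which is off by a factor $n$. Union-bounding individual Markov estimates gives the same loss. The same error appears in your net sketch: for $b$ supported on $I$, the ``diagonal part'' $\sum_{i\in I}b_i^2\norm{y_i}_2^2$ is controlled by $\max_{i\in I}\norm{y_i}_2^2$, not by the sum.

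In fact no argument can close this step, because with only the stated hypotheses the lemma is false. Since $\norm{YY^\top}_{\op}\ge\max_i\norm{y_i}_2^2$, the conclusion forces $\max_i\norm{y_i}_2=O(\sqrt{p+n})$ with probability at least $3/4$, and the hypotheses do not give this. Take $p=n$ and $y_i=|g_i|\sqrt p\,\vartheta_i$, with $g_i\sim\mathcal N(0,1)$ and $\vartheta_i$ uniform on the unit sphere of $\R^p$, all independent. These columns are centered, and for $\norm{u}_2\le1$ one has $\Norm{u^\top y_i}_{L^q}=\Norm{g_i}_{L^q}\Norm{\sqrt p\,u^\top\vartheta_i}_{L^q}\le Cq$, a product of two sub-Gaussian moment sequences. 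Also $\E\norm{y_i}_2^2=p$, so the hypotheses hold with an absolute $K$. Yet $\norm{YY^\top}_{\op}\ge n\max_i g_i^2$ and $\max_i g_i^2\to\infty$ in probability, so $\PP(\norm{YY^\top}_{\op}>C(p+n))\to1$ for every fixed $C$.

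An extra assumption on $\max_i\norm{y_i}_2$ is therefore needed; it is exactly the max-norm hypothesis you list for the cited theorem. The paper's statement, which likewise only cites that work, omits it too. In the paper's application it is available. Under Assumption~\ref{ass:poincare-data}, the map $y\mapsto\norm{y}_2$ is $1$-Lipschitz, so the argument of Lemma~\ref{lem:linear-moments-poincare} gives $\Norm{\norm{y_i}_2-\E\norm{y_i}_2}_{L^q}=O(q)$. Lemma~\ref{lem:max-polynomial-moment-growth} then yields $\max_i\norm{y_i}_2\le\sqrt{C_Pp}+O(\log n)=O(\sqrt{p+n})$ with probability close to one. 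Only with that input added does the citation go through.
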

\begin{proof}[Proof of Lemma~\ref{lem:operator-norm-poincare-matrix}]
Write
\[
    m_i:=\E z_i,
    \qquad
    M:=(m_1,\ldots,m_n),
    \qquad
    Y:=Z-M=(y_1,\ldots,y_n).
\]
The deterministic mean part satisfies
\[
    \norm{M}_{\op}
    \le
    \norm{M}_F
    =
    \left(\sum_{i=1}^n\norm{m_i}_2^2\right)^{1/2}
    =
    O(\sqrt n)
    \le
    O\!\left(\sqrt{p+n}\right).
\]
It remains to control \(Y\).

For every \(u\in\R^p\) with \(\norm{u}_2\le1\), the Poincar\'e inequality
applied to the linear map \(x\mapsto u^\top x\) gives
\[
    \Var(u^\top z_i)\le C_P.
\]
Hence
\[
    \norm{
        \sum_{i=1}^n \E[y_i y_i^\top]
    }_{\op}
    \le
    C_P n.
\]
Moreover, Lemma~\ref{lem:linear-moments-poincare} gives the uniform
sub-exponential moment bound
\[
    \sup_{i\in[n]}\sup_{\norm{u}_2\le1}\sup_{q\ge1}
    \frac1q
    \Norm{u^\top y_i}_{L^q}
    =
    O(1).
\]
Furthermore,
\[
    \E\norm{y_i}_2^2
    =
    \tr\!\big(\Cov(z_i)\big)
    \le
    C_Pp,
\]
and Lemma~\ref{lem:linear-moments-poincare} gives
\[
    \sup_{i\in[n]}\sup_{\|u\|_2\le1}\sup_{q\ge1}
    q^{-1}\Norm{u^\top y_i}_{L^q}
    =
    O(1).
\]
Therefore Lemma~\ref{lem:empirical-covariance-input} yields, for a constant
\(C\) depending only on \(C_P\),
\[
    \PP\!\left(
        \norm{YY^\top}_{\op}
        >
        C(p+n)
    \right)
    \le
    \frac14 .
\]
\[
    \PP\!\left(
        \norm{YY^\top}_{\op}
        >
        C(p+n)
    \right)
    \le
    \frac14 .
\]
Equivalently, with
\[
    R:=\norm{Y}_{\op},
\]
we have
\[
    \PP\!\left(
        R
        \le
        C\sqrt{p+n}
    \right)
    \ge
    \frac34 .
\]

On the other hand, the map \(Y\mapsto\norm{Y}_{\op}\) is \(1\)-Lipschitz with
respect to the Frobenius norm. Since the columns of \(Y\) are independent and
each satisfies a Poincar\'e inequality with constant \(C_P\), tensorization and
the Bobkov--Ledoux moment consequence give, for every fixed integer \(k\ge1\),
\[
    \Norm{
        R-\E R
    }_{L^k}
    =
    O_k(1).
\]
Here one may apply Lemma~\ref{lem:bobkov_ledoux} first to the bounded
truncations \(R\wedge A\), and then let \(A\to\infty\).

We now combine this concentration around the mean with the preceding
positive-probability bound. Let
\[
    a:=C\sqrt{p+n}.
\]
If \(\E R>a+t\), then
\[
    \frac34
    \le
    \PP(R\le a)
    \le
    \PP(|R-\E R|\ge t)
    \le
    \frac{\Norm{R-\E R}_{L^k}^k}{t^k}.
\]
Since \(\Norm{R-\E R}_{L^k}=O_k(1)\), this implies \(t=O_k(1)\). Therefore
\[
    \E R
    =
    O_k\!\left(\sqrt{p+n}\right).
\]
Consequently,
\[
    \Norm{R}_{L^k}
    \le
    \E R+\Norm{R-\E R}_{L^k}
    =
    O_k\!\left(\sqrt{p+n}\right).
\]
Finally,
\[
    \Norm{\norm{Z}_{\op}}_{L^k}
    \le
    \norm{M}_{\op}
    +
    \Norm{\norm{Y}_{\op}}_{L^k}
    =
    O_k\!\left(\sqrt{p+n}\right).
\]
Under Assumption~\ref{ass:1dim}, \(p=O(n)\), and the final bound becomes
\(O_k(\sqrt n)\).
\end{proof}

\bibliographystyle{plain}
\bibliography{biblio}

\end{document}